\documentclass{amsart}
\usepackage{amssymb, amsmath, mathrsfs, verbatim, url}
\usepackage{mathabx}

\makeatletter
\@namedef{subjclassname@2020}{%
  \textup{2020} Mathematics Subject Classification}
\makeatother

\theoremstyle{plain}
\newtheorem{theorem}{Theorem}[section]
\newtheorem{lemma}[theorem]{Lemma}
\newtheorem{proposition}[theorem]{Proposition}
\newtheorem{corollary}[theorem]{Corollary}

\theoremstyle{definition}
\newtheorem{definition}[theorem]{Definition}

\newcommand{\re}{\upharpoonright}
\newcommand{\diam}{\mathsf{diam}}
\newcommand{\maxi}{\mathsf{max}}
\newcommand{\limi}{\mathsf{lim}}
\newcommand{\id}{\mathsf{id}}

\newcommand{\supi}{\mathsf{sup}}
\newcommand{\bool}{\mathsf{b}}
\newcommand{\bD}{\mathbf{\Delta}}
\newcommand{\bG}{\mathbf{\Gamma}}
\newcommand{\bGc}{\widecheck{\mathbf{\Gamma}}}
\newcommand{\bL}{\mathbf{\Lambda}}
\newcommand{\bLc}{\widecheck{\mathbf{\Lambda}}}
\newcommand{\bP}{\mathbf{\Pi}}
\newcommand{\bPc}{\widecheck{\mathbf{\Pi}}}
\newcommand{\bS}{\mathbf{\Sigma}}
\newcommand{\Borel}{\mathsf{B}}
\newcommand{\sss}{\mathsf{s}}
\newcommand{\wc}{\!\!\downarrow\,}
\newcommand{\Ne}{\mathsf{N}}
\newcommand{\Ha}{\mathsf{Ha}}
\newcommand{\HaS}{\mathsf{Ha}_{\bS}}
\newcommand{\Lo}{\mathsf{Lo}}
\newcommand{\LoS}{\mathsf{Lo}_{\bS}}

\newcommand{\ZF}{\mathsf{ZF}}
\newcommand{\DC}{\mathsf{DC}}
\newcommand{\Det}{\mathsf{Det}}
\newcommand{\Ga}{\mathsf{G}}
\newcommand{\BP}{\mathsf{BP}}
\newcommand{\EW}{\mathsf{EW}}
\newcommand{\NSD}{\mathsf{NSD}}
\newcommand{\NSDS}{\mathsf{NSD}_{\bS}}
\newcommand{\PU}{\mathsf{PU}}

\newcommand{\AD}{\mathsf{AD}}

\newcommand{\SD}{\mathsf{SD}}

\newcommand{\Diff}{\mathsf{D}}
\newcommand{\Wa}{\mathsf{Wa}}

\newcommand{\Aa}{\mathcal{A}}
\newcommand{\BB}{\mathcal{B}}
\newcommand{\CC}{\mathcal{C}}

\newcommand{\HH}{\mathcal{H}}

\newcommand{\II}{\mathcal{I}}

\newcommand{\UU}{\mathcal{U}}
\newcommand{\VV}{\mathcal{V}}
\newcommand{\PP}{\mathcal{P}}

\newcommand{\RRR}{\mathbb{R}}

\begin{document}

\title{Constructing Wadge classes}

\author{Rapha\"{e}l Carroy}
\address{Dipartimento di Matematica ``Giuseppe Peano''
\newline\indent Palazzo Campana, Universit\'{a} di Torino
\newline\indent Via Carlo Alberto 10
\newline\indent 10123 Turin, Italy}
\email{raphael.carroy@unito.it}

\author{Andrea Medini}
\address{Institut f\"{u}r Diskrete Mathematik und Geometrie
\newline\indent Technische Universit\"{a}t Wien
\newline\indent  Wiedner Hauptstra{\ss}e 8–-10/104
\newline\indent 1040 Vienna, Austria}
\email{andrea.medini@tuwien.ac.at}
\urladdr{http://www.dmg.tuwien.ac.at/medini/}

\author{Sandra M\"{u}ller}
\address{Institut f\"{u}r Diskrete Mathematik und Geometrie
\newline\indent Technische Universit\"{a}t Wien
\newline\indent  Wiedner Hauptstra{\ss}e 8–-10/104
\newline\indent 1040 Vienna, Austria}
\email{sandra.mueller@tuwien.ac.at}
\urladdr{http://muellersandra.github.io}

\subjclass[2020]{03E15, 54H05, 03E60.}

\keywords{Wadge theory, level, expansion, separated differences, determinacy, Hausdorff operation, $\omega$-ary Boolean operation.}

\thanks{The first-listed author acknowledges the support of the FWF grant P 28153 and of the PRIN grant 2017NWTM8R. The second-listed author acknowledges the support of the FWF grant P 30823. The third-listed author (formerly known as Sandra Uhlenbrock) acknowledges the support of the FWF grants P 28157 and V 844, as well as funding from L'OR\'{E}AL Austria, in collaboration with the Austrian UNESCO Commission and in cooperation with the Austrian Academy of Sciences, through the fellowship ``Determinacy and Large Cardinals''. The authors are grateful to Alessandro Andretta for valuable bibliographical help, to Alain Louveau for allowing them to use his unpublished book \cite{louveaub}, and to Salvatore Scamperti for spotting several inaccuracies.}

\date{March 15, 2022}

\begin{abstract}
We show that, assuming the Axiom of Determinacy, every non-selfdual Wadge class can be constructed by starting with those of level $\omega_1$ (that is, the ones that are closed under Borel preimages) and iteratively applying the operations of expansion and separated differences. The proof is essentially due to Louveau, and it yields at the same time a new proof of a theorem of Van Wesep (namely, that every non-selfdual Wadge class can be expressed as the result of a Hausdorff operation applied to the open sets). The exposition is self-contained, except for facts from classical descriptive set theory.
\end{abstract}

\maketitle

\tableofcontents

\section{Introduction}\label{sectionintroduction}

Throughout this article, unless we specify otherwise, we will be working in the theory $\ZF+\DC$, that is, the usual axioms of Zermelo-Fraenkel (without the Axiom of Choice) plus the principle of Dependent Choices (see Section \ref{sectionpreliminaries} for more details). Given a set $Z$, we will denote by $\PP(Z)$ the collection of all subsets of $Z$. By \emph{space}, we will always mean separable metrizable topological space.

Given a space $Z$, we will say that $\bG$ is a \emph{Wadge class} in $Z$ if there exists $A\subseteq Z$ such that
$$
\bG=\{f^{-1}[A]:f:Z\longrightarrow Z\text{ is a continuous function}\}.
$$
Given a set $Z$ and $\bG\subseteq\PP(Z)$, define $\bGc=\{Z\setminus A:A\in\bG\}$. We will say that $\bG$ is \emph{selfdual} if $\bGc=\bG$. Observe that $\{\varnothing\}$ and $\{Z\}$ are non-selfdual Wadge classes whenever $Z$ is non-empty. The systematic study of these classes, founded by William Wadge in his doctoral thesis \cite{wadget} (see also \cite{wadgea}), is known as Wadge theory, and it has become a classical topic in descriptive set theory (see \cite[Section 21.E]{kechris}). Under suitable determinacy assumptions, the collection of all Wadge classes on a zero-dimensional Polish space $Z$, ordered by inclusion, constitutes a well-behaved hierarchy that is similar to, but much finer than the well-known Borel hierarchy (and not limited to sets of low complexity).

In \cite{louveaua}, Alain Louveau gave a complete description of the non-selfdual Borel Wadge classes, by using an iterative process built on five basic operations.
Subsequently, in his unpublished book \cite{louveaub}, he obtained the following theorem, that reduces the number of operations to two (namely, expansion and separated differences).
Since it would not be feasible to introduce all the relevant notions here, for now we will only mention that $\bG^{(\xi)}$ denotes the operation of \emph{expansion}, $\SD_\eta$ denotes the operation of \emph{separated differences}, and $\ell(\bG)$ denotes the \emph{level} of $\bG$ (see Sections \ref{sectionexpansionbasic}, \ref{sectionsdbasic}, and \ref{sectionlevelbasic} respectively).
\begin{theorem}[Louveau]\label{louveaumain}
The collection of all non-selfdual Borel Wadge classes in $\omega^\omega$ is equal to $\Lo$, where $\Lo$ is the smallest collection satisfying the following conditions:
\begin{itemize}
\item $\{\varnothing\}\in\Lo$ and $\{\omega^\omega\}\in\Lo$,
\item $\bG^{(\xi)}\in\Lo$ whenever $\bG\in\Lo$ and $\xi<\omega_1$,
\item $\SD_\eta(\bD,\bG)\in\Lo$, where $\bD=\bigcup_{n\in\omega}(\bL_n\cup\bLc_n)$, whenever $1\leq\eta<\omega_1$, $\bG\in\Lo$ and $\bL_n\in\Lo$ for $n\in\omega$ are such that $\bG\subseteq\bD$ and $\ell(\bL_n)\geq 1$ for each $n$. 
\end{itemize}
\end{theorem}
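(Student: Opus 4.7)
The claim is an equality of two collections, which I would prove by a pair of inclusions. The forward inclusion $\Lo\subseteq\{\text{non-selfdual Borel Wadge classes on }\omega^\omega\}$ is a routine induction on the inductive definition of $\Lo$. The base cases $\{\varnothing\}$ and $\{\omega^\omega\}$ are plainly non-selfdual Borel Wadge classes. For the closure steps, the proof invokes the preservation facts developed in Sections \ref{sectionexpansionbasic} and \ref{sectionsdbasic}: namely, $\bG^{(\xi)}$ is a non-selfdual Borel Wadge class whenever $\bG$ is one and $\xi<\omega_1$, and $\SD_\eta(\bD,\bG)$ is likewise under the side conditions on $\bD$, $\bG$, and the $\bL_n$ stipulated in the theorem. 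Borelness of the output uses that only countably many Borel ingredients are combined.

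The substance lies in the reverse inclusion. My plan is to argue by transfinite induction on the Wadge rank of a non-selfdual Borel Wadge class $\bG$ on $\omega^\omega$, using Borel determinacy to ensure that the Wadge hierarchy restricted to Borel classes is well-founded. If $\bG$ is $\{\varnothing\}$ or $\{\omega^\omega\}$, then $\bG\in\Lo$ by definition. Otherwise I would split cases on $\ell(\bG)$. When $\ell(\bG)<\omega_1$, the level-versus-expansion analysis from Section \ref{sectionlevelbasic} should supply a non-selfdual Borel Wadge class $\bG'$ with $\ell(\bG')>\ell(\bG)$ and an ordinal $\xi<\omega_1$ such that $\bG=\bG'^{(\xi)}$; since $\bG'$ has strictly smaller Wadge rank, the inductive hypothesis yields $\bG'\in\Lo$, and hence $\bG\in\Lo$. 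When $\ell(\bG)=\omega_1$ and $\bG$ is non-trivial, a separated-differences representation theorem should express $\bG=\SD_\eta(\bD,\bG')$ with $\bD=\bigcup_{n\in\omega}(\bL_n\cup\bLc_n)$, where $\bG'$ and each $\bL_n$ are non-selfdual Borel Wadge classes of strictly smaller Wadge rank than $\bG$, $\bG'\subseteq\bD$, and $\ell(\bL_n)\geq 1$ for each $n$. The inductive hypothesis then places $\bG'$ and each $\bL_n$ in $\Lo$, giving $\bG=\SD_\eta(\bD,\bG')\in\Lo$.

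The principal obstacle is clearly the two structural decomposition theorems that drive the reverse inclusion: the expansion representation for non-trivial classes of level strictly below $\omega_1$, and the separated-differences representation for non-trivial classes of level $\omega_1$. These are the heart of Louveau's analysis and presumably occupy the bulk of the paper, since each requires a sharp understanding of how the relevant operation interacts with the level function and the Wadge hierarchy. Beyond this, there is the subsidiary but delicate point that each decomposition must produce ingredients of strictly smaller Wadge rank (so that the induction is well-founded) and remain within the Borel realm, both of which should be built into the statements of the decomposition theorems themselves.
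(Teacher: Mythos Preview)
Your case split on the level is inverted, and this is a genuine gap rather than a cosmetic slip. In the paper's argument (see the proof of Theorem~\ref{main}), the three cases are: (i) $\ell(\bG)=\omega_1$ is the \emph{base} case---and in the Borel setting this forces $\bG\in\{\{\varnothing\},\{\omega^\omega\}\}$, since by Corollary~\ref{uncountablelevel} such a $\bG$ is closed under Borel preimages, hence would otherwise be all of $\Borel(\omega^\omega)$ and selfdual; (ii) when $1\leq\ell(\bG)=\xi<\omega_1$, Theorem~\ref{expansiontheorem} gives $\bG=\bL^{(\xi)}$ with $\bL\subsetneq\bG$ (strict by Corollary~\ref{expansionbigger}), and one recurses; (iii) when $\ell(\bG)=0$, Theorem~\ref{sdmain} supplies the separated-differences decomposition. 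You have assigned separated differences to the level-$\omega_1$ case and expansion to the entire range $\ell(\bG)<\omega_1$, and neither assignment can work: by Proposition~\ref{sdarehausdorff} every class of the form $\SD_\eta(\bD,\bG^\ast)$ has level exactly $0$, so no level-$\omega_1$ class is of that form; and when $\ell(\bG)=0$ there is no $\xi\geq 1$ with $\bG=\bG'^{(\xi)}$, since Theorem~\ref{expansiontheorem} would then give $\ell(\bG)\geq\xi\geq 1$.

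Two smaller points. First, in your expansion step you claim $\ell(\bG')>\ell(\bG)$; this is neither asserted nor needed in the paper---the induction runs on Wadge rank, and what is used is $\bG'\subsetneq\bG$, obtained from Corollary~\ref{expansionbigger}. Second, the forward inclusion is not quite ``routine'' from Sections~\ref{sectionexpansionbasic} and~\ref{sectionsdbasic} alone: the paper establishes that expansions and separated differences of non-selfdual Wadge classes are again non-selfdual Wadge classes by passing through Hausdorff operations (Propositions~\ref{expansionsarehausdorff} and~\ref{sdarehausdorff} together with Theorem~\ref{addisontheorem}), rather than by a direct argument.
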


The final fundamental notion for this article is that of a Hausdorff operation. In Section \ref{sectionhausdorffbasic}, given $D\subseteq\PP(\omega)$, we will show how to simultaneously define a function $\HH_D:\PP(Z)^\omega\longrightarrow\PP(Z)$ for every set $Z$. Functions of this form are known as \emph{Hausdorff operations}. The most basic examples of Hausdorff operation are those obtained by combining the usual set-theoretic operations of union, intersection, and complement (see Proposition \ref{hausdorffsettheoretic}). When $Z$ is a space (as opposed to just a set), we will let
$$
\bG_D(Z)=\{\HH_D(U_0,U_1,\ldots):U_0,U_1,\ldots\in\bS^0_1(Z)\}
$$
denote the class in $Z$ associated to $\HH_D$. Under rather mild assumptions on $Z$, using universal sets, it is not hard to show that each $\bG_D(Z)$ is a non-selfdual Wadge class in $Z$ (see Theorem \ref{addisontheorem}).
In fact, in his doctoral thesis, Robert Van Wesep built on work of Addison, Steel, and Radin to obtain the following result (see \cite[Proposition 5.0.3 and Theorem 5.3.1]{vanwesept}).

\begin{theorem}[Van Wesep]\label{vanwesepmain}
Assume that the Axiom of Determinacy holds. Then the following conditions are equivalent:
\begin{itemize}
\item $\bG$ is a non-selfdual Wadge class in $\omega^\omega$,
\item $\bG=\bG_D(\omega^\omega)$ for some $D\subseteq\PP(\omega)$.
\end{itemize}
\end{theorem}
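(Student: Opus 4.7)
The reverse implication, that every $\bG_D(\omega^\omega)$ is a non-selfdual Wadge class, is already recorded as Theorem \ref{addisontheorem} and requires no determinacy assumption, so I would dispose of that direction at once. The substantive content is the forward direction, and the plan is to reduce it to Louveau's Theorem \ref{louveaumain} together with a Hausdorff-preservation argument along the constructive hierarchy. The first thing I would do is upgrade Theorem \ref{louveaumain} from the Borel setting to the full Wadge hierarchy under $\AD$: the same iterative scheme (start from $\{\varnothing\}$ and $\{\omega^\omega\}$, close under expansion $\bG^{(\xi)}$ and separated differences $\SD_\eta(\bD,\bG)$) should generate every non-selfdual Wadge class on $\omega^\omega$, provided the induction is performed on the Wadge rank rather than the Borel rank. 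Granted that, I would prove the theorem by induction along this generation, maintaining as invariant that the class under consideration has the form $\bG_D(\omega^\omega)$.

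The base cases are trivial: taking $D=\varnothing$ yields $\bG_D(\omega^\omega)=\{\varnothing\}$, and taking $D=\PP(\omega)$ yields $\bG_D(\omega^\omega)=\{\omega^\omega\}$. The separated-differences step I expect to be mostly bookkeeping: if $\bG=\bG_D(\omega^\omega)$ and $\bL_n=\bG_{D_n}(\omega^\omega)$ for each $n\in\omega$, then every set of the form $\SD_\eta(\bD,\bG)$ is assembled from a countable collection of pieces, each itself an $\omega$-ary Boolean combination of open sets coming from the $D_n$ and $D$. Since a countable union of countable families is countable, one can merge the entire family of open ``generators'' into a single $\omega$-indexed list and encode the construction of $\SD_\eta(\bD,\bG)$ as a single subset $D''\subseteq\PP(\omega)$, obtaining $\SD_\eta(\bD,\bG)=\bG_{D''}(\omega^\omega)$. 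This uses the basic calculus of Hausdorff operations from Section \ref{sectionhausdorffbasic} and nothing deeper.

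The main obstacle is the expansion step, and I expect most of the work to go there. The problem is that the definition of $\bG_D(Z)$ insists that the generators be \emph{open}, whereas $\bG^{(\xi)}$ is obtained by allowing $\bS^0_{1+\xi}$-generators. The reconciliation requires two ingredients: first, a uniform representation of every $\bS^0_{1+\xi}$ subset of $\omega^\omega$ as $\HH_{E_\xi}(U_0,U_1,\ldots)$ for open $U_n$ and a \emph{fixed} code $E_\xi\subseteq\PP(\omega)$ depending only on $\xi$, which is a classical reading of the Borel hierarchy through Hausdorff operations; and second, a composition lemma for Hausdorff operations stating that $\HH_D$ applied to inputs of the form $\HH_{E_\xi}(\cdots)$ can be rewritten as $\HH_{D\ast E_\xi}$ for a suitable code $D\ast E_\xi$. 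Both ingredients are combinatorial in nature, but the composition must be arranged carefully so that the master input sequence remains indexed by $\omega$; this is where the $\omega$-ary formulation of Hausdorff operations emphasized in the paper pays off.

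Once the base case, the separated-differences closure, and the expansion closure are established, the induction goes through and yields the forward direction. Combined with the reverse implication supplied by Theorem \ref{addisontheorem}, this completes the proof of Van Wesep's theorem. Notice that determinacy enters only to secure the AD-extension of Louveau's theorem; the three closure lemmas are pure manipulations of Hausdorff codes and do not invoke $\AD$ directly.
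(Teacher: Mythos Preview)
Your proposal has a genuine gap at the very first step: the ``upgrade'' of Theorem~\ref{louveaumain} to $\AD$ with the same base clauses does not work. The operations of expansion $\bG^{(\xi)}$ (for $\xi<\omega_1$) and separated difference $\SD_\eta$ both preserve the property of being a Borel class: expansion only uses $\bS^0_{1+\xi}$-measurable preimages, and separated differences combine open sets with classes already in hand. Hence, starting from $\{\varnothing\}$ and $\{\omega^\omega\}$ and iterating, one never leaves the Borel realm. Under $\AD$ there are plenty of non-Borel non-selfdual Wadge classes---$\bS^1_1(\omega^\omega)$ being the obvious example---so your generation scheme cannot reach them. Changing the induction parameter from Borel rank to Wadge rank does not help: the induction would stall precisely at the first class of level $\omega_1$ beyond $\{\varnothing\}$ and $\{\omega^\omega\}$, since such a class is neither an expansion of a strictly smaller class (Corollary~\ref{expansionbigger} fails when the level is $\omega_1$) nor a separated difference (those always have level $0$, by Proposition~\ref{sdarehausdorff}).

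The paper handles this by enlarging the base of the hierarchy: Definition~\ref{louveauhierarchy} takes as initial classes \emph{all} $\bG\in\NSD(Z)$ with $\ell(\bG)=\omega_1$, not just the two trivial ones. The forward direction of Van~Wesep's theorem then requires an independent argument that every such class is already of the form $\bG_D(\omega^\omega)$. This is exactly the content of Section~\ref{sectionstretch}: Radin's stretch operation $A\mapsto A^{\sss}$ and the associated Lemma~\ref{radinlemma} show that any non-selfdual $\bG=[A]$ with $A\equiv A^{\sss}$ lies in $\Ha(\omega^\omega)$, and Lemma~\ref{levelatleast2} establishes $A\equiv A^{\sss}$ whenever $\ell(\bG)\geq 2$. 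This Radin step is the missing idea in your plan; without it the induction cannot get off the ground beyond the Borel classes. Your treatment of the expansion and separated-difference closure steps is otherwise in line with what the paper does (Propositions~\ref{expansionsarehausdorff} and~\ref{sdarehausdorff}).
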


The purpose of this article is to give a self-contained (except for facts from \cite{kechris}) proof of Theorem \ref{main}, which simultaneously generalizes Theorems \ref{louveaumain} and \ref{vanwesepmain}. There are several ways in which Theorem \ref{main} generalizes the above results. First, the ambient space is an arbitrary uncountable zero-dimensional Polish space instead of $\omega^\omega$. Second, unlike Theorem \ref{louveaumain}, it applies to classes beyond the Borel realm. Third, it gives a level-by-level result, in the sense that to obtain the desired result for classes of a given complexity, only the corresponding determinacy assumption will be required.

In our previous paper \cite{carroymedinimuller} (which has a significant overlap with the present one\footnote{\,In particular, Sections \ref{sectionhausdorffbasic}, \ref{sectionhausdorffclasses}, \ref{sectionhausdorffuniversal}, \ref{sectionexpansionbasic} and \ref{sectionexpansionmain} are almost verbatim the same as the corresponding sections in \cite{carroymedinimuller}.}), we generalized results of van Engelen from Borel spaces to arbitrary spaces (assuming the Axiom of Determinacy). We hope and expect that the results proved here will yield similar applications in the future.

We would like to point out that many of our proofs are essentially the same as those from \cite[Section 7.3]{louveaub}. However, as that is an unpublished manuscript, numerous gaps had to be filled. Most notably, \cite{louveaub} lacks any treatment of relativization (see Sections \ref{sectionrelativizationbasic}, \ref{sectionrelativizationuncountable} and \ref{sectionexpansionrelativization}). Furthermore, for the general case, we will employ ideas of Radin that were not needed in the Borel case (see Section \ref{sectionstretch}).

Finally, we remark that Theorem \ref{main} is in a sense more transparent than Theorem \ref{vanwesepmain}, as it specifies more clearly which Hausdorff operations generate the given Wadge classes. This approach is based once again on unpublished results of Louveau, which are however limited to the Borel realm (see \cite[Corollary 7.3.11 and Theorem 7.3.12]{louveaub}).

\section{Preliminaries and notation}\label{sectionpreliminaries}

Given a function $f:Z\longrightarrow W$, $A\subseteq Z$ and $B\subseteq W$, we will use the notation $f[A]=\{f(x):x\in A\}$ and $f^{-1}[B]=\{x\in Z:f(x)\in B\}$.

\begin{definition}[Wadge]
Let $Z$ be a space, and let $A,B\subseteq Z$. We will write $A\leq B$ if there exists a continuous function $f:Z\longrightarrow Z$ such that $A=f^{-1}[B]$.\footnote{\,Wadge-reduction is usually denoted by $\leq_\mathsf{W}$, which allows to distinguish it from other types of reduction (such as Lipschitz-reduction). Since we will not consider any other type of reduction in this article, we decided to simplify the notation.} In this case, we will say that $A$ is \emph{Wadge-reducible} to $B$, and that $f$ \emph{witnesses} the reduction. We will write $A<B$ if $A\leq B$ and $B\not\leq A$. We will write $A\equiv B$ if $A\leq B$ and $B\leq A$.
\end{definition}

\begin{definition}[Wadge]\label{wadgeclassdefinition}
Let $Z$ be a space. Given $A\subseteq Z$, define
$$
A\wc=\{B\subseteq Z:B\leq A\}.\footnote{\,We point out that $A\wc$ is sometimes denoted by $[A]$ (see for example \cite{carroymedinimuller}, \cite{vanengelenpr}, \cite{vanengelent}, \cite{vanengelena}, and \cite{louveaua}). We decided to avoid this notation, as it conflicts with the notation for the \emph{Wadge degree} of $A$, that is $\{B\subseteq Z:B\equiv A\}$.}
$$
Given $\bG\subseteq\PP(Z)$, we will say that $\bG$ is a \emph{Wadge class} if there exists $A\subseteq Z$ such that $\bG=A\wc$, and that $\bG$ is \emph{continuously closed} if $A\wc\subseteq\bG$ for every $A\in\bG$.
\end{definition}

Both of the above definitions depend of course on the space $Z$. Often, for the sake of clarity, we will specify what the ambient space is by saying, for example, that ``$A\leq B$ in $Z$'' or ``$\bG$ is a Wadge class in $Z$''. We will say that $A\subseteq Z$ is \emph{selfdual} if $A\leq Z\setminus A$ in $Z$. It is easy to check that $A$ is selfdual iff $A\wc$ is selfdual.

Our reference for descriptive set theory is \cite{kechris}. In particular, we assume familiarity with the basic theory of Polish spaces, and their Borel and projective subsets. We use the same notation as in \cite[Section 11]{kechris}. For example, given a space $Z$, the collection of all Borel subsets of $Z$ is denoted by $\Borel(Z)$, while $\mathbf{\Sigma}^0_1(Z)$, $\mathbf{\Pi}^0_1(Z)$ and $\mathbf{\Delta}^0_1(Z)$ denote the collections of all open, closed and clopen subsets of $Z$ respectively. Given spaces $Z$ and $W$, we will say that $j:Z\longrightarrow W$ is an \emph{embedding} if $j:Z\longrightarrow j[Z]$ is a homeomorphism. Recall that the classes $\bS^1_n(Z)$ for $1\leq n<\omega$ can be defined for an arbitrary (that is, not necessarily Polish) space $Z$ by declaring $A\in\bS^1_n(Z)$ if there exist a Polish space $W$ and an embedding $j:Z\longrightarrow W$ such that $j[A]=B\cap j[Z]$ for some $B\in\bS^1_n(W)$.\footnote{\,This is the same definition given in \cite[page 315]{kechris}.} Using the methods of the proof of \cite[Proposition 4.2]{medinizdomskyy}, one can show that $A\in\bS^1_n(Z)$ iff for every Polish space $W$ and embedding $j:Z\longrightarrow W$ there exists $B\in\bS^1_n(W)$ such that $j[A]=B\cap j[Z]$. Our reference for other set-theoretic notions is \cite{jech}.

The classes defined below constitute the so-called \emph{difference hierarchy} (or \emph{small Borel sets}). For a detailed treatment, see \cite[Section 22.E]{kechris} or \cite[Chapter 3]{vanengelent}. Here, we will only mention that the classes $\Diff_\eta(\mathbf{\Sigma}^0_\xi(Z))$ are among the simplest concrete examples of Wadge classes (see Propositions \ref{expansiondifferences} and \ref{expansionhausdorffnonselfdual}).

\begin{definition}[Kuratowski]
Let $Z$ be a space, let $1\leq\eta<\omega_1$, and let $1\leq\xi<\omega_1$. Given a sequence of sets $(A_\mu:\mu<\eta)$, define
$$
\left.
\begin{array}{lcl}
& & \Diff_\eta(A_\mu:\mu<\eta)= \left\{
\begin{array}{ll}
\bigcup\{A_\mu\setminus\bigcup_{\zeta<\mu}A_\zeta:\mu<\eta\text{ and }\mu\text{ is odd}\} & \text{if }\eta\text{ is even,}\\
\bigcup\{A_\mu\setminus\bigcup_{\zeta<\mu}A_\zeta:\mu<\eta\text{ and }\mu\text{ is even}\} & \text{if }\eta\text{ is odd.}
\end{array}
\right.
\end{array}
\right.
$$
Define $\Diff_\eta(\mathbf{\Sigma}^0_\xi(Z))$ by declaring $A\in\Diff_\eta(\mathbf{\Sigma}^0_\xi(Z))$ if there exist $A_\mu\in\mathbf{\Sigma}^0_\xi(Z)$ for $\mu<\eta$ such that $A=\Diff_\eta(A_\mu:\mu<\eta)$.\footnote{\,Notice that requiring that $(A_\mu:\mu<\eta)$ is $\subseteq$-increasing would yield an equivalent definition of $\Diff_\eta(\mathbf{\Sigma}^0_\xi(Z))$.}
\end{definition}

The following two lemmas are useful for proving by induction statements regarding the difference hierarchy. Their straightforward proofs are  mostly left to the reader.

\begin{lemma}\label{diffsuccessor}
Let $Z$ be a space, let $1\leq\eta<\omega_1$, and let $1\leq\xi<\omega_1$. Then the following are equivalent:
\begin{itemize}
\item $A\in\Diff_{\eta+1}(\bS^0_\xi(Z))$,
\item There exist $B\in\bS^0_\xi(Z)$ and $C\in\Diff_\eta(\bS^0_\xi(Z))$ such that $C\subseteq B$ and $A=B\setminus C$.
\end{itemize}
\end{lemma}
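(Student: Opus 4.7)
The plan is to verify both implications by choosing $\subseteq$-increasing representations (which is legitimate by the footnote following the definition above) and then performing the parity bookkeeping. The key observation is that $\eta$ and $\eta+1$ always have opposite parity, which forces the ``top layer'' of an increasing sequence of length $\eta+1$ onto one side of the decomposition rather than the other.

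For the forward direction, I would write $A=\Diff_{\eta+1}(A_\mu:\mu<\eta+1)$ with $(A_\mu)$ a $\subseteq$-increasing sequence in $\bS^0_\xi(Z)$, and set $B=A_\eta\in\bS^0_\xi(Z)$ and $C=B\setminus A$. The ``layers'' $L_\mu:=A_\mu\setminus\bigcup_{\zeta<\mu}A_\zeta$ for $\mu\leq\eta$ partition $B$, and $A$ is by definition the disjoint union of those $L_\mu$ whose index has parity opposite to $\eta+1$, i.e., the same parity as $\eta$; in particular $L_\eta\subseteq A$. Therefore $C$ is the union of the $L_\mu$ with $\mu<\eta$ of parity opposite to $\eta$, which is precisely $\Diff_\eta(A_\mu:\mu<\eta)$, so $C\in\Diff_\eta(\bS^0_\xi(Z))$ and trivially $C\subseteq B$ and $A=B\setminus C$.

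For the backward direction, I would write $C=\Diff_\eta(C_\mu:\mu<\eta)$ with $(C_\mu)$ $\subseteq$-increasing in $\bS^0_\xi(Z)$. Replacing each $C_\mu$ by $C_\mu\cap B\in\bS^0_\xi(Z)$ (using closure of $\bS^0_\xi(Z)$ under finite intersections, together with the elementary identity $(X\setminus Y)\cap B=(X\cap B)\setminus(Y\cap B)$ applied layer-by-layer, so that intersecting with $B$ commutes with $\Diff_\eta$), I may assume $C_\mu\subseteq B$ for every $\mu<\eta$. Then defining $A_\mu=C_\mu$ for $\mu<\eta$ and $A_\eta=B$ produces a $\subseteq$-increasing sequence of length $\eta+1$ in $\bS^0_\xi(Z)$, and the parity computation inverse to that of the forward direction yields $\Diff_{\eta+1}(A_\mu:\mu<\eta+1)=B\setminus C=A$. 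The only subtlety in the whole argument is the parity bookkeeping together with the small verification that intersection with $B$ commutes with $\Diff_\eta$; there are no topological obstacles and the reasoning is essentially combinatorial on ordinals below $\eta+1$.
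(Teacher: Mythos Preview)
Your argument is correct. Both directions are handled cleanly: the forward direction correctly identifies $B=A_\eta$ and $C=B\setminus A$, and the parity analysis showing $C=\Diff_\eta(A_\mu:\mu<\eta)$ is sound because the layers $L_\mu$ for $\mu\leq\eta$ do partition $A_\eta$ when the sequence is $\subseteq$-increasing; the backward direction correctly uses that $\Diff_\eta(C_\mu\cap B:\mu<\eta)=B\cap\Diff_\eta(C_\mu:\mu<\eta)=C$, and the extension by $A_\eta=B$ recovers $A$.

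The paper's proof consists of the single hint ``Proceed by induction on $\eta$,'' whereas you give a direct argument that works uniformly for all $\eta$ at once via the explicit parity description of $\Diff_\eta$. For this elementary lemma the difference is cosmetic: an inductive proof would unwind to exactly the same layer-by-layer bookkeeping you carry out, and your direct version has the mild advantage of treating successor and limit $\eta$ on the same footing without a separate case split.
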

\begin{proof}
Proceed by induction on $\eta$.	
\end{proof}

\begin{lemma}\label{difflimit}
Let $Z$ be a space, let $1<\xi<\omega_1$, and let $\eta<\omega_1$ be a limit ordinal. Then the following are equivalent:
\begin{itemize}
\item $A\in\Diff_\eta(\bS^0_\xi(Z))$,
\item There exist $A_n\in\bigcup_{\eta'<\eta}\Diff_{\eta'}(\bS^0_\xi(Z))$ and pairwise disjoint $V_n\in\bS^0_\xi(Z)$ for $n\in\omega$ such that $A=\bigcup_{n\in\omega}A_n\cap V_n$.
\end{itemize}
Furthermore, if $Z$ is zero-dimensional then the result holds for $\xi=1$ as well.
\end{lemma}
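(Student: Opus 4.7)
The plan is to split the proof into the two implications. Only the forward direction requires a hypothesis on $Z$ and $\xi$, and the relevant tool is the reduction property for $\bS^0_\xi(Z)$: every countable family in $\bS^0_\xi(Z)$ admits a pairwise disjoint refinement with the same union. On separable metrizable $Z$ this holds for $\xi\geq 2$ (by lifting sets to the Polish completion, applying the classical reduction theorem there, and intersecting with $Z$) and for $\xi=1$ precisely when $Z$ is zero-dimensional (using clopen refinements of open covers), which accounts for the two clauses in the statement.

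For the forward direction, let $A=\Diff_\eta(A_\mu:\mu<\eta)$ with $A_\mu\in\bS^0_\xi(Z)$; by the footnote to the definition we may assume $(A_\mu)$ is $\subseteq$-increasing, so under the convention that limit ordinals are even, $x\in A$ iff the minimum $\mu$ with $x\in A_\mu$ exists and is odd. Fix a strictly increasing sequence of limit ordinals $\eta_n\nearrow\eta$ with $\eta_0=0$, and set $B_n=\bigcup_{\mu<\eta_{n+1}}A_\mu\in\bS^0_\xi(Z)$. Apply reduction to $(B_n)_n$ to produce pairwise disjoint $V_n\in\bS^0_\xi(Z)$ with $V_n\subseteq B_n$ and $\bigcup_n V_n=\bigcup_n B_n=\bigcup_{\mu<\eta}A_\mu\supseteq A$. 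Let $A_n=\Diff_{\eta_{n+1}}(A_\mu:\mu<\eta_{n+1})\in\Diff_{\eta_{n+1}}(\bS^0_\xi(Z))$; since $\eta_{n+1}$ is also a limit, the same parity rule applies, and for $x\in V_n\subseteq B_n$ the minimum $\mu$ with $x\in A_\mu$ lies already below $\eta_{n+1}$, so $A\cap V_n=A_n\cap V_n$. Combining over $n$ and using $A\subseteq\bigcup_n V_n$ yields $A=\bigcup_n A_n\cap V_n$.

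For the backward direction, suppose $A=\bigcup_n A_n\cap V_n$ with $V_n\in\bS^0_\xi(Z)$ pairwise disjoint and $A_n\in\Diff_{\beta_n}(\bS^0_\xi(Z))$ for some $\beta_n<\eta$. Via the standard shift $A^n_0{=}\varnothing$, $A^n_{\mu+1}{=}$ (old $A^n_\mu$), one shows $\Diff_\alpha(\bS^0_\xi(Z))\subseteq\Diff_{\alpha+1}(\bS^0_\xi(Z))$, so we may assume each $\beta_n$ is even. Writing $A_n=\Diff_{\beta_n}(A^n_\mu:\mu<\beta_n)$ with $(A^n_\mu)_\mu$ increasing, define for each $\mu<\eta$
$$
B_\mu=\bigcup\{A^n_\mu\cap V_n : n\in\omega,\ \mu<\beta_n\},
$$
which lies in $\bS^0_\xi(Z)$ as a countable union of finite intersections of $\bS^0_\xi$ sets. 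Pairwise disjointness of the $V_n$ forces, for $x\in V_k$, the minimum $\mu$ with $x\in B_\mu$ to coincide with the minimum $\mu<\beta_k$ with $x\in A^k_\mu$; since $\eta$ and every $\beta_k$ are even, the parity rule gives $\Diff_\eta(B_\mu:\mu<\eta)\cap V_k=A_k\cap V_k=A\cap V_k$, while any $x\notin\bigcup_n V_n$ belongs to no $B_\mu$ and to no part of $A$. Thus $A=\Diff_\eta(B_\mu:\mu<\eta)\in\Diff_\eta(\bS^0_\xi(Z))$.

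The main obstacle, and the only source of the dichotomy on $\xi$, is securing the reduction property in the stated generality; everything else is routine bookkeeping, organized around the rank function $\rho(x)=\min\{\mu:x\in A_\mu\}$ whose parity encodes membership in $\Diff_\eta$.
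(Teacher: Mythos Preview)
Your approach is essentially the paper's own: the paper simply cites \cite[Theorem~22.16]{kechris}, which is the generalized reduction property for $\bS^0_\xi$, and you have written out the argument in full, including the correct observation that reduction transfers from the Polish completion to an arbitrary separable metrizable $Z$ when $\xi\geq 2$, and follows from clopen refinements when $\xi=1$ and $Z$ is zero-dimensional.

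There is one small slip in the forward direction. You ask for a strictly increasing sequence of \emph{limit} ordinals $\eta_n\nearrow\eta$ with $\eta_0=0$, but for $\eta=\omega$ no such sequence exists (there are no limit ordinals strictly between $0$ and $\omega$). What you actually need, and what your parity argument uses, is only that each $\eta_{n+1}$ be \emph{even}; taking any cofinal sequence of even ordinals (e.g.\ $\eta_n=2n$ when $\eta=\omega$) repairs this immediately. With that adjustment the proof is correct.
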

\begin{proof}
Use \cite[Theorem 22.16]{kechris}.
\end{proof}

For an introduction to the topic of games, we refer the reader to \cite[Section 20]{kechris}. Here, we only want to give the precise definition of determinacy. Given a set $A$, a \emph{play} of the game $\Ga(A,X)$ is described by the diagram
\begin{center}
\begin{tabular}{c|ccccl}
I & $a_0$ & & $a_2$ & & $\cdots$\\\hline
II & & $a_1$ & & $a_3$ & $\cdots$,
\end{tabular}
\end{center}
in which $a_n\in A$ for every $n\in\omega$ and $X\subseteq A^\omega$ is called the \emph{payoff set}. We will say that Player I \emph{wins} this play of the game $\Ga(A,X)$ if $(a_0,a_1,\ldots)\in X$. Player II \emph{wins} if Player I does not win.

A \emph{strategy} for a player is a function $\sigma:A^{<\omega}\longrightarrow A$. We will say that $\sigma$ is a \emph{winning strategy} for Player I if setting $a_{2n}=\sigma(a_1,a_3,\ldots,a_{2n-1})$ for each $n$ makes Player I win for every $(a_1,a_3,\ldots)\in A^\omega$. A winning strategy for Player II is defined similarly. We will say that the game $\Ga(A,X)$ (or simply the set $X$) is \emph{determined} if (exactly) one of the players has a winning strategy. In this article, we will exclusively deal with the case $A=\omega$. Given $\bS\subseteq\PP(\omega^\omega)$, we will write $\Det(\bS)$ to mean that every element of $\bS$ is determined. The assumption $\Det(\PP(\omega^\omega))$ is known as the \emph{Axiom of Determinacy} (briefly, $\AD$).\footnote{\,Quite amusingly, Van Wesep referred to $\AD$ as a ``frankly heretical postulate'' (see \cite[page 64]{vanwesept}), while Steel deemed it ``probably false'' (see \cite[page 63]{steel}).} The assumption $\Det(\bigcup_{1\leq n<\omega}\bS^1_n(\omega^\omega))$ is known as the axiom of \emph{Projective Determinacy}.

It is well-known that $\AD$ is incompatible with the Axiom of Choice (see \cite[Lemma 33.1]{jech}). This is the reason why, throughout this article, we will be working in $\ZF+\DC$. Recall that the principle of \emph{Dependent Choices} (briefly, $\DC$) states that if $R$ is a binary relation on a non-empty set $A$ such that for every $a\in A$ there exists $b\in A$ such that $(b,a)\in R$, then there exists a sequence $(a_0, a_1,\ldots)\in A^\omega$ such that $(a_{n+1},a_n)\in R$ for every $n\in\omega$. This principle is what is needed to carry out recursive constructions of length $\omega$. Another consequence (in fact, an equivalent formulation) of $\DC$ is that a relation $R$ on a set $A$ is well-founded iff there exists no sequence $(a_0, a_1,\ldots)\in A^\omega$ such that $(a_{n+1},a_n)\in R$ for every $n\in\omega$ (see \cite[Lemma 5.5.ii]{jech}). Furthermore, $\DC$ implies the Countable Axiom of Choice (see \cite[Exercise 5.7]{jech}). To the reader who is unsettled by the lack of the full Axiom of Choice, we recommend \cite{howardrubin}.

It is a theorem of Martin that $\Det(\Borel(\omega^\omega))$ holds in $\ZF+\DC$ (this was originally proved in \cite{martina}, but see also \cite[Remark (2) on page 307]{martinb}). On the other hand, Harrington showed that $\Det(\bS^1_1(\omega^\omega))$ has large cardinal strength (see \cite{harrington}). For the consistency of $\ZF+\DC+\AD$, see \cite{neeman} and \cite[Proposition 11.13]{kanamori}.

We conclude this section with more notation and well-known definitions, for the sake of clarity. A \emph{partition} of a set $Z$ is a collection $\VV\subseteq\PP(Z)$ consisting of pairwise disjoint non-empty sets such that $\bigcup\VV=Z$. We will denote by $\id_Z:Z\longrightarrow Z$ the identity function on a set $Z$. Given a set $A$, we will denote by $A^{<\omega}$ the collection of all functions $s:n\longrightarrow A$, where $n\in\omega$. Given $s\in A^{<\omega}$, we will use the notation $\Ne_s=\{z\in A^\omega:s\subseteq z\}$.\footnote{\,In all our applications, we will have $A=2$ or $A=\omega$.} Given a set $Z$ and $\bS\subseteq\PP(Z)$, we will denote by $\bool\bS$ the smallest subset of $\PP(Z)$ that contains $\bS$ and is closed under complements and finite intersections.

A subset of a space is \emph{clopen} if it is closed and open. A \emph{base} for a space $Z$ is a collection $\UU\subseteq\bS^0_1(Z)$ consisting of non-empty sets such that for every $x\in Z$ and every $U\in\bS^0_1(Z)$ containing $x$ there exists $V\in\UU$ such that $x\in V\subseteq U$. A space is \emph{zero-dimensional} if it is non-empty\footnote{\,The empty space has dimension $-1$ (see \cite[Section 7.1]{engelking}).} and it has a base consisting of clopen sets. A space $Z$ is a \emph{Borel space} if there exists a Polish space $W$ and an embedding $j:Z\longrightarrow W$ such that $j[Z]\in\Borel(W)$. By proceeding as in the proof of \cite[Proposition 4.2]{medinizdomskyy}, it is easy to show that a space $Z$ is Borel iff $j[Z]\in\Borel(W)$ for every Polish space $W$ and every embedding $j:Z\longrightarrow W$. For example, by \cite[Theorem 3.11]{kechris}, every Polish space is a Borel space.

Given $1\leq\xi<\omega_1$ and spaces $Z$ and $W$, a function $f:Z\longrightarrow W$ is \emph{$\bS^0_\xi$-measurable} if $f^{-1}[U]\in\bS^0_\xi(Z)$ for every $U\in\bS^0_1(W)$. A function $f:Z\longrightarrow W$ is \emph{Borel} if $f^{-1}[U]\in\Borel(Z)$ for every $U\in\bS^0_1(W)$. Using the existence of a countable base, it is easy to see that a function is Borel iff it is $\bS^0_{1+\xi}$-measurable for some $\xi<\omega_1$.

\section{Nice topological pointclasses}\label{sectionpointclasses}

In this section we will consider the natural concept of a topological pointclass, and then define a strengthening of it that will be convenient for technical reasons (without resulting in any loss of generality for our intended applications). It is in terms of these classes that our determinacy assumptions will be stated. In fact, the typical result in this article will begin by assuming $\Det(\bS(\omega^\omega))$, where $\bS$ is a suitable topological pointclass.

Notice that the term ``function'' in the following definition is an abuse of terminology, as each topological pointclass is a proper class. Therefore, every theorem in this paper that mentions these pointclasses is strictly speaking an infinite scheme (one theorem for each suitable topological pointclass). In fact, as we will make clear in the remainder of this section, topological pointclasses are simply a convenient expositional tool that will allow us to simultaneously state the Borel, Projective, and full-Determinacy versions of our results.

\begin{definition}
We will say that a function $\bS$ is a \emph{topological pointclass} if it satisfies the following requirements:
\begin{itemize}
\item The domain of $\bS$ is the class of all spaces,\footnote{\,Recall from Section \ref{sectionintroduction} that we are only considering separable metrizable spaces.}
\item $\bS(Z)\subseteq\PP(Z)$ for every space $Z$,
\item If $f:Z\longrightarrow W$ is a continuous function and $B\in\bS(W)$ then $f^{-1}[B]\in\bS(Z)$.
\end{itemize}
Furthermore, we will say that a topological pointclass $\bS$ is \emph{nice} if it satisfies the following additional properties:
\begin{enumerate}
\item\label{niceboolean} $\bool\bS(Z)=\bS(Z)$ for every space $Z$,
\item\label{niceborel} $\Borel(Z)\subseteq\bS(Z)$ for every space $Z$,
\item\label{nicepreimages} If $f:Z\longrightarrow W$ is a Borel function and $B\in\bS(W)$ then $f^{-1}[B]\in\bS(Z)$,
\item\label{niceabsoluteness} For every space $Z$, if $j[Z]\in\bS(W)$ for some Borel space $W$ and embedding $j:Z\longrightarrow W$, then $j[Z]\in\bS(W)$ for every Borel space $W$ and embedding $j:Z\longrightarrow W$.
\end{enumerate}
\end{definition}

Condition $(\ref{niceboolean})$ is mostly due to the complexity of the payoff set in the proof of Lemma \ref{strongwadgelemma}, but it also ensures other useful closure properties, especially in conjunction with condition $(\ref{niceborel})$. Condition $(\ref{nicepreimages})$ ensures that $\bS$ is suitably closed under expansions, in the terminology of Definition \ref{definitionexpansion}. Furthermore, as in the proof of the implication $(\ref{uncountablelevelpreimages})\rightarrow (\ref{uncountablelevelomega_1})$ of Corollary \ref{uncountablelevel}, it is easy to see that condition $(\ref{nicepreimages})$ implies the following:
\begin{itemize}
\item[($3'$)] For every space $Z$ and $A\in\bS(Z)$, if $V_n\in\bD^0_2(Z)$ and $A_n\leq A$ for $n\in\omega$, and the $V_n$ are pairwise disjoint, then $\bigcup_{n\in\omega}(A_n\cap V_n)\in\bS(Z)$.
\end{itemize}
Condition $(3')$ will be tacitly used in Section \ref{sectionsdmain} (see Claims 5 and 10 in the proof of Theorem \ref{sdmain}, and Lemma \ref{l0impliessomewheresmaller}), as it ensures that $\bS$ is suitably closed under $\PU_1$, in the notation of Definition \ref{pudefinition}. 

Condition $(\ref{niceabsoluteness})$ encapsulates the appropriate degree of ``topological absoluteness'' for spaces of complexity $\bS$, and it will be used exclusively in the proof of Lemma \ref{relativizationprelim}. We remark that our focus on Borel spaces is due to the fact that we will need a certain portion of the machinery of relativization to work for these spaces (see Section \ref{sectionrelativizationbasic}, in particular Footnote 12).

For the purposes of this paper, the following are the intended examples of nice topological pointclasses (this can be verified using \cite[Exercise 37.3]{kechris} and the methods of \cite[Section 4]{medinizdomskyy}):
\begin{itemize}
 \item[(A)] $\bS(Z)=\Borel(Z)$ for every space $Z$,
 \item[(B)] $\bS(Z)=\bool\bS^1_n(Z)$ for every space $Z$, where $1\leq n<\omega$,
 \item[(C)] $\bS(Z)=\bigcup_{1\leq n<\omega}\bS^1_n(Z)$ for every space $Z$,
 \item[(D)] $\bS(Z)=\PP(Z)$ for every space $Z$.
\end{itemize}
Regarding example (B), we remark that $\Det(\bool\bS^1_n(\omega^\omega))$ is equivalent to $\Det(\bS^1_n(\omega^\omega))$ whenever $1\leq n<\omega$ (this easily follows from \cite[Corollary 4.1]{mullerschindlerwoodin}).

We conclude with two well-known results, which clarify the relationship between determinacy assumptions and the Baire property in Polish spaces. Given $\bS\subseteq\PP(\omega^\omega)$, we will write $\BP(\bS)$ to mean that every element of $\bS$ has the Baire property in $\omega^\omega$.

\begin{theorem}\label{dettobp}
Let $\bS$ be a nice topological pointclass. If $\Det(\bS(\omega^\omega))$ holds then $\BP(\bS(\omega^\omega))$ holds.
\end{theorem}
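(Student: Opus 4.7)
The plan is to apply the classical Banach--Mazur unfolding of the Baire property. Given $A\in\bS(\omega^\omega)$, I would consider the Banach--Mazur game $G^{\ast\ast}(A)$ in which Player I and Player II alternately play strictly extending finite sequences $s_0\subsetneq s_1\subsetneq\cdots$ in $\omega^{<\omega}$, with Player I winning iff the unique element of $\bigcap_n\Ne_{s_n}$ lies in $A$. First, I would encode this game as a game $\Ga(\omega,A^\ast)$ on $\omega$ via a fixed bijection $\omega\leftrightarrow\omega^{<\omega}$. The resulting payoff $A^\ast$ is a Boolean combination of the Borel ``follows-the-rules'' conditions for the two players together with the set $\tilde\pi^{-1}[A]$, where $\tilde\pi\colon\omega^\omega\to\omega^\omega$ is the continuous ``read-off-the-limit'' map (extended by any continuous convention in case of an illegal play). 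The niceness of $\bS$ --- namely $\Borel\subseteq\bS$ and closure under continuous preimages --- then places $A^\ast$ inside $\bool\bS(\omega^\omega)$, so $A^\ast$ is determined by hypothesis.

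Next I would recall the classical dictionary for Banach--Mazur games: Player II has a winning strategy in $G^{\ast\ast}(A)$ iff $A$ is meager, while Player I has a winning strategy iff $A$ is comeager in some non-empty $\Ne_s$. Both equivalences are standard Baire-category computations: a winning strategy for Player II can be fused along all possible moves of Player I to produce a countable cover of $A$ by nowhere dense sets, and conversely a comeager $G_\delta$ inside some $\Ne_s$ guides Player I through a fixed family of dense open sets.

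To finish, set $U^\ast=\bigcup\{\Ne_s : A\text{ is comeager in }\Ne_s\}$. Second countability of $\omega^\omega$ lets me rewrite $U^\ast\setminus A$ as a countable union of meager sets, hence as a meager set. It remains to show that $A\setminus U^\ast$ is meager. Since $U^\ast$ is open, $A\setminus U^\ast\in\bool\bS(\omega^\omega)$, so the coded Banach--Mazur game for $A\setminus U^\ast$ is again determined by hypothesis. If $A\setminus U^\ast$ were non-meager, Player II would have no winning strategy, hence Player I would, providing a non-empty $\Ne_s$ in which $A\setminus U^\ast$, and \emph{a fortiori} $A$, is comeager. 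Then $\Ne_s\subseteq U^\ast$ by the very definition of $U^\ast$, contradicting the non-emptiness of $(A\setminus U^\ast)\cap\Ne_s$. Therefore $A\triangle U^\ast$ is meager, which means precisely that $A$ has the Baire property.

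The main obstacle I anticipate is cleanly identifying the payoff of the coded Banach--Mazur game as an element of $\bool\bS(\omega^\omega)$; once this bookkeeping is settled, everything else is a classical Baire-category argument. Determinacy is invoked exactly twice --- once for $A$ itself and once for $A\setminus U^\ast$ --- and only the three conditions in the definition of a nice topological pointclass are used.
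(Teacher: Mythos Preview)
Your proposal is correct and is precisely the approach the paper intends: the paper's proof consists of the single sentence ``Use the methods of \cite[Section 8.H]{kechris}'', and Section~8.H of Kechris is exactly the Banach--Mazur argument you outline (Theorems~8.33 and~8.35 there). Your bookkeeping for why the coded payoff lies in $\bool\bS(\omega^\omega)$ is the only detail not spelled out in Kechris, and your use of niceness (continuous preimages and $\Borel\subseteq\bS$) handles it correctly.
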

\begin{proof}
Use the methods of \cite[Section 8.H]{kechris}.
\end{proof}

\begin{proposition}\label{bpbairetoall}
Let $\bS$ be a topological pointclass, and assume that $\BP(\bS(\omega^\omega))$ holds. Let $Z$ be a Polish space, and let $A\in\bS(Z)$. Then $A$ has the Baire property in $Z$.
\end{proposition}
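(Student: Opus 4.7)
The strategy is to reduce to the assumed Baire property in $\omega^\omega$ by means of a suitable continuous surjection. The case $Z=\varnothing$ is trivial, so assume $Z$ is nonempty. I would begin by invoking the classical fact that every nonempty Polish space is a continuous open image of $\omega^\omega$ (a standard result from \cite[Section 7]{kechris}), thus fixing a continuous open surjection $\pi:\omega^\omega\longrightarrow Z$. Since $\bS$ is a topological pointclass, $\pi^{-1}[A]\in\bS(\omega^\omega)$; hence, by the hypothesis $\BP(\bS(\omega^\omega))$, the set $\pi^{-1}[A]$ has the Baire property in $\omega^\omega$.

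It then remains to push the Baire property of $\pi^{-1}[A]$ back down to $A$. For this I would appeal to the following transfer lemma: whenever $\pi:X\longrightarrow Y$ is a continuous open surjection between Polish spaces and $B\subseteq Y$, the set $B$ has the Baire property in $Y$ if and only if $\pi^{-1}[B]$ has the Baire property in $X$. The easy direction is the ``only if'': a decomposition $B=U\triangle M$ with $U$ open and $M$ meager in $Y$ pulls back to $\pi^{-1}[B]=\pi^{-1}[U]\triangle\pi^{-1}[M]$; here $\pi^{-1}[U]$ is open by continuity of $\pi$, and $\pi^{-1}[M]$ is meager because, when $\pi$ is continuous and open, the identity $\pi^{-1}[\overline{N}]=\overline{\pi^{-1}[N]}$ holds for every $N\subseteq Y$, whence preimages of nowhere dense sets are nowhere dense.

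The main obstacle is the reverse direction of this transfer lemma. Given a decomposition $\pi^{-1}[A]=U\triangle N$ with $U\subseteq\omega^\omega$ open and $N\subseteq\omega^\omega$ meager, one must produce an open $V\subseteq Z$ such that $A\triangle V$ is meager in $Z$. Using a countable base $\{W_n:n\in\omega\}$ of $Z$, a natural candidate is
$$
V=\bigcup\{W_n:\pi^{-1}[W_n]\setminus U\text{ is meager in }\omega^\omega\}.
$$
Verifying that $A\triangle V$ is meager is a careful Baire category argument, most cleanly executed via the Banach-Mazur game along the lines of \cite[Section 8.H]{kechris}. Applying the transfer lemma to $B=A$ then yields the proposition.
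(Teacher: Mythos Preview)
Your proposal is correct and follows essentially the same approach as the paper: both reduce to $\BP(\bS(\omega^\omega))$ via a continuous open surjection $\pi:\omega^\omega\longrightarrow Z$ (the paper cites \cite[Exercise 7.14]{kechris}). The paper's proof is in fact even terser than yours---it simply says ``use the fact that there exists an open continuous surjection'' and leaves the transfer lemma entirely implicit---so your sketch of the lemma (including the identity $\pi^{-1}[\overline{N}]=\overline{\pi^{-1}[N]}$ for the easy direction and the Banach--Mazur argument for the hard one) goes beyond what the paper records.
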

\begin{proof}
Use the fact that, if $Z$ is non-empty, then there exists an open continuous surjection $f:\omega^\omega\longrightarrow Z$ (see \cite[Exercise 7.14]{kechris}).
\end{proof}

\section{The basics of Wadge theory}\label{sectionwadgebasic}

We begin by introducing a special notation for the collection of all non-selfdual Wadge classes in a given space. Throughout the paper, starting from the discussion at the end of this section and culminating with Theorem \ref{main}, it will become increasingly clear that these are the most important Wadge classes.

\begin{definition}
Given a space $Z$, define 
$$
\NSD(Z)=\{\bG:\bG\textrm{ is a non-selfdual Wadge class in }Z\}.
$$
Also set $\NSDS(Z)=\{\bG\in\NSD(Z):\bG\subseteq\bS(Z)\}$ whenever $\bS$ is a topological pointclass.
\end{definition}

The following simple lemma will allow us to generalize many Wadge-theoretic results from $\omega^\omega$ to an arbitrary zero-dimensional Polish space. This approach has already appeared in \cite[Section 5]{andretta}, where it is credited to Marcone. Recall that, given a space $Z$ and $W\subseteq Z$, a \emph{retraction} is a continuous function $\rho:Z\longrightarrow W$ such that $\rho\re W=\id_W$. By \cite[Theorem 7.8]{kechris}, every zero-dimensional Polish space is homeomorphic to a closed subspace $Z$ of $\omega^\omega$, and by \cite[Proposition 2.8]{kechris} there exists a retraction $\rho:\omega^\omega\longrightarrow Z$.

\begin{lemma}\label{bairetoall}
Let $Z\subseteq\omega^\omega$, and let $\rho:\omega^\omega\longrightarrow Z$ be a retraction. Fix $A,B\subseteq Z$. Then $A\leq B$ in $Z$ iff $\rho^{-1}[A]\leq\rho^{-1}[B]$ in $\omega^\omega$. 
\end{lemma}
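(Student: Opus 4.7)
The plan is to prove both directions by an explicit construction using composition with the retraction $\rho$, exploiting the defining property that $\rho\re Z=\id_Z$.

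For the forward direction, suppose $f:Z\longrightarrow Z$ is continuous with $A=f^{-1}[B]$. I would set $g=f\circ\rho:\omega^\omega\longrightarrow\omega^\omega$, which is continuous as a composition of continuous functions (using $Z\subseteq\omega^\omega$). To check that $g$ witnesses $\rho^{-1}[A]\leq\rho^{-1}[B]$, one computes
$$
g^{-1}[\rho^{-1}[B]]=\{x\in\omega^\omega:\rho(f(\rho(x)))\in B\}.
$$
Since $f(\rho(x))\in Z$ and $\rho\re Z=\id_Z$, this reduces to $\{x:f(\rho(x))\in B\}=\{x:\rho(x)\in f^{-1}[B]\}=\rho^{-1}[A]$.

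For the backward direction, suppose $g:\omega^\omega\longrightarrow\omega^\omega$ is continuous with $\rho^{-1}[A]=g^{-1}[\rho^{-1}[B]]$. I would set $f=\rho\circ g\re Z:Z\longrightarrow Z$, which is again continuous. Then
$$
f^{-1}[B]=\{z\in Z:\rho(g(z))\in B\}=\{z\in Z:g(z)\in\rho^{-1}[B]\}=Z\cap g^{-1}[\rho^{-1}[B]]=Z\cap\rho^{-1}[A].
$$
Using $\rho\re Z=\id_Z$ and $A\subseteq Z$, the right-hand side equals $A$, as required.

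There is no real obstacle here; the entire proof is a bookkeeping exercise once one notes that $\rho\re Z=\id_Z$ makes the compositions $f\circ\rho$ and $\rho\circ g\re Z$ collapse correctly on the relevant sets. The only point worth stating explicitly is that $Z\subseteq\omega^\omega$ is needed to interpret $f\circ\rho$ as a map into $\omega^\omega$, and that $\rho^{-1}[A]\cap Z=A$ (because $\rho$ fixes $Z$ pointwise and $A\subseteq Z$).
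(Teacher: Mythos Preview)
Your proof is correct and follows exactly the same approach as the paper: for the forward direction you use $f\circ\rho$, and for the backward direction you use $\rho\circ(g\re Z)$, just as the paper does. Your version simply spells out the verifications that the paper leaves to the reader.
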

\begin{proof}
If $f:Z\longrightarrow Z$ witnesses that $A\leq B$ in $Z$, then $f\circ\rho:\omega^\omega\longrightarrow\omega^\omega$ will witness that $\rho^{-1}[A]\leq\rho^{-1}[B]$ in $\omega^\omega$. On the other hand, if $f:\omega^\omega\longrightarrow\omega^\omega$ witnesses that $\rho^{-1}[A]\leq\rho^{-1}[B]$ in $\omega^\omega$, then $\rho\circ(f\re Z):Z\longrightarrow Z$ will witness that $A\leq B$ in $Z$.
\end{proof}

The most fundamental result of Wadge theory is Lemma \ref{wadgelemma} (commonly known as ``Wadge's Lemma''). Among other things, it shows that antichains with respect to $\leq$ have size at most $2$. However, instead of proving it directly, we will deduce it from the following lemma, which is essentially due to Louveau and Saint-Raymond (see \cite[Theorem 4.1.b]{louveausaintraymondc}).
Lemma \ref{strongwadgelemma} will also be a crucial tool in Section \ref{sectionrelativizationbasic}.

The following ``Extended Wadge game'' was also introduced by Louveau and Saint-Raymond (see \cite[Section 3]{louveausaintraymondc}), and it will be used in the proof of Lemma \ref{strongwadgelemma}. Given $D,A_0,A_1\subseteq\omega^\omega$, consider the game $\EW(D,A_0,A_1)$ described by the following diagram
\begin{center}
\begin{tabular}{c|ccccl}
I & $x_0$ & & $x_1$ & & $\cdots$\\\hline
II & & $y_0$ & & $y_1$ & $\cdots$,
\end{tabular}
\end{center}
where $x=(x_0,x_1,\ldots)\in\omega^\omega$, $y=(y_0,y_1,\ldots)\in\omega^\omega$, and Player II wins if one of the following conditions is verified:
\begin{itemize}
\item $x\in D$ and $y\in A_0$,
\item $x\notin D$ and $y\in A_1$.
\end{itemize}

\begin{lemma}\label{strongwadgelemma}
Let $\bS$ be a topological pointclass, and assume that $\Det(\bool\bS(\omega^\omega))$ holds. Let $\bG\subseteq\bool\bS(\omega^\omega)$ be continuously closed, and let  $A_0,A_1\in\bool\bS(\omega^\omega)$ be such that $A_0\cap A_1=\varnothing$. Then, one of the following conditions holds:
\begin{enumerate}
\item\label{strongwadgeexists} There exists $C\in\bG$ such that $A_0\subseteq C$ and $C\cap A_1=\varnothing$,
\item\label{strongwadgeforall} For all $D\in\bGc$ there exists a continuous $f:\omega^\omega\longrightarrow A_0\cup A_1$ such that $f^{-1}[A_0]=D$.
\end{enumerate}
\end{lemma}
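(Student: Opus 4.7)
My plan is to prove the dichotomy by analyzing, for each $D\in\bGc$, who wins the extended Wadge game $\EW(D,A_0,A_1)$ described just above the lemma. The first step is to verify that the determinacy hypothesis applies. Under the standard interleaving homeomorphism, letting $p_0,p_1:\omega^\omega\to\omega^\omega$ extract the even and odd coordinates of a play, Player II's payoff set is
\[
X=\bigl(p_0^{-1}[D]\cap p_1^{-1}[A_0]\bigr)\cup\bigl((\omega^\omega\setminus p_0^{-1}[D])\cap p_1^{-1}[A_1]\bigr).
\]
Since $\bG$ is continuously closed, $\bool\bG$ is closed under continuous preimages as well as Boolean combinations, and since $D\in\bGc\subseteq\bool\bG$ and $A_0,A_1\in\bool\bG$, we get $X\in\bool\bG$. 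Hence $\EW(D,A_0,A_1)$ is determined.

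Next I would extract a separator from a winning strategy for Player I. Suppose $\sigma$ wins for I; the usual definition produces a continuous map $g:\omega^\omega\to\omega^\omega$ where $g(y)$ is I's response sequence against II playing $y$. That $\sigma$ is winning means the payoff condition fails for every $(g(y),y)$, which unpacks to: $y\in A_0\Rightarrow g(y)\notin D$ and $y\in A_1\Rightarrow g(y)\in D$. Setting $C=g^{-1}[\omega^\omega\setminus D]$, we immediately get $A_0\subseteq C$ and $C\cap A_1=\varnothing$; and since $\omega^\omega\setminus D\in\bG$ with $\bG$ continuously closed, $C\in\bG$. So a win for Player I (for any single $D$) already delivers conclusion (\ref{strongwadgeexists}).

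From a winning strategy $\tau$ for Player II I would extract the required reduction for (\ref{strongwadgeforall}). The map $f:\omega^\omega\to\omega^\omega$ defined by $f(x)_n=\tau(x_0,\ldots,x_n)$ is continuous, and $\tau$'s winning condition says that $x\in D\Rightarrow f(x)\in A_0$ and $x\notin D\Rightarrow f(x)\in A_1$. Since $A_0\cap A_1=\varnothing$, this forces $f[\omega^\omega]\subseteq A_0\cup A_1$ and $f^{-1}[A_0]=D$ exactly, which is the $f$ demanded by (\ref{strongwadgeforall}) for this particular $D$.

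To conclude, I would combine the two cases: either Player I wins $\EW(D,A_0,A_1)$ for some $D\in\bGc$, in which case the preceding paragraph gives (\ref{strongwadgeexists}); or Player II wins for every $D\in\bGc$, in which case the above construction yields, uniformly for each $D\in\bGc$, a continuous reduction witnessing (\ref{strongwadgeforall}). The only slightly delicate point I expect is the bookkeeping for $X\in\bool\bG$, since $\bool\bG$ is a class of subsets of $\omega^\omega$ rather than of $\omega^\omega\times\omega^\omega$; once one absorbs the interleaving homeomorphism into the continuous preimages $p_0,p_1$, the argument proceeds cleanly.
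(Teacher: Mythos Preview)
Your argument is correct and follows essentially the same route as the paper: analyze the game $\EW(D,A_0,A_1)$, extract a separator $C=g^{-1}[\omega^\omega\setminus D]\in\bG$ from a winning strategy for Player~I, and a continuous reduction $f$ with range in $A_0\cup A_1$ from a winning strategy for Player~II. The only cosmetic difference is that the paper argues by contrapositive (assume~(\ref{strongwadgeforall}) fails for some $D$, then Player~II cannot win, hence Player~I wins and (\ref{strongwadgeexists}) follows), whereas you phrase it as a global case split over all $D\in\bGc$; your added verification that the payoff set lies in $\bool\bG$ is a detail the paper leaves implicit.
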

\begin{proof}
Assume that condition $(\ref{strongwadgeforall})$ fails. We will show that condition $(\ref{strongwadgeexists})$ holds. Fix $D\in\bGc$ such that $f^{-1}[A_0]\neq D$ for every continuous $f:\omega^\omega\longrightarrow A_0\cup A_1$. First we claim that Player II does not have a winning strategy in the game $\EW(D,A_0,A_1)$. Assume, in order to get a contradiction, that there exists a winning strategy $\sigma$ for Player II. Given $x\in\omega^\omega$, view $x$ as describing the moves of Player I, then define $f(x)=y$, where $y$ is the response of Player II to $x$ according to the strategy $\sigma$. It is easy to realize that $f$ contradicts the assumption at the beginning of this proof.

Since the payoff set of the game $\EW(D,A_0,A_1)$ belongs to $\bool\bS(\omega^\omega)$, the assumption of $\Det(\bool\bS(\omega^\omega))$ guarantees the existence of a winning strategy $\tau$ for Player I. Given $y\in\omega^\omega$, view $y$ as describing the moves of Player II, then define $g(y)=x$, where $x$ is the response of Player I to $y$ according to the strategy $\tau$.

Set $C=g^{-1}[\omega^\omega\setminus D]$, and observe that $C\in\bG$ because $\bG$ is continuously closed. Notice that, since $\tau$ is a winning strategy for Player I, for every $y\in A_0\cup A_1$ neither of the following conditions holds:
\begin{itemize}
\item $g(y)\in D$ and $y\in A_0$,
\item $g(y)\notin D$ and $y\in A_1$.
\end{itemize}
Using this observation, one sees that $A_0\subseteq C$ and $C\cap A_1=\varnothing$.
\end{proof}

\begin{lemma}[Wadge]\label{wadgelemma}
Let $\bS$ be a topological pointclass, and assume that $\Det(\bool\bS(\omega^\omega))$ holds. Let $Z$ be a zero-dimensional Polish space, and let $A,B\in\bool\bS(Z)$. Then either $A\leq B$ or $Z\setminus B\leq A$.
\end{lemma}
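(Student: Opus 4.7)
The plan is to reduce to the case $Z=\omega^\omega$ by means of Lemma \ref{bairetoall}, and then to extract the desired dichotomy by a direct application of Lemma \ref{strongwadgelemma}.

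For the reduction, I will use the observations preceding Lemma \ref{bairetoall} to identify $Z$ with a closed subspace of $\omega^\omega$ equipped with a retraction $\rho:\omega^\omega\longrightarrow Z$. Setting $A'=\rho^{-1}[A]$ and $B'=\rho^{-1}[B]$, the fact that $\bool\bS$ is closed under continuous preimages gives $A',B'\in\bool\bS(\omega^\omega)$; one also has $\rho^{-1}[Z\setminus B]=\omega^\omega\setminus B'$. By Lemma \ref{bairetoall}, it is therefore enough to prove that $A'\leq B'$ or $\omega^\omega\setminus B'\leq A'$ holds in $\omega^\omega$.

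I would then invoke Lemma \ref{strongwadgelemma} with $\bG=[\omega^\omega\setminus A']$, $A_0=B'$ and $A_1=\omega^\omega\setminus B'$. Observe that $\bG$ is continuously closed, that $A'\in\bGc$, and that $\bool\bG\subseteq\bool\bS(\omega^\omega)$, so $\Det(\bool\bG)$ is inherited from the hypothesis. In case $(\ref{strongwadgeexists})$, the separator $C\in\bG$ is pinned down by $B'\subseteq C\subseteq\omega^\omega\setminus(\omega^\omega\setminus B')=B'$ to equal $B'$, and $B'\in[\omega^\omega\setminus A']$ rewrites as $B'\leq\omega^\omega\setminus A'$, which after complementing the witnessing reduction gives $\omega^\omega\setminus B'\leq A'$. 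In case $(\ref{strongwadgeforall})$, applying the conclusion to $D=A'\in\bGc$ produces a continuous $f:\omega^\omega\longrightarrow\omega^\omega$ with $f^{-1}[B']=A'$, that is, $A'\leq B'$.

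The point I expect to be most delicate is verifying the hypothesis $A_0,A_1\in\bool\bG=\bool[A']$, which is not automatic from $A',B'\in\bool\bS$. I would plan to handle it either by the symmetric setup $\bG=[\omega^\omega\setminus B']$, $A_0=A'$, $A_1=\omega^\omega\setminus A'$ (which yields the same dichotomy), or, if neither symmetric setup is directly applicable, by analysing the extended Wadge game $\EW(A',B',\omega^\omega\setminus B')$ in the spirit of the proof of Lemma \ref{strongwadgelemma}: its payoff set visibly lies in $\bool\bS(\omega^\omega)$ and is hence determined, and the extraction from Player~II's (resp.\ Player~I's) winning strategy yields $A'\leq B'$ (resp.\ $\omega^\omega\setminus B'\leq A'$) exactly as in that proof.
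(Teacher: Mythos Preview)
Your proposal is correct and follows essentially the same approach as the paper: reduce to $Z=\omega^\omega$ via a retraction and Lemma~\ref{bairetoall}, then invoke Lemma~\ref{strongwadgelemma}. The only difference is a harmless symmetry in the parameters: the paper takes $\bG=[B]$, $A_0=A$, $A_1=\omega^\omega\setminus A$, whereas you take the dual setup $\bG=[\omega^\omega\setminus A']$, $A_0=B'$, $A_1=\omega^\omega\setminus B'$; the two case analyses produce the same dichotomy.

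You are right to flag the hypothesis $A_0,A_1\in\bool\bG$ as not being automatic, and in fact the paper glosses over this point. Your proposed fix---falling back to a direct analysis of the game $\EW(A',B',\omega^\omega\setminus B')$, whose payoff set visibly lies in $\bool\bS(\omega^\omega)$ and is therefore determined---is exactly what is needed, and is implicitly what the paper relies on as well (in the proof of Lemma~\ref{strongwadgelemma} the only use of $A_0,A_1\in\bool\bG$ is to ensure determinacy of the extended Wadge game). So your treatment is, if anything, slightly more careful than the paper's.
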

\begin{proof}
For the case $Z=\omega^\omega$, apply Lemma \ref{strongwadgelemma} with $A_0=A$, $A_1=\omega^\omega\setminus A$, and $\bG=B\wc$. To obtain the full result from this particular case, use Lemma \ref{bairetoall} and the remarks preceding it.
\end{proof}

The following two results are simple applications of Wadge's Lemma, whose proofs are left to the reader.

\begin{lemma}\label{nonselfdualcontinuouslyclosed}
Let $\bS$ be a topological pointclass, and assume that $\Det(\bool\bS(\omega^\omega))$ holds. Let $Z$ be a zero-dimensional Polish space, and let $\bG\subseteq\bool\bS(Z)$. If $\bG$ is continuously closed and non-selfdual then $\bG$ is a Wadge class.
\end{lemma}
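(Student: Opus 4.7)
The plan is to produce an explicit set $A$ with $\bG = [A]$. Since $\bG$ is non-selfdual, by definition $\bG \neq \bGc$, so I can pick $A \in \bG$ with $A \notin \bGc$, i.e., $Z \setminus A \notin \bG$. I claim this $A$ witnesses that $\bG$ is a Wadge class.

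The inclusion $[A] \subseteq \bG$ is immediate from the hypothesis that $\bG$ is continuously closed: since $A \in \bG$, every $B \leq A$ lies in $\bG$ by definition of continuously closed.

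For the reverse inclusion, let $B \in \bG$ be arbitrary. Since $\bG \subseteq \bool\bS(Z)$, both $A$ and $B$ belong to $\bool\bS(Z)$, so Wadge's Lemma (Lemma \ref{wadgelemma}) applies, using the standing assumption $\Det(\bool\bS(\omega^\omega))$ and the fact that $Z$ is zero-dimensional Polish. Hence either $B \leq A$ or $Z \setminus A \leq B$. In the second case, continuously closedness of $\bG$ together with $B \in \bG$ forces $Z \setminus A \in \bG$, contradicting the choice of $A$. Thus $B \leq A$, that is, $B \in [A]$, proving $\bG \subseteq [A]$.

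I do not foresee any real obstacle here: the entire argument is a two-line application of Wadge's Lemma combined with the definition of continuously closed. The only point worth being careful about is that $A$ and $B$ actually satisfy the hypotheses of Lemma \ref{wadgelemma}, which is guaranteed by the assumption $\bG \subseteq \bool\bS(Z)$.
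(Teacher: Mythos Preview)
Your argument is correct and is exactly the intended one: the paper leaves this proof to the reader as a simple application of Wadge's Lemma, and what you have written is precisely that application. The only subtlety, which you already flagged, is verifying that $A,B\in\bool\bS(Z)$ so that Lemma~\ref{wadgelemma} applies, and this is immediate from the hypothesis $\bG\subseteq\bool\bS(Z)$.
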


\begin{lemma}\label{selfdualwadgelemma}
Let $\bS$ be a topological pointclass, and assume that $\Det(\bool\bS(\omega^\omega))$ holds. Let $Z$ be a zero-dimensional Polish space, let $\bG\subseteq\bool\bS(Z)$ be a non-selfdual Wadge class, and let $\bD\subseteq\bool\bS(Z)$ be continuously closed and selfdual. If $\bG\nsubseteq\bD$ then $\bD\subsetneq\bG$.
\end{lemma}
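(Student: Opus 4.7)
The plan is a direct application of Wadge's Lemma (Lemma \ref{wadgelemma}). The assumption $\bG \nsubseteq \bD$ provides a witness $A \in \bG \setminus \bD$; I will use this fixed $A$ to compare every $B \in \bD$ against, via the dichotomy from Wadge's Lemma. Since both $\bG$ and $\bD$ are subsets of $\bS(Z) \subseteq \bool\bS(Z)$ and $Z$ is a zero-dimensional Polish space, the hypotheses of Lemma \ref{wadgelemma} apply to the pair $A, B$.

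The two cases of Wadge's Lemma are: either $B \leq A$, or $Z \setminus A \leq B$. In the first case, since $\bG$ is a Wadge class (and thus continuously closed: writing $\bG = [A_0]$, any $C \leq B \leq A \leq A_0$ satisfies $C \in \bG$), we conclude $B \in \bG$. In the second case, $Z \setminus A \leq B$ together with $B \in \bD$ and continuous closure of $\bD$ gives $Z \setminus A \in \bD$; selfduality of $\bD$ then yields $A \in \bD$, contradicting the choice of $A$. So the second case never occurs, and we obtain $\bD \subseteq \bG$.

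For the strictness, the element $A \in \bG \setminus \bD$ witnesses $\bD \neq \bG$, hence $\bD \subsetneq \bG$.

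I do not expect any serious obstacle here: the proof is essentially a one-line invocation of Wadge's Lemma, exploiting the asymmetry between the non-selfdual $\bG$ and the selfdual $\bD$. The only point worth double-checking is that Lemma \ref{wadgelemma} is in fact applicable, which requires verifying $A, B \in \bool\bS(Z)$ — this is immediate from $\bG, \bD \subseteq \bS(Z)$.
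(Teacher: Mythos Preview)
Your proof is correct and is exactly the simple application of Wadge's Lemma that the paper intends (the paper leaves the proof to the reader, stating only that it is a simple application of Lemma \ref{wadgelemma}). The case analysis and the verification that $A,B\in\bool\bS(Z)$ are both handled correctly.
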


The following is the second most fundamental theorem of Wadge theory after Wadge's Lemma. In fact, it is at the core of many proofs of important Wadge-theoretic results.

\begin{theorem}[Martin, Monk]\label{wellfounded}
Let $\bS$ be a nice topological pointclass, and assume that $\Det(\bS(\omega^\omega))$ holds. Let $Z$ be a zero-dimensional Polish space. Then the relation $\leq$ on $\bS(Z)$ is well-founded.
\end{theorem}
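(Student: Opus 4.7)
The strategy is to argue by contradiction via a flip-set construction. Suppose, toward a contradiction, that there exists a strictly decreasing sequence $A_0>A_1>A_2>\cdots$ in $\bS(Z)$. First, I would reduce to the case $Z=\omega^\omega$: by the remarks preceding Lemma \ref{bairetoall}, we may assume $Z$ is a closed subspace of $\omega^\omega$ with a retraction $\rho:\omega^\omega\longrightarrow Z$. Each $\rho^{-1}[A_n]$ lies in $\bS(\omega^\omega)$ because $\bS$ is a topological pointclass and $\rho$ is continuous, and Lemma \ref{bairetoall} transfers the strict descent to $\rho^{-1}[A_0]>\rho^{-1}[A_1]>\cdots$ in $\omega^\omega$. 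So from now on we may assume $Z=\omega^\omega$.

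Next, I would apply Wadge's Lemma (Lemma \ref{wadgelemma}) to each consecutive pair. Since $A_n,A_{n+1}\in\bool\bS(\omega^\omega)$ and $A_n\not\leq A_{n+1}$, we obtain $\omega^\omega\setminus A_{n+1}\leq A_n$, witnessed by a continuous $f_n:\omega^\omega\longrightarrow\omega^\omega$ with $f_n^{-1}[A_n]=\omega^\omega\setminus A_{n+1}$; equivalently, $x\in A_{n+1}\iff f_n(x)\notin A_n$ for every $x$. I would then use these "flipping reductions" to build a continuous map $h:2^\omega\longrightarrow\omega^\omega$ so that $B:=h^{-1}[A_0]$ is a \emph{flip set}, i.e., $x\in B\iff y\notin B$ whenever $x,y\in 2^\omega$ differ in exactly one coordinate. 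The construction proceeds by a fusion argument along the tree $2^{<\omega}$: to each node $s$ of length $n$ one assigns a point $p_s\in\omega^\omega$ so that (i) $p_s\in A_0$ iff the weight of $s$ has a prescribed parity, forced by applying $f_n$ (or leaving the point untouched) as one passes from $s$ to $s^\frown 0$ versus $s^\frown 1$, and (ii) $p_{s^\frown i}$ agrees with $p_s$ on a sufficiently long prefix, so that $h(x):=\lim_n p_{x\re n}$ is well-defined and continuous.

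Finally, I would derive the contradiction. Since $\bS$ is closed under continuous preimages, $B\in\bS(2^\omega)$; as $2^\omega$ is Polish, Theorem \ref{dettobp} together with Proposition \ref{bpbairetoall} implies that $B$ has the Baire property in $2^\omega$. But no flip set can have the Baire property: if $B$ were comeager in some basic clopen set determined by some $s\in 2^{<\omega}$, the autohomeomorphism flipping the $n$-th coordinate (for any $n>|s|$) would send $B$ (restricted to that clopen) onto its complement within the same clopen, and both cannot be comeager. This contradicts the assumed existence of a strictly decreasing chain, so $\leq$ is well-founded. The main technical obstacle is the fusion construction of $h$: the reductions $f_n$ produced by Wadge's Lemma are merely continuous, with no a priori control on their modulus of continuity, so the base points at each level of $2^{<\omega}$ must be chosen with enough "slack" that the relevant partial compositions stabilize coordinate by coordinate, while still respecting the parity condition that produces the flip.
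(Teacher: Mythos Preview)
Your approach is the classical Martin--Monk flip-set argument, which is precisely what the paper invokes (it defers to \cite[proof of Theorem~21.15]{kechris} for $Z=\omega^\omega$ and uses Lemma~\ref{bairetoall} for the transfer, exactly as you do; the paper's Lemma~\ref{flipsetlemma} is the flip-set fact you reprove inline).

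One slip: you cannot ``leave the point untouched'' when passing to $s^\frown 0$, since the identity does not witness $A_{n+1}\leq A_n$. You need a second, \emph{non-flipping} reduction $g_n$ with $g_n^{-1}[A_n]=A_{n+1}$ alongside your flipping $f_n$, and $h(z)$ is then the limit of the compositions $g_0^{z(0)}\circ\cdots\circ g_n^{z(n)}$ where $g_n^0=g_n$ and $g_n^1=f_n$. As for the modulus-of-continuity obstacle you flag at the end: in the standard presentation it does not arise, because the reductions obtained from winning strategies in the Wadge game (which underlies Lemma~\ref{wadgelemma} via Lemma~\ref{strongwadgelemma}) are already Lipschitz---indeed strict contractions on Player~I's side---so the infinite compositions converge automatically without any extra fusion bookkeeping.
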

\begin{proof}
For the case $Z=\omega^\omega$, proceed as in \cite[proof of Theorem 21.15]{kechris}, using Theorem \ref{dettobp} and Proposition \ref{bpbairetoall}. To obtain the full result from this particular case, use Lemma \ref{bairetoall} and the remarks preceding it.
\end{proof}

Next, we state an elementary result, which shows that clopen sets are ``neutral sets'' for Wadge-reduction. By this we mean that, apart from trivial exceptions, intersections or unions with these sets do not change the Wadge class. The straightforward proof is left to the reader. For more sophisticated closure properties, see \cite[Section 12]{carroymedinimuller}.

\begin{lemma}\label{closureclopen}
Let $Z$ be a space, let $\bG$ be a Wadge class in $Z$, and let $A\in\bG$.
\begin{itemize}
\item Assume that $\bG\neq\{Z\}$. Then $A\cap V\in\bG$ for every $V\in\mathbf{\Delta}^0_1(Z)$.
\item Assume that $\bG\neq\{\varnothing\}$. Then $A\cup V\in\bG$ for every $V\in\mathbf{\Delta}^0_1(Z)$.
\end{itemize}
\end{lemma}

We conclude this section with some basic facts that will not be needed in the rest of the paper, but hopefully will help the reader in understanding how Wadge classes behave. In order to simplify the discussion, we will assume that $\AD$ holds until the end of this section. Given a zero-dimensional Polish space $Z$, define
$$
\Wa(Z)=\{\{\bG,\bGc\}:\bG\text{ is a Wadge class in }Z\}.
$$
Given $p,q\in\Wa(Z)$, define $p\prec q$ if $\bG\subseteq\bL$ for every $\bG\in p$ and $\bL\in q$. Using Lemma \ref{wadgelemma} and Theorem \ref{wellfounded}, one sees that the ordering $\prec$ on $\Wa(Z)$ is a well-order. Therefore, there exists an order-isomorphism $\phi:\Wa(Z)\longrightarrow\Theta$ for some ordinal $\Theta$.\footnote{\,For a characterization of $\Theta$, see \cite[Definition 0.1 and Lemma 0.2]{solovay}.} The reason for the ``1+'' in the definition below is simply a matter of technical convenience (see \cite[page 45]{andrettahjorthneeman}).
\begin{definition}
Let $Z$ be a zero-dimensional Polish space, and let $\bG$ be a Wadge class in $Z$. Define
$$
||\bG||=1+\phi(\{\bG,\bGc\}).
$$
We will say that $||\bG||$ is the \emph{Wadge-rank} of $\bG$.
\end{definition}

It is easy to check that $\{\{\varnothing\},\{Z\}\}$ is the minimal element of $\Wa(Z)$. Furthermore, elements of the form $\{\bG,\bGc\}$ for $\bG\in\NSD(Z)$ are always followed by $\{\bD\}$ for some selfdual Wadge class $\bD$ in $Z$, while elements of the form $\{\bD\}$ for some selfdual Wadge class $\bD$ in $Z$ are always followed by $\{\bG,\bGc\}$ for some $\bG\in\NSD(Z)$. This was proved by Van Wesep for $Z=\omega^\omega$ (see \cite[Corollary to Theorem 2.1]{vanwesept}), and it can be generalized to arbitrary uncountable zero-dimensional Polish spaces using Corollary \ref{selfdualcorollary} and the machinery of relativization that we will develop in Sections \ref{sectionrelativizationbasic} and \ref{sectionrelativizationuncountable}. Since these facts will not be needed in the remainder of the paper, we omit their proofs.

In fact, as Theorem \ref{orderisomorphism} will show, the ordering of the non-selfdual classes is independent of the space $Z$ (as long as it is uncountable, zero-dimensional, and Borel). However, the situation is more delicate for selfdual classes. For example, it follows easily from Corollary \ref{selfdualcorollary} that if $\bG$ is a Wadge class in $2^\omega$ such that $||\bG||$ is a limit ordinal of countable cofinality, then $\bG$ is non-selfdual. On the other hand, if $\bG$ is a Wadge class in $\omega^\omega$ such that $||\bG||$ is a limit ordinal of countable cofinality, then $\bG$ is selfdual (see \cite[Corollary to Theorem 2.1]{vanwesept} again).

\section{The analysis of selfdual sets}\label{sectionselfdual}

The aim of this section is to show that a set is selfdual iff it can be constructed in a certain way using sets of lower complexity. The easy implication is given by Proposition \ref{locallylowerimpliesselfdual}, while the hard implication can be obtained by applying Corollary \ref{selfdualcorollary} with $U=Z$. These are well-known results (see for example \cite[Lemmas 7.3.1.iv and 7.3.4]{louveaub}). Our approach is essentially the same as the one used in the proof of \cite[Theorem 16]{andrettamartin} or in \cite[Theorem 5.3]{mottoros}. However, since we would like our paper to be self-contained, and the proof becomes slightly simpler in our context, we give all the details below.

\begin{proposition}\label{locallylowerimpliesselfdual}
Let $\bS$ be a nice topological pointclass, and assume that $\Det(\bS(\omega^\omega))$ holds. Let $Z$ be a zero-dimensional Polish space, and let $A\in\bS(Z)$. Assume that $\UU$ is an open cover of $Z$ such that $A\cap U<A$ in $Z$ for every $U\in\UU$. Then $A$ is selfdual.
\end{proposition}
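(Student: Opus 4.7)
The plan is to assemble a Wadge-reduction of $Z\setminus A$ to $A$ out of pieces coming from Wadge's Lemma on a clopen refinement of $\UU$. Since $Z$ is zero-dimensional Polish, and hence second-countable with a countable clopen base, $\UU$ can be refined to a countable clopen partition $\{V_n:n\in\omega\}$ of $Z$; for each $n$ fix $U_n\in\UU$ with $V_n\subseteq U_n$. The first step is to verify that $A\cap V_n<A$ for every $n$. Proposition \ref{closureclopen} applied to the Wadge class $[A\cap U_n]$ yields $A\cap V_n\in[A\cap U_n]$, i.e.\ $A\cap V_n\leq A\cap U_n$, provided $[A\cap U_n]\neq\{Z\}$; but if $A\cap U_n=Z$ then the hypothesis $A\cap U_n<A$ would force $Z<Z$, which is absurd. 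Combining this with $A\cap U_n<A$ gives $A\cap V_n<A$.

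Next, I would apply Wadge's Lemma (Lemma \ref{wadgelemma}) to the pair $A$, $A\cap V_n$. Both sets belong to $\bool\bS(Z)$: by hypothesis $A\in\bS(Z)$, while $V_n$ is clopen and hence in $\Borel(Z)\subseteq\bS(Z)$ because $\bS$ is nice. Since $A\cap V_n<A$ entails $A\not\leq A\cap V_n$, the lemma produces a continuous function $g_n:Z\longrightarrow Z$ satisfying $g_n^{-1}[A]=Z\setminus(A\cap V_n)$.

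Finally, I would glue these into a single continuous map $f:Z\longrightarrow Z$ by setting $f(x)=g_n(x)$ for the unique $n$ with $x\in V_n$; this is continuous because the partition is clopen. A direct computation yields $f^{-1}[A]\cap V_n=V_n\cap g_n^{-1}[A]=V_n\setminus A$ for every $n$, hence $f^{-1}[A]=Z\setminus A$. Equivalently $f^{-1}[Z\setminus A]=A$, which shows $A\leq Z\setminus A$, i.e.\ that $A$ is selfdual. I do not anticipate a real obstacle: the argument is a standard application of Wadge's Lemma combined with zero-dimensionality, and the only delicate points are the bookkeeping needed to dispatch the degenerate cases $A\in\{\varnothing,Z\}$ (where the hypothesis cannot be met) and the verification that the relevant sets lie in $\bool\bS(Z)$ so that Wadge's Lemma is available.
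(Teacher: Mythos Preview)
Your argument is correct and follows essentially the same route as the paper: refine the open cover to a clopen partition, use Proposition~\ref{closureclopen} to transfer the strict inequality $A\cap U_n<A$ to the pieces $V_n$, apply Wadge's Lemma to obtain reductions of the complements, and glue. The only cosmetic difference is that the paper first shows $U\setminus A\leq A$ for each piece (which forces a small case split on whether $A\cap U=\varnothing$, because of a second use of Proposition~\ref{closureclopen}), whereas your direct computation $V_n\cap g_n^{-1}[A]=V_n\cap\bigl(Z\setminus(A\cap V_n)\bigr)=V_n\setminus A$ sidesteps that split.
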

\begin{proof}
First notice that $\UU\neq\varnothing$ because $Z\neq\varnothing$. It follows that $A\neq\varnothing$ and $A\neq Z$. In particular, $A\cap U\neq Z$ for every $U\in\UU$. So, by Lemma \ref{closureclopen}, $A\cap V\leq A\cap U<A$ whenever $V\in\bD^0_1(Z)$ and $V\subseteq U\in\UU$. Therefore, since $Z$ is zero-dimensional, we can assume without loss of generality that $\UU$ is a disjoint clopen cover of $Z$.

We claim that $U\setminus A\leq A$ for every $U\in\UU$. Pick $U\in\UU$. If $A\cap U=\varnothing$ then $U\setminus A=U\in\bD^0_1(Z)$, hence the claim holds because $A\neq\varnothing$ and $A\neq Z$. On the other hand, if $A\cap U\neq\varnothing$ then
$$
U\setminus A=(Z\setminus (A\cap U))\cap U\leq Z\setminus (A\cap U)\leq A,
$$
where the first reduction holds by Lemma \ref{closureclopen} and the second reduction follows from $A\nleq A\cap U$ using Lemma \ref{wadgelemma}. In conclusion, we can fix $f_U:Z\longrightarrow Z$ witnessing that $U\setminus A\leq A$ in $Z$ for every $U\in\UU$. It is clear that $f=\bigcup\{f_U\re U:U\in\UU\}$ will witness that $Z\setminus A\leq A$.
\end{proof}

Given a space $Z$ and $A\subseteq Z$, define
\begin{multline}
\II(A)=\{V\in\mathbf{\Delta}^0_1(Z):\text{ there exists a partition }\UU\subseteq\mathbf{\Delta}^0_1(V)\text{ of }V\\\nonumber
\text{ such that }U\cap A< A\text{ in }Z\text{ for every }U\in\UU\}.
\end{multline}
Notice that $\II(A)$ is \emph{$\sigma$-additive}, in the sense that if $V_n\in\II(A)$ for $n\in\omega$ and $V=\bigcup_{n\in\omega}V_n\in\mathbf{\Delta}^0_1(Z)$, then $V\in\II(A)$.

We begin with two simple preliminary results. Recall that $F\subseteq 2^\omega$ is a \emph{flip-set} if whenever $z,w\in 2^\omega$ are such that $|\{n\in\omega:z(n)\neq w(n)\}|=1$ then $z\in F$ iff $w\notin F$.

\begin{lemma}\label{flipsetlemma}
Let $F\subseteq 2^\omega$ be a flip-set. Then $F$ does not have the Baire property.
\end{lemma}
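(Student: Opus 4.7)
The plan is to argue by contradiction, exploiting the homogeneity of $2^\omega$ under coordinate-flips. For each $n\in\omega$, define $\phi_n:2^\omega\longrightarrow 2^\omega$ by $\phi_n(z)(k)=z(k)$ if $k\neq n$ and $\phi_n(z)(n)=1-z(n)$. Each $\phi_n$ is a self-homeomorphism of $2^\omega$ (in fact an involution), and the defining property of a flip-set gives $\phi_n[F]=2^\omega\setminus F$. Moreover, if $s:m\longrightarrow 2$ and $n\geq m$, then $\phi_n$ fixes $\Ne_s$ setwise; in particular, $\phi_n\re\Ne_s:\Ne_s\longrightarrow\Ne_s$ is a homeomorphism carrying $F\cap\Ne_s$ onto $(2^\omega\setminus F)\cap\Ne_s$.

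With these observations, I would suppose for contradiction that $F$ has the Baire property and split into two cases. First, if $F$ is meager, then $2^\omega\setminus F=\phi_0[F]$ is the image of a meager set under a homeomorphism, hence meager, so $2^\omega=F\cup(2^\omega\setminus F)$ is meager in itself, contradicting the Baire category theorem. Therefore $F$ is non-meager, and combined with the Baire property this yields (by the standard characterization) some $s\in 2^{<\omega}$ such that $F\cap\Ne_s$ is comeager in $\Ne_s$, equivalently $(2^\omega\setminus F)\cap\Ne_s$ is meager in $\Ne_s$.

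Now pick any $n\geq |s|$. By the second observation above, $\phi_n\re\Ne_s$ is a homeomorphism of $\Ne_s$ that sends $F\cap\Ne_s$ to $(2^\omega\setminus F)\cap\Ne_s$. Since homeomorphisms preserve the meager ideal, $(2^\omega\setminus F)\cap\Ne_s$ is simultaneously meager and comeager in $\Ne_s$, contradicting the Baire category theorem applied to the non-empty open subspace $\Ne_s$ of the Polish space $2^\omega$.

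There is no serious obstacle here; the only point requiring a little care is to verify that $\phi_n$ actually restricts to a self-map of $\Ne_s$ when $n\geq|s|$, and that the flip property implies $\phi_n[F]=2^\omega\setminus F$ (and not merely $\phi_n[F]\subseteq 2^\omega\setminus F$), both of which are immediate from the definitions.
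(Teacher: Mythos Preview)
Your proof is correct and follows essentially the same approach as the paper's: both argue by contradiction, reduce to the case where $F$ is comeager in some basic clopen set $\Ne_s$, and derive a contradiction by flipping a coordinate $n\geq|s|$. The only cosmetic difference is that the paper phrases the final contradiction as ``$F\cap h[F]$ is comeager in $\Ne_s$, hence non-empty, contradicting the flip-set definition,'' whereas you phrase it as ``$(2^\omega\setminus F)\cap\Ne_s$ is simultaneously meager and comeager,'' which is arguably a bit cleaner since you invoke the global identity $\phi_n[F]=2^\omega\setminus F$ directly.
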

\begin{proof}
Assume, in order to get a contradiction, that $F$ has the Baire property. Since $2^\omega\setminus F$ is also a flip-set, we can assume without loss of generality that $F$ is non-meager in $2^\omega$. By \cite[Proposition 8.26]{kechris}, we can fix $n\in\omega$ and $s\in 2^n$ such that $F\cap\Ne_s$ is comeager in $\Ne_s$. Fix $k\in\omega\setminus n$ and let $h:\Ne_s\longrightarrow \Ne_s$ be the homeomorphism defined by
$$
h(x)(i)=\left\{
\begin{array}{ll} x(i) & \textrm{if }i\neq k,\\
1-x(i) & \textrm{if }i=k
\end{array}
\right.
$$
for $x\in\Ne_s$ and $i\in\omega$. Observe that $(\Ne_s\cap F)\cap h[\Ne_s\cap F]$ is comeager in $\Ne_s$, hence it is non-empty. It is easy to realize that this contradicts the definition of flip-set.
\end{proof}

\begin{lemma}\label{selfduallemma}
Let $Z$ be a space, and let $A\subseteq Z$ be a selfdual set such that $A\notin\mathbf{\Delta}^0_1(Z)$. Assume that $V\in\mathbf{\Delta}^0_1(Z)$ and $V\notin\II(A)$. Then $V\cap A\leq V\setminus A$ in $V$.
\end{lemma}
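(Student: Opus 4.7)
The plan is to produce the required witness $f:V\longrightarrow V$ as a corrected version of the composition $F = h\circ g$, where $g:Z\longrightarrow Z$ witnesses the selfduality of $A$ (so that $g^{-1}[A]=Z\setminus A$) and $h:Z\longrightarrow Z$ is a witness of the reduction $A\leq V\cap A$. To justify the existence of $h$, I would apply the hypothesis $V\notin\II(A)$ to the trivial partition $\{V\}$, obtaining $V\cap A\not< A$. Since $A\notin\mathbf{\Delta}^0_1(Z)$ gives $[A]\notin\{\{\varnothing\},\{Z\}\}$, Proposition \ref{closureclopen} yields $V\cap A\leq A$, hence $V\cap A\equiv A$. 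The same non-triviality of $A$ will force both $V\cap A\neq\varnothing$ and $V\setminus A\neq\varnothing$: if $V\cap A=\varnothing$ then $V\cap A\equiv A$ gives $A=\varnothing$, and if $V\subseteq A$ then $V=V\cap A\equiv A$ gives $A\leq V$, hence $A\in\mathbf{\Delta}^0_1(Z)$.

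Setting $F=h\circ g$, the identity
$$
F^{-1}[V\cap A]=g^{-1}[h^{-1}[V\cap A]]=g^{-1}[A]=Z\setminus A
$$
encodes exactly the ``swap'' behaviour we want: each point of $V\setminus A$ is sent into $V\cap A$, while each point of $V\cap A$ avoids $V\cap A$ (being sent either into $V\setminus A$, which is good, or outside $V$, which is bad). I would then split $V$ into the clopen pieces $V_1=V\cap F^{-1}[V]$ and $V_2=V\setminus V_1$, and deduce immediately from the computation above that $V\setminus A\subseteq V_1$ and, dually, $V_2\subseteq V\cap A$.

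To finish, fix any $y_0\in V\setminus A$ and define $\tilde f:V\longrightarrow V$ piecewise by setting $\tilde f(x)=F(x)$ for $x\in V_1$ and $\tilde f(x)=y_0$ for $x\in V_2$. Continuity is automatic from the clopen partition $V=V_1\sqcup V_2$, and a direct case check gives
$$
\tilde f^{-1}[V\setminus A]=(V_1\cap A)\cup V_2=V\cap A,
$$
which is the required Wadge reduction $V\cap A\leq V\setminus A$ in $V$. The only conceptual obstacle is that the natural first candidate $F$ need not land in $V$; the crucial observation is that the bad set $V_2$ is entirely contained in $V\cap A$, so it can be absorbed by any constant map taking value in $V\setminus A$. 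This is precisely where the non-emptiness of $V\setminus A$, and hence the assumption $A\notin\mathbf{\Delta}^0_1(Z)$, is used.
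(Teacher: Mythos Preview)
Your proof is correct and follows essentially the same route as the paper's: establish $V\cap A\equiv A$ via Proposition~\ref{closureclopen} and the hypothesis $V\notin\II(A)$, compose with the selfduality witness to obtain a continuous $F:Z\to Z$ with $F^{-1}[V\cap A]=Z\setminus A$, and then redirect the values of $F$ that fall outside $V$ to a fixed point of $V\setminus A$. The paper packages the same correction as post-composition with a retraction $g:Z\to V$ (identity on $V$, constant on $Z\setminus V$), so your map $\tilde f$ is exactly $g\circ(F\upharpoonright V)$; the two arguments differ only in notation.
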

\begin{proof}
Using Lemma \ref{closureclopen}, one sees that $V\cap A\leq A$ and $V\setminus A\leq Z\setminus A$, where both reductions are in $Z$. On the other hand, since $V\cap A<A$ would contradict the assumption that $V\notin\II(A)$, we see that $V\cap A\equiv A$. It follows that $V\setminus A\leq Z\setminus A\equiv A\equiv V\cap A$. Let $f:Z\longrightarrow Z$ be a function witnessing that $V\setminus A\leq V\cap A$. Notice that $V\setminus A\neq\varnothing$, otherwise we would have $V=V\cap A\equiv A$, contradicting the assumption that $A\notin\mathbf{\Delta}^0_1(Z)$. So we can fix $z\in V\setminus A$, and define $g:Z\longrightarrow V$ by setting
$$
g(x)=\left\{
\begin{array}{ll}x & \textrm{if }x\in V,\\
z & \textrm{if }x\in Z\setminus V.
\end{array}
\right.
$$
Since $V\in\mathbf{\Delta}^0_1(Z)$, the function $g$ is continuous. Finally, it is straightforward to verify that $g\circ(f\re V):V\longrightarrow V$ witnesses that $V\cap A\leq V\setminus A$ in $V$.
\end{proof}

\begin{theorem}\label{selfdualtheorem}
Let $\bS$ be a topological pointclass, and assume that $\BP(\bS(\omega^\omega))$ holds. Let $Z$ be a zero-dimensional Polish space, and let $A\in\bS(Z)$ be selfdual. Assume that $A\notin\mathbf{\Delta}^0_1(Z)$. Then $\mathbf{\Delta}^0_1(Z)=\II(A)$.
\end{theorem}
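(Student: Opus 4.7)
The plan is to prove the nontrivial inclusion $\bD^0_1(Z) \subseteq \II(A)$ by contradiction. Suppose some clopen $V \subseteq Z$ lies outside $\II(A)$. Since $Z$ is Polish and $A \in \bS(Z)$, Proposition \ref{bpbairetoall} shows that $A$ has the Baire property in $Z$. We aim to construct a continuous map $\Phi : 2^\omega \to V$ such that $F := \Phi^{-1}[A]$ is a flip-set; since continuous maps between Polish spaces are Baire measurable, $F$ will inherit the Baire property, contradicting Lemma \ref{flipsetlemma}.

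The argument first records three preliminary facts. Call a clopen $W \subseteq Z$ \emph{bad} when $W \notin \II(A)$. First, $\II(A)$ is $\sigma$-additive: if a clopen $W$ is partitioned into countably many clopens $W_n \in \II(A)$, concatenating their witnessing subpartitions shows $W \in \II(A)$. Hence every clopen partition of a bad clopen has a bad cell. Second, since $Z$ is zero-dimensional Polish, every clopen admits a clopen partition into pieces of arbitrarily small diameter (in any fixed compatible complete metric), so every bad clopen contains bad clopen subsets of arbitrarily small diameter. Third, by Lemma \ref{selfduallemma}, for every bad clopen $W$ there is a continuous \emph{flipping} map $g_W : W \to W$ satisfying $g_W(x) \in A \Leftrightarrow x \notin A$ for every $x \in W$.

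The central step is a recursive construction on $s \in 2^{<\omega}$ producing a bad clopen $V_s \subseteq V$ and a continuous map $\pi_s : V \to V_s$, with the following invariants: $V_\emptyset = V$, $\pi_\emptyset = \id_V$, $\diam V_s \le 2^{-|s|}$, both $V_{s^\frown 0}, V_{s^\frown 1} \subseteq V_s$ are bad and disjoint, $\pi_{s^\frown 0}(x) \in A \Leftrightarrow \pi_s(x) \in A$, and $\pi_{s^\frown 1}(x) \in A \Leftrightarrow \pi_s(x) \notin A$ for every $x \in V$. The ``no-flip'' map $\pi_{s^\frown 0}$ is obtained by post-composing $\pi_s$ with an $A$-preserving continuous map into $V_{s^\frown 0}$, while $\pi_{s^\frown 1}$ first applies the flipping map $g_{V_s}$ and then maps into $V_{s^\frown 1}$ via an $A$-preserving continuous map. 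Defining $\Phi(z) := \lim_n \pi_{z \re n}(x_0)$ for any fixed $x_0 \in V$ produces a continuous function by the diameter condition, and flipping any single coordinate of $z$ alters the $A$-membership of $\Phi(z)$ thanks to the recursive invariants, so $F = \Phi^{-1}[A]$ is a flip-set.

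The main obstacle is ensuring that the flipping invariants at each finite level survive the passage to the limit: since $A$-membership at a limit point is not generally controlled by $A$-membership of an approximating sequence, the ``$A$-preserving continuous maps into $V_{s^\frown i}$'' must be engineered carefully, typically by restricting to even smaller clopen neighborhoods on which $A$-status is pinned down by the combinatorics of the branch (Proposition \ref{closureclopen} plays a supporting role here). The resulting bookkeeping is the delicate heart of the proof and is analogous to the schemes of \cite[Theorem 16]{andrettamartin} and \cite[Theorem 5.3]{mottoros}.
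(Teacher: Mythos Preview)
Your overall strategy is correct and matches the paper's: assume some clopen $V$ is bad (i.e.\ $V\notin\II(A)$), use $\sigma$-additivity of $\II(A)$ to keep finding bad clopens of small diameter, use Lemma~\ref{selfduallemma} to produce flipping maps, and manufacture a continuous map $2^\omega\to Z$ whose $A$-preimage is a flip-set, contradicting Lemma~\ref{flipsetlemma} via Proposition~\ref{bpbairetoall}. The preliminary facts you list are exactly right.

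However, your specific construction has a genuine gap at precisely the point you flag as ``the main obstacle,'' and the fix is not bookkeeping but a structural change. In your scheme the maps are built by \emph{post}-composition: $\pi_{s^\frown i}=h_i\circ\pi_s$, so that $\pi_{z\re n}$ applies the map corresponding to $z(0)$ \emph{first} and the map corresponding to $z(n-1)$ \emph{last}. Consequently, if $z$ and $w$ differ only at coordinate $m$, then for every $n>m$ the maps $\pi_{z\re n}$ and $\pi_{w\re n}$ live on different branches of your tree and are unrelated; there is no way to compare $\Phi(z)$ with $\Phi(w)$, and your recursive invariants say nothing about the limit point. Your suggested remedy (``restricting to even smaller clopen neighborhoods on which $A$-status is pinned down'') cannot work, since $A\notin\bD^0_1(Z)$ means $A$-status is \emph{never} determined by a clopen neighborhood.

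The paper avoids this by reversing the composition order and using a single nested chain rather than a tree. One builds bad clopens $V_0\supseteq V_1\supseteq\cdots$ and flipping maps $f_n:V_n\to V_n$; for $z\in 2^\omega$ one sets $f^z_{[m,n]}=f_m^{z(m)}\circ\cdots\circ f_n^{z(n)}$ (so the map for the \emph{largest} index is applied \emph{first}) and $x_m^z=\lim_n f^z_{[m,n]}(y_{n+1})$. The diameter condition needed for convergence is imposed on all the finitely many $f^s_{[m,n]}[V_{n+1}]$ when choosing $V_{n+1}$. The payoff is that $x_{m+1}^z$ depends only on $z\re(\omega\setminus(m{+}1))$, and continuity of $f_m^{z(m)}$ gives $x_m^z=f_m^{z(m)}(x_{m+1}^z)$. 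Hence if $z,w$ differ only at $m$, then $x_{m+1}^z=x_{m+1}^w$ and a single application of the flipping map separates $x_0^z$ from $x_0^w$ in $A$. This ``apply later maps first'' trick is the missing idea in your proposal.
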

\begin{proof}
Assume, in order to get a contradiction, that $V\in\mathbf{\Delta}^0_1(Z)\setminus\II(A)$. Fix a complete metric on $Z$ that induces the given Polish topology. We will recursively construct sets $V_n$ and functions $f_n:V_n\longrightarrow V_n$ for $n\in\omega$. Before specifying which properties we require from them, we introduce some more notation. Given a set $X$ and a function $f:X\longrightarrow X$, set $f^0=\id_X$ and $f^1=f$. Furthermore, given $m,n\in\omega$ such that $m\leq n$ and $z\in 2^\omega$ (or just $z\in 2^{[m,n]}$), define
$$
f_{[m,n]}^z=f_m^{z(m)}\circ\cdots\circ f_n^{z(n)}.
$$
We will make sure that the following conditions are satisfied for every $n\in\omega$, where $\diam(X)$ denotes the diameter of $X\subseteq Z$:
\begin{enumerate}
\item $V_n\in\mathbf{\Delta}^0_1(Z)$,
\item\label{ideal} $V_n\notin\II(A)$,
\item $V_m\supseteq V_n$ whenever $m\leq n$,
\item\label{reduction} $f_n:V_n\longrightarrow V_n$ witnesses that $V_n\cap A\leq V_n\setminus A$ in $V_n$,
\item\label{smalldiam} $\diam(f_{[m,n]}^s[V_{n+1}])\leq 2^{-n}$ whenever $m\leq n$ and $s\in 2^{[m,n]}$.
\end{enumerate}

Start by setting $V_0=V$ and let $f_0:V_0\longrightarrow V_0$ be given by Lemma \ref{selfduallemma}. Now fix $n\in\omega$, and assume that $V_m$ and $f_m$ have already been constructed for every $m\leq n$. Fix a partition $\UU$ of $Z$ consisting of clopen sets of diameter at most $2^{-n}$. Given $m\leq n$ and $s\in 2^{[m,n]}$, define
$$
\VV^s_m=\{(f_{[m,n]}^s)^{-1}[U\cap V_m]:U\in\UU\}.
$$
Observe that each $\VV^s_m\subseteq\mathbf{\Delta}^0_1(V_n)$ because each $f_{[m,n]}^s$ is continuous. Furthermore, it is clear that each $\VV^s_m$ consists of pairwise disjoint sets, and that $\bigcup\VV^s_m=V_n$. Since there are only finitely many $m\leq n$ and $s\in 2^{[m,n]}$, it is possible to obtain a partition $\VV\subseteq\mathbf{\Delta}^0_1(V_n)$ of $V_n$ that simultaneously refines each $\VV^s_m$. This clearly implies that any choice of $V_{n+1}\in\VV$ will satisfy condition $(\ref{smalldiam})$. On the other hand since $\II(A)$ is $\sigma$-additive and $V_n\notin\II(A)$, it is possible to choose $V_{n+1}\in\VV$ such that $V_{n+1}\notin\II(A)$, thus ensuring that condition $(\ref{ideal})$ is satisfied as well. To obtain $f_{n+1}:V_{n+1}\longrightarrow V_{n+1}$ that satisfies condition $(\ref{reduction})$, simply apply Lemma \ref{selfduallemma}. This concludes the construction.

Fix an arbitrary $y_{n+1}\in V_{n+1}$ for $n\in\omega$. Given $m\in\omega$ and $z\in 2^\omega$, observe that the sequence $(f_{[m,n]}^z(y_{n+1}):m\leq n)$ is Cauchy by condition $(\ref{smalldiam})$, hence it makes sense to define
$$
x_m^z=\underset{n\to\infty}{\limi}f_{[m,n]}^z(y_{n+1}).
$$
To conclude the proof, we will show that $F=\{z\in 2^\omega:x_0^z\in A\}$ is a flip-set. Since the function $g:2^\omega\longrightarrow Z$ defined by setting $g(z)=x_0^z$ is continuous and $A\in\bS(Z)$, Proposition \ref{bpbairetoall} and Lemma \ref{flipsetlemma} will easily yield a contradiction.

Define $A^0=A$ and $A^1=Z\setminus A$. Given any $m\in\omega$ and $\varepsilon\in 2$, it is clear from the definition of $f_m^\varepsilon$ and condition $(\ref{reduction})$ that
$$
x\in A\text{ iff }f_m^\varepsilon(x)\in A^\varepsilon
$$
for every $x\in V_m$. Furthermore, using the continuity of $f_m^\varepsilon$ and the definition of $x_m^z$, it is easy to see that
$$
f_m^{z(m)}(x_{m+1}^z)=x_m^z
$$
for every $z\in 2^\omega$ and $m\in\omega$.

Fix $z\in 2^\omega$ and notice that, by the observations in the previous paragraph,
$$
x_0^z\in A\text{ iff } x_1^z\in A^{z(0)}\text{ iff }\cdots\text{ iff }x_{m+1}^z\in (\cdots(A^{z(0)})^{z(1)}\cdots)^{z(m)}
$$
for every $m\in\omega$. Now fix $w\in 2^\omega$ and $m\in\omega$ such that $z\re\omega\setminus\{m\}=w\re\omega\setminus\{m\}$ and $z(m)\neq w(m)$. We need to show that $x_0^z\in A$ iff $x_0^w\notin A$. For exactly the same reason as above, we have
$$
x_0^w\in A\text{ iff } x_1^w\in A^{w(0)}\text{ iff }\cdots\text{ iff }x_{m+1}^w\in (\cdots(A^{w(0)})^{w(1)}\cdots)^{w(m)}.
$$
Since $z\re m=w\re m$ and $z(m)\neq w(m)$, in order to finish the proof, it will be enough to show that $x_{m+1}^z=x_{m+1}^w$. To see this, observe that
$$
x_{m+1}^z=\underset{n\to\infty}{\limi}f_{[m+1,n]}^z(y_{n+1})=\underset{n\to\infty}{\limi}f_{[m+1,n]}^w(y_{n+1})=x_{m+1}^w,
$$
where the middle equality uses the assumption $z\re\omega\setminus (m+1)=w\re\omega\setminus (m+1)$.
\end{proof}

\begin{corollary}\label{selfdualcorollary}
Let $\bS$ be a nice topological pointclass, and assume that $\Det(\bS(\omega^\omega))$ holds. Let $Z$ be a zero-dimensional Polish space, let $A\in\bS(Z)$ be selfdual, and let $U\in\mathbf{\Delta}^0_1(Z)$. Then there exist pairwise disjoint $V_n\in\mathbf{\Delta}^0_1(U)$ and non-selfdual $A_n<A$ in $Z$ for $n\in\omega$ such that $\bigcup_{n\in\omega}V_n=U$ and $\bigcup_{n\in\omega}(A_n\cap V_n)=A\cap U$.
\end{corollary}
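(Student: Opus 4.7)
The plan is to prove the result by strong induction on the rank of $A$ in the relation $\leq$ on $\bS(Z)$, which is well-founded by Theorem \ref{wellfounded}; the hypothesis $\Det(\bool\bS(\omega^\omega))$ combined with $\bS$ nice also yields $\BP(\bS(\omega^\omega))$ via Theorem \ref{dettobp}, so Theorem \ref{selfdualtheorem} is available whenever needed.

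For the base case, suppose $A\in\bD^0_1(Z)$; selfduality of $A$ forces $A\neq\varnothing$ and $A\neq Z$. Take $V_0=A\cap U$, $V_1=U\setminus A$, and $V_n=\varnothing$ for $n\geq 2$, together with $A_0=Z$, $A_n=\varnothing$ for $n\geq 1$. Both $\varnothing$ and $Z$ are non-selfdual, and constant-function reductions (using $A\neq\varnothing$ and $A\neq Z$) give $\varnothing<A$ and $Z<A$ in $Z$. The remaining conditions follow from $V_0$ and $V_1$ being disjoint clopens in $U$ with $V_0\cup V_1=U$ and from $A_0\cap V_0=A\cap U$.

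For the inductive step, suppose $A\notin\bD^0_1(Z)$. Theorem \ref{selfdualtheorem} yields $U\in\II(A)$, producing a clopen partition $\UU\subseteq\bD^0_1(U)$ of $U$ with $W\cap A<A$ in $Z$ for every $W\in\UU$. Since $Z$ is second-countable, $\UU$ is countable, so enumerate it as $\{W_k:k\in\omega\}$ (padding with $\varnothing$ if finite). For each $k$, niceness of $\bS$ ensures $W_k\cap A\in\bS(Z)$: pull back $A$ along the continuous inclusion $W_k\hookrightarrow Z$ to get $W_k\cap A\in\bS(W_k)$, and note $W_k\in\Borel(Z)\subseteq\bS(Z)$. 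If $W_k\cap A$ is non-selfdual, set $V_{k,0}=W_k$, $A_{k,0}=W_k\cap A$, and pad the remaining entries with $\varnothing$. If instead $W_k\cap A$ is selfdual, apply the induction hypothesis with $W_k\cap A$ in place of $A$ and $W_k\in\bD^0_1(Z)$ in place of $U$: this yields pairwise disjoint $V_{k,m}\in\bD^0_1(W_k)\subseteq\bD^0_1(U)$ covering $W_k$, together with non-selfdual $A_{k,m}<W_k\cap A<A$ satisfying $\bigcup_m(A_{k,m}\cap V_{k,m})=W_k\cap A$. Reindexing $(V_{k,m},A_{k,m})_{k,m\in\omega}$ as $(V_n,A_n)_{n\in\omega}$ then produces the desired decomposition, since the $W_k$ partition $U$ disjointly and so $\bigcup_{k,m}(A_{k,m}\cap V_{k,m})=\bigcup_k(W_k\cap A)=A\cap U$.

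I expect the main difficulty to be purely organisational; the conceptual heavy lifting is done by Theorem \ref{selfdualtheorem}. The only subtle verification is in the base case, where one must explicitly check that $\varnothing$ and $Z$ lie strictly below every non-trivial clopen set, and the only technical point in the inductive step is the preservation of $\bS(Z)$-membership under the restriction $A\mapsto W_k\cap A$, which is where niceness of $\bS$ is used.
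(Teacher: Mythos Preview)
Your proof is correct and follows essentially the same strategy as the paper: combine Theorem \ref{selfdualtheorem} with the well-foundedness of $\leq$ (Theorem \ref{wellfounded}). The paper phrases this as a proof by contradiction---assume no suitable partition exists, then recursively build a strictly $\leq$-decreasing sequence---whereas you phrase it as a direct transfinite induction on rank; these are dual presentations of the same idea, and your version has the advantage of making the base-case verification (that $\varnothing,Z<A$ for nontrivial clopen $A$) explicit where the paper leaves it to the reader.
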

\begin{proof}
As one can easily check, it will be enough to show that there exists a partition $\VV\subseteq\mathbf{\Delta}^0_1(U)$ of $U$ such that for every $V\in\VV$ either $A\cap V\in\mathbf{\Delta}^0_1(Z)$ or $A\cap V$ is non-selfdual in $Z$. If this were not the case, then, using Theorem \ref{selfdualtheorem}, one could recursively construct a strictly $\leq$-decreasing sequence of subsets of $Z$, which would contradict Theorem \ref{wellfounded}.
\end{proof}

\section{Relativization: basic facts}\label{sectionrelativizationbasic}

When one tries to give a systematic exposition of Wadge theory, it soon becomes apparent that it would be very useful to be able to say when $A$ and $B$ belong to ``the same'' Wadge class $\bG$, even when $A\subseteq Z$ and $B\subseteq W$ for distinct ambient spaces $Z$ and $W$. It is clear how to do that in certain particular cases, for example when $\bG=\bP^0_2$ or $\bG=\Diff_2(\bS^0_1)$, because elements of those classes are obtained by performing set-theoretic operations\footnote{\,The key fact here is that these are Hausdorff operations (see Section \ref{sectionhausdorffbasic}). In fact, in \cite{carroymedinimuller}, we used Hausdorff operations (together with Theorem \ref{vanwesepmain}) to give an alternative treatment of relativization.} to the open sets. However, it is not a priori clear how to deal with this issue in the case of arbitrary, possibly rather exotic Wadge classes.

We will solve the problem by using Wadge classes in $\omega^\omega$ to parametrize Wadge classes in arbitrary spaces. Roughly, using this approach, two Wadge classes $\bL$ in $Z$ and $\bL'$ in $W$ will be ``the same'' if there exists a Wadge class $\bG$ in $\omega^\omega$ such that $\bG(Z)=\bL$ and $\bG(W)=\bL'$. We will refer to this process as relativization. This is essentially due to Louveau and Saint-Raymond (see \cite[Theorem 4.2]{louveausaintraymondc}), but here we tried to give a more systematic exposition. Furthermore, as we mentioned in Section \ref{sectionintroduction}, this topic does not appear at all in \cite{louveaub}.

The reason why we used the word ``roughly'' is that, in order for relativization to work, the Wadge classes in question have to be non-selfdual (see the discussion at the end of Section \ref{sectionwadgebasic}). Furthermore, the ambient spaces $Z$ and $W$ are generally assumed to be be zero-dimensional and Polish, even though for some results the assumption ``Polish'' can be relaxed to ``Borel'',\footnote{\,This form of relativization will be needed in the proof of Theorem \ref{everyclasshasalevelprelim}.} or even dropped altogether.

Lemma \ref{relativization}, whose straightforward proof is left to the reader, gives several ``reassuring'' and extremely useful facts about relativization. Lemma \ref{relativizationprelim} gives equivalent definitions of $\bG(Z)$. An important application of these appears in the proof of Lemma \ref{relativizationsubspace}, which shows that relativization is well-behaved with respect to subspaces. As another application, observe that if $\bS$ is a nice topological pointclass and $\Det(\bS(\omega^\omega))$ holds, then $\bG(Z)\subseteq\bS(Z)$ whenever $\bG\in\NSDS(\omega^\omega)$ and $Z$ is a zero-dimensional Borel space. Finally, Lemma \ref{relativizationexistsunique} shows that every non-selfdual Wadge class can be obtained through relativization, and in a unique way.

\begin{definition}[Louveau, Saint-Raymond]\label{relativizationdefinition}
Given a space $Z$ and $\bG\subseteq\PP(\omega^\omega)$, define
$$
\bG(Z)=\{A\subseteq Z:g^{-1}[A]\in\bG\text{ for every continuous }g:\omega^\omega\longrightarrow Z\}.
$$	
\end{definition}

\begin{lemma}\label{relativization}
Let $\bG\subseteq\PP(\omega^\omega)$, and let $Z$ and $W$ be spaces.
\begin{enumerate}
\item\label{relativizationpreimage} If $f:Z\longrightarrow W$ is continuous and $B\in\bG(W)$ then $f^{-1}[B]\in\bG(Z)$.
\item\label{relativizationhomeo} If $h:Z\longrightarrow W$ is a homeomorphism then $A\in\bG(Z)$ iff $h[A]\in\bG(W)$.
\item\label{relativizationcheck} $\widecheck{\bG(Z)}=\bGc(Z)$.
\item\label{relativizationbaire} If $\bG$ is continuously closed then $\bG(\omega^\omega)=\bG$.
\end{enumerate}
\end{lemma}

\begin{lemma}[Louveau, Saint-Raymond]\label{relativizationprelim}
Let $\bS$ be a nice topological pointclass, and assume that $\Det(\bS(\omega^\omega))$ holds. Let $Z$ be a zero-dimensional Borel space, let $\bG\in\NSDS(\omega^\omega)$, and let $A\in\bS(Z)$. Then, the following conditions are equivalent:
\begin{enumerate}
\item\label{forallcont} $A\in\bG(Z)$,
\item\label{forallemb} For every embedding $j:Z\longrightarrow\omega^\omega$ there exists $B\in\bG$ such that $A=j^{-1}[B]$,
\item\label{existsemb} There exists an embedding $j:Z\longrightarrow\omega^\omega$ and $B\in\bG$ such that $A=j^{-1}[B]$,
\item\label{existscont} There exists a continuous $f:Z\longrightarrow\omega^\omega$ and $B\in\bG$ such that $A=f^{-1}[B]$.
\end{enumerate}
\end{lemma}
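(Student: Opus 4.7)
The implications $(2) \Rightarrow (3) \Rightarrow (4)$ are immediate: an embedding $j : Z \to \omega^\omega$ exists because every zero-dimensional separable metrizable space embeds into $\omega^\omega$, and every embedding is continuous. For $(4) \Rightarrow (1)$, suppose $A = f^{-1}[B]$ for some continuous $f : Z \to \omega^\omega$ and some $B \in \bG$, and let $g : \omega^\omega \to Z$ be continuous. Then $g^{-1}[A] = (f \circ g)^{-1}[B]$, which lies in $\bG$ because $\bG$ is a Wadge class in $\omega^\omega$, hence continuously closed.

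The heart of the argument is $(1) \Rightarrow (2)$. Fix an embedding $j : Z \to \omega^\omega$; identifying $Z$ with $j[Z]$, it suffices to produce $B \in \bG$ with $B \cap Z = A$. Niceness of $\bS$, together with $Z \in \Borel(\omega^\omega) \subseteq \bS(\omega^\omega)$, gives $A,\, Z \setminus A \in \bS(\omega^\omega) \subseteq \bool\bS(\omega^\omega)$. Setting $A_0 = A$ and $A_1 = Z \setminus A$, I would apply the dichotomy of Lemma \ref{strongwadgelemma} to these disjoint sets. Although the lemma is stated under the stronger assumption $A_0, A_1 \in \bool\bG$, its proof uses this only to guarantee that the payoff set of the Extended Wadge Game $\EW(D, A_0, A_1)$ (for $D \in \bGc$) belongs to a determined class. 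Since $A_0, A_1, D \in \bool\bS(\omega^\omega)$ and $\Det(\bool\bS(\omega^\omega))$ is in force, the same game-theoretic argument goes through verbatim, yielding either (i) some $B \in \bG$ with $A_0 \subseteq B$ and $B \cap A_1 = \varnothing$, or (ii) for every $D \in \bGc$ a continuous function $f : \omega^\omega \to A_0 \cup A_1 = Z$ with $f^{-1}[A] = D$.

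Alternative (ii) combined with hypothesis (1) yields $D = f^{-1}[A] \in \bG$ for every $D \in \bGc$; hence $\bGc \subseteq \bG$, so $\bG$ is selfdual, contradicting $\bG \in \NSDS(\omega^\omega)$. Therefore (i) must hold, and then
\[
B \cap Z = (B \cap A) \cup (B \cap (Z \setminus A)) = A \cup \varnothing = A,
\]
as required. The principal obstacle is precisely this invocation of Lemma \ref{strongwadgelemma} under a hypothesis weaker than it is stated with: one must inspect its proof to verify that the conclusion survives when $A_0, A_1$ are only assumed to lie in $\bool\bS(\omega^\omega)$, with the crucial point being that the determinacy of $\EW(D, A_0, A_1)$ continues to be supplied by $\Det(\bool\bS(\omega^\omega))$. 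Aside from this observation, no further ideas are needed.
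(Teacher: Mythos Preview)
Your proof is correct and follows essentially the same route as the paper's: the easy implications $(2)\Rightarrow(3)\Rightarrow(4)\Rightarrow(1)$ are handled identically, and for $(1)\Rightarrow(2)$ both arguments reduce to the separation dichotomy of Lemma~\ref{strongwadgelemma} applied to $A_0=j[A]$ and $A_1=j[Z\setminus A]$, with alternative~(ii) contradicting non-selfduality via condition~(1). You are in fact more explicit than the paper on one point: the paper invokes Lemma~\ref{strongwadgelemma} directly without remarking that $A_0,A_1$ lie only in $\bS(\omega^\omega)$ rather than in $\bool\bG$, whereas you correctly isolate this and observe that the proof of the lemma only needs the payoff set of $\EW(D,A_0,A_1)$ to be determined, which $\Det(\bool\bS(\omega^\omega))$ supplies.
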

\begin{proof}
In order to prove that $(\ref{forallcont})\rightarrow (\ref{forallemb})$, assume that condition $(\ref{forallcont})$ holds. Pick an embedding $j:Z\longrightarrow\omega^\omega$, then set $A_0=j[A]$ and $A_1=j[Z\setminus A]$. Using the fact that $\bS$ is a nice topological pointclass and that $Z$ is a Borel space, it is easy to see that $A_0,A_1\in\bS(\omega^\omega)$. Therefore, by the assumption $\Det(\bS(\omega^\omega))$, it is possible to apply Lemma \ref{strongwadgelemma}. Notice that it would be sufficient to show that there exists $B\in\bG$ such that $A_0\subseteq B$ and $B\cap A_1=\varnothing$, as $j^{-1}[B]=A$ would clearly follow. So assume, in order to get a contradiction, that no such $B\in\bG$ exists. Then, by Lemma \ref{strongwadgelemma}, for all $B\in\bGc$ there exists a continuous $f:\omega^\omega\longrightarrow\omega^\omega$ such that $f[\omega^\omega]\subseteq A_0\cup A_1$ and $f^{-1}[A_0]=B$. In particular, we can fix such a function $f$ when $B$ is such that $\bGc=B\wc$. Set $g=j^{-1}\circ f:\omega^\omega\longrightarrow Z$, and observe that $g$ is continuous because $j$ is an embedding. Then
$$
B=f^{-1}[A_0]=f^{-1}[j[A]]=g^{-1}[A]\in\bG
$$
by condition $(\ref{forallcont})$, contradicting the fact that $\bG$ is non-selfdual.

The implication $(\ref{forallemb})\rightarrow (\ref{existsemb})$ holds because $Z$ is zero-dimensional. The implication $(\ref{existsemb})\rightarrow (\ref{existscont})$ is trivial. In order to prove that $(\ref{existscont})\rightarrow (\ref{forallcont})$, assume that $f:Z\longrightarrow\omega^\omega$ and $B\in\bG$ are such that $A=f^{-1}[B]$. Pick a continuous $g:\omega^\omega\longrightarrow Z$. Since $f\circ g:\omega^\omega\longrightarrow\omega^\omega$ is continuous and $\bG$ is continuously closed, one sees that
\[
g^{-1}[A]=g^{-1}[f^{-1}[B]]=(f\circ g)^{-1}[B]\in\bG.\qedhere
\]
\end{proof}

\begin{lemma}\label{relativizationsubspace}
Let $\bS$ be a nice topological pointclass, and assume that $\Det(\bS(\omega^\omega))$ holds. Let $Z$ and $W$ be zero-dimensional Borel spaces such that $W\subseteq Z$, and let $\bG\in\NSDS(\omega^\omega)$. Then $B\in\bG(W)$ iff $A\cap W=B$ for some $A\in\bG(Z)$.
\end{lemma}
\begin{proof}
In order to prove the left-to-right implication, pick $B\in\bG(W)$. Since $Z$ is zero-dimensional, we can fix an embedding $j:Z\longrightarrow\omega^\omega$. Notice that $i=j\re W:W\longrightarrow\omega^\omega$ is also an embedding, hence by condition $(\ref{forallemb})$ of Lemma \ref{relativizationprelim} there exists $C\in\bG$ such that $i^{-1}[C]=B$. Let $A=j^{-1}[C]$, and observe that $A\cap W=B$. The fact that $A\in\bG(Z)$ follows from condition $(\ref{existsemb})$ of Lemma \ref{relativizationprelim}. In order to prove the right-to-left implication, pick $A\in\bG(Z)$. Let $i:W\longrightarrow Z$ be the inclusion. It follows from Lemma \ref{relativization}.\ref{relativizationpreimage} that $A\cap W=i^{-1}[A]\in\bG(W)$.
\end{proof}

\begin{lemma}\label{relativizationexistsunique}
Let $\bS$ be a nice topological pointclass, and assume that $\Det(\bS(\omega^\omega))$ holds. Let $Z$ be a zero-dimensional Polish space, and let $\bL\in\NSDS(Z)$. Then there exists a unique $\bG\in\NSDS(\omega^\omega)$ such that $\bG(Z)=\bL$.
\end{lemma}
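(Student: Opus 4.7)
The plan is to construct $\bG$ explicitly for existence, and use Wadge's Lemma together with the non-selfduality of $\bL$ for uniqueness. Since $Z$ is zero-dimensional Polish, I would first fix a homeomorphic copy of $Z$ as a closed subspace of $\omega^\omega$, together with a retraction $\rho:\omega^\omega\longrightarrow Z$ (as in the remarks preceding Lemma \ref{bairetoall}). Pick a set $A\in\bL$ with $\bL=[A]$ in $Z$, and propose $\bG=[A']$ in $\omega^\omega$, where $A'=\rho^{-1}[A]$. Then $\bG$ is a Wadge class in $\omega^\omega$ by Definition \ref{wadgeclassdefinition}; we have $\bG\subseteq\bS(\omega^\omega)$ because $A'\in\bS(\omega^\omega)$ by continuity of $\rho$ and the definition of a topological pointclass; and $\bG$ is non-selfdual because Lemma \ref{bairetoall} applied with $B=Z\setminus A$ shows that selfduality of $A'$ in $\omega^\omega$ would force selfduality of $A$ in $Z$.

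For the equality $\bG(Z)=\bL$, I would treat the two inclusions separately. For $\bL\subseteq\bG(Z)$, continuous closure of $\bG(Z)$ (Lemma \ref{relativization}(\ref{relativizationpreimage})) reduces the problem to verifying $A\in\bG(Z)$; this follows from condition (\ref{existsemb}) of Lemma \ref{relativizationprelim} applied to the inclusion $j:Z\hookrightarrow\omega^\omega$ and the set $A'\in\bG$, since $A'\cap Z=A$. For $\bG(Z)\subseteq\bL$, fix $B\in\bG(Z)$. Condition (\ref{forallemb}) of Lemma \ref{relativizationprelim}, again applied to the inclusion, produces $C\in\bG$ with $B=C\cap Z$; write $C=g^{-1}[A']$ for some continuous $g:\omega^\omega\longrightarrow\omega^\omega$. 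Then for $x\in Z$ we have $x\in B$ iff $g(x)\in A'$ iff $\rho(g(x))\in A$, so the continuous function $h=\rho\circ(g\upharpoonright Z):Z\longrightarrow Z$ witnesses $B=h^{-1}[A]\in[A]=\bL$.

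For uniqueness, suppose $\bG,\bG'\in\NSDS(\omega^\omega)$ both satisfy $\bG(Z)=\bG'(Z)=\bL$, and fix $A\in\bG$, $B\in\bG'$ with $\bG=[A]$ and $\bG'=[B]$. Since $A,B\in\bool\bS(\omega^\omega)$, Wadge's Lemma (Lemma \ref{wadgelemma}) gives $A\leq B$ or $\omega^\omega\setminus B\leq A$. The first alternative yields $\bG\subseteq\bG'$. The second would give $\widecheck{\bG'}\subseteq\bG$, and then Lemma \ref{relativization}(\ref{relativizationcheck}) would force $\widecheck{\bL}=\widecheck{\bG'}(Z)\subseteq\bG(Z)=\bL$, contradicting the non-selfduality of $\bL$. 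Hence $\bG\subseteq\bG'$, and by symmetry $\bG=\bG'$.

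I expect the main obstacle to lie in the inclusion $\bG(Z)\subseteq\bL$ of the existence step: one must convert a Wadge-reduction of $C$ to $A'$ living in $\omega^\omega$ into a Wadge-reduction of $B=C\cap Z$ to $A$ living in $Z$, and the retraction $\rho$ is exactly the device that performs this translation (which is also what makes Lemma \ref{bairetoall} the right tool for non-selfduality). Once the role of $\rho$ is clear, the remaining verifications reduce to reading off the appropriate items of Lemmas \ref{relativizationprelim} and \ref{relativization}.
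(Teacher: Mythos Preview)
Your proof is correct and follows essentially the same approach as the paper: construct $\bG$ as $[\rho^{-1}[A]]$ using a retraction onto a closed copy of $Z$ in $\omega^\omega$, verify $\bG(Z)=\bL$ via Lemmas~\ref{relativizationprelim} and~\ref{relativization}, and derive uniqueness from Wadge's Lemma together with Lemma~\ref{relativization}(\ref{relativizationcheck}). The only cosmetic differences are that the paper handles the inclusion $\bG(Z)\subseteq\bL$ by observing $\rho^{-1}[B]\in\bG$ directly from Definition~\ref{relativizationdefinition} and then quoting Lemma~\ref{bairetoall}, whereas you unwind the reduction by hand via $h=\rho\circ(g\re Z)$; and the paper's uniqueness argument first disposes of the case $\bG'=\bGc$ before invoking the semi-linear order, while yours applies Wadge's Lemma immediately and argues by symmetry---both organizations amount to the same thing.
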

\begin{proof}
First we will prove the existence of $\bG$. Pick $A\subseteq Z$ such that $\bL=A\wc$. By \cite[Theorem 7.8]{kechris} and Lemma \ref{relativization}.\ref{relativizationhomeo} we can assume without loss of generality that $Z$ is a closed subspace of $\omega^\omega$. Therefore, by \cite[Proposition 2.8]{kechris}, we can fix a retraction $\rho:\omega^\omega\longrightarrow Z$. Set $\bG=\rho^{-1}[A]\wc$ in $\omega^\omega$, and observe that $\bG$ is non-selfdual by Lemma \ref{bairetoall}. We claim that $\bL=\bG(Z)$. Since $A=\rho^{-1}[A]\cap Z\in\bG(Z)$ by Lemma \ref{relativizationsubspace} and $\bG(Z)$ is continuously closed, one sees that $\bL=A\wc\subseteq\bG(Z)$. To see that the other inclusion holds, pick $B\in\bG(Z)$. Then $\rho^{-1}[B]\in\bG$, hence $\rho^{-1}[B]\leq\rho^{-1}[A]$. It follows from Lemma \ref{bairetoall} that $B\leq A$.

Now assume, in order to get a contradiction, that $\bG,\bG'\in\NSDS(\omega^\omega)$ are such that $\bG\neq\bG'$ and $\bG(Z)=\bL=\bG'(Z)$. Notice that $\bG'=\bGc$ is impossible, as an application of Lemma \ref{relativization}.\ref{relativizationcheck} would contradict the fact that $\bL$ is non-selfdual. Therefore, we can assume without loss of generality that $\bG\subseteq\bG'$, hence $\bG'\nsubseteq\bG$. By Lemma \ref{wadgelemma}, it follows that $\bGc\subseteq\bG'$. Therefore $\bGc(Z)\subseteq\bG'(Z)=\bG(Z)$, which contradicts the fact that $\bG(Z)=\bL$ is non-selfdual.
\end{proof}

\section{Relativization: uncountable spaces}\label{sectionrelativizationuncountable}

Notice that, in the previous section, we never assumed the uncountability of the ambient spaces. As the following two results show, the situation gets particularly pleasant when this assumption is satisfied.
In particular, Theorem \ref{orderisomorphism} shows that the ordering of non-selfdual Wadge classes becomes independent of the ambient space. Observe that the uncountability assumption cannot be dropped in either result, as $\bG(Z)=\PP(Z)$ whenever $Z$ is a countable space and $\bG\subseteq\PP(\omega^\omega)$ is such that $\bD^0_2(\omega^\omega)\subseteq\bG$.

\begin{theorem}\label{orderisomorphism}
Let $\bS$ be a nice topological pointclass, and assume that $\Det(\bS(\omega^\omega))$ holds. Let $Z$ and $W$ be uncountable zero-dimensional Borel spaces, and let $\bG,\bL\in\NSDS(\omega^\omega)$. Then
$$
\bG(Z)\subseteq\bL(Z)\text{ iff }\bG(W)\subseteq\bL(W).
$$
\end{theorem}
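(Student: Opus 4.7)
The plan is to factor the theorem through a single observation that disposes of the ambient space entirely: for every uncountable zero-dimensional Borel space $Y$ and every $\bG,\bL\in\NSDS(\omega^\omega)$, the inclusion $\bG(Y)\subseteq\bL(Y)$ holds if and only if $\bG\subseteq\bL$. Since the right-hand side does not mention $Y$, applying this observation with $Y=Z$ and then with $Y=W$ will immediately yield the theorem.

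The ``if'' direction of the observation is tautological from Definition \ref{relativizationdefinition}: any continuous $g:\omega^\omega\longrightarrow Y$ and any $A\in\bG(Y)$ give $g^{-1}[A]\in\bG\subseteq\bL$, so $A\in\bL(Y)$. For ``only if'' I would argue contrapositively. Assume $\bG\not\subseteq\bL$ and pick generators $A\in\bG$ and $B\in\bL$, both of which lie in $\bool\bS(\omega^\omega)$. Wadge's Lemma \ref{wadgelemma} rules out $A\leq B$, so it forces $\omega^\omega\setminus B\leq A$, which translates to $\bLc\subseteq\bG$. The ``if'' half, run now in the reverse direction, produces $\bLc(Y)\subseteq\bG(Y)$; assuming additionally $\bG(Y)\subseteq\bL(Y)$, I would chain these to get $\bLc(Y)\subseteq\bL(Y)$. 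Combined with $\widecheck{\bL(Y)}=\bLc(Y)$ from Lemma \ref{relativization}.\ref{relativizationcheck}, taking complements then yields $\bL(Y)=\widecheck{\bL(Y)}$, that is, $\bL(Y)$ is selfdual.

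The final task is to convert this selfduality into a contradiction with $\bL\in\NSDS(\omega^\omega)$, and this is where the uncountability of $Y$ must be used. Since $Y$ is uncountable and Borel, the perfect set theorem for analytic sets provides a compact uncountable subset of $Y$, and removing a countable dense subset from such a Cantor set gives a copy of $\omega^\omega$ realized as some $Y'\subseteq Y$. Lemma \ref{relativization}.\ref{relativizationsubspace} then transfers the selfduality of $\bL(Y)$ to selfduality of $\bL(Y')$, and Lemma \ref{relativization}.\ref{relativizationhomeo} together with \ref{relativizationbaire} pushes this through the homeomorphism $\omega^\omega\cong Y'$ to obtain $\bL=\bLc$, the desired contradiction. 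I expect the only step meriting care to be this embedding of $\omega^\omega$ into $Y$; everything else is straightforward bookkeeping with the relativization machinery of Sections \ref{sectionrelativizationbasic} and \ref{sectionrelativizationuncountable}, and the failure of this embedding for countable $Y$ is precisely why uncountability cannot be dropped, consistently with the remark preceding Theorem \ref{vanwesepsurrogate}.
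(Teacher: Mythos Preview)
Your proof is correct but takes a genuinely different route from the paper's. The paper argues directly: since $Z$ is an uncountable Borel space it contains a copy of $2^\omega$, and since $W$ is zero-dimensional separable metrizable it embeds into $2^\omega$, hence into $Z$; one then assumes $W\subseteq Z$ and, given $B\in\bG(W)$, uses Lemma \ref{relativization}.\ref{relativizationsubspace} to lift $B$ to some $A\in\bG(Z)\subseteq\bL(Z)$ and then restrict back down to $B=A\cap W\in\bL(W)$. No Wadge's Lemma, no selfduality argument, no need to single out $\omega^\omega$.

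Your approach instead factors through the absolute statement $\bG(Y)\subseteq\bL(Y)\Longleftrightarrow\bG\subseteq\bL$, which is conceptually appealing because it makes the independence from $Y$ completely explicit. The price is a longer argument: the contrapositive of the ``only if'' direction needs Wadge's Lemma, the relativization compatibility with duals, an embedding of $\omega^\omega$ into $Y$, and a transfer of selfduality along that embedding. All of these steps are sound (in particular your construction of $\omega^\omega$ inside $Y$ as a Cantor set minus a countable dense set is correct by the Alexandrov--Urysohn characterization), but the paper's two applications of Lemma \ref{relativization}.\ref{relativizationsubspace} accomplish the same thing with less machinery. What your version buys is a cleaner intermediate lemma that could be reused elsewhere; what the paper's version buys is brevity.
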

\begin{proof}
It will be enough to prove the left-to-right implication, as the other implication is perfectly analogous. So assume that $\bG(Z)\subseteq\bL(Z)$, and let $B\in\bG(W)$. Since $Z$ is an uncountable Borel space and $W$ is zero-dimensional, there exists an embedding of $W$ into $Z$. Hence, using Lemma \ref{relativization}.\ref{relativizationhomeo}, we can assume without loss of generality that $W\subseteq Z$. By Lemma \ref{relativizationsubspace}, there exists $A\in\bG(Z)$ such that $A\cap W=B$. Since $A\in\bL(Z)$ by our assumption, a further application of Lemma \ref{relativizationsubspace} shows that $B\in\bL(W)$.	
\end{proof}

\begin{theorem}\label{vanwesepsurrogate}
Let $\bS$ be a nice topological pointclass, and assume that $\Det(\bS(\omega^\omega))$ holds. Let $Z$ be an uncountable zero-dimensional Polish space. Then
$$
\NSDS(Z)=\{\bG(Z):\bG\in\NSDS(\omega^\omega)\}.
$$
\end{theorem}
\begin{proof}
The inclusion $\subseteq$ holds by Lemma \ref{relativizationexistsunique}. In order to prove that the inclusion $\supseteq$ holds, pick $\bG\in\NSDS(\omega^\omega)$. By Lemma \ref{nonselfdualcontinuouslyclosed}, it will be enough to show that $\bG(Z)$ is continuously closed and non-selfdual. The fact that $\bG(Z)$ is continuously closed follows from Lemma \ref{relativization}.\ref{relativizationpreimage}. Now assume, in order to get a contradiction, that $\bG(Z)$ is selfdual. Then $\bGc(Z)=\bG(Z)$ by Lemma \ref{relativization}.\ref{relativizationcheck}, which implies $\bGc(\omega^\omega)=\bG(\omega^\omega)$ by Theorem \ref{orderisomorphism}. It follows from Lemma \ref{relativization}.\ref{relativizationbaire} that $\bGc=\bG$, which is a contradiction.
\end{proof}

\section{Hausdorff operations: basic facts}\label{sectionhausdorffbasic}

For a history of the following important notion, see \cite[page 583]{hausdorff}. For a modern survey, we recommend \cite{zafrany}. Most of the proofs in this section are straightforward, hence we leave them to the reader.

\begin{definition}
Given a set $Z$ and $D\subseteq\PP(\omega)$, define
$$
\HH_D(A_0,A_1,\ldots)=\{x\in Z:\{n\in\omega:x\in A_n\}\in D\}
$$
whenever $A_0,A_1,\ldots\subseteq Z$. Functions of this form are called \emph{Hausdorff operations} (or \emph{$\omega$-ary Boolean operations}).
\end{definition}

Of course, the function $\HH_D$ depends on the set $Z$, but what $Z$ is will usually be clear from the context. In case there might be uncertainty about the ambient space, we will use the notation $\HH_D^Z$. Notice that, once $D$ is specified, the corresponding Hausdorff operation simultaneously defines functions $\PP(Z)^\omega\longrightarrow\PP(Z)$ for every $Z$.

The following proposition lists the most basic properties of Hausdorff operations. Given $n\in\omega$, set $S_n=\{A\subseteq\omega:n\in A\}$.

\begin{proposition}\label{hausdorffsettheoretic}
Let $I$ be a set, and let $D_i\subseteq\PP(\omega)$ for every $i\in I$. Fix an ambient set $Z$ and $A_0,A_1,\ldots\subseteq Z$.
\begin{itemize}
\item $\HH_{S_n}(A_0,A_1,\ldots)=A_n$ for all $n\in\omega$.
\item $\bigcap_{i\in I}\HH_{D_i}(A_0,A_1,\ldots)=\HH_D(A_0,A_1,\ldots)$, where $D=\bigcap_{i\in I}D_i$.
\item $\bigcup_{i\in I}\HH_{D_i}(A_0,A_1,\ldots)=\HH_D(A_0,A_1,\ldots)$, where $D=\bigcup_{i\in I}D_i$.
\item $Z\setminus\HH_D(A_0,A_1,\ldots)=\HH_{\PP(\omega)\setminus D}(A_0,A_1,\ldots)$ for all $D\subseteq\PP(\omega)$.
\end{itemize}
\end{proposition}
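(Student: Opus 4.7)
The plan is to unfold all four identities directly from the definition of $\HH_D$ via the single auxiliary gadget $E_x = \{n \in \omega : x \in A_n\}$, which converts membership in a Hausdorff operation into membership in a subset of $\PP(\omega)$. The key observation is that, by definition, $x \in \HH_D(A_0, A_1, \ldots)$ if and only if $E_x \in D$. Once this reformulation is in place, each of the four equalities becomes a one-line manipulation of sets in $\PP(\omega)$, and the proposition is proved by a short case-by-case verification.

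For the first item, I would simply chase definitions: $x \in \HH_{S_n}(A_0, A_1, \ldots)$ iff $E_x \in S_n$ iff $n \in E_x$ iff $x \in A_n$. For the intersection and union items, fix $x \in Z$ and apply the reformulation uniformly across $i \in I$: $x \in \bigcap_{i \in I} \HH_{D_i}(A_0, A_1, \ldots)$ iff $E_x \in D_i$ for every $i \in I$ iff $E_x \in \bigcap_{i \in I} D_i$; swapping $\bigcap$ for $\bigcup$ and ``every'' for ``some'' handles the union statement in the same way. For the complement item, $x \in Z \setminus \HH_D(A_0, A_1, \ldots)$ iff $E_x \notin D$ iff $E_x \in \PP(\omega) \setminus D$ iff $x \in \HH_{\PP(\omega) \setminus D}(A_0, A_1, \ldots)$.

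There is no substantive obstacle here; the content of the proposition is that $D \mapsto \HH_D$ is a complete lattice homomorphism from $(\PP(\PP(\omega)), \cap, \cup, \setminus)$ to $(\PP(Z), \cap, \cup, \setminus)$ that sends the ``coordinate ultrafilter'' $S_n$ to the $n$-th coordinate projection. The only care needed is to keep the two levels of power-set straight (elements of $D$ are subsets of $\omega$, not elements of $\omega$), but once one fixes the notation $E_x$ this confusion evaporates. Accordingly, I would simply present the four verifications as parallel one-line computations and leave the bookkeeping to the reader, which matches the author's remark that these proofs are straightforward.
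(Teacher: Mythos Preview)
Your proposal is correct and matches the paper's approach: the paper leaves this proof entirely to the reader as a straightforward verification, and the direct unfolding via $E_x=\{n\in\omega:x\in A_n\}$ that you describe is exactly the intended routine check.
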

The point of the above proposition is that any operation obtained by combining unions, intersections and complements can be expressed as a Hausdorff operation. For example, if $D=\bigcup_{n\in\omega}(S_{2n+1}\setminus S_{2n})$, then $\HH_D(A_0,A_1,\ldots)=\bigcup_{n\in\omega}(A_{2n+1}\setminus A_{2n})$.

The following proposition shows that the composition of Hausdorff operations is again a Hausdorff operation. We will assume that a bijection $\langle\cdot,\cdot\rangle:\omega\times\omega\longrightarrow\omega$ has been fixed.
\begin{proposition}\label{hausdorffcomposition}
Let $Z$ be a set, let $D\subseteq\PP(\omega)$ and $E_m\subseteq\PP(\omega)$ for $m\in\omega$. Then there exists $F\subseteq\PP(\omega)$ such that
$$
\HH_D(B_0,B_1,\ldots)=\HH_F(A_0,A_1,\ldots)
$$
for all $A_0,A_1,\ldots\subseteq Z$, where $B_m=\HH_{E_m}(A_{\langle m,0\rangle},A_{\langle m,1\rangle},\ldots)$ for $m\in\omega$.
\end{proposition}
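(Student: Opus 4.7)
The plan is to construct $F$ explicitly by unwinding the definitions. For each $x \in Z$, let $T_x = \{n \in \omega : x \in A_n\}$, so membership in any Hausdorff operation applied to the $A_n$'s is determined by $T_x$ alone. My goal is to reduce membership in $\HH_D(B_0,B_1,\ldots)$ to a condition on $T_x$, and then take $F$ to be the set of $T$'s satisfying that condition.

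First I would compute: $x \in B_m$ iff $\{k \in \omega : x \in A_{\langle m,k\rangle}\} \in E_m$ iff $\pi_m(T_x) \in E_m$, where $\pi_m: \PP(\omega) \to \PP(\omega)$ is the "$m$-th slice" operation defined by $\pi_m(T) = \{k \in \omega : \langle m,k\rangle \in T\}$. Then $x \in \HH_D(B_0,B_1,\ldots)$ iff $\{m \in \omega : x \in B_m\} \in D$ iff $\{m \in \omega : \pi_m(T_x) \in E_m\} \in D$.

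This suggests the definition
$$
F = \bigl\{T \subseteq \omega : \{m \in \omega : \pi_m(T) \in E_m\} \in D\bigr\}.
$$
With this choice, $T_x \in F$ iff $x \in \HH_D(B_0,B_1,\ldots)$, and by definition $x \in \HH_F(A_0,A_1,\ldots)$ iff $T_x \in F$, giving the desired equality.

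There is no real obstacle here: the proof is purely a matter of chasing definitions through the bijection $\langle\cdot,\cdot\rangle$. The only point worth double-checking is that $F$ depends solely on $D$ and the $E_m$'s (and on the fixed bijection $\langle\cdot,\cdot\rangle$), not on $Z$ or on the particular sequence $(A_n)_{n \in \omega}$, so that the same $F$ works uniformly across all ambient sets and all input sequences; this is clear from the construction above.
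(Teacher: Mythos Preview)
Your proposal is correct and takes essentially the same approach as the paper: the paper defines $z\in F$ iff $\{m\in\omega:\{n\in\omega:\langle m,n\rangle\in z\}\in E_m\}\in D$, which is precisely your $F$ written without the auxiliary notation $\pi_m$. Your write-up is in fact more detailed than the paper's, which simply gives this definition of $F$ and declares the remainder a straightforward verification.
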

\begin{proof}
Define $z\in F$ if $\{m\in\omega:\{n\in\omega:\langle m,n\rangle\in z\}\in E_m\}\in D$. The rest of the proof is a straightforward verification.
\end{proof}

Finally, we state a result that will be needed in the next section (see the proof of Lemma \ref{hausdorffrelativization}).
\begin{lemma}\label{relativizationsettheoretic}
Let $Z$ and $W$ be sets, let $D\subseteq\PP(\omega)$, let $A_0,A_1,\ldots\subseteq Z$, and let $B_0,B_1,\ldots\subseteq W$.
\begin{enumerate}
\item\label{preimagesettheoretic} $f^{-1}[\HH_D(B_0,B_1,\ldots)]=\HH_D(f^{-1}[B_0],f^{-1}[B_1],\ldots)$ for all $f:Z\longrightarrow W$.
\item\label{homeomorphismsettheoretic} $f[\HH_D(A_0,A_1,\ldots)]=\HH_D(f[A_0],f[A_1],\ldots)$ for all bijections $f:Z\longrightarrow W$.
\item\label{subspacesettheoretic} $W\cap\HH_D^Z(A_0,A_1,\ldots)=\HH_D^W(A_0\cap W,A_1\cap W,\ldots)$ whenever $W\subseteq Z$.
\end{enumerate}
\end{lemma}

\section{Hausdorff operations: the associated classes}\label{sectionhausdorffclasses}

In the context of this article, the most important fact regarding Hausdorff operations is that they all give rise in a natural way to a non-selfdual Wadge class. More precisely, as in the following definition, this class is obtained by applying the given Hausdorff operation to all open subsets of a given space. The claim that these are all non-selfdual Wadge classes will be proved in the next section (see Theorem \ref{addisontheorem}), by employing the classical notion of a universal set.

\begin{definition}
Given a space $Z$ and $D\subseteq\PP(\omega)$, define
$$
\bG_D(Z)=\{\HH_D(A_0,A_1,\ldots):A_n\in\mathbf{\Sigma}^0_1(Z)\text{ for every }n\in\omega\}.
$$
\end{definition}

\begin{definition}
Given a space $Z$, define
$$
\Ha(Z)=\{\bG_D(Z):D\subseteq\PP(\omega)\}.
$$
Also set $\HaS(Z)=\{\bG\in\Ha(Z):\bG\subseteq\bS(Z)\}$ whenever $\bS$ is a topological pointclass.
\end{definition}

As examples (that will be useful later), consider the following two simple propositions.

\begin{proposition}\label{hausdorffdifferences}
Let $1\leq\eta<\omega_1$. Then there exists $D\subseteq\PP(\omega)$ such that $\bG_D(Z)=\mathsf{D}_\eta(\mathbf{\Sigma}^0_1(Z))$ for every space $Z$.
\end{proposition}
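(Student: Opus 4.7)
The plan is to construct a set $D\subseteq\PP(\omega)$ explicitly so that $\HH_D$, applied to an $\omega$-sequence of sets, recovers $\Diff_\eta$ applied to an $\eta$-sequence, after a fixed reindexing. Since $\eta<\omega_1$ is a countable ordinal, we may fix a bijection $\iota:\eta\longrightarrow\omega$. With this in hand, define
$$
D=\{z\subseteq\omega:\{\mu<\eta:\iota(\mu)\in z\}\text{ has a minimum of parity opposite to that of }\eta\},
$$
where the minimum is taken in the ordinal ordering on $\eta$.

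Next I would verify that $\HH_D$ computes $\Diff_\eta$. Given a space $Z$ and open sets $(A_\mu)_{\mu<\eta}$ in $Z$, set $B_n=A_{\iota^{-1}(n)}$ for $n\in\omega$. For $x\in Z$ we have the equality
$$
\{n\in\omega:x\in B_n\}=\iota[\{\mu<\eta:x\in A_\mu\}],
$$
so by the definition of $D$, $x\in\HH_D(B_0,B_1,\ldots)$ iff $\{\mu<\eta:x\in A_\mu\}$ has a minimum $\mu_0$ of the appropriate parity. By the definition of $\Diff_\eta$ (noting that $x\in A_{\mu_0}\setminus\bigcup_{\zeta<\mu_0}A_\zeta$ iff $\mu_0$ is the minimum of $\{\mu<\eta:x\in A_\mu\}$), this is precisely the condition $x\in\Diff_\eta(A_\mu:\mu<\eta)$.

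Finally, the two inclusions $\bG_D(Z)=\Diff_\eta(\bS^0_1(Z))$ are immediate from the computation above: given $A=\HH_D(B_0,B_1,\ldots)$ with each $B_n$ open, setting $A_\mu=B_{\iota(\mu)}$ exhibits $A$ as $\Diff_\eta(A_\mu:\mu<\eta)$; conversely, given $A=\Diff_\eta(A_\mu:\mu<\eta)$ with each $A_\mu$ open, setting $B_n=A_{\iota^{-1}(n)}$ exhibits $A$ as $\HH_D(B_0,B_1,\ldots)$. Crucially, $D$ itself does not depend on $Z$ — only on $\eta$ and on the fixed choice of $\iota$ — so the same $D$ works uniformly for every space $Z$, which is what the statement demands.

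I expect no real obstacle here; the only thing that requires a moment's care is confirming that the parity convention in $D$ agrees with the convention in the definition of $\Diff_\eta$ (even $\eta$ picks out odd $\mu$ and vice versa), and that the reindexing via $\iota$ does not affect the operation since $\iota$ is a bijection. The argument is entirely elementary once the right $D$ has been written down.
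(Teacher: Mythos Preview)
Your approach is correct in spirit and more explicit than the paper's. The paper simply invokes Propositions \ref{hausdorffsettheoretic} and \ref{hausdorffcomposition} (closure of Hausdorff operations under the basic set-theoretic operations and under composition), together with a bijection $\pi:\eta\to\omega$ when $\eta>\omega$, and leaves the reader to unwind the definition. You instead write down $D$ directly, which is cleaner and avoids the composition machinery entirely.

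There is one small oversight: you assert that ``since $\eta<\omega_1$ is a countable ordinal, we may fix a bijection $\iota:\eta\to\omega$'', but this fails when $1\leq\eta<\omega$. The fix is immediate: take $\iota$ to be an injection (say the identity on $\eta$) rather than a bijection. Your definition of $D$ still makes sense, and since membership in $D$ depends only on $z\cap\iota[\eta]$, the verification goes through unchanged --- in the direction $\Diff_\eta(\bS^0_1(Z))\subseteq\bG_D(Z)$ one simply sets $B_n=\varnothing$ for $n\notin\iota[\eta]$. With this adjustment the argument is complete.
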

\begin{proof}
This follows from Propositions \ref{hausdorffsettheoretic} and \ref{hausdorffcomposition} (in case $\eta>\omega$, use a bijection $\pi:\eta\longrightarrow\omega$).
\end{proof}

\begin{proposition}\label{hausdorffborel}
Let $1\leq\xi<\omega_1$. Then there exists $D\subseteq\PP(\omega)$ such that $\bG_D(Z)=\mathbf{\Sigma}^0_\xi(Z)$ for every space $Z$.
\end{proposition}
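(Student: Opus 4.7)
The plan is to proceed by induction on $\xi$. The base case $\xi=1$ is handled by taking $D=S_0$, since Proposition \ref{hausdorffsettheoretic} gives $\HH_{S_0}(A_0,A_1,\ldots)=A_0$, so $\bG_{S_0}(Z)=\bS^0_1(Z)$ for every $Z$.

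For the inductive step at $\xi\geq 2$, I would exploit the recursive description $\bS^0_\xi(Z)=\{\bigcup_m B_m:B_m\in\bigcup_{\zeta<\xi}\bP^0_\zeta(Z)\}$ together with the fact that Hausdorff operations compose. Since $\xi<\omega_1$, the set $\{\zeta:1\leq\zeta<\xi\}$ is countable, so we may fix an enumeration $(\xi_m)_{m\in\omega}$ of it in which every value appears infinitely often. By the inductive hypothesis, choose $D_m\subseteq\PP(\omega)$ with $\bG_{D_m}(Z)=\bS^0_{\xi_m}(Z)$ for every $Z$, and set $E_m=\PP(\omega)\setminus D_m$; then Proposition \ref{hausdorffsettheoretic} yields $\bG_{E_m}(Z)=\bP^0_{\xi_m}(Z)$. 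Finally, let $D=\PP(\omega)\setminus\{\varnothing\}=\bigcup_{n\in\omega}S_n$, so that $\HH_D$ is the countable-union operation, and apply Proposition \ref{hausdorffcomposition} to obtain an $F\subseteq\PP(\omega)$ satisfying
$$
\HH_F(A_0,A_1,\ldots)=\bigcup_{m\in\omega}\HH_{E_m}(A_{\langle m,0\rangle},A_{\langle m,1\rangle},\ldots)
$$
for every ambient set and every sequence $(A_j)$. The candidate for the statement is this $F$.

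It remains to verify $\bG_F(Z)=\bS^0_\xi(Z)$ for every space $Z$. The inclusion $\subseteq$ is immediate: when each $A_j\in\bS^0_1(Z)$, each summand lies in $\bP^0_{\xi_m}(Z)\subseteq\bS^0_\xi(Z)$, and $\bS^0_\xi(Z)$ is closed under countable unions. For the converse, given $A\in\bS^0_\xi(Z)$, write $A=\bigcup_n A_n$ with $A_n\in\bP^0_{\zeta_n}(Z)$ and $\zeta_n<\xi$; using that every value in $(\xi_m)$ recurs infinitely often, assign distinct indices $m(n)$ with $\xi_{m(n)}\geq\max(\zeta_n,1)$, so that $A_n\in\bP^0_{\xi_{m(n)}}(Z)=\bG_{E_{m(n)}}(Z)$ and hence $A_n=\HH_{E_{m(n)}}(C^n_0,C^n_1,\ldots)$ for some open sets $C^n_i$.

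The step requiring the most care is bookkeeping for indices $m$ outside the range of $m(\cdot)$: these still contribute a summand $\HH_{E_m}(\ldots)$ to the expression for $\HH_F$, and we must arrange that summand to be empty. This is possible because $\varnothing\in\bP^0_{\xi_m}(Z)=\bG_{E_m}(Z)$, so we can fix open sets $(C^{\ast m}_i)_{i\in\omega}$ with $\HH_{E_m}(C^{\ast m}_0,C^{\ast m}_1,\ldots)=\varnothing$ and use them as padding. Setting $A_{\langle m(n),i\rangle}=C^n_i$ and $A_{\langle m,i\rangle}=C^{\ast m}_i$ for $m\notin\ran(m(\cdot))$ gives open sets with $\HH_F(A_0,A_1,\ldots)=\bigcup_n A_n=A$, completing the inductive step.
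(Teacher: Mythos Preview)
Your argument is correct and follows exactly the approach the paper indicates: induction on $\xi$, using Proposition~\ref{hausdorffsettheoretic} to express complementation and countable union as Hausdorff operations, and Proposition~\ref{hausdorffcomposition} to compose them. The paper's proof is a one-line sketch to this effect; you have simply filled in the details (enumeration of levels below $\xi$ with infinite repetition, padding unused indices with representations of $\varnothing$), and these details are handled correctly.
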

\begin{proof}
This can be proved by induction on $\xi$, using Propositions \ref{hausdorffsettheoretic} and \ref{hausdorffcomposition}.
\end{proof}

Finally, we state a useful lemma, which shows that this notion behaves well with respect to subspaces and continuous functions. It extends (and is inspired by) \cite[Lemma 2.3]{vanengelena}.

\newpage
\begin{lemma}\label{hausdorffrelativization}
Let $Z$ and $W$ be spaces, and let $D\subseteq\PP(\omega)$.
\begin{enumerate}
\item\label{hausdorffrelativizationpreimage} If $f:Z\longrightarrow W$ is continuous and $B\in\bG_D(W)$ then $f^{-1}[B]\in\bG_D(Z)$.
\item\label{hausdorffrelativizationhomeomorphism} If $h:Z\longrightarrow W$ is a homeomorphism then $A\in\bG_D(Z)$ iff $h[A]\in\bG_D(W)$.
\item\label{hausdorffrelativizationsubspace} Assume that $W\subseteq Z$. Then $B\in\bG_D(W)$ iff there exists $A\in\bG_D(Z)$ such that $B=A\cap W$.
\end{enumerate}
\end{lemma}
\begin{proof}
This is a straightforward consequence of Lemma \ref{relativizationsettheoretic}.
\end{proof}

\section{Hausdorff operations: universal sets}\label{sectionhausdorffuniversal}

The aim of this section is to show that $\Ha(Z)\subseteq\NSD(Z)$ whenever $Z$ is an uncountable zero-dimensional Polish space (see Theorem \ref{addisontheorem} for a more precise statement). Notice that the uncountability requirement cannot be dropped, as $\bS^0_2(Z)=\PP(Z)$ is selfdual whenever $Z$ is countable. The ideas presented here are well-known, but since we could not find a satisfactory reference, we will give all the details. Our approach is inspired by \cite[Section 22.A]{kechris}. 

\begin{definition}
Let $Z$ and $W$ be spaces, and let $D\subseteq\PP(\omega)$. Given $U\subseteq W\times Z$ and $x\in W$, let $U_x=\{y\in Z:(x,y)\in U\}$ denote the vertical section of $U$ above $x$. We will say that $U\subseteq W\times Z$ is a \emph{$W$-universal set} for $\bG_D(Z)$ if the following two conditions hold:
\begin{itemize}
\item $U\in\bG_D(W\times Z)$,
\item $\{U_x:x\in W\}=\bG_D(Z)$.
\end{itemize}	
\end{definition}

Notice that, by Proposition \ref{hausdorffborel}, the above definition applies to the class $\mathbf{\Sigma}^0_\xi(Z)$ whenever $1\leq\xi <\omega_1$. Furthermore, for these classes, this definition agrees with \cite[Definition 22.2]{kechris}.

\begin{proposition}\label{existsCuniversal}
Let $Z$ be a space, and let $D\subseteq\PP(\omega)$. Then there exists a $2^\omega$-universal set for $\bG_D(Z)$.
\end{proposition}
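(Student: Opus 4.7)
The plan is to bootstrap from the classical existence of a $2^\omega$-universal set for the open subsets of $Z$ (essentially \cite[Theorem 22.3]{kechris}), namely, a set $V\subseteq 2^\omega\times Z$ with $V\in\bS^0_1(2^\omega\times Z)$ and $\{V_x:x\in 2^\omega\}=\bS^0_1(Z)$. Once we have $V$, the idea is simply to build $U$ by applying the Hausdorff operation $\HH_D$ to suitable continuous pullbacks of $V$, using a homeomorphism that lets a single parameter in $2^\omega$ code a whole $\omega$-sequence of parameters in $2^\omega$.

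More precisely, I would fix a homeomorphism $h:2^\omega\longrightarrow (2^\omega)^\omega$, and for every $x\in 2^\omega$ write $h(x)=(x_0,x_1,\ldots)$. For each $n\in\omega$, the projection-type map $\phi_n:2^\omega\times Z\longrightarrow 2^\omega\times Z$ defined by $\phi_n(x,y)=(x_n,y)$ is continuous, so $W_n=\phi_n^{-1}[V]$ belongs to $\bS^0_1(2^\omega\times Z)$. Now set
$$
U=\HH_D(W_0,W_1,\ldots)\in\bG_D(2^\omega\times Z).
$$
This takes care of the first clause of the definition of universality.

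For the second clause, unravel the definitions: for every $x\in 2^\omega$,
$$
U_x=\{y\in Z:\{n\in\omega:y\in (W_n)_x\}\in D\}=\HH_D(V_{x_0},V_{x_1},\ldots),
$$
since $(W_n)_x=V_{x_n}$. This shows $U_x\in\bG_D(Z)$. Conversely, given any $A\in\bG_D(Z)$, write $A=\HH_D(A_0,A_1,\ldots)$ with each $A_n\in\bS^0_1(Z)$; by the universality of $V$, choose $a_n\in 2^\omega$ with $V_{a_n}=A_n$, and let $x=h^{-1}(a_0,a_1,\ldots)$. Then $U_x=\HH_D(A_0,A_1,\ldots)=A$, so $\{U_x:x\in 2^\omega\}=\bG_D(Z)$.

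There is no real obstacle here; the only thing to keep in mind is to invoke the correct classical universal open set and to verify that $\phi_n$ is continuous (which is immediate from the continuity of $h$ together with the continuity of the $n$-th coordinate projection $(2^\omega)^\omega\longrightarrow 2^\omega$). The rest is a direct application of Lemma \ref{relativizationsettheoretic} combined with the definition of $\HH_D$.
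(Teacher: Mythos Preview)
Your proposal is correct and follows essentially the same approach as the paper: start from the classical $2^\omega$-universal open set, pull it back along the continuous maps $(x,y)\mapsto(\pi_n(h(x)),y)$ to obtain a sequence of open sets, and apply $\HH_D$ to that sequence. Apart from a harmless swap of the names $U$ and $V$ (and writing $\phi_n$, $W_n$ where the paper writes $f_n$, $V_n$), your argument and the paper's are the same; your invocation of Lemma~\ref{relativizationsettheoretic} to pass to vertical sections is exactly what underlies the paper's use of Lemma~\ref{hausdorffrelativization}.
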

\begin{proof}
By \cite[Theorem 22.3]{kechris}, we can fix a $2^\omega$-universal set $U$ for $\mathbf{\Sigma}^0_1(Z)$. Let $h:2^\omega\longrightarrow (2^\omega)^\omega$ be a homeomorphism, and let $\pi_n:(2^\omega)^\omega\longrightarrow 2^\omega$ be the projection on the $n$-th coordinate for $n\in\omega$. Notice that, given any $n\in\omega$, the function $f_n:2^\omega\times Z\longrightarrow 2^\omega\times Z$ defined by $f_n(x,y)=(\pi_n(h(x)),y)$ is continuous. Let $V_n=f_n^{-1}[U]$ for each $n$, and observe that each $V_n\in\mathbf{\Sigma}^0_1(2^\omega\times Z)$. Set $V=\HH_D(V_0,V_1,\ldots)$.

We claim that $V$ is a $2^\omega$-universal set for $\bG_D(Z)$. It is clear that $V\in\bG_D(2^\omega\times Z)$. Furthermore, using Lemma \ref{hausdorffrelativization}, one can easily check that $V_x\in\bG_D(Z)$ for every $x\in 2^\omega$. To complete the proof, fix $A\in\bG_D(Z)$. Let $A_0,A_1,\ldots\in\mathbf{\Sigma}^0_1(Z)$ be such that $A=\HH_D(A_0,A_1,\ldots)$. Since $U$ is $2^\omega$-universal, we can fix $z_n\in 2^\omega$ such that $U_{z_n}=A_n$ for every $n\in\omega$. Set $z=h^{-1}(z_0,z_1,\ldots)$. It is straightforward to verify that $V_z=A$.
\end{proof}

\begin{corollary}\label{existsZuniversal}
Let $Z$ be a space in which $2^\omega$ embeds, and let $D\subseteq\PP(\omega)$. Then there exists a $Z$-universal set for $\bG_D(Z)$.
\end{corollary}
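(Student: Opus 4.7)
The plan is to leverage the $2^\omega$-universal set provided by Proposition \ref{existsCuniversal} and push it forward along the given embedding into $Z$, then extend the resulting subset of $W \times Z$ (where $W$ is the image of the embedding) to the whole product $Z \times Z$ using the subspace property of Hausdorff-generated classes.

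More precisely, first I would fix an embedding $j:2^\omega\longrightarrow Z$ and let $W=j[2^\omega]$, then apply Proposition \ref{existsCuniversal} to obtain a $2^\omega$-universal set $U\subseteq 2^\omega\times Z$ for $\bG_D(Z)$. The map $j\times\id_Z:2^\omega\times Z\longrightarrow Z\times Z$ is a homeomorphism onto $W\times Z$, so Lemma \ref{hausdorffrelativization}.\ref{hausdorffrelativizationhomeomorphism} yields $(j\times\id_Z)[U]\in\bG_D(W\times Z)$. Then Lemma \ref{hausdorffrelativization}.\ref{hausdorffrelativizationsubspace} gives some $V\in\bG_D(Z\times Z)$ with $V\cap(W\times Z)=(j\times\id_Z)[U]$. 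This $V$ will be the required $Z$-universal set.

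It remains to verify the two defining conditions of universality. For every $x\in Z$, the section $V_x$ is the preimage of $V$ under the continuous map $y\mapsto(x,y)$, so $V_x\in\bG_D(Z)$ by Lemma \ref{hausdorffrelativization}.\ref{hausdorffrelativizationpreimage}. Conversely, given $A\in\bG_D(Z)$, by the $2^\omega$-universality of $U$ there is $z\in 2^\omega$ with $U_z=A$; setting $x=j(z)\in W$ and using injectivity of $j$, one checks that
$$
V_x=\{y\in Z:(j(z),y)\in V\cap(W\times Z)\}=\{y\in Z:(z,y)\in U\}=U_z=A.
$$
Thus $\{V_x:x\in Z\}=\bG_D(Z)$, completing the argument.

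The proof is essentially a transfer argument, so there is no real obstacle beyond bookkeeping; the only point that requires a moment's care is the verification that the section of $V$ over $j(z)$ coincides with $U_z$, which hinges on $j$ being injective and on $V$ agreeing with the pushforward of $U$ on $W\times Z$.
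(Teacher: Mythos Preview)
Your argument is correct and follows essentially the same route as the paper's proof: push forward the $2^\omega$-universal set via $j\times\id_Z$, extend from $W\times Z$ to $Z\times Z$ using Lemma \ref{hausdorffrelativization}.\ref{hausdorffrelativizationsubspace}, and verify universality. In fact you have spelled out the verification of the two universality conditions that the paper leaves as an exercise to the reader.
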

\begin{proof}
By Proposition \ref{existsCuniversal}, we can fix a $2^\omega$-universal set $U$ for $\bG_D(Z)$. Fix an embedding $j:2^\omega\longrightarrow Z$ and set $W=j[2^\omega]$. Notice that $(j\times\id_Z)[U]\in\bG_D(W\times Z)$ by Lemma \ref{hausdorffrelativization}.\ref{hausdorffrelativizationhomeomorphism}. Therefore, by Lemma \ref{hausdorffrelativization}.\ref{hausdorffrelativizationsubspace}, there exists $V\in\bG_D(Z\times Z)$ such that $V\cap (W\times Z)=(j\times\id_Z)[U]$. Using Lemma \ref{hausdorffrelativization} again, one can easily check that $V$ is a $Z$-universal set for $\bG_D(Z)$.
\end{proof}

\begin{lemma}\label{Zuniversalnonselfdual}
Let $Z$ be a space, and let $D\subseteq\PP(\omega)$. Assume that there exists a $Z$-universal set for $\bG_D(Z)$. Then $\bG_D(Z)$ is non-selfdual.
\end{lemma}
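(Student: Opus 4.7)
The plan is to carry out the standard diagonalization argument, which is the classical way to exploit a universal set to produce non-selfduality. Fix a $Z$-universal set $U \subseteq Z \times Z$ for $\bG_D(Z)$, and consider the diagonal set
\[
A = \{x \in Z : (x,x) \in U\}.
\]
The first step is to observe that $A$ itself lies in $\bG_D(Z)$. Indeed, the diagonal map $\delta : Z \longrightarrow Z \times Z$ defined by $\delta(z) = (z,z)$ is continuous, and $A = \delta^{-1}[U]$; since $U \in \bG_D(Z \times Z)$ by universality, Lemma \ref{hausdorffrelativization}.\ref{hausdorffrelativizationpreimage} gives $A \in \bG_D(Z)$.

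Next, I would assume toward a contradiction that $\bG_D(Z)$ is selfdual, so that $Z \setminus A \in \bG_D(Z)$ as well. By the defining property of a $Z$-universal set, there exists some $x_0 \in Z$ such that $U_{x_0} = Z \setminus A$. Chasing definitions now yields the desired contradiction: on the one hand $x_0 \in U_{x_0}$ iff $x_0 \in Z \setminus A$ iff $x_0 \notin A$; on the other hand, directly from the definition of $A$, we have $x_0 \in A$ iff $(x_0, x_0) \in U$ iff $x_0 \in U_{x_0}$. Combining these gives $x_0 \in U_{x_0}$ iff $x_0 \notin U_{x_0}$, which is absurd.

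There is essentially no obstacle here beyond setting up the notation correctly; the only ingredients needed are that $\delta$ is continuous (so that Lemma \ref{hausdorffrelativization}.\ref{hausdorffrelativizationpreimage} applies to witness $A \in \bG_D(Z)$) and the defining properties of $U$. The proof does not use any determinacy hypothesis, and it works uniformly for every $D \subseteq \PP(\omega)$, which is consistent with the remark that some further assumption on $Z$ (uncountability, so that $2^\omega$ embeds into $Z$) is needed only to guarantee the \emph{existence} of such a universal set via Corollary \ref{existsZuniversal}.
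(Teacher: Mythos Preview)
Your proof is correct and essentially identical to the paper's: both pull back $U$ along the continuous diagonal map $x\mapsto(x,x)$ to obtain a set in $\bG_D(Z)$, assume selfduality to get its complement in $\bG_D(Z)$, and then derive a contradiction by picking a point $x_0$ whose vertical section equals that complement. The only cosmetic difference is that you name the diagonal set $A$ and cite Lemma~\ref{hausdorffrelativization}.\ref{hausdorffrelativizationpreimage} explicitly, whereas the paper writes $f^{-1}[U]$ throughout and leaves the preimage closure implicit.
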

\begin{proof}
Fix a $Z$-universal set $U\subseteq Z\times Z$ for $\bG_D(Z)$. Assume, in order to get a contradiction, that $\bG_D(Z)$ is selfdual. Let $f:Z\longrightarrow Z\times Z$ be the function defined by $f(x)=(x,x)$, and observe that $f$ is continuous. Since $f^{-1}[U]\in\bG_D(Z)=\bGc_D(Z)$, we see that $Z\setminus f^{-1}[U]\in\bG_D(Z)$. Therefore, since $U$ is $Z$-universal, we can fix $z\in Z$ such that $U_z=Z\setminus f^{-1}[U]$. If $z\in U_z$ then $f(z)=(z,z)\in U$ by the definition of $U_z$, contradicting the fact that $U_z=Z\setminus f^{-1}[U]$. On the other hand, if $z\notin U_z$ then $f(z)=(z,z)\notin U$ by the definition of $U_z$, contradicting the fact that $Z\setminus U_z=f^{-1}[U]$.
\end{proof}

The case $Z=\omega^\omega$ of the following result is \cite[Proposition 5.0.3]{vanwesept}, and it is credited to Addison by Van Wesep.
\begin{theorem}\label{addisontheorem}
Let $Z$ be a zero-dimensional space in which $2^\omega$ embeds. Then $\Ha(Z)\subseteq\NSD(Z)$.
\end{theorem}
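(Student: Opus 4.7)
The plan is to apply the universal-set machinery already set up. By Corollary \ref{existsZuniversal}, since $2^\omega$ embeds in $Z$, there is a $Z$-universal set $U\subseteq Z\times Z$ for $\bG_D(Z)$. Lemma \ref{Zuniversalnonselfdual} immediately gives that $\bG_D(Z)$ is non-selfdual. So the only remaining task is to show that $\bG_D(Z)$ is a Wadge class, i.e., to find a single $A\subseteq Z$ with $\bG_D(Z)=[A]$.

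My plan is to code the universal set $U$ as a subset of $Z$ via an embedding $j:Z\times Z\longrightarrow Z$. To produce such an embedding, first note that since $Z$ is separable metrizable and zero-dimensional, a standard argument (using a countable clopen base and the characteristic functions of its members) embeds $Z$ into $2^\omega$. Then $Z\times Z$ embeds into $2^\omega\times 2^\omega\cong 2^\omega$, and $2^\omega$ embeds into $Z$ by hypothesis; composing yields the desired $j:Z\times Z\longrightarrow Z$. Using Lemma \ref{hausdorffrelativization}.\ref{hausdorffrelativizationhomeomorphism}, the image $j[U]$ belongs to $\bG_D(j[Z\times Z])$, and then Lemma \ref{hausdorffrelativization}.\ref{hausdorffrelativizationsubspace} furnishes $A\in\bG_D(Z)$ with $A\cap j[Z\times Z]=j[U]$.

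I then claim that $\bG_D(Z)=[A]$. The inclusion $[A]\subseteq\bG_D(Z)$ is immediate from Lemma \ref{hausdorffrelativization}.\ref{hausdorffrelativizationpreimage}, since $A\in\bG_D(Z)$ and $\bG_D(Z)$ is closed under continuous preimages. For the reverse inclusion, pick $B\in\bG_D(Z)$; by $Z$-universality there is some $x\in Z$ with $U_x=B$. The map $g_x:Z\longrightarrow Z$ defined by $g_x(y)=j(x,y)$ is continuous, and since $j$ is injective and $j(x,y)$ always lies in $j[Z\times Z]$, one has
$$
g_x^{-1}[A]=\{y\in Z:j(x,y)\in A\cap j[Z\times Z]\}=\{y\in Z:j(x,y)\in j[U]\}=U_x=B,
$$
so $B\leq A$ in $Z$. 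This completes the proof.

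I do not anticipate a real obstacle here: all heavy lifting has been done in the preceding sections, and the only non-formal point is the embedding $Z\times Z\hookrightarrow Z$, which is exactly what the assumptions (zero-dimensional plus $2^\omega\hookrightarrow Z$) are designed to provide. The only thing one should be mindful of is to invoke both parts of Lemma \ref{hausdorffrelativization} in the right order when transferring $U$ from $Z\times Z$ to $Z$.
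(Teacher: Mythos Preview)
Your argument is correct and essentially mirrors the paper's proof. The only cosmetic difference is that the paper works with a $2^\omega$-universal set $U\subseteq 2^\omega\times Z$ (from Proposition~\ref{existsCuniversal}) and embeds $2^\omega\times Z$ into $Z$, whereas you first pass to a $Z$-universal set and embed $Z\times Z$ into $Z$; the embedding is produced by the same $Z\hookrightarrow 2^\omega\hookrightarrow Z$ trick in both cases, and the remainder of the argument (transferring $U$ via Lemma~\ref{hausdorffrelativization} and reading off the reduction from the section map) is identical.
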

\begin{proof}
Pick $D\subseteq\PP(\omega)$. The fact that $\bG_D(Z)$ is non-selfdual follows from Corollary \ref{existsZuniversal} and Lemma \ref{Zuniversalnonselfdual}. Therefore, it will be enough to show that $\bG_D(Z)$ is a Wadge class. By Proposition \ref{existsCuniversal}, we can fix a $2^\omega$-universal set $U\subseteq 2^\omega\times Z$ for $\bG_D(Z)$. Fix an embedding $j:2^\omega\times Z\longrightarrow Z$ and set $W=j[2^\omega\times Z]$. By Lemma \ref{hausdorffrelativization}, we can fix $A\in\bG_D(Z)$ such that $A\cap W=j[U]$. We claim that $\bG_D(Z)=A\wc$. The inclusion $\supseteq$ follows from Lemma \ref{hausdorffrelativization}.\ref{hausdorffrelativizationpreimage}. In order to prove the other inclusion, pick $B\in\bG_D(Z)$. Since $U$ is $2^\omega$-universal, we can fix $z\in 2^\omega$ such that $B=U_z$. Consider the function $f:Z\longrightarrow 2^\omega\times Z$ defined by $f(x)=(z,x)$, and observe that $f$ is continuous. It is straightforward to check that $j\circ f:Z\longrightarrow Z$ witnesses that $B\leq A$ in $Z$.
\end{proof}

\section{The complete analysis of $\bD^0_2$}\label{sectioncompleteanalysis}

Let $Z$ be an uncountable zero-dimensional Polish space. Observe that $\Diff_\eta(\bS^0_1(Z))\in\NSD(Z)$ whenever $1\leq\eta<\omega_1$ by Proposition \ref{hausdorffdifferences} and Theorem \ref{addisontheorem}. In this section we will show that these are the only non-trivial elements of $\NSD(Z)$ contained in $\bD^0_2(Z)$. We will need this fact in the proof of Theorem \ref{sdmain}. The case $Z=\omega^\omega$ of this result is already mentioned in \cite[pages 84-85]{vanwesept}, but we are not aware of a satisfactory reference for it. We begin by stating a classical result (see \cite[Theorem 22.27]{kechris} for a proof).

\begin{theorem}[Hausdorff, Kuratowski]\label{hausdorffkuratowskitheorem}
Let $Z$ be a Polish space, and let $1\leq\xi<\omega_1$. Then
$$
\bD^0_{\xi+1}(Z)=\bigcup_{1\leq\eta<\omega_1}\Diff_\eta(\bS^0_\xi(Z)).
$$
\end{theorem}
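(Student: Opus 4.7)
The plan is to prove the two inclusions separately, with $\supseteq$ being a routine verification and $\subseteq$ carrying the main content of the theorem.

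For $\supseteq$: As remarked in the footnote to the definition of $\Diff_\eta$, every $A\in\Diff_\eta(\bS^0_\xi(Z))$ admits a $\subseteq$-increasing witness $(A_\mu:\mu<\eta)$. Then $A=\bigsqcup_{\mu\in S}D_\mu$ where $S$ collects the indices of the parity opposite to $\eta$ and $D_\mu=A_\mu\setminus\bigcup_{\zeta<\mu}A_\zeta$ are pairwise disjoint. Each $D_\mu$ is the finite intersection of an $\bS^0_\xi$-set with a $\bP^0_\xi$-set, hence lies in $\bD^0_{\xi+1}(Z)$. A countable union of $\bD^0_{\xi+1}$-sets lies in $\bS^0_{\xi+1}$, so $A\in\bS^0_{\xi+1}(Z)$. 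Dually, $Z\setminus A=(Z\setminus\bigcup_\mu A_\mu)\cup\bigsqcup_{\mu\notin S,\,\mu<\eta}D_\mu$ is also a countable union of $\bD^0_{\xi+1}$-sets, so $Z\setminus A\in\bS^0_{\xi+1}(Z)$. Hence $A\in\bD^0_{\xi+1}(Z)$, as required. (Alternatively one could induct on $\eta$ using Lemmas \ref{diffsuccessor} and \ref{difflimit}.)

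For $\subseteq$: Fix $A\in\bD^0_{\xi+1}(Z)$, and write $A=\bigcup_n S_n$ and $Z\setminus A=\bigcup_n T_n$ as increasing countable unions with $S_n,T_n\in\bP^0_\xi(Z)$. I would mimic the classical Kuratowski derivative argument (which is exactly what is done in \cite[Theorem 22.27]{kechris}): construct transfinitely a $\subseteq$-decreasing sequence $Z=P_0\supseteq P_1\supseteq\cdots$ of $\bP^0_\xi$-in-$Z$ subsets, where at each successor stage one locates a maximal $\bS^0_\xi$-in-$P_\alpha$ piece that sits entirely inside $A$ or entirely inside $Z\setminus A$ (the choice alternating with the parity of $\alpha$) and removes it; at limits one takes intersections. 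Assemble the ``peeled'' layers into an increasing sequence $(A_\mu:\mu<\eta)$ of $\bS^0_\xi(Z)$-sets so that the resulting differences of appropriate parity reconstruct exactly $A$.

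The main obstacle is a simultaneous two-sided termination argument: one must verify (i) that at each successor stage a nonempty such piece actually exists (so the derivative process strictly contracts while $P_\alpha\neq\varnothing$), and (ii) that the contraction forces the process to halt at some countable $\eta<\omega_1$. Point (i) uses the representations $A=\bigcup_n S_n$ and $Z\setminus A=\bigcup_n T_n$ together with the Baire category theorem applied inside the Polish subspace $P_\alpha$ to locate an open-in-$P_\alpha$ set lying wholly inside one of the $S_n$ or $T_n$. Point (ii) follows from second countability of $Z$: any strictly decreasing transfinite sequence of closed subsets of a second-countable space has countable length. Once the construction is executed, the sequence $(A_\mu)_{\mu<\eta}$ obtained as partial unions of the layers witnesses $A\in\Diff_\eta(\bS^0_\xi(Z))$, completing the inclusion and hence the theorem.
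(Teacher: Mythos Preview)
The paper's own ``proof'' is simply a citation to \cite[Theorem 22.27]{kechris}, so your proposal is in spirit the same approach: you are sketching the classical derivative argument that Kechris carries out.

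However, your sketch for the inclusion $\subseteq$ has a genuine gap when $\xi\geq 2$. You construct a decreasing sequence $(P_\alpha)$ of $\bP^0_\xi$-in-$Z$ sets and then invoke (i) ``the Baire category theorem applied inside the Polish subspace $P_\alpha$'' and (ii) the fact that ``any strictly decreasing transfinite sequence of closed subsets of a second-countable space has countable length.'' Both appeals fail for $\xi\geq 2$: a $\bP^0_\xi$ subset of a Polish space is Polish only when it is $G_\delta$, i.e., only when $\xi=1$; and your $P_\alpha$ are not closed, so the Lindel\"of-type termination argument does not apply. Also, even when $\xi=1$, what you need to peel off is a $\bS^0_\xi$-in-$P_\alpha$ set, so for general $\xi$ ``open-in-$P_\alpha$'' is not the right notion.

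The standard fix---and what makes the Kechris proof go through uniformly---is to first reduce to $\xi=1$ by refining the topology: apply Corollary \ref{sigmaintoopen} (Kuratowski's change-of-topology theorem) with $\Aa$ consisting of the countably many sets $S_n,T_n$ (or rather their complements) to obtain a finer zero-dimensional Polish topology $\sigma\subseteq\bS^0_\xi(Z,\tau)$ in which each $S_n,T_n$ is closed. Then $A\in\bD^0_2(Z,\sigma)$, your derivative argument runs correctly (now the $P_\alpha$ really are closed, hence Polish, and termination is immediate), yielding $A\in\Diff_\eta(\bS^0_1(Z,\sigma))\subseteq\Diff_\eta(\bS^0_\xi(Z,\tau))$. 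Your argument for the case $\xi=1$ is fine; it is only the passage to higher $\xi$ that needs this extra step.
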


\begin{theorem}\label{completeanalysisdifferences}
Let $Z$ be a zero-dimensional Polish space, and let $\bG\in\NSD(Z)$ be such that $\bG\subseteq\bD^0_2(Z)$. Assume that $\bG\neq\{\varnothing\}$ and $\bG\neq\{Z\}$. Then there exists $1\leq\eta<\omega_1$ such that $\bG=\Diff_\eta(\bS^0_1(Z))$ or $\bG=\widecheck{\Diff}_\eta(\bS^0_1(Z))$.
\end{theorem}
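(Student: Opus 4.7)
My plan is to pick a generator $A$ for $\bG$, find the minimal difference level of $A$ using the Hausdorff--Kuratowski Theorem \ref{hausdorffkuratowskitheorem}, and argue that this level already pins down $\bG$. Concretely, choose $A\subseteq Z$ with $\bG=[A]$; then $A\in\bD^0_2(Z)=\bigcup_{1\leq\eta<\omega_1}\Diff_\eta(\bS^0_1(Z))$. Since the classes $\Diff_\eta(\bS^0_1(Z))$ are $\subseteq$-increasing in $\eta$ (by padding the defining sequence with copies of $\varnothing$), I let $\eta_0$ be the least $\eta$ such that $A\in\Diff_\eta(\bS^0_1(Z))$ or $Z\setminus A\in\Diff_\eta(\bS^0_1(Z))$. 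Replacing $A$ by $Z\setminus A$ (and $\bG$ by $\bGc$) if necessary, I assume $A\in\Diff_{\eta_0}(\bS^0_1(Z))$; the conclusion of the theorem is symmetric under such a replacement. By Proposition \ref{hausdorffdifferences} and Lemma \ref{hausdorffrelativization}.\ref{hausdorffrelativizationpreimage}, $\Diff_{\eta_0}(\bS^0_1(Z))$ is continuously closed, so $\bG=[A]\subseteq\Diff_{\eta_0}(\bS^0_1(Z))$.

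The substantive part is showing this inclusion is an equality. Suppose not: by Proposition \ref{hausdorffdifferences} and Theorem \ref{addisontheorem}, $\Diff_{\eta_0}(\bS^0_1(Z))$ is a non-selfdual Wadge class, hence equals $[B]$ for some $B\subseteq Z$. The strict inclusion $\bG\subsetneq[B]$ forces $B\not\leq A$; Wadge's Lemma \ref{wadgelemma} (its determinacy hypothesis being met by Borel determinacy, which holds in $\ZF+\DC$) then yields $Z\setminus A\leq B$, so $Z\setminus A\in\Diff_{\eta_0}(\bS^0_1(Z))$, equivalently $A\in\widecheck{\Diff}_{\eta_0}(\bS^0_1(Z))$. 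Thus $A$ lies in the ``ambiguous'' class $\Diff_{\eta_0}(\bS^0_1(Z))\cap\widecheck{\Diff}_{\eta_0}(\bS^0_1(Z))$ while being non-selfdual (because $\bG$ is) and non-trivial (because $\bG\neq\{\varnothing\},\{Z\}$, so $A\neq\varnothing,Z$).

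The \textbf{main obstacle} is turning this into a contradiction. I would prove, by induction on $\eta$, the following intermediate claim: every $C\in\Diff_\eta(\bS^0_1(Z))\cap\widecheck{\Diff}_\eta(\bS^0_1(Z))$ with $C\neq\varnothing$ and $C\neq Z$ is either selfdual or belongs to $\Diff_{\eta'}(\bS^0_1(Z))\cup\widecheck{\Diff}_{\eta'}(\bS^0_1(Z))$ for some $\eta'<\eta$. Applied to $A$ at level $\eta_0$, this contradicts either the non-selfduality of $\bG$ or the minimality of $\eta_0$, finishing the proof. The base $\eta=1$ amounts to the standard observation that every non-trivial clopen subset of a zero-dimensional space is selfdual (an explicit reduction sends each of the two clopen halves of $Z$ to a constant point in the opposite half). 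For successor $\eta=\zeta+1$, Lemma \ref{diffsuccessor} produces decompositions $C=U\setminus D$ and $Z\setminus C=U'\setminus D'$ with $U,U'\in\bS^0_1(Z)$, $D,D'\in\Diff_\zeta(\bS^0_1(Z))$, $D\subseteq U$, and $D'\subseteq U'$. A direct bookkeeping on the four regions $Z\setminus(U\cup U')$, $U\setminus U'$, $U'\setminus U$, $U\cap U'$ shows $U\cup U'=Z$ and $D,D'\subseteq U\cap U'$ with $D\sqcup D'=U\cap U'$. Passing to the open subspace $R=U\cap U'$ and invoking Lemma \ref{hausdorffrelativization}.\ref{hausdorffrelativizationsubspace}, one sees $D\in\Diff_\zeta(\bS^0_1(R))\cap\widecheck{\Diff}_\zeta(\bS^0_1(R))$; the inductive hypothesis applied inside $R$ gives either a selfduality witness or a lower-level decomposition of $D$, which can be propagated back to $C=U\setminus D$ using Proposition \ref{closureclopen} and the fact that $D\subseteq U$. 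The limit case is similar, with Lemma \ref{difflimit} providing a decomposition of $C$ into countably many pieces living in lower-level classes on pairwise disjoint open sets, to which the inductive hypothesis applies componentwise.
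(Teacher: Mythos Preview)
Your overall strategy coincides with the paper's: pick a generator $A$, take the minimal $\eta$ with $A\in\Diff_\eta(\bS^0_1(Z))\cup\widecheck{\Diff}_\eta(\bS^0_1(Z))$, and derive a contradiction from $A\in\Diff_\eta(\bS^0_1(Z))\cap\widecheck{\Diff}_\eta(\bS^0_1(Z))$. The divergence is in how that contradiction is obtained. The paper works directly with the chosen increasing sequences $(A_\xi)_{\xi<\eta}$ for $A$ and $(B_\xi)_{\xi<\eta}$ for $Z\setminus A$: in the successor case $\eta=\zeta+1$ it uses zero-dimensionality to shrink the open cover $\{A_\zeta,B_\zeta\}$ to a clopen partition $\{U,V\}$ of $Z$, observes $A\cap U\in\widecheck{\Diff}_\zeta(\bS^0_1(Z))$ and $A\cap V\in\Diff_\zeta(\bS^0_1(Z))$, and invokes Proposition~\ref{locallylowerimpliesselfdual}; in the limit case it uses the open cover $\{A_\xi:\xi<\eta\}\cup\{B_\xi:\xi<\eta\}$ analogously. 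No induction is needed.

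Your inductive claim is a plausible reformulation, but the successor step as written has a genuine gap. After setting $R=U\cap U'$ and applying the inductive hypothesis to $D$ inside $R$, the ``lower-level'' subcase propagates cleanly via Lemma~\ref{diffsuccessor}, but the ``$D$ selfdual in $R$'' subcase does not: Proposition~\ref{closureclopen} is about \emph{clopen} sets, while $U$, $U'$, and $R$ are merely open, and there is no evident way to turn a continuous witness $f:R\to R$ of $D\leq R\setminus D$ into a continuous witness of $C\leq Z\setminus C$ on all of $Z$ (the pieces $U\setminus U'$, $R$, $U'\setminus U$ do not form a clopen partition). The natural repair is to use zero-dimensionality to find disjoint clopen $V\subseteq U$, $V'\subseteq U'$ with $V\cup V'=Z$, after which $C\cap V\in\widecheck{\Diff}_\zeta(\bS^0_1(Z))$ and $C\cap V'\in\Diff_\zeta(\bS^0_1(Z))$ follow directly; but this is exactly the paper's argument, and it bypasses the subspace detour and the induction entirely. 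Your limit step is similarly underspecified: Lemma~\ref{difflimit} does not guarantee that the $V_n$ cover $Z$, and the decompositions it yields for $C$ and for $Z\setminus C$ need not align, so there is no single piece on which the inductive hypothesis applies ``componentwise''. Again the fix is to work with the defining sequences themselves as an open cover and refine to clopen, which is the paper's route. (A minor additional point: your induction hypothesis is phrased for the fixed space $Z$ but is then invoked inside the open subspace $R$; if you keep the inductive structure you should quantify over all zero-dimensional Polish spaces.)
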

\begin{proof}
Pick $A\subseteq Z$ such that $\bG=A\wc$. By Theorem \ref{hausdorffkuratowskitheorem}, we can fix the minimal$\eta$ such that $1\leq\eta<\omega_1$ and $A\in\Diff_\eta(\bS^0_1(Z))\cup\widecheck{\Diff}_\eta(\bS^0_1(Z))$. We will only give the proof in the case $A\in\Diff_\eta(\bS^0_1(Z))$, as the other case is perfectly analogous. More specifically, assume that $A=\Diff_\eta(A_\xi:\xi<\eta)$, where each $A_\xi\in\bS^0_1(Z)$ and $(A_\xi:\xi<\eta)$ is a $\subseteq$-increasing sequence. We claim that $\bG=\Diff_\eta(\bS^0_1(Z))$.~By~Lemma \ref{wadgelemma} and the fact that $\Diff_\eta(\bS^0_1(Z))\in\NSD(Z)$, it will be enough to show that $A\notin\widecheck{\Diff}_\eta(\bS^0_1(Z))$.

Assume, in order to get a contradiction, that $A\in\widecheck{\Diff}_\eta(\bS^0_1(Z))$. More specifically, assume that $Z\setminus A=\Diff_\eta(B_\xi:\xi<\eta)$, where each $B_\xi\in\bS^0_1(Z)$ and $(B_\xi:\xi<\eta)$ is a $\subseteq$-increasing sequence. First assume that $\eta$ is a limit ordinal. Define $\UU=\{A_\xi:\xi<\eta\}\cup\{B_\xi:\xi<\eta\}$. It is clear that $\UU$ is an open cover of $Z$. Furthermore, it is easy to realize that for every $U\in\UU$ there exists $\xi<\eta$ such that either $A\cap U\in\Diff_\xi(\bS^0_1(Z))$ or $(Z\setminus A)\cap U\in\Diff_\xi(\bS^0_1(Z))$. Using Lemma \ref{closureclopen} and the fact that $Z$ is zero-dimensional, we can assume without loss of generality that $\UU$ consists of clopen subsets of $Z$.

We claim that $A\cap U<A$ for every $U\in\UU$. This will conclude the proof by Proposition \ref{locallylowerimpliesselfdual}, as it will contradict the fact that $A$ is non-selfdual. Pick $U\in\UU$. If there exists $\xi<\eta$ such that $A\cap U\in\Diff_\xi(\bS^0_1(Z))$, then the claim holds by the minimality of $\eta$. If $(Z\setminus A)\cap U\in\Diff_\xi(\bS^0_1(Z))$, then
$$
A\cap U=(Z\setminus ((Z\setminus A)\cap U))\cap U\in\widecheck{\Diff}_\xi(\bS^0_1(Z))
$$
by Lemma \ref{closureclopen}. Hence the claim follows from the minimality of $\eta$ again.

Finally, assume that $\eta=\zeta+1$ is a successor ordinal. Notice that $\zeta\neq 0$, otherwise $A$ would be a clopen subset of $Z$ such that $\varnothing\subsetneq A\subsetneq Z$, contradicting the fact that $A$ is non-selfdual. So it makes sense to set $C=\Diff_\zeta(A_\xi:\xi<\zeta)$ and $D=\Diff_\zeta(B_\xi:\xi<\zeta)$. Observe that $A=A_\zeta\setminus C$ and $Z\setminus A=B_\zeta\setminus D$. Since $A_\zeta\cup B_\zeta=Z$, using the fact that $Z$ is zero-dimensional it is possible to find $U,V\in\bD^0_1(Z)$ such that $U\subseteq A_\zeta$, $V\subseteq B_\zeta$, $U\cup V=Z$ and $U\cap V=\varnothing$. Notice that $A\cap U=U\setminus C=(Z\setminus C)\cap U\in\widecheck{\Diff}_\zeta(\bS^0_1(Z))$ and $A\cap V=D\cap V\in\Diff_\zeta(\bS^0_1(Z))$ by Lemma \ref{closureclopen}. As above, this contradicts the fact that $A$ is non-selfdual.
\end{proof}

\section{Kuratowski's transfer theorem}\label{sectionkuratowski}

In this section, we will prove many forms of a classical result, known as ``Kuratowski's transfer theorem''. The most powerful form of this result (as it gives the sharpest bounds in terms of complexity) is Theorem \ref{kuratowskitheorem} (which is taken from \cite[Theorem 7.1.6]{louveaub}). This strong version of the result will only be needed in Section \ref{sectionlevelhard}. The weaker versions will be used to successfully employ the notion of expansion. We also point out that Corollary \ref{sigmaintoopen} can be easily obtained from \cite[Theorem 22.18]{kechris}, and viceversa.

Given $f:Z\longrightarrow W$, we will denote by $f^\ast:Z\longrightarrow Z\times W$ the function defined by setting $f^\ast(x)=(x,f(x))$ for every $x\in Z$. Given a set $I$, a function $f:Z\longrightarrow\prod_{k\in I}W_k$ and $k\in I$, we will denote the $k$-th coordinate of $f$ by $f_k:Z\longrightarrow W_k$. More precisely, set $f_k(x)=f(x)(k)$ for every $x\in Z$. In almost all of our applications, $I=\omega$ and $W_k=\omega$ for each $k$, so that $f:Z\longrightarrow\omega^\omega$ and $f_k(x)=n_k$ for $x\in Z$, where $f(x)=(n_0,n_1,\ldots)$.

The following is the crucial concept in Theorem \ref{kuratowskitheorem}, and it will feature prominently in Section \ref{sectionlevelhard} as well. Its name comes from the fact that it yields a finer topology (see Corollary \ref{sigmaintoopen}). 

\begin{definition}
Let $Z$ be a space, let $2\leq\xi<\omega_1$, and let $\Aa\subseteq\bS^0_\xi(Z)$. We will say that $f:Z\longrightarrow\omega^\omega$ is a \emph{$\xi$-refining function} for $\Aa$ if it satisfies the following conditions, where $F=f^\ast[Z]$ denotes the graph of $f$:
\begin{enumerate}
 \item\label{kuratowskigraphclosed} $F$ is closed in $Z\times\omega^\omega$,
 \item\label{kuratowskiinverseimage} $f^\ast[A]\in\bS^0_1(F)$ for every $A\in\Aa$,
 \item\label{kuratowskistrictlysmaller} For all $k\in\omega$ there exists $\xi_k$ such that $1\leq\xi_k<\xi$ and $f_k^{-1}(j)\in\bP^0_{\xi_k}(Z)$ for every $j\in\omega$.
\end{enumerate}
\end{definition}

\begin{theorem}[Louveau]\label{kuratowskitheorem}
Let $Z$ be a zero-dimensional space, let $2\leq\xi<\omega_1$, and let $\Aa\subseteq\bS^0_\xi(Z)$ be countable. Then there exists a $\xi$-refining function $f:Z\longrightarrow\omega^\omega$ for $\Aa$.
\end{theorem}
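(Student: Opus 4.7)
The plan is to proceed by transfinite induction on $\xi$, separating the base case $\xi=2$, the successor case $\xi=\zeta+1$ with $\zeta\geq 2$, and the limit case; in each step I will exploit the zero-dimensionality of $Z$ through clopen refinements.

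For the base case, enumerate $\Aa=\{A_m:m\in\omega\}$, write each $A_m=\bigcup_n F_{m,n}$ with $F_{m,n}\in\bP^0_1(Z)$, and use second countability together with zero-dimensionality to partition each open complement $Z\setminus F_{m,n}$ into a sequence $(U_{m,n,k})_{k\in\omega}$ of pairwise disjoint clopen sets. Define $g_{m,n}:Z\to\omega$ by $g_{m,n}=0$ on $F_{m,n}$ and $g_{m,n}=k+1$ on $U_{m,n,k}$, so that every fiber is closed, and concatenate via a bijection $\omega\times\omega\to\omega$ to obtain $f:Z\to\omega^\omega$. Then $F=f^\ast[Z]$ is closed in $Z\times\omega^\omega$ because each $g_{m,n}$ has closed graph in $Z\times\omega$, and $f^\ast[F_{m,n}]$ is cut out of $F$ by a clopen slice of $\omega^\omega$, so $f^\ast[A_m]$ is open in $F$; condition (\ref{kuratowskistrictlysmaller}) holds with $\xi_k=1$.

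For the successor case, decompose each $A\in\Aa$ as $A=\bigcup_n B_{A,n}$ with $B_{A,n}\in\bP^0_\zeta(Z)$, apply the inductive hypothesis at $\zeta$ to $\{Z\setminus B_{A,n}:A\in\Aa,\,n\in\omega\}\subseteq\bS^0_\zeta(Z)$ to produce a $\zeta$-refining function $g$, and observe that each $g^\ast[A]=\bigcup_n g^\ast[B_{A,n}]$ is $F_\sigma$ in the zero-dimensional space $F_g=g^\ast[Z]$. Apply the base case to $F_g$ with $\{g^\ast[A]:A\in\Aa\}$ to get a $2$-refining function $h$, and set $f(x)=(g(x),h(g^\ast(x)))$; the graph of $f$ canonically identifies with the graph of $h$ inside $F_g\times\omega^\omega$, so openness of $h^\ast[g^\ast[A]]$ translates to openness of $f^\ast[A]$. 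For the limit case, decompose each $A\in\Aa$ as $A=\bigcup_n C_{A,n}$ with $C_{A,n}\in\bP^0_{\eta_{A,n}}(Z)$, $\eta_{A,n}<\xi$; since $\xi$ is a limit ordinal, $\eta_{A,n}+1<\xi$, and the inductive hypothesis at $\eta_{A,n}+1$ applied to the singleton $\{C_{A,n}\}\subseteq\bS^0_{\eta_{A,n}+1}(Z)$ yields a refining function $h^{(A,n)}$ making $C_{A,n}$ open in its graph. Concatenating all the $h^{(A,n)}$ into $f$, each $f^\ast[C_{A,n}]$ becomes open in the graph of $f$ as the preimage of an open set under the continuous projection onto the relevant factor, so $f^\ast[A]$ is open.

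The main obstacle will be verifying the fiber condition for the $h$-block in the successor case: one needs $(g^\ast)^{-1}(h_k^{-1}(j))\in\bP^0_\zeta(Z)$, while a naive computation only yields $\bP^0_\xi(Z)$, because pulling back $F_g$-closed sets can inflate complexity by one level. The resolution uses the zero-dimensionality of $Z$: expand the $F_g$-complement of the closed set $h_k^{-1}(j)$ as $\bigcup_i F_g\cap(U_i\times N_{s_i})$ with each $U_i$ clopen in $Z$, which is possible because $Z$ has a clopen base. Then each $U_i\cap g^{-1}(N_{s_i})$ is the intersection of a clopen set with finitely many coordinate fibers of $g$, each in $\bP^0_{\eta_\ell}(Z)$ for some $\eta_\ell<\zeta$, and hence lies in $\bP^0_{\eta}(Z)$ for some $\eta<\zeta$. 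The countable union $\bigcup_i(U_i\cap g^{-1}(N_{s_i}))=Z\setminus(g^\ast)^{-1}(h_k^{-1}(j))$ then belongs to $\bS^0_\zeta(Z)$, so $(g^\ast)^{-1}(h_k^{-1}(j))\in\bP^0_\zeta(Z)=\bP^0_{\xi-1}(Z)$, as required.
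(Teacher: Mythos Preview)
Your proof is correct and follows the same overall inductive strategy as the paper, with the identical base case $\xi=2$. There are two organizational differences worth noting. First, the paper handles all $\xi\geq 3$ in a single uniform step rather than splitting into successor and limit cases: it decomposes each $A_n$ as $\bigcup_i A_{(n,i)}$ with $A_{(n,i)}\in\bP^0_{\xi(n,i)}(Z)$, explicitly partitions each $Z\setminus A_{(n,i)}$ into pairwise disjoint $A_{(n,i,j)}\in\bD^0_{\xi(n,i)}(Z)$, and applies the inductive hypothesis at level $\xi(n,i)$ to the family $\{A_{(n,i,j)},Z\setminus A_{(n,i,j)}:j\in\omega\}$; concatenating these yields $g$ with each $g^\ast[A_{(n,i,j)}]$ clopen in $G=g^\ast[Z]$. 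Second --- and this is the payoff of tracking that partition --- the paper builds $h$ by hand exactly as in the base case, so that its fibers are literally $g^\ast[A_{(n,i)}]$ and $g^\ast[A_{(n,i,j)}]$; pulled back through $g^\ast$ these are just $A_{(n,i)},A_{(n,i,j)}\in\bP^0_{\xi(n,i)}(Z)$, and condition~(\ref{kuratowskistrictlysmaller}) is immediate. You instead invoke the base case as a black box on $F_g$ and then recover the fiber bound by decomposing an arbitrary $F_g$-open set through the clopen base of $Z$; this is more modular but costs the extra argument you flagged as the main obstacle. Your limit case, on the other hand, is actually simpler than the paper's uniform treatment: direct concatenation of the $h^{(A,n)}$ already makes each $C_{A,n}$ open, so no second $h$-layer is needed there.
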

\begin{proof}
The result is trivial if $\Aa=\varnothing$, so assume that $\Aa\neq\varnothing$. Let $\Aa=\{A_n:n\in\omega\}$ be an enumeration. We will proceed by induction on $\xi$. First assume that $\xi=2$. Pick $A_{(n,i)}\in\bP^0_1(Z)$ for $n,i\in\omega$ such that $A_n=\bigcup_{i\in\omega}A_{(n,i)}$. Since $Z$ is zero-dimensional, it is possible to pick $A_{(n,i,j)}\in\bD^0_1(Z)$ for $n,i,j\in\omega$ such that $Z\setminus A_{(n,i)}=\bigcup_{j\in\omega}A_{(n,i,j)}$ and $A_{(n,i,j)}\cap A_{(n,i,j')}=\varnothing$ whenever $j\neq j'$. Define $f_{(n,i)}:Z\longrightarrow\omega$ for $n,i\in\omega$ by setting
$$
f_{(n,i)}(x)=\left\{
\begin{array}{ll} 0 & \textrm{if }x\in A_{(n,i)},\\
j+1 & \textrm{if }x\in A_{(n,i,j)}.
\end{array}
\right.
$$
Fix a bijection $\langle\cdot,\cdot\rangle:\omega\times\omega\longrightarrow\omega$ and define $f:Z\longrightarrow\omega^\omega$ by setting $f(x)(\langle n,i\rangle)=f_{(n,i)}(x)$ for $n,i\in\omega$. It is clear that condition $(\ref{kuratowskistrictlysmaller})$ holds.

To show that condition $(\ref{kuratowskigraphclosed})$ holds, pick $x_m\in Z$ for $m\in\omega$ such that $(x_m,f(x_m))\to(x,y)\in Z\times\omega^\omega$. We need to show that $y=f(x)$. So fix $k=\langle n,i\rangle\in\omega$. By the definition of $f$, in order to see that $y(k)=f(x)(k)$, we need to show that $y(k)=f_{(n,i)}(x)$. But $f_{(n,i)}(x_m)=f(x_m)(k)$ is eventually constant (with value $y(k)$), hence all but finitely many $x_m$ belong to $A_{(n,i)}$, or there exists $j\in\omega$ such that all but finitely many $x_m$ belong to $A_{(n,i,j)}$. Since $x_m\to x$ and these sets are all closed, it follows that $f_{(n,i)}(x)=y(k)$.

To show that condition $(\ref{kuratowskiinverseimage})$ holds for a given $n\in\omega$, pick $x\in A_n$. We need to find $V\in\bS^0_1(F)$ such that $(x,f(x))\in V\subseteq f^\ast[A_n]$. Pick $i\in\omega$ such that $x\in A_{(n,i)}$, then set $U=\{y\in\omega^\omega:y(\langle n,i\rangle)=0\}$, and observe that $U\in\bD^0_1(\omega^\omega)$. Using the definition of $f$, it is easy to realize that $V=(U\times Z)\cap F$ is as required.

Now assume that $\xi\geq 3$ and that the theorem holds for all $\xi'$ such that $2\leq\xi'<\xi$. Pick $A_{(n,i)}\in\bP^0_{\xi(n,i)}(Z)$ for $n,i\in\omega$ such that $A_n=\bigcup_{i\in\omega}A_{(n,i)}$, where $1\leq\xi(n,i)<\xi$. Then  pick $A_{(n,i,j)}\in\bD^0_{\xi(n,i)}$ for $n,i,j\in\omega$ such that $Z\setminus A_{(n,i)}=\bigcup_{j\in\omega}A_{(n,i,j)}$ and $A_{(n,i,j)}\cap A_{(n,i,j')}=\varnothing$ whenever $j\neq j'$.

Set $\Aa_{(n,i)}=\{A_{(n,i,j)}:j\in\omega\}\cup\{Z\setminus A_{(n,i,j)}:j\in\omega\}$ for $(n,i)\in\omega\times\omega$. By the inductive hypothesis, for each $(n,i)$ we can fix a $\xi(n,i)$-refining function $g_{(n,i)}:Z\longrightarrow\omega^\omega$ for $\Aa_{(n,i)}$. This means that the following conditions will hold, where $G_{(n,i)}=g_{(n,i)}^\ast[Z]$ denotes the graph of $g_{(n,i)}$:
\begin{enumerate}
 \item[(4)] $G_{(n,i)}$ is closed in $Z\times\omega^\omega$,
 \item[(5)] $g_{(n,i)}^\ast[A_{(n,i,j)}]\in\bD^0_1(G_{(n,i)})$ for every $j\in\omega$,
 \item[(6)] For all $k\in\omega$ there exists $\xi_k$ such that $1\leq\xi_k<\xi(n,i)$ and $(g_{(n,i)})_k^{-1}(j)\in\bP^0_{\xi_k}(Z)$ for every $j\in\omega$.
\end{enumerate}

Fix a bijection $\langle\cdot,\cdot,\cdot\rangle:\omega\times\omega\times\omega\longrightarrow\omega$ and define $g:Z\longrightarrow\omega^\omega$ by setting $g(x)(\langle n,i,j\rangle)=g_{(n,i)}(x)(j)$ for every $x\in Z$ and $n,i,j\in\omega$. Denote by $G=g^\ast[Z]$ the graph of $g$. Using condition $(4)$ and arguments as in the case $\xi=2$, one can show that $G$ is closed in $Z\times\omega^\omega$. Using condition $(5)$, it is easy to see that each $g^\ast[A_{(n,i,j)}]\in\bD^0_1(G)$. Since $G\setminus g^\ast[A_{(n,i)}]=\bigcup_{j\in\omega}g^\ast[A_{(n,i,j)}]$, by proceeding as in the proof of the case $\xi=2$ it is possible to obtain $h:G\longrightarrow\omega^\omega$ that satisfies the following conditions, where $H=h^\ast[G]$ denotes the graph of $h$:
\begin{enumerate}
 \item[(7)] $H$ is closed in $G\times\omega^\omega$,
 \item[(8)] $h^\ast[g^\ast[A_n]]\in\bS^0_1(H)$ for every $n\in\omega$,
 \item[(9)] $h_{\langle n,i\rangle}^{-1}(0)=g^\ast[A_{(n,i)}]$ and $h_{\langle n,i\rangle}^{-1}(j+1)=g^\ast[A_{(n,i,j)}]$ for every $n,i,j\in\omega$.
\end{enumerate} 

Finally, define $f:Z\longrightarrow\omega^{\omega+\omega}$ by setting 
$f_k(x)=g_k(x)$ and $f_{\omega+k}(x)=h_k(x,g(x))$ for every $x\in Z$ and $k\in\omega$. Also set $F=f^\ast[Z]$. Using conditions $(6)$ and $(9)$, it is easy to check that condition $(\ref{kuratowskistrictlysmaller})$ will hold. By identifying $\omega^{\omega+\omega}$ with $\omega^\omega\times\omega^\omega$ in the obvious way, $F$ can be identified with $H$. Therefore, condition $(\ref{kuratowskiinverseimage})$ holds by condition $(8)$. Furthermore, since condition $(7)$ holds and $G\times\omega^\omega$ is closed in $Z\times\omega^\omega\times\omega^\omega$, it follows that condition $(\ref{kuratowskigraphclosed})$ holds.
\end{proof}

\begin{corollary}\label{sigmaintoopenbijection}
Let $Z$ be a zero-dimensional Polish space, let $1\leq\xi<\omega_1$, and let $\Aa\subseteq\mathbf{\Sigma}^0_\xi(Z)$ be countable. Then there exists a zero-dimensional Polish space $W$ and a $\mathbf{\Sigma}^0_\xi$-measurable bijection $f:Z\longrightarrow W$ such that $f[A]\in\mathbf{\Sigma}^0_1(W)$ for every $A\in\Aa$.
\end{corollary}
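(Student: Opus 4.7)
The plan is to use Theorem \ref{kuratowskitheorem} directly, taking $W$ to be the graph of a $\xi$-refining function for $\Aa$.

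The case $\xi = 1$ is trivial: take $W = Z$ and $f = \id_Z$, which is $\bS^0_1$-measurable and maps each $A \in \Aa \subseteq \bS^0_1(Z)$ to itself. So assume $\xi \geq 2$. Apply Theorem \ref{kuratowskitheorem} to obtain a $\xi$-refining function $g : Z \longrightarrow \omega^\omega$ for $\Aa$, and let $W = g^\ast[Z]$ be its graph. By condition (\ref{kuratowskigraphclosed}) of the definition of $\xi$-refining, $W$ is closed in $Z \times \omega^\omega$, hence $W$ is a zero-dimensional Polish space. Define $f = g^\ast : Z \longrightarrow W$ by $f(x) = (x, g(x))$; this is plainly a bijection.

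The fact that $f[A] \in \bS^0_1(W)$ for every $A \in \Aa$ is exactly condition (\ref{kuratowskiinverseimage}). The only thing to verify is that $f$ is $\bS^0_\xi$-measurable. Since the sets of the form $(U \times \Ne_s) \cap W$ with $U \in \bS^0_1(Z)$ and $s \in \omega^{<\omega}$ form a base for $W$, and the $\bS^0_\xi$-measurable subsets of $Z$ form a $\sigma$-algebra, it suffices to check that $f^{-1}[(U \times \Ne_s) \cap W] = U \cap g^{-1}[\Ne_s]$ lies in $\bS^0_\xi(Z)$. For this, note that
\[
g^{-1}[\Ne_s] = \bigcap_{k \in \dom(s)} g_k^{-1}(s(k)),
\]
and by condition (\ref{kuratowskistrictlysmaller}) each $g_k^{-1}(s(k)) \in \bP^0_{\xi_k}(Z)$ for some $1 \leq \xi_k < \xi$. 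Since $\bP^0_\eta(Z) \subseteq \bS^0_\xi(Z)$ whenever $\eta < \xi$ (distinguishing the limit and successor cases for $\xi$), the finite intersection lies in $\bS^0_\xi(Z)$, and intersecting with $U \in \bS^0_1(Z) \subseteq \bS^0_\xi(Z)$ keeps us in $\bS^0_\xi(Z)$.

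There is no real obstacle here: all the work has been absorbed into Theorem \ref{kuratowskitheorem}. The only mildly delicate point is the verification that condition (\ref{kuratowskistrictlysmaller}) truly delivers $\bS^0_\xi$-measurability of $f$ rather than something weaker, which amounts to the routine absorption argument $\bP^0_\eta(Z) \subseteq \bS^0_\xi(Z)$ for $\eta < \xi$.
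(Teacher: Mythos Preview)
Your proof is correct and follows exactly the paper's approach: handle $\xi=1$ trivially, and for $\xi\geq 2$ take $W$ to be the graph of the $\xi$-refining function from Theorem \ref{kuratowskitheorem} with $f=g^\ast$. The paper's own proof is a terse two-liner (``set $W=F$ and $f=f^\ast$''), so you have simply filled in the verification that condition (\ref{kuratowskistrictlysmaller}) yields $\bS^0_\xi$-measurability of $g^\ast$, which the paper leaves implicit.
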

\begin{proof}
The case $\xi=1$ is trivial, so assume that $\xi\geq 2$. Then the desired result follows from Theorem \ref{kuratowskitheorem}, by setting $W=F$ and $f=f^\ast$.
\end{proof}

\begin{corollary}[Kuratowski]\label{sigmaintoopen}
Let $(Z,\tau)$ be a zero-dimensional Polish space, let $1\leq\xi<\omega_1$, and let $\BB\subseteq\mathbf{\Sigma}^0_\xi(Z,\tau)$ be countable. Then there exists a zero-dimensional Polish topology $\sigma$ on the set $Z$ such that $\tau\subseteq\sigma\subseteq\mathbf{\Sigma}^0_\xi(Z,\tau)$ and $\BB\subseteq\sigma$.
\end{corollary}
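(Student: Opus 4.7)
My plan is to derive the result directly from Theorem \ref{kuratowskitheorem}, by transporting to $Z$ the topology of the graph of a $\xi$-refining function. The case $\xi=1$ is trivial: $\BB\subseteq\tau$, so setting $\sigma=\tau$ works. Henceforth assume $\xi\geq 2$, and apply Theorem \ref{kuratowskitheorem} to the countable family $\BB$ to produce a $\xi$-refining function $f\colon Z\longrightarrow\omega^\omega$ whose graph $F=f^\ast[Z]$ is closed in $Z\times\omega^\omega$. Define $\sigma$ to be the unique topology on $Z$ making the graph map $f^\ast\colon Z\longrightarrow F$ a homeomorphism, where $F$ carries the subspace topology inherited from $Z\times\omega^\omega$. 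Since $F$ is a closed subspace of the zero-dimensional Polish space $Z\times\omega^\omega$, the space $(Z,\sigma)$ is automatically zero-dimensional Polish.

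Next I would verify the two topological inclusions. For $\tau\subseteq\sigma$, note that $(f^\ast)^{-1}\colon F\longrightarrow Z$ is just the restriction to $F$ of the coordinate projection $Z\times\omega^\omega\longrightarrow Z$, which is continuous into $(Z,\tau)$; composing with the homeomorphism $f^\ast\colon(Z,\sigma)\longrightarrow F$ shows that $\id_Z\colon(Z,\sigma)\longrightarrow(Z,\tau)$ is continuous, i.e.\ $\tau\subseteq\sigma$. For $\sigma\subseteq\bS^0_\xi(Z,\tau)$, fix a countable $\tau$-base $\mathcal{U}$ of $Z$, so that the sets $(U\times\Ne_s)\cap F$ with $U\in\mathcal{U}$ and $s\in\omega^{<\omega}$ form a countable base of $F$; pulling these back via $f^\ast$ gives basic $\sigma$-opens of the form $U\cap f^{-1}[\Ne_s]$. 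Now $f^{-1}[\Ne_s]=\bigcap_{k<|s|}f_k^{-1}(s(k))$, and each $f_k^{-1}(s(k))\in\bP^0_{\xi_k}(Z,\tau)\subseteq\bS^0_\xi(Z,\tau)$ by condition $(\ref{kuratowskistrictlysmaller})$ of the definition of a $\xi$-refining function (using $\xi_k+1\leq\xi$). Since $\bS^0_\xi(Z,\tau)$ is closed under finite intersections and countable unions, and $\sigma$ has the countable base just described, every $\sigma$-open set lies in $\bS^0_\xi(Z,\tau)$.

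Finally, the inclusion $\BB\subseteq\sigma$ (reading the ``$\Aa$'' in the statement as a typo for ``$\BB$'') is immediate from condition $(\ref{kuratowskiinverseimage})$: for each $A\in\BB$, $f^\ast[A]$ is open in $F$, hence $A=(f^\ast)^{-1}[f^\ast[A]]$ is $\sigma$-open. I do not expect any real obstacle here, as the corollary is essentially a repackaging of Theorem \ref{kuratowskitheorem} into the language of topology refinements; the only mild subtlety is confirming that the complexity bound $\bS^0_\xi(Z,\tau)$ is preserved, which is exactly what condition $(\ref{kuratowskistrictlysmaller})$ supplies.
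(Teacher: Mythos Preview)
Your proof is correct and takes essentially the same approach as the paper: both define $\sigma$ as the pullback to $Z$ of the subspace topology on the graph $F=f^\ast[Z]$ of a $\xi$-refining function. The only minor difference is that the paper routes through Corollary~\ref{sigmaintoopenbijection} and throws a countable $\tau$-base $\UU$ into the family handed to Theorem~\ref{kuratowskitheorem} (so that $\tau\subseteq\sigma$ follows from condition~(\ref{kuratowskiinverseimage}) applied to each $U\in\UU$), whereas you obtain $\tau\subseteq\sigma$ directly from the continuity of the first-coordinate projection $Z\times\omega^\omega\to Z$.
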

\begin{proof}
Let $\UU$ be a countable base for $(Z,\tau)$. Let $f$ and $W$ be given by applying Corollary \ref{sigmaintoopenbijection} with $\Aa=\BB\cup\UU$, then define
$$
\sigma=\{f^{-1}[U]:U\in\bS^0_1(W)\}.
$$
It is straightforward to verify that $\sigma$ satisfies all of the desired properties.
\end{proof}

We conclude this section with the ``two-variable'' versions of Corollaries \ref{sigmaintoopen} and \ref{sigmaintoopenbijection} respectively. Corollary \ref{sigmaintoopenbijectioneta} will be needed in Section \ref{sectionexpansioncomposition}.

\begin{theorem}\label{sigmaintoopeneta}
Let $(Z,\tau)$ be a zero-dimensional Polish space, let $\xi,\eta<\omega_1$, and let $\Aa\subseteq\mathbf{\Sigma}^0_{1+\eta+\xi}(Z,\tau)$ be countable. Then there exists a zero-dimensional Polish topology $\sigma$ on the set $Z$ such that $\tau\subseteq\sigma\subseteq\mathbf{\Sigma}^0_{1+\eta}(Z,\tau)$ and $\Aa\subseteq\mathbf{\Sigma}^0_{1+\xi}(Z,\sigma)$.
\end{theorem}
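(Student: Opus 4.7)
The plan is to proceed by transfinite induction on $\xi$ (keeping $\eta$ variable in the statement, so that the inductive hypothesis can be reused with different $\eta$'s).

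The base case $\xi=0$ is immediate from Corollary \ref{sigmaintoopen} applied to $\Aa$ with exponent $1+\eta$: this directly yields $\sigma$ with $\tau\subseteq\sigma\subseteq\bS^0_{1+\eta}(Z,\tau)$ and $\Aa\subseteq\sigma=\bS^0_1(Z,\sigma)$.

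For the successor case $\xi=\xi'+1$, I would use that $\bS^0_{1+\eta+\xi'+1}(Z,\tau)$ consists of countable unions of $\bP^0_{1+\eta+\xi'}(Z,\tau)$ sets. Write each $A\in\Aa$ as $A=\bigcup_n A^A_n$ with $A^A_n\in\bP^0_{1+\eta+\xi'}(Z,\tau)$, and apply the inductive hypothesis to the countable family $\Bb=\{Z\setminus A^A_n:A\in\Aa,\,n\in\omega\}\subseteq\bS^0_{1+\eta+\xi'}(Z,\tau)$. The resulting topology $\sigma$ satisfies $\tau\subseteq\sigma\subseteq\bS^0_{1+\eta}(Z,\tau)$ and $Z\setminus A^A_n\in\bS^0_{1+\xi'}(Z,\sigma)$, so $A^A_n\in\bP^0_{1+\xi'}(Z,\sigma)$ and $A\in\bS^0_{1+\xi'+1}(Z,\sigma)=\bS^0_{1+\xi}(Z,\sigma)$.

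For the limit case, fix a cofinal $\omega$-sequence $\xi_m\nearrow\xi$ (possible since $\xi<\omega_1$); note that $1+\eta+\xi$ is then a limit, with $1+\eta+\xi_m\nearrow 1+\eta+\xi$. Each $A\in\Aa$ can then be written $A=\bigcup_n A^A_n$ with $A^A_n\in\bP^0_{\zeta(A,n)}(Z,\tau)$ for some $\zeta(A,n)<1+\eta+\xi$; for each pair $(A,n)$ choose $m(A,n)\in\omega$ with $\zeta(A,n)\leq 1+\eta+\xi_{m(A,n)}$, so that $A^A_n\in\bP^0_{1+\eta+\xi_{m(A,n)}}(Z,\tau)$. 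For each $m\in\omega$ apply the inductive hypothesis (with $\xi$ replaced by $\xi_m<\xi$) to the countable family $\{Z\setminus A^A_n:m(A,n)=m\}\subseteq\bS^0_{1+\eta+\xi_m}(Z,\tau)$, obtaining a zero-dimensional Polish topology $\sigma_m$ on $Z$ with $\tau\subseteq\sigma_m\subseteq\bS^0_{1+\eta}(Z,\tau)$ and $Z\setminus A^A_n\in\bS^0_{1+\xi_m}(Z,\sigma_m)$ whenever $m(A,n)=m$. Fix a countable clopen base $\UU_m$ of $(Z,\sigma_m)$. The union $\bigcup_m\UU_m$ is a countable subfamily of $\bS^0_{1+\eta}(Z,\tau)$, so one last application of Corollary \ref{sigmaintoopen} produces a zero-dimensional Polish topology $\sigma$ on $Z$ with $\tau\subseteq\sigma\subseteq\bS^0_{1+\eta}(Z,\tau)$ and $\bigcup_m\UU_m\subseteq\sigma$. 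Since $\UU_m$ is a base for $\sigma_m$, this forces $\sigma_m\subseteq\sigma$ for each $m$, hence $A^A_n\in\bP^0_{1+\xi_{m(A,n)}}(Z,\sigma)\subseteq\bS^0_{1+\xi}(Z,\sigma)$ (using that $1+\xi_{m(A,n)}+1<1+\xi$ because $\xi$ is a limit), and taking the countable union $A=\bigcup_n A^A_n$ yields $A\in\bS^0_{1+\xi}(Z,\sigma)$.

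The main obstacle I expect is the limit step, specifically the need to amalgamate countably many auxiliary topologies $\sigma_m$ into a single zero-dimensional Polish refinement of $\tau$ that still lies inside $\bS^0_{1+\eta}(Z,\tau)$. The key observation that unlocks it is that each $\sigma_m$ has a countable clopen base, so the amalgamation is itself just another instance of the one-variable Kuratowski refinement (Corollary \ref{sigmaintoopen}).
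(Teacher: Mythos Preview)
Your argument is correct and follows essentially the same route as the paper: induction on $\xi$, base case via Corollary~\ref{sigmaintoopen}, inductive step by decomposing each $A$ into lower-level pieces, recursing, and amalgamating the resulting refinements. The only cosmetic differences are that the paper first reduces to a single set $\Aa=\{A\}$ (via \cite[Lemma~13.3]{kechris}) and treats the case $\xi>0$ uniformly rather than splitting into successor and limit, and that it amalgamates the auxiliary topologies by taking the topology generated by $\bigcup_n\sigma_n$ and invoking \cite[Lemma~13.3]{kechris} directly, whereas you feed the countable clopen bases back into Corollary~\ref{sigmaintoopen}; both devices accomplish the same thing.
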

\begin{proof}
By \cite[Lemma 13.3]{kechris}, it will be enough to consider the case $\Aa=\{A\}$. We will proceed by induction on $\xi$. The case $\xi=0$ is Corollary \ref{sigmaintoopen}. Now assume that $\xi>0$ and the result holds for every $\xi'<\xi$. Write $A=\bigcup_{n\in\omega}(Z\setminus A_n)$, where each $A_n\in\mathbf{\Sigma}^0_{1+\eta+\xi_n}(Z,\tau)$ for suitable $\xi_n<\xi$. By the inductive assumption, for each $n$, we can fix a zero-dimensional Polish topology $\sigma_n$ on the set $Z$ such that $\tau\subseteq\sigma_n\subseteq\mathbf{\Sigma}^0_{1+\eta}(Z,\tau)$ and $A_n\in\mathbf{\Sigma}^0_{1+\xi}(Z,\sigma_n)$. Using \cite[Lemma 13.3]{kechris} again, it is easy to check that the topology $\sigma$ on $Z$ generated by $\bigcup_{n\in\omega}\sigma_n$ is as desired.
\end{proof}

\begin{corollary}\label{sigmaintoopenbijectioneta}
Let $Z$ be a zero-dimensional Polish space, let $\xi,\eta<\omega_1$, and let $\Aa\subseteq\mathbf{\Sigma}^0_{1+\eta+\xi}(Z)$ be countable. Then there exists a zero-dimensional Polish space $W$ and a $\mathbf{\Sigma}^0_{1+\eta}$-measurable bijection $f:Z\longrightarrow W$ such that $f[A]\in\mathbf{\Sigma}^0_{1+\xi}(W)$ for every $A\in\Aa$.
\end{corollary}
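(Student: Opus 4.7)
The plan is to obtain $W$ and $f$ directly from Theorem \ref{sigmaintoopeneta} with essentially no additional work beyond a change of perspective. First I would apply Theorem \ref{sigmaintoopeneta} to $\Aa$ itself (viewing $Z$ as $(Z,\tau)$) to obtain a zero-dimensional Polish topology $\sigma$ on the underlying set of $Z$ such that $\tau\subseteq\sigma\subseteq\mathbf{\Sigma}^0_{1+\eta}(Z,\tau)$ and $\Aa\subseteq\mathbf{\Sigma}^0_{1+\xi}(Z,\sigma)$.

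Next, I would simply set $W=(Z,\sigma)$ and let $f=\id_Z:Z\longrightarrow W$. That $f$ is a bijection is trivial, and $W$ is a zero-dimensional Polish space by the conclusion of Theorem \ref{sigmaintoopeneta}. To verify that $f$ is $\mathbf{\Sigma}^0_{1+\eta}$-measurable, pick any $U\in\bS^0_1(W)=\sigma$; then $f^{-1}[U]=U\in\sigma\subseteq\mathbf{\Sigma}^0_{1+\eta}(Z,\tau)=\mathbf{\Sigma}^0_{1+\eta}(Z)$, as required. Finally, for each $A\in\Aa$ we have $f[A]=A\in\mathbf{\Sigma}^0_{1+\xi}(Z,\sigma)=\mathbf{\Sigma}^0_{1+\xi}(W)$ by the choice of $\sigma$.

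There is essentially no obstacle in this argument, since all the real content has already been placed in Theorem \ref{sigmaintoopeneta}. The one conceptual point worth flagging is that $W$ is allowed to share its underlying set with $Z$: taking $W$ to be $Z$ equipped with a strictly finer topology is what makes the identity map a legitimate witness, in the same spirit in which Corollary \ref{sigmaintoopenbijection} produced its bijection from the graph construction of Theorem \ref{kuratowskitheorem}. This is the exact reverse of the derivation of Corollary \ref{sigmaintoopen} from Corollary \ref{sigmaintoopenbijection}, where a bijection was turned into a refining topology; here a refining topology is turned into a bijection.
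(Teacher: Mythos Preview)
Your proof is correct and takes exactly the same approach as the paper: apply Theorem \ref{sigmaintoopeneta} to obtain the finer topology $\sigma$, set $W=(Z,\sigma)$, and let $f=\id_Z$. The paper's proof is in fact a one-sentence version of what you wrote.
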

\begin{proof}
The space $W$ is simply the set $Z$ with the finer topology given by Theorem \ref{sigmaintoopeneta}, while $f=\id_Z$.
\end{proof}

\section{Expansions: basic facts}\label{sectionexpansionbasic}

The following notion is essentially due to Wadge (see \cite[Chapter IV]{wadget}), and it is inspired by work of Kuratowski. It is one of the fundamental concepts needed to state our main result (see Definition \ref{louveauhierarchy}). Proposition \ref{expansionbasic}, whose straightforward proof is left to the reader, lists some of its most basic properties.

\begin{definition}\label{definitionexpansion}
Let $Z$ be a space, and let $\xi<\omega_1$. Given $\bG\subseteq\PP(Z)$, define
$$
\bG^{(\xi)}=\{f^{-1}[A]:A\in\bG\text{ and }f:Z\longrightarrow Z\text{ is $\mathbf{\Sigma}^0_{1+\xi}$-measurable}\}.
$$
We will refer to $\bG^{(\xi)}$ as an \emph{expansion} of $\bG$.
\end{definition}

\begin{proposition}\label{expansionbasic}
Let $Z$ be a space, let $\bG\subseteq\PP(Z)$, and let $\xi<\omega_1$.
\begin{itemize}
\item $\bG^{(\xi)}$ is continuously closed.
\item $\bG\subseteq\bG^{(\eta)}\subseteq\bG^{(\xi)}$ whenever $\eta\leq\xi$.
\item $\bG^{(0)}=\bG$ whenever $\bG$ is continuously closed.
\item $\widecheck{\bG^{(\xi)}}=\bGc^{(\xi)}$.
\end{itemize}
\end{proposition}

The following is the corresponding definition in the context of Hausdorff operations. Lemma \ref{movexihausdorff} below shows that this is in fact the ``right'' definition.
\begin{definition}\label{hausdorffexpansion}
Let $Z$ be a space, let $D\subseteq\PP(\omega)$, and let $\xi<\omega_1$. Define
$$
\bG_D^{(\xi)}(Z)=\{\HH_D(A_0,A_1,\ldots):A_n\in\mathbf{\Sigma}^0_{1+\xi}(Z)\text{ for every }n\in\omega\}.
$$
\end{definition}

As an example (that will be useful later), consider the following simple observation.

\begin{proposition}\label{expansiondifferences}
Let $1\leq\eta<\omega_1$. Then there exists $D\subseteq\PP(\omega)$ such that $\bG_D^{(\xi)}(Z)=\mathsf{D}_\eta(\mathbf{\Sigma}^0_{1+\xi}(Z))$ for every space $Z$ and every $\xi<\omega_1$.
\end{proposition}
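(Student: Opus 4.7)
The plan is to take $D \subseteq \PP(\omega)$ to be exactly the set produced by Proposition \ref{hausdorffdifferences}, and show that the \emph{same} $D$ witnesses this stronger, $\xi$-uniform statement.

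First, I would recall how that $D$ is constructed. Starting from the basic coordinate sets $S_n=\{A\subseteq\omega:n\in A\}$, one iteratively applies Proposition \ref{hausdorffsettheoretic} and composes via Proposition \ref{hausdorffcomposition} (using a bijection $\pi:\eta\longrightarrow\omega$ when $\eta\geq\omega$) to obtain a $D$ for which the identity
$$
\HH_D^Z(A_0,A_1,\ldots)=\mathsf{D}_\eta(A_{\pi(0)},A_{\pi(1)},\ldots)
$$
holds for \emph{every} set $Z$ and \emph{every} sequence $A_0,A_1,\ldots\subseteq Z$. The essential feature to highlight is that this is a purely set-theoretic identity: it is valid regardless of the topology on $Z$ and independent of any complexity class to which the $A_n$ might belong.

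Next, I would apply this identity with inputs drawn from $\bS^0_{1+\xi}(Z)$ in place of $\bS^0_1(Z)$. For the inclusion $\bG_D^{(\xi)}(Z)\subseteq\mathsf{D}_\eta(\bS^0_{1+\xi}(Z))$, any element of the left-hand side has by Definition \ref{hausdorffexpansion} the form $\HH_D(A_0,A_1,\ldots)$ with $A_n\in\bS^0_{1+\xi}(Z)$, and the displayed identity rewrites it as $\mathsf{D}_\eta$ applied to a reindexed $\eta$-sequence of sets still in $\bS^0_{1+\xi}(Z)$. For the reverse inclusion, any $B=\mathsf{D}_\eta(B_\mu:\mu<\eta)$ with $B_\mu\in\bS^0_{1+\xi}(Z)$ can be reindexed as $B=\HH_D(A_0,A_1,\ldots)$ by setting $A_{\pi(\mu)}=B_\mu$ for $\mu<\eta$ and, in case $\eta<\omega$, $A_n=\varnothing$ for the remaining indices; since $\varnothing\in\bS^0_{1+\xi}(Z)$, this exhibits $B$ as a member of $\bG_D^{(\xi)}(Z)$.

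I do not expect any real obstacle here: all of the substantive content was already absorbed into Proposition \ref{hausdorffdifferences}, and the only point to verify is that the $D$ produced there is \emph{uniform} in $\xi$, which it is, since its defining formula refers solely to the combinatorics of $\omega$ and never inspects the input sets. This uniformity is precisely the reason Definition \ref{hausdorffexpansion} is the ``right'' notion of expansion in the Hausdorff-operation setting, and it is what makes the single $D$ work simultaneously for all $\xi<\omega_1$ and all spaces $Z$.
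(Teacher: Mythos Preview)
Your proposal is correct and follows exactly the approach indicated in the paper: the paper's proof simply remarks that it is proved like Proposition \ref{hausdorffdifferences} and that ``in fact, the same $D$ will work,'' which is precisely what you argue in detail. Your explanation of why the identity $\HH_D^Z(A_0,A_1,\ldots)=\mathsf{D}_\eta(A_{\pi(0)},A_{\pi(1)},\ldots)$ is purely set-theoretic (and hence uniform in $\xi$) is the correct justification.
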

\begin{proof}
This is proved like Proposition \ref{hausdorffdifferences} (in fact, the same $D$ will work).
\end{proof}

The following proposition shows that Definition \ref{hausdorffexpansion} actually fits in the context provided by Section \ref{sectionhausdorffclasses}.

\begin{proposition}\label{movexidown}
Let $D\subseteq\PP(\omega)$, and let $\xi<\omega_1$. Then there exists $E\subseteq\PP(\omega)$ such that $\bG_D^{(\xi)}(Z)=\bG_E(Z)$ for every space $Z$.
\end{proposition}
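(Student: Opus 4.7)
The plan is to combine two results from Section \ref{sectionhausdorffbasic}: Proposition \ref{hausdorffborel}, which tells us that the class $\bS^0_{1+\xi}(Z)$ is itself of the form $\bG_{D'}(Z)$ for some fixed $D'\subseteq\PP(\omega)$ (independent of $Z$), and Proposition \ref{hausdorffcomposition}, which states that a composition of Hausdorff operations is again a Hausdorff operation. The proposition then follows by realizing each $\bS^0_{1+\xi}$-input as a Hausdorff operation applied to open sets, and then collapsing the two nested operations into one.

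In detail, the first step is to apply Proposition \ref{hausdorffborel} to $1+\xi$, obtaining $D'\subseteq\PP(\omega)$ such that $\bG_{D'}(Z)=\bS^0_{1+\xi}(Z)$ for every space $Z$. Next, apply Proposition \ref{hausdorffcomposition} with the given $D$ and with $E_m=D'$ for every $m\in\omega$; this yields some $F\subseteq\PP(\omega)$ (which we will call $E$) such that
$$
\HH_D(B_0,B_1,\ldots)=\HH_E(V_0,V_1,\ldots)
$$
for all sets $V_0,V_1,\ldots\subseteq Z$, where $B_m=\HH_{D'}(V_{\langle m,0\rangle},V_{\langle m,1\rangle},\ldots)$ for each $m\in\omega$.

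Now verify the two inclusions. For $\bG_D^{(\xi)}(Z)\subseteq\bG_E(Z)$, pick $A_0,A_1,\ldots\in\bS^0_{1+\xi}(Z)$; by the choice of $D'$, each $A_m$ can be written as $\HH_{D'}(V_{\langle m,0\rangle},V_{\langle m,1\rangle},\ldots)$ for some open sets $V_{\langle m,j\rangle}\in\bS^0_1(Z)$, and the composition identity above immediately gives $\HH_D(A_0,A_1,\ldots)=\HH_E(V_0,V_1,\ldots)\in\bG_E(Z)$. Conversely, for $\bG_E(Z)\subseteq\bG_D^{(\xi)}(Z)$, given arbitrary open sets $V_0,V_1,\ldots\in\bS^0_1(Z)$, define $A_m=\HH_{D'}(V_{\langle m,0\rangle},V_{\langle m,1\rangle},\ldots)$; then each $A_m\in\bG_{D'}(Z)=\bS^0_{1+\xi}(Z)$, and the same composition identity gives $\HH_E(V_0,V_1,\ldots)=\HH_D(A_0,A_1,\ldots)\in\bG_D^{(\xi)}(Z)$.

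There is no serious obstacle here: the proof is essentially a bookkeeping exercise that repackages two applications of Hausdorff operations as a single one. The only mildly delicate point is making sure that the set $E$ produced by Proposition \ref{hausdorffcomposition} depends only on $D$ and $D'$, and not on the ambient space $Z$ or on the specific open sets chosen; this is immediate from the explicit description of $E$ given in the proof of Proposition \ref{hausdorffcomposition}, namely $z\in E$ iff $\{m\in\omega:\{n\in\omega:\langle m,n\rangle\in z\}\in D'\}\in D$.
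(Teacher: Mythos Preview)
Your proof is correct and follows exactly the route the paper indicates: the paper's own proof is the one-line remark ``This is proved by combining Propositions \ref{hausdorffborel} and \ref{hausdorffcomposition},'' and you have simply unpacked that combination in full detail, including the check that the set $E$ produced depends only on $D$ and $D'$ and not on the ambient space.
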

\begin{proof}
This is proved by combining Propositions \ref{hausdorffborel} and \ref{hausdorffcomposition}.
\end{proof}
\begin{corollary}\label{expansionhausdorffnonselfdual}
Let $Z$ be a zero-dimensional space in which $2^\omega$ embeds, let $D\subseteq\PP(\omega)$, and let $\xi<\omega_1$. Then $\bG_D^{(\xi)}(Z)\in\NSD(Z)$.
\end{corollary}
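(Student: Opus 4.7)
The proof should be essentially a one-line combination of two results already established in the excerpt. The plan is to reduce expansions back to ordinary Hausdorff classes and then apply the main theorem of Section \ref{sectionhausdorffuniversal}.

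First, I would invoke Proposition \ref{movexidown} to obtain some $E\subseteq\PP(\omega)$ such that $\bG_D^{(\xi)}(Z)=\bG_E(Z)$. This moves the problem from the ``expansion'' setting into the cleaner setting of $\Ha(Z)$, where we already have a structural theorem available.

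Next, I would observe that the hypotheses on $Z$ guarantee that $2^\omega$ embeds into $Z$: an uncountable Polish space contains a homeomorphic copy of the Cantor set by the classical perfect set theorem (see \cite[Theorem 6.2 or Corollary 6.5]{kechris}), and since $Z$ is zero-dimensional there is no additional obstruction. Thus the hypothesis of Theorem \ref{addisontheorem} is satisfied.

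Finally, Theorem \ref{addisontheorem} yields $\bG_E(Z)\in\Ha(Z)\subseteq\NSD(Z)$, and combining this with the identification $\bG_D^{(\xi)}(Z)=\bG_E(Z)$ completes the proof. There is no real obstacle here; the content of the corollary is essentially that expansions of Hausdorff classes fit inside the framework of Section \ref{sectionhausdorffclasses}, which has already been handled by Proposition \ref{movexidown}.
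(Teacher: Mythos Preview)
Your proposal is correct and matches the paper's own proof exactly: the paper simply says ``This is proved by combining Proposition \ref{movexidown} and Theorem \ref{addisontheorem}.'' Your additional remark about why $2^\omega$ embeds in $Z$ is a reasonable clarification of the one hypothesis that needs checking, but the argument is otherwise identical.
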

\begin{proof}
This is proved by combining Proposition \ref{movexidown} and Theorem \ref{addisontheorem}.
\end{proof}

The following useful result is the analogue of Lemma \ref{relativization} in the present context.

\begin{lemma}\label{expansionhausdorffrelativization}
Let $Z$ and $W$ be spaces, let $D\subseteq\PP(\omega)$, and let $\xi<\omega_1$.
\begin{enumerate}
\item\label{expansionpreimage} If $f:Z\longrightarrow W$ is continuous and $B\in\bG_D^{(\xi)}(W)$ then $f^{-1}[B]\in\bG_D^{(\xi)}(Z)$.
\item\label{expansionmeasurablepreimage} If $f:Z\longrightarrow W$ is $\mathbf{\Sigma}^0_{1+\xi}$-measurable and $B\in\bG_D(W)$ then $f^{-1}[B]\in\bG_D^{(\xi)}(Z)$.
\item\label{expansionhomeomorphism} If $h:Z\longrightarrow W$ is a homeomorphism then $A\in\bG_D^{(\xi)}(Z)$ iff $h[A]\in\bG_D^{(\xi)}(W)$.
\item\label{expansionsubspace} Assume that $W\subseteq Z$. Then $B\in\bG_D^{(\xi)}(W)$ iff there exists $A\in\bG_D^{(\xi)}(Z)$ such that $B=A\cap W$.
\end{enumerate}
\end{lemma}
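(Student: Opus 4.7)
The plan is to prove each of the four items by unpacking the definition of $\bG_D^{(\xi)}$ and reducing to the analogous facts for the classes $\bS^0_{1+\xi}$, then recombining via the set-theoretic identities from Lemma~\ref{relativizationsettheoretic}. In other words, since $B\in\bG_D^{(\xi)}(W)$ exactly means $B=\HH_D(B_0,B_1,\ldots)$ for some $B_n\in\bS^0_{1+\xi}(W)$, everything reduces to transporting the $B_n$'s through the operation (subspace intersection, preimage, image), which commutes with $\HH_D$ by parts (\ref{subspacesettheoretic}), (\ref{preimagesettheoretic}), (\ref{homeomorphismsettheoretic}) of Lemma~\ref{relativizationsettheoretic}.

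For (\ref{expansionsubspace}), to prove the right-to-left direction I write $A=\HH_D(A_0,A_1,\ldots)$ with $A_n\in\bS^0_{1+\xi}(Z)$; then Lemma~\ref{relativizationsettheoretic}.\ref{subspacesettheoretic} gives $A\cap W=\HH_D(A_0\cap W,A_1\cap W,\ldots)$, and each $A_n\cap W\in\bS^0_{1+\xi}(W)$ by the standard behaviour of Borel additive classes under relative topology. For the left-to-right direction, given $B=\HH_D(B_0,B_1,\ldots)$ with $B_n\in\bS^0_{1+\xi}(W)$, the same standard fact yields $A_n\in\bS^0_{1+\xi}(Z)$ with $A_n\cap W=B_n$, and then $A=\HH_D(A_0,A_1,\ldots)$ works by Lemma~\ref{relativizationsettheoretic}.\ref{subspacesettheoretic} again.

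For (\ref{expansionpreimage}) and (\ref{expansionmeasurablepreimage}) I use Lemma~\ref{relativizationsettheoretic}.\ref{preimagesettheoretic} to commute $f^{-1}$ past $\HH_D$: writing $B=\HH_D(B_0,B_1,\ldots)$, we get $f^{-1}[B]=\HH_D(f^{-1}[B_0],f^{-1}[B_1],\ldots)$. In (\ref{expansionpreimage}), each $B_n\in\bS^0_{1+\xi}(W)$ and $f$ continuous force $f^{-1}[B_n]\in\bS^0_{1+\xi}(Z)$ by the standard Borel-class preimage stability, so $f^{-1}[B]\in\bG_D^{(\xi)}(Z)$. In (\ref{expansionmeasurablepreimage}), each $B_n\in\bS^0_1(W)$ and $f$ is $\bS^0_{1+\xi}$-measurable, so $f^{-1}[B_n]\in\bS^0_{1+\xi}(Z)$ directly from the definition of $\bS^0_{1+\xi}$-measurability, giving the same conclusion.

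For (\ref{expansionhomeomorphism}), both implications are instances of (\ref{expansionpreimage}) applied to $h^{-1}$ and $h$ respectively (alternatively, one can invoke Lemma~\ref{relativizationsettheoretic}.\ref{homeomorphismsettheoretic} together with the fact that homeomorphisms preserve each $\bS^0_{1+\xi}$). I do not expect any real obstacle here: the entire proof is bookkeeping, and the only thing to be mildly careful about is that in (\ref{expansionsubspace}) one genuinely needs the classical fact that $\bS^0_{1+\xi}(W)=\{A\cap W:A\in\bS^0_{1+\xi}(Z)\}$ for arbitrary subspaces $W\subseteq Z$, which is standard (see, e.g., \cite[Section~22.A]{kechris}).
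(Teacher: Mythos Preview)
Your proposal is correct and takes essentially the same approach as the paper: the paper's proof is the single sentence ``This is a straightforward consequence of Proposition~\ref{relativizationsettheoretic},'' and you have simply unpacked that sentence, correctly identifying the one mildly nontrivial external ingredient (the subspace behaviour of $\bS^0_{1+\xi}$) needed for item~(\ref{expansionsubspace}). The only quibble is the citation: the subspace fact $\bS^0_{1+\xi}(W)=\{A\cap W:A\in\bS^0_{1+\xi}(Z)\}$ is not really in \cite[Section~22.A]{kechris}; it is the standard inductive argument on $\xi$ (or see \cite[Exercise~22.1]{kechris}).
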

\begin{proof}
This is a straightforward consequence of Proposition \ref{relativizationsettheoretic}.
\end{proof}

Finally, we show that $\Ha(Z)$ is closed under expansions (see Proposition \ref{expansionsarehausdorff}). We will need the following result, which is another variation on the theme of Kuratowski's transfer theorem. Notice however that, at this point, we do not know that $\bG^{(\xi)}$ is a non-selfdual Wadge class whenever $\bG$ is. That this is true will follow from Theorem \ref{main}.

\begin{lemma}\label{expansionbijectionhausdorff}
Let $Z$ be a zero-dimensional Polish space, let $D\subseteq\PP(\omega)$, let $\xi<\omega_1$, and let $\Aa\subseteq\bG_D^{(\xi)}(Z)$ be countable. Then there exists a zero-dimensional Polish space $W$ and a $\mathbf{\Sigma}^0_{1+\xi}$-measurable bijection $f:Z\longrightarrow W$ such that $f[A]\in\bG_D(W)$ for every $A\in\Aa$.
\end{lemma}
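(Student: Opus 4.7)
The plan is to reduce to Corollary \ref{sigmaintoopenbijection} by collecting all the open sets used to witness membership in $\bG_D^{(\xi)}(Z)$ of the finitely or countably many sets in $\Aa$, and then push them forward along a single refining bijection.

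More precisely, first I would unpack the definition of $\bG_D^{(\xi)}(Z)$: for each $A\in\Aa$, fix a witnessing sequence $(B^A_n:n\in\omega)$ with $B^A_n\in\bS^0_{1+\xi}(Z)$ and $A=\HH_D(B^A_0,B^A_1,\ldots)$. Let
\[
\BB=\{B^A_n:A\in\Aa,\ n\in\omega\}\subseteq\bS^0_{1+\xi}(Z),
\]
which is countable since $\Aa$ is countable. Now apply Corollary \ref{sigmaintoopenbijection} (with $1+\xi$ in place of $\xi$) to $\BB$ to obtain a zero-dimensional Polish space $W$ and a $\bS^0_{1+\xi}$-measurable bijection $f:Z\longrightarrow W$ such that $f[B]\in\bS^0_1(W)$ for every $B\in\BB$.

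To finish, I would show that $f[A]\in\bG_D(W)$ for each $A\in\Aa$. Since $f$ is a bijection, Lemma \ref{relativizationsettheoretic}.\ref{homeomorphismsettheoretic} gives
\[
f[A]=f[\HH^Z_D(B^A_0,B^A_1,\ldots)]=\HH^W_D(f[B^A_0],f[B^A_1],\ldots).
\]
Each $f[B^A_n]\in\bS^0_1(W)$ by the choice of $f$, so $f[A]\in\bG_D(W)$ by the definition of $\bG_D(W)$, as required.

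The only mild obstacle is a bookkeeping one: making sure that Corollary \ref{sigmaintoopenbijection} can be invoked with the index $1+\xi$, which is fine because $1\leq 1+\xi<\omega_1$, and then using Lemma \ref{relativizationsettheoretic}.\ref{homeomorphismsettheoretic} in the bijective form so that pushing forward commutes with the Hausdorff operation $\HH_D$. No determinacy hypothesis is needed, and no Wadge-theoretic input beyond the Kuratowski-style refinement already established.
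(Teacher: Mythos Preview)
Your proof is correct and follows essentially the same approach as the paper: collect the countably many $\bS^0_{1+\xi}$ sets witnessing each $A\in\Aa$ as a Hausdorff combination, apply Corollary~\ref{sigmaintoopenbijection} to make them all open in a refined Polish space, and then use the bijective case of Lemma~\ref{relativizationsettheoretic}.\ref{homeomorphismsettheoretic} to commute $f$ past $\HH_D$. The paper's argument differs only in that it fixes an explicit enumeration of $\Aa$ rather than indexing the witnesses by elements of $\Aa$.
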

\begin{proof}
If $\Aa=\varnothing$ the desired result is trivial, so assume that $\Aa\neq\varnothing$. Let $\Aa=\{A_m:m\in\omega\}$ be an enumeration. Given $m\in\omega$, fix $B_{(m,n)}\in\mathbf{\Sigma}^0_{1+\xi}(Z)$ for $n\in\omega$ such that $A_m=\HH_D(B_{(m,0)},B_{(m,1)},\ldots)$. Define $\BB=\{B_{(m,n)}:m,n\in\omega\}$. By Corollary \ref{sigmaintoopenbijection}, we can fix a zero-dimensional Polish space $W$ and a $\mathbf{\Sigma}^0_{1+\xi}$-measurable bijection $f:Z\longrightarrow W$ such that $f[B]\in\mathbf{\Sigma}^0_1(W)$ for every $B\in\BB$. It remains to observe that
$$
f[A_m]=f[\HH_D(B_{(m,0)},B_{(m,1)},\ldots)]=\HH_D(f[B_{(m,0)}],f[B_{(m,1)}],\ldots)\in\bG_D(W)
$$
for every $m\in\omega$, where the second equality follows from Proposition \ref{relativizationsettheoretic}.\ref{homeomorphismsettheoretic}.
\end{proof}

\begin{lemma}\label{movexihausdorff}
Let $Z$ be an uncountable zero-dimensional Polish space, let $D\subseteq\PP(\omega)$, and let $\xi<\omega_1$. Then $\bG_D(Z)^{(\xi)}=\bG_D^{(\xi)}(Z)$.
\end{lemma}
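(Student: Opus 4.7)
The plan is to prove the two inclusions separately. The inclusion $\bG_D(Z)^{(\xi)}\supseteq\bG_D^{(\xi)}(Z)$ is the substantive one, while $\bG_D(Z)^{(\xi)}\subseteq\bG_D^{(\xi)}(Z)$ is essentially a consequence of the fact that Hausdorff operations commute with preimages.

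For the easy inclusion, I would take $B\in\bG_D(Z)^{(\xi)}$, so $B=f^{-1}[A]$ for some $\bS^0_{1+\xi}$-measurable $f:Z\longrightarrow Z$ and some $A=\HH_D(A_0,A_1,\ldots)\in\bG_D(Z)$ with each $A_n\in\bS^0_1(Z)$. By Lemma \ref{relativizationsettheoretic}.\ref{preimagesettheoretic} (preimages commute with Hausdorff operations), one has $B=\HH_D(f^{-1}[A_0],f^{-1}[A_1],\ldots)$, and each $f^{-1}[A_n]\in\bS^0_{1+\xi}(Z)$ by $\bS^0_{1+\xi}$-measurability, so $B\in\bG_D^{(\xi)}(Z)$.

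For the harder inclusion, I would take $B\in\bG_D^{(\xi)}(Z)$ and apply Lemma \ref{expansionbijectionhausdorff} with $\Aa=\{B\}$ to obtain a zero-dimensional Polish space $W$ and a $\bS^0_{1+\xi}$-measurable bijection $f:Z\longrightarrow W$ such that $f[B]\in\bG_D(W)$. Intuitively, this refines the topology of $Z$ just enough to make $B$ a genuine Hausdorff operation applied to open sets. To get back into $Z$, I would pick an embedding $j:W\longrightarrow Z$ (which exists because $W$ embeds into $\omega^\omega$, and $\omega^\omega$ embeds into any uncountable zero-dimensional Polish space $Z$). By Lemma \ref{hausdorffrelativization}.\ref{hausdorffrelativizationhomeomorphism} we have $j[f[B]]\in\bG_D(j[W])$, and then by Lemma \ref{hausdorffrelativization}.\ref{hausdorffrelativizationsubspace} there exists $C\in\bG_D(Z)$ with $C\cap j[W]=j[f[B]]$. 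Setting $g=j\circ f:Z\longrightarrow Z$, which is $\bS^0_{1+\xi}$-measurable as the composition of a $\bS^0_{1+\xi}$-measurable function with a continuous one, a short verification gives $g^{-1}[C]=f^{-1}[j^{-1}[C]]=f^{-1}[f[B]]=B$, using injectivity of $j$ and bijectivity of $f$. Hence $B\in\bG_D(Z)^{(\xi)}$.

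The main obstacle is really bundled into Lemma \ref{expansionbijectionhausdorff}, which itself rests on Kuratowski's transfer theorem. Once that machinery is available, the only new work is the transfer from $W$ back to $Z$ via an embedding, which is where the uncountability of $Z$ is used: if $Z$ were countable one could not guarantee that $W$ embeds into $Z$, and the argument would collapse. No assumption of $\Det$ or niceness of a pointclass is required, since everything takes place within the purely topological theory of Hausdorff operations.
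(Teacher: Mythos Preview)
Your proof is correct and follows essentially the same approach as the paper: the easy inclusion via commutation of Hausdorff operations with preimages (the paper cites Lemma~\ref{expansionhausdorffrelativization}.\ref{expansionmeasurablepreimage}, which is exactly your argument), and the hard inclusion via Lemma~\ref{expansionbijectionhausdorff} followed by embedding $W$ back into $Z$ and extending via Lemma~\ref{hausdorffrelativization}.\ref{hausdorffrelativizationsubspace}. The only cosmetic difference is that the paper absorbs your embedding $j$ into a ``without loss of generality $W\subseteq Z$'' step, whereas you make the composition $g=j\circ f$ explicit.
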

\begin{proof}
The inclusion $\bG_D(Z)^{(\xi)}\subseteq\bG_D^{(\xi)}(Z)$ follows from Lemma \ref{expansionhausdorffrelativization}.\ref{expansionmeasurablepreimage}. In order to prove the other inclusion, pick $A\in\bG_D^{(\xi)}(Z)$. By Lemma \ref{expansionbijectionhausdorff}, we can fix a zero-dimensional Polish space $W$ and a $\mathbf{\Sigma}^0_{1+\xi}$-measurable bijection $f:Z\longrightarrow W$ such that $f[A]\in\bG_D(W)$. Since $2^\omega$ embeds in $Z$ and $W$ is zero-dimensional, using Lemma \ref{hausdorffrelativization}.\ref{hausdorffrelativizationhomeomorphism} we can assume without loss of generality that $W$ is a subspace of $Z$, so that $f:Z\longrightarrow Z$. By Lemma \ref{hausdorffrelativization}.\ref{hausdorffrelativizationsubspace}, we can fix $B\in\bG_D(Z)$ such that $B\cap W=f[A]$. It is easy to check that $A=f^{-1}[B]$, which concludes the proof.
\end{proof}

\begin{proposition}\label{expansionsarehausdorff}
Let $Z$ be an uncountable zero-dimensional Polish space, let $\xi<\omega_1$, and let $\bG\in\Ha(Z)$. Then $\bG^{(\xi)}\in\Ha(Z)$.
\end{proposition}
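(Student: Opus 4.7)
The plan is to simply chain together three results that have already been established in the preceding pages. Since $\bG \in \Ha(Z)$, by definition of $\Ha(Z)$ we can fix some $D \subseteq \PP(\omega)$ such that $\bG = \bG_D(Z)$. The goal is then to exhibit some $E \subseteq \PP(\omega)$ for which $\bG^{(\xi)} = \bG_E(Z)$.

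The first step is to translate the expansion $\bG_D(Z)^{(\xi)}$ into the Hausdorff-operation framework of Definition \ref{hausdorffexpansion}. This is exactly the content of Lemma \ref{movexihausdorff}, which gives $\bG_D(Z)^{(\xi)} = \bG_D^{(\xi)}(Z)$; this is where the hypothesis that $Z$ is uncountable, zero-dimensional, and Polish gets used (through the refining-function machinery encapsulated in Lemma \ref{expansionbijectionhausdorff}). The second step is to absorb the $\xi$-expansion back into the index set: Proposition \ref{movexidown} provides some $E \subseteq \PP(\omega)$ such that $\bG_D^{(\xi)}(W) = \bG_E(W)$ for every space $W$, in particular for $W = Z$.

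Combining these, one reads off
\[
\bG^{(\xi)} = \bG_D(Z)^{(\xi)} = \bG_D^{(\xi)}(Z) = \bG_E(Z) \in \Ha(Z),
\]
which is the desired conclusion. There is no real obstacle here; all the work has been done in Section \ref{sectionkuratowski} and the lemmas preceding the proposition. The one thing to double-check is just that Lemma \ref{movexihausdorff} is cited under its stated hypotheses (uncountable zero-dimensional Polish), which are precisely the hypotheses of the present proposition, so the application is direct.
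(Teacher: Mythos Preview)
Your proof is correct and follows exactly the same approach as the paper's: the paper's proof simply reads ``This follows from Lemma \ref{movexihausdorff} and Proposition \ref{movexidown},'' and your argument spells out precisely that chain of equalities $\bG^{(\xi)} = \bG_D(Z)^{(\xi)} = \bG_D^{(\xi)}(Z) = \bG_E(Z)$.
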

\begin{proof}
This follows from Lemma \ref{movexihausdorff} and Proposition \ref{movexidown}.
\end{proof}

\section{Expansions: relativization}\label{sectionexpansionrelativization}

In this section we collect some useful results, showing that expansions interact in the expected way with the machinery of relativization. Lemma \ref{expansionbijectionrelativization} is yet another variation on the theme of Kuratowski's transfer theorem. These facts will be needed in Section \ref{sectionexpansionmain}.

\begin{lemma}\label{relativizationexpansion}
Let $\bS$ be a nice topological pointclass, and assume that $\Det(\bS(\omega^\omega))$ holds. Let $Z$ and $W$ be zero-dimensional Polish spaces, let $\bG\in\NSDS(\omega^\omega)$, let $\xi<\omega_1$, and let $f:Z\longrightarrow W$ be $\mathbf{\Sigma}^0_{1+\xi}$-measurable. Then $f^{-1}[A]\in\bG^{(\xi)}(Z)$ for every $A\in\bG(W)$.
\end{lemma}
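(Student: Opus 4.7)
The plan is to unfold the definition of $\bG^{(\xi)}(Z)$ (in the sense of Definition \ref{relativizationdefinition} applied to the class $\bG^{(\xi)}\subseteq\PP(\omega^\omega)$), reducing the problem to showing that $g^{-1}[f^{-1}[A]]\in\bG^{(\xi)}$ for every continuous $g:\omega^\omega\longrightarrow Z$. Once there, I would use Lemma \ref{relativizationprelim} to convert $A\in\bG(W)$ into the preimage of a set in $\bG$ under an embedding of $W$ into $\omega^\omega$, which reduces everything to a statement purely about $\omega^\omega$.

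Concretely, I would first fix $A\in\bG(W)$ and an arbitrary continuous $g:\omega^\omega\longrightarrow Z$. Since $W$ is a zero-dimensional Polish space, there is an embedding $j:W\longrightarrow\omega^\omega$, and so by the implication $(\ref{forallcont})\rightarrow(\ref{forallemb})$ of Lemma \ref{relativizationprelim} there exists $B\in\bG$ with $A=j^{-1}[B]$. Consequently
$$
g^{-1}[f^{-1}[A]]=(j\circ f\circ g)^{-1}[B].
$$

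Next I would verify that $h=j\circ f\circ g:\omega^\omega\longrightarrow\omega^\omega$ is $\bS^0_{1+\xi}$-measurable. This is the only computation of the proof: for $V\in\bS^0_1(\omega^\omega)$, continuity of $j$ gives $j^{-1}[V]\in\bS^0_1(W)$, $\bS^0_{1+\xi}$-measurability of $f$ gives $f^{-1}[j^{-1}[V]]\in\bS^0_{1+\xi}(Z)$, and continuity of $g$ then gives $h^{-1}[V]=g^{-1}[f^{-1}[j^{-1}[V]]]\in\bS^0_{1+\xi}(\omega^\omega)$. By the definition of expansion, $(j\circ f\circ g)^{-1}[B]\in\bG^{(\xi)}$, as $B\in\bG$. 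Since $g$ was arbitrary, this shows $f^{-1}[A]\in\bG^{(\xi)}(Z)$.

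There is essentially no obstacle here: the result is a routine diagram-chase combining the definition of relativization, the definition of expansion, and the standard fact that pre- or post-composing a $\bS^0_{1+\xi}$-measurable function with a continuous function preserves $\bS^0_{1+\xi}$-measurability. The only place where the nontrivial hypotheses ($\bG$ non-selfdual, $\Det(\bool\bS(\omega^\omega))$) are used is in invoking Lemma \ref{relativizationprelim} to realise $A$ as $j^{-1}[B]$ for some $B\in\bG$.
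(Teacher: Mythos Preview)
Your proof is correct and follows essentially the same route as the paper: unfold the definition of $\bG^{(\xi)}(Z)$, use Lemma~\ref{relativizationprelim} to write $A=j^{-1}[B]$ with $B\in\bG$, and observe that $(j\circ f\circ g)^{-1}[B]\in\bG^{(\xi)}$ because $j\circ f\circ g$ is $\bS^0_{1+\xi}$-measurable. The only cosmetic difference is that the paper cites Lemma~\ref{composefunctions} for the measurability of the composite, whereas you verify it directly; your verification is fine.
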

\begin{proof}
Pick $A\in\bG(W)$. To see that $f^{-1}[A]\in\bG^{(\xi)}(Z)$, we have to show that $g^{-1}[f^{-1}[A]]\in\bG^{(\xi)}$ for every continuous $g:\omega^\omega\longrightarrow Z$. So pick such a $g$. Since $A\in\bG(W)$, by condition $(\ref{existsemb})$ of Lemma \ref{relativizationprelim}, we can fix an embedding $j:W\longrightarrow\omega^\omega$ and $B\in\bG$ such that $A=j^{-1}[B]$. The proof is concluded by observing that
$$
g^{-1}[f^{-1}[A]]=g^{-1}[f^{-1}[j^{-1}[B]]]=(j\circ f\circ g)^{-1}[B]\in\bG^{(\xi)}
$$
by the definition of expansion, since $j\circ f\circ g$ is $\mathbf{\Sigma}^0_{1+\xi}$-measurable by Lemma \ref{composefunctions}.
\end{proof}

\begin{lemma}\label{movexirelativization}
Let $\bS$ be a nice topological pointclass, and assume that $\Det(\bS(\omega^\omega))$ holds. Let $Z$ be an uncountable zero-dimensional Polish space, let $\bG\in\NSDS(\omega^\omega)$, and let $\xi<\omega_1$. Then $\bG(Z)^{(\xi)}=\bG^{(\xi)}(Z)$.
\end{lemma}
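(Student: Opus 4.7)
The inclusion $\bG(Z)^{(\xi)}\subseteq\bG^{(\xi)}(Z)$ is an immediate application of Lemma \ref{relativizationexpansion} with $W=Z$: given $A\in\bG(Z)$ and a $\mathbf{\Sigma}^0_{1+\xi}$-measurable $f:Z\longrightarrow Z$, that lemma already guarantees $f^{-1}[A]\in\bG^{(\xi)}(Z)$, and this exhausts the elements of $\bG(Z)^{(\xi)}$ by definition of expansion.

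For the non-trivial inclusion $\bG^{(\xi)}(Z)\subseteq\bG(Z)^{(\xi)}$, I would mirror the argument used in the proof of Lemma \ref{movexihausdorff}, replacing Hausdorff classes by relativized ones throughout. Fix $A\in\bG^{(\xi)}(Z)$. The first step is to invoke Lemma \ref{expansionbijectionrelativization} (the relativized Kuratowski-style refinement proved earlier in this section, analogous to Lemma \ref{expansionbijectionhausdorff}) to obtain a zero-dimensional Polish space $W$ and a $\mathbf{\Sigma}^0_{1+\xi}$-measurable bijection $f:Z\longrightarrow W$ such that $f[A]\in\bG(W)$.

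Next, I would realize $W$ as a subspace of $Z$. Since $Z$ is uncountable zero-dimensional Polish, a copy of $2^\omega$ embeds into $Z$; on the other hand, $W$ is zero-dimensional separable metrizable and hence embeds into $2^\omega$. Composing these two embeddings yields an embedding of $W$ into $Z$, so by Lemma \ref{relativization}.\ref{relativizationhomeo} we may assume without loss of generality that $W\subseteq Z$ and that $f$ is a $\mathbf{\Sigma}^0_{1+\xi}$-measurable map from $Z$ into $Z$ whose range is $W$, with $f[A]\in\bG(W)$ unchanged.

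Finally, apply the subspace clause Lemma \ref{relativization}.\ref{relativizationsubspace} to extract $B\in\bG(Z)$ with $B\cap W=f[A]$. Because $f:Z\longrightarrow W$ is a bijection, $f^{-1}[B]=f^{-1}[B\cap W]=f^{-1}[f[A]]=A$, which exhibits $A$ as a $\mathbf{\Sigma}^0_{1+\xi}$-measurable preimage of an element of $\bG(Z)$, hence $A\in\bG(Z)^{(\xi)}$. The substantive work is entirely packaged inside Lemma \ref{expansionbijectionrelativization}; once that refinement is in hand, what remains is a routine assembly of the homeomorphism and subspace clauses of Lemma \ref{relativization}, and the potential obstacle -- ensuring that the refined topology $W$ sits inside $Z$ -- is resolved by the combination of the perfect set theorem in $Z$ and the universal embedding of zero-dimensional separable metric spaces into $2^\omega$.
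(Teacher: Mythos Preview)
Your argument for the inclusion $\bG(Z)^{(\xi)}\subseteq\bG^{(\xi)}(Z)$ via Lemma~\ref{relativizationexpansion} is fine and matches the paper's reasoning.

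The reverse inclusion, however, has a genuine gap. You propose to feed $A\in\bG^{(\xi)}(Z)$ into Lemma~\ref{expansionbijectionrelativization}, but look at that lemma's hypothesis: it asks for $\Aa\subseteq\bG(Z)^{(\xi)}$, not $\Aa\subseteq\bG^{(\xi)}(Z)$. Its proof unpacks each $A_n$ via the definition of expansion as $g_n^{-1}[B_n]$ with $g_n:Z\to Z$ measurable and $B_n\in\bG(Z)$; this unpacking is precisely the statement $A_n\in\bG(Z)^{(\xi)}$. So invoking Lemma~\ref{expansionbijectionrelativization} for an arbitrary $A\in\bG^{(\xi)}(Z)$ presupposes exactly the inclusion you are trying to establish. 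The analogy with Lemma~\ref{movexihausdorff} breaks down because Lemma~\ref{expansionbijectionhausdorff} is stated with hypothesis $\Aa\subseteq\bG_D^{(\xi)}(Z)$ (the class defined by applying $\HH_D$ to $\bS^0_{1+\xi}$ sets), whereas the relativized version is stated on the other side.

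The paper avoids this by routing directly through $\omega^\omega$: given $A\in\bG^{(\xi)}(Z)$, one uses Lemma~\ref{relativizationprelim} to obtain an embedding $j:Z\to\omega^\omega$ and $B\in\bG^{(\xi)}$ with $A=j^{-1}[B]$; then the definition of expansion in $\omega^\omega$ gives a $\bS^0_{1+\xi}$-measurable $f:\omega^\omega\to\omega^\omega$ and $C\in\bG$ with $B=f^{-1}[C]$; finally an embedding $i:\omega^\omega\to Z$ (available since $Z$ is uncountable) together with Lemma~\ref{relativizationprelim} produces $D\in\bG(Z)$ with $i^{-1}[D]=C$, and $i\circ f\circ j$ witnesses $A\in\bG(Z)^{(\xi)}$. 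No Kuratowski-type refinement is needed for this direction.
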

\begin{proof}
In order to prove the inclusion $\subseteq$, pick $A\in\bG(Z)^{(\xi)}$. We have to show that $g^{-1}[A]\in\bG^{(\xi)}$ for every continuous function $g:\omega^\omega\longrightarrow Z$. So pick such a $g$. By the definition of expansion, there exists a $\bS^0_{1+\xi}$-measurable function $f:Z\longrightarrow Z$ and $B\in\bG(Z)$ such that $f^{-1}[B]=A$. By condition $(\ref{existsemb})$ of Lemma \ref{relativizationprelim}, there exists an embedding $j:Z\longrightarrow\omega^\omega$ and $C\in\bG$ such that $B=j^{-1}[C]$. Using Lemma \ref{composefunctions}, one sees that $j\circ f\circ g:\omega^\omega\longrightarrow\omega^\omega$ is a $\bS^0_{1+\xi}$-measurable function. Therefore $g^{-1}[A]=(j\circ f\circ g)^{-1}[C]\in\bG^{(\xi)}$ by the definition of expansion.

In order to prove the inclusion $\supseteq$, pick $A\in\bG^{(\xi)}(Z)$. By \cite[Theorem 7.8]{kechris}, we can fix an embedding $i:Z\longrightarrow\omega^\omega$ such that $i[Z]$ is closed in $\omega^\omega$. Therefore, by \cite[Proposition 2.8]{kechris}, there exists a retraction $\rho:\omega^\omega\longrightarrow i[Z]$. Observe that $i[A]\in\bG^{(\xi)}(i[Z])$ by Lemma \ref{relativization}.\ref{relativizationhomeo}. Set $A'=\rho^{-1}[i[A]]$, and observe that $A'\in\bG^{(\xi)}$. Therefore, there exist a $\bS^0_{1+\xi}$-measurable function $f:\omega^\omega\longrightarrow\omega^\omega$ and $B'\in\bG$ such that $f^{-1}[B']=A'$. Since $Z$ is uncountable, we can fix an embedding $j:\omega^\omega\longrightarrow Z$. By Lemmas \ref{relativization}.\ref{relativizationhomeo}, \ref{relativization}.\ref{relativizationbaire} and \ref{relativizationsubspace}, there exists $B\in\bG(Z)$ such that $B\cap j[\omega^\omega]=j[B']$. The proof is concluded by observing that $A=(j\circ f\circ i)^{-1}[B]$, and that $j\circ f\circ i$ is $\bS^0_{1+\xi}$-measurable by Lemma \ref{composefunctions}.
\end{proof}

\begin{lemma}\label{expansionbijectionrelativization}
Let $Z$ be a zero-dimensional Polish space, let $\bG\subseteq\PP(\omega^\omega)$, and let $\xi<\omega_1$. Assume that $\Aa\subseteq\bG(Z)^{(\xi)}$ and $\BB\subseteq\bS^0_{1+\xi}(Z)$ are countable. Then there exists a zero-dimensional Polish space $W$ and a $\mathbf{\Sigma}^0_{1+\xi}$-measurable bijection $f:Z\longrightarrow W$ such that $f[A]\in\bG(W)$ for every $A\in\Aa$ and $f[B]\in\bS^0_1(W)$ for every $B\in\BB$.
\end{lemma}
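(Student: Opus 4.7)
The plan is to refine the topology of $Z$ to a finer zero-dimensional Polish topology $\sigma$ in which every set in $\BB$ is already open and every set in $\Aa$ belongs to the relativized class $\bG(W)$ for $W = (Z,\sigma)$; then $f = \id_Z$, viewed as a map from $Z$ to $W$, will automatically be the desired bijection. Its $\bS^0_{1+\xi}$-measurability is guaranteed as soon as $\sigma \subseteq \bS^0_{1+\xi}(Z)$, and Corollary \ref{sigmaintoopen} is precisely the tool that produces zero-dimensional Polish refinements of this form from any prescribed countable family of $\bS^0_{1+\xi}$-subsets of $Z$. The whole task therefore reduces to isolating a countable family of $\bS^0_{1+\xi}$-sets which, once forced to be open in $\sigma$, will take care of both requirements simultaneously.

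For each $A \in \Aa$, the hypothesis $A \in \bG(Z)^{(\xi)}$ unpacks by the definition of expansion to a $\bS^0_{1+\xi}$-measurable function $g_A : Z \to Z$ and a set $C_A \in \bG(Z)$ with $A = g_A^{-1}[C_A]$. Fix a countable base $\UU$ for the original topology $\tau$ on $Z$ and set
$$
\CC = \BB \cup \{g_A^{-1}[U] : A \in \Aa,\ U \in \UU\},
$$
which is a countable subfamily of $\bS^0_{1+\xi}(Z)$. Apply Corollary \ref{sigmaintoopen} (with parameter $1+\xi$) to $\CC$ to obtain a zero-dimensional Polish topology $\sigma$ on $Z$ with $\tau \subseteq \sigma \subseteq \bS^0_{1+\xi}(Z,\tau)$ and $\CC \subseteq \sigma$. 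Declare $W = (Z,\sigma)$ and $f = \id_Z$.

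The verification is then routine. That $f : Z \to W$ is a $\bS^0_{1+\xi}$-measurable bijection is immediate from $\sigma \subseteq \bS^0_{1+\xi}(Z,\tau)$. For each $B \in \BB$ we have $f[B] = B \in \sigma = \bS^0_1(W)$. For each $A \in \Aa$, having arranged $g_A^{-1}[U] \in \sigma$ for every $U \in \UU$ suffices (by checking continuity on a base) to make $g_A : W \to (Z,\tau)$ continuous; Lemma \ref{relativization}.\ref{relativizationpreimage} applied to this continuous map and to $C_A \in \bG(Z)$ then gives $f[A] = A = g_A^{-1}[C_A] \in \bG(W)$, as required.

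The only genuinely delicate point is keeping the collection of sets that must be opened up countable: this is why it is essential to reduce the continuity of each $g_A$ to the countably many demands $g_A^{-1}[U] \in \sigma$ for $U$ in a fixed countable base of $\tau$, rather than trying to place $f[A]$ into $\bG(W)$ in a single stroke. Apart from that, the argument is essentially a packaging of Kuratowski's topology-refinement theorem with the first clause of Lemma \ref{relativization}, and neither uncountability of $Z$ nor any determinacy assumption beyond those already built into Lemma \ref{relativization} enters the argument.
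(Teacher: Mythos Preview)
Your proof is correct and is essentially the same as the paper's: both collect the preimages $g_A^{-1}[U]$ of a countable base under the witnessing $\bS^0_{1+\xi}$-measurable maps together with $\BB$, refine the topology to make these open, and then invoke Lemma~\ref{relativization}.\ref{relativizationpreimage} once each $g_A$ has become continuous from the refined space to the original. The only cosmetic difference is that the paper phrases the refinement via Corollary~\ref{sigmaintoopenbijection} (the bijection form) rather than Corollary~\ref{sigmaintoopen} (the finer-topology form), but as you note these amount to the same thing with $f=\id_Z$.
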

\begin{proof}
If $\Aa=\varnothing$ then the desired result is Corollary \ref{sigmaintoopenbijection}, so assume that $\Aa\neq\varnothing$. Let $\Aa=\{A_n:n\in\omega\}$ be an enumeration. By the definition of expansion, we can fix $\mathbf{\Sigma}^0_{1+\xi}$-measurable functions $g_n:Z\longrightarrow Z$ and $B_n\in\bG(Z)$ for $n\in\omega$ such that $A_n=g_n^{-1}[B_n]$. Fix a countable base $\UU$ for $Z$, and set
$$
\CC=\{g_n^{-1}[U]:n\in\omega,U\in\UU\}\cup\BB.
$$
By Corollary \ref{sigmaintoopenbijection}, there exists a zero-dimensional Polish space $W$ and $\mathbf{\Sigma}^0_{1+\xi}$-measurable bijection $f:Z\longrightarrow W$ such that $f[C]\in\mathbf{\Sigma}^0_1(W)$ for every $C\in\CC$. We claim that $f[A_n]\in\bG(W)$ for every $n\in\omega$. So pick $n\in\omega$. Since $f[A_n]=f[g_n^{-1}[B_n]]=(g_n\circ f^{-1})^{-1}[B_n]$, by Lemma \ref{relativization}.\ref{relativizationpreimage}, it will be enough to show that $g_n\circ f^{-1}$ is continuous. This follows from the fact that $(g_n\circ f^{-1})^{-1}[U]=f[g_n^{-1}[U]]\in\mathbf{\Sigma}^0_1(W)$ for every $U\in\UU$.
\end{proof}

\section{Level: basic facts}\label{sectionlevelbasic}

In this section we will introduce the notion of level, which is one of the fundamental concepts involved in our main result (see Definition \ref{louveauhierarchy}). We will need the following preliminary definition. Both notions are taken from \cite{louveausaintraymondf}, which was however limited to the Borel context (see also \cite[Section 7.3.4]{louveaub}).\footnote{\,In \cite{louveausaintraymondf}, the notation $\mathbf{\Delta}_{1+\xi}^0\text{-}\PU$ is used instead of $\PU_\xi$, and $\lambda_\mathsf{C}$ is used instead of $\ell$.}

\begin{definition}[Louveau, Saint-Raymond]\label{pudefinition}
Let $Z$ be a space, let $\bG\subseteq\PP(Z)$, and let $\xi<\omega_1$. Define $\PU_\xi(\bG)$ to be the collection of all sets of the form
$$
\bigcup_{n\in\omega}(A_n\cap V_n),
$$
where each $A_n\in\bG$, each $V_n\in\mathbf{\Delta}_{1+\xi}^0(Z)$, the $V_n$ are pairwise disjoint, and $\bigcup_{n\in\omega}V_n=Z$. A set in this form is called a \emph{partitioned union} of sets in $\bG$.
\end{definition}

Notice that the sets $V_n$ in the above definition are not required to be non-empty. The following proposition, whose straightforward proof is left to the reader, collects the most basic facts about partitioned unions.

\begin{proposition}\label{pubasic}
Let $Z$ be a space, let $\bG\subseteq\PP(Z)$, and let $\xi<\omega_1$.
\begin{enumerate}
\item\label{pubasiccc} If $\bG$ is continuously closed then $\PU_\xi(\bG)$ is continuously closed.
\item\label{pubasicincreasing} $\bG\subseteq\PU_\eta(\bG)\subseteq\PU_\xi(\bG)$ whenever $\eta\leq\xi$.
\item\label{pubasicwadge} $\PU_0(\bG)=\bG$ whenever $\bG$ is a Wadge class in $Z$.
\item\label{pubasiccheck} $\widecheck{\PU_\xi(\bG)}=\PU_\xi(\bGc)$.
\item\label{pubasicidempotent} $\PU_\xi(\PU_\xi(\bG))=\PU_\xi(\bG)$.
\end{enumerate}
\end{proposition}

\begin{definition}[Louveau, Saint-Raymond]
Let $Z$ be a space, let $\bG\subseteq\PP(Z)$, and let $\xi<\omega_1$. Define
\begin{itemize}
\item $\ell(\bG)\geq\xi$ if $\PU_\xi(\bG)=\bG$,
\item $\ell(\bG)=\xi$ if $\ell(\bG)\geq\xi$ and $\ell(\bG)\not\geq\xi+1$,
\item $\ell(\bG)=\omega_1$ if $\ell(\bG)\geq\eta$ for every $\eta<\omega_1$.
\end{itemize}
We refer to $\ell(\bG)$ as the \emph{level} of $\bG$.
\end{definition}

Notice that, by Proposition \ref{pubasic}.\ref{pubasicwadge}, $\ell(\bG)\geq 0$ for every Wadge class $\bG$. Using \cite[Theorem 22.4 and Exercise 37.3]{kechris}, one sees that the following hold for every uncountable Polish space $Z$:
\begin{itemize}
\item $\ell(\{\varnothing\})=\ell(\{Z\})=\omega_1$,
\item $\ell(\bS^0_{1+\xi}(Z))=\ell(\bP^0_{1+\xi}(Z))=\xi$ whenever $\xi<\omega_1$,
\item $\ell(\bS^1_n(Z))=\ell(\bP^1_n(Z))=\omega_1$ whenever $1\leq n<\omega$.
\end{itemize}
In fact, the classes of uncountable level can be characterized as those closed under Borel preimages (see Corollary \ref{uncountablelevel} for a more precise statement).

We remark that it is not clear at this point whether for every non-selfdual Wadge class $\bG$ there exists $\xi\leq\omega_1$ such that $\ell(\bG)=\xi$.\footnote{\,It is conceivable that $\PU_\xi(\bG)=\bG$ for all $\xi<\eta$, where $\eta$ is a limit ordinal, while $\PU_\eta(\bG)\neq\bG$.} In Section \ref{sectionlevelhard}, we will show that this is in fact the case (see Corollary \ref{everyclasshasalevel}).

The following simple proposition shows that the notion of level becomes rather trivial when the ambient space is countable.

\begin{proposition}\label{levelcountable}
Let $Z$ be a countable space, and let $\{\varnothing,Z\}\subseteq\bG\subseteq\PP(Z)$. Assume that $\ell(\bG)\geq 1$. Then $\bG=\PP(Z)$.	
\end{proposition}
\begin{proof}
Use the fact that $\{\{x\}:x\in Z\}$ is a countable partition of $Z$ consisting of $\bD^0_2$ sets.	
\end{proof}

We conclude this section with another simple result, which shows that classes of high level are guaranteed to have certain closure properties. Its straightforward proof is left to the reader.

\begin{lemma}\label{levelintersection}
Let $Z$ be a space, let $\bG\subseteq\PP(Z)$ be such that $\varnothing\in\bG$, and let $\xi<\omega_1$. Assume that $\ell(\bG)\geq\xi$. Then $A\cap V\in\bG$ whenever $A\in\bG$ and $V\in\bD^0_{1+\xi}(Z)$.
\end{lemma}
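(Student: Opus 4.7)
The plan is to exhibit $A\cap V$ as an explicit partitioned union witnessing its membership in $\PU_{\xi+1}(\bG)$, and then invoke the hypothesis $\ell(\bG)\geq\xi+1$, which by definition says $\PU_{\xi+1}(\bG)=\bG$.

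More concretely, given $A\in\bG$ and $V\in\bD^0_{1+\xi+1}(Z)$, I would define the sequences $(A_n:n\in\omega)$ and $(V_n:n\in\omega)$ by setting $A_0=A$, $V_0=V$, $V_1=Z\setminus V$, and $A_n=\varnothing$ for every $n\geq 1$, together with $V_n=\varnothing$ for every $n\geq 2$. Then $V_0,V_1,\ldots$ are pairwise disjoint elements of $\bD^0_{1+\xi+1}(Z)$ (using that $\bD^0_{1+\xi+1}(Z)$ is closed under complements and contains $\varnothing$) with $\bigcup_{n\in\omega}V_n=Z$, and each $A_n\in\bG$ by the assumption that $\varnothing\in\bG$. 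By construction,
\[
\bigcup_{n\in\omega}(A_n\cap V_n)=(A\cap V)\cup\varnothing\cup\varnothing\cup\cdots=A\cap V,
\]
so $A\cap V\in\PU_{\xi+1}(\bG)$. Since $\ell(\bG)\geq\xi+1$ means exactly $\PU_{\xi+1}(\bG)=\bG$, we conclude $A\cap V\in\bG$, as desired.

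There is no real obstacle here: the content is purely unwinding the two definitions, and the only mild point to double-check is that the sets $V_n$ (and in particular $\varnothing$ and $Z\setminus V$) qualify as elements of $\bD^0_{1+\xi+1}(Z)$, which is immediate.
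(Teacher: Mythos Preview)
Your proof is correct and is precisely the intended argument: the paper leaves this proof to the reader, calling it straightforward, and the natural way to fill it in is exactly your two-block partition $\{V,\,Z\setminus V\}$ (padded with $\varnothing$'s) together with the hypothesis $\varnothing\in\bG$ to witness $A\cap V\in\PU_{\xi+1}(\bG)=\bG$.
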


\section{Expansions: the main theorem}\label{sectionexpansionmain}

The main result of this section is Theorem \ref{expansiontheorem}, which clarifies the crucial connection between level and expansion. This result can be traced back to \cite[Th\'eor\`{e}me 8]{louveausaintraymondf}, but the proof given here is essentially the same as \cite[proof of Theorem 7.3.9.ii]{louveaub}. Both of these are however limited to the Borel context.

\begin{theorem}\label{expansiontheorem}
Let $\bS$ be a nice topological pointclass, and assume that $\Det(\bS(\omega^\omega))$ holds. Let $Z$ be an uncountable zero-dimensional Polish space, and let $\xi<\omega_1$. Then, for every $\bG\in\NSDS(Z)$, the following conditions are equivalent:
\begin{enumerate}
\item\label{levelgeqxi} $\ell(\bG)\geq\xi$,
\item\label{expansionofsomething} $\bG=\bL^{(\xi)}$ for some $\bL\in\NSDS(Z)$.
\end{enumerate}
\end{theorem}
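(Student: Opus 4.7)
The direction $(\ref{expansionofsomething}) \Rightarrow (\ref{levelgeqxi})$ is the routine one. Suppose $\bG = \bL^{(\xi)}$ with $\bL \in \NSD(Z)$, and fix $B$ such that $\bL = [B]$. To show $\PU_\xi(\bG) \subseteq \bG$ (the other inclusion is Lemma \ref{pubasic}.\ref{pubasicincreasing}), take $A = \bigcup_{n \in \omega}(A_n \cap V_n)$ with each $A_n \in \bG$ and $\{V_n\}$ a $\bD^0_{1+\xi}(Z)$-partition of $Z$. Since $A_n \in \bL^{(\xi)}$, write $A_n = f_n^{-1}[B_n]$ with $f_n$ being $\bS^0_{1+\xi}$-measurable and $B_n \in \bL$. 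For each $n$, the reduction $B_n \leq B$ in $Z$ is witnessed by some continuous $h_n$, so replacing $f_n$ by $h_n \circ f_n$ (still $\bS^0_{1+\xi}$-measurable) we may assume $B_n = B$. Define $f : Z \to Z$ by $f(x) = f_n(x)$ for $x \in V_n$. For any $U \in \bS^0_1(Z)$, $f^{-1}[U] = \bigcup_{n}(f_n^{-1}[U] \cap V_n) \in \bS^0_{1+\xi}(Z)$, so $f$ is $\bS^0_{1+\xi}$-measurable. Finally $f^{-1}[B] = A$, hence $A \in \bL^{(\xi)} = \bG$.

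The direction $(\ref{levelgeqxi}) \Rightarrow (\ref{expansionofsomething})$ is the main obstacle. Assume $\ell(\bG) \geq \xi$, fix $A$ with $\bG = [A]$, and fix a countable base $\UU$ for $Z$. The plan is to find a finer zero-dimensional Polish topology $\sigma$ on $Z$ satisfying $\tau \subseteq \sigma \subseteq \bS^0_{1+\xi}(Z,\tau)$ in which $A$ becomes a generator of a ``lower-complexity'' non-selfdual Wadge class $\bL'$; then pull $\bL'$ back via the identity $\id : (Z,\tau) \to (Z,\sigma)$ (which is $\bS^0_{1+\xi}$-measurable) to obtain $\bL$ and verify $\bG = \bL^{(\xi)}$. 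Concretely, I would proceed as follows. Using Theorem \ref{sigmaintoopeneta}/Corollary \ref{sigmaintoopen} applied to a carefully chosen countable collection of $\bS^0_{1+\xi}$ sets (derived from a representation of $A$ afforded by the hypothesis $\PU_\xi(\bG) = \bG$), I would obtain a refined topology $\sigma$ and a $\bS^0_{1+\xi}$-measurable bijection $g : (Z,\tau) \to W = (Z,\sigma)$ where $W$ is zero-dimensional Polish. In $W$ the Wadge class $\bL' = [g[A]]$ will be non-selfdual (here I would appeal to Lemma \ref{bairetoall}-style transfer and the non-selfduality of $\bG$, together with the fact that $g$ is a bijection between zero-dimensional Polish spaces). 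Then by Lemma \ref{relativizationexistsunique} there is a unique $\bG_0 \in \NSDS(\omega^\omega)$ with $\bG_0(W) = \bL'$, and I set $\bL = \bG_0(Z) \in \NSDS(Z)$.

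To finish, I would show $\bG = \bL^{(\xi)}$. The inclusion $\bL^{(\xi)} \subseteq \bG$: any $f^{-1}[C]$ with $C \in \bL = \bG_0(Z)$ and $f : Z \to Z$ being $\bS^0_{1+\xi}$-measurable lies in $\bG_0^{(\xi)}(Z) = \bG_0(Z)^{(\xi)}$ (Lemma \ref{movexirelativization}); on the other hand $g^{-1}[g[A]] = A \in \bG$, and by tracing through the relativization (Lemma \ref{relativizationexpansion} and the choice of $g$) one sees that every preimage of an element of $\bL$ under a $\bS^0_{1+\xi}$-measurable map Wadge-reduces to $A$, hence lies in $\bG$. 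For the reverse inclusion $\bG \subseteq \bL^{(\xi)}$: since $A = g^{-1}[g[A]] \in \bL^{(\xi)}$ by construction, and $\bL^{(\xi)}$ is continuously closed, we obtain $\bG = [A] \subseteq \bL^{(\xi)}$.

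The key technical hurdle is arranging that $g[A]$ really does generate a \emph{non-selfdual} class in $W$ (so that relativization applies) and that the pulled-back class $\bL = \bG_0(Z)$ satisfies both $A \in \bL^{(\xi)}$ and $\bL^{(\xi)} \subseteq \bG$. The hypothesis $\ell(\bG) \geq \xi$ is precisely what ensures that the $\bD^0_{1+\xi}$-partitioned unions needed to witness the expansion remain inside $\bG$, and this is where it enters essentially. The uncountability of $Z$ is needed to embed $\omega^\omega$ into $Z$ in order to apply the relativization machinery of Sections \ref{sectionrelativizationbasic} and \ref{sectionrelativizationuncountable}.
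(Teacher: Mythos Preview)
Your argument for $(\ref{expansionofsomething})\Rightarrow(\ref{levelgeqxi})$ is correct, and in fact more direct than the paper's: you glue the witnessing $\bS^0_{1+\xi}$-measurable maps along the $\bD^0_{1+\xi}$-partition, whereas the paper passes through Lemma \ref{expansionbijectionrelativization} and a Kuratowski transfer. Your route is a clean simplification for this direction.

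The other direction, however, has a real gap. Your plan is to refine the topology via Kuratowski and hope that $g[A]$ generates a \emph{non-selfdual} class $\bL'$ in $W$; you then want $\bL^{(\xi)}\subseteq\bG$ ``by tracing through the relativization.'' Neither step works as stated. First, there is no reason for $g[A]$ to be non-selfdual in an arbitrary refinement: non-selfduality is not preserved by measurable bijections, and Lemma \ref{bairetoall} concerns retractions, not change-of-topology maps. Second, and more seriously, you have not specified \emph{which} countable family of $\bS^0_{1+\xi}$ sets to make open; different choices give different $\bL$, and for most of them $\bL^{(\xi)}$ will strictly overshoot $\bG$ (e.g.\ for $\bG=\bS^0_2$, $\xi=1$, refining too little leaves $\bL=\bS^0_2$ and $\bL^{(1)}=\bS^0_3$). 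The hypothesis $\ell(\bG)\geq\xi$ does not by itself single out the right $\bL$.

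The paper's argument supplies the missing idea: one does not construct $\bL$ directly, but instead takes $\bL(Z)$ \emph{minimal} among non-selfdual classes with $\bG(Z)\subseteq\bL(Z)^{(\xi)}$ (well-foundedness, Theorem \ref{wellfounded}). If the inclusion were strict, Wadge's Lemma would give $\bG(Z)\subseteq\bLc(Z)^{(\xi)}$ as well, so both $A$ and $Z\setminus A$ lie in $\bL(Z)^{(\xi)}$. A Kuratowski transfer (Lemma \ref{expansionbijectionrelativization}) pushes both into $\bL(W)$; now $f[A]$ \emph{must} be selfdual in $W$, since otherwise its class would be a strictly smaller $\bP(Z)$ with $\bG(Z)\subseteq\bP(Z)^{(\xi)}$, contradicting minimality. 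The selfdual decomposition (Corollary \ref{selfdualcorollary}) then writes $f[A]$ as a $\bD^0_1$-partitioned union of pieces in classes $\bG_n(W)\subsetneq\bL(W)$; minimality forces $\bGc_n(Z)^{(\xi)}\subseteq\bG(Z)$ for each $n$, and pulling back turns the decomposition of $W\setminus f[A]$ into a $\bD^0_{1+\xi}$-partitioned union witnessing $Z\setminus A\in\PU_\xi(\bG(Z))=\bG(Z)$, the desired contradiction. The crux you are missing is precisely this interplay between minimality, the forced selfduality of $f[A]$, and the selfdual analysis of Section \ref{sectionselfdual}.
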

\begin{proof}
First we will prove that the implication $(\ref{levelgeqxi})\rightarrow (\ref{expansionofsomething})$ holds. Pick $\bG(Z)\in\NSDS(Z)$, where $\bG\in\NSDS(\omega^\omega)$. Assume that $\ell(\bG(Z))\geq\xi$. Let $\bL(Z)\in\NSDS(Z)$, where $\bL\in\NSDS(\omega^\omega)$, be $\subseteq$-minimal with the property that $\bG(Z)\subseteq\bL(Z)^{(\xi)}$. We claim that $\bG(Z)=\bL(Z)^{(\xi)}$. Assume, in order to get a contradiction, that $\bL(Z)^{(\xi)}\nsubseteq\bG(Z)$. It follows from Lemma \ref{wadgelemma} that $\bG(Z)\subseteq\bLc(Z)^{(\xi)}$. Fix $A\subseteq Z$ such that $\bG(Z)=A\wc$, and observe that $\{A,Z\setminus A\}\subseteq\bL(Z)^{(\xi)}$. Then, by Corollary \ref{expansionbijectionrelativization}, we can fix a zero-dimensional Polish space $W$ and a $\mathbf{\Sigma}^0_{1+\xi}$-measurable bijection $f:Z\longrightarrow W$ such that $\{f[A],f[Z\setminus A]\}\subseteq\bL(W)$.

Next, we will show that $f[A]$ is selfdual in $W$. Assume, in order to get a contradiction, that this is not the case. Then we can fix $\bP\in\NSDS(\omega^\omega)$ such that $f[A]\wc=\bP(W)$. Notice that $\bP(W)\subseteq\bL(W)$. Furthermore $W\setminus f[A]=f[Z\setminus A]\in\bL(W)$, hence $\bPc(W)\subseteq\bL(W)$. Since $\bP(W)$ is non-selfdual, it follows that $\bP(W)\subsetneq\bL(W)$. Therefore, $\bP(Z)\subsetneq\bL(Z)$ by Theorem \ref{orderisomorphism}. On the other hand, Lemmas \ref{relativizationexpansion} and \ref{movexirelativization} show that $A=f^{-1}[f[A]]\in\bP^{(\xi)}(Z)=\bP(Z)^{(\xi)}$. Hence $\bG(Z)\subseteq\bP(Z)^{(\xi)}$, which contradicts the minimality of $\bL(Z)$.

Since $f[A]$ is selfdual in $W$, by Corollary \ref{selfdualcorollary}, we can fix $A_n\subseteq W$, pairwise disjoint $V_n\in\mathbf{\Delta}^0_1(W)$, and $\bG_n\in\NSDS(\omega^\omega)$ for $n\in\omega$ such that $\bigcup_{n\in\omega}V_n=W$,
$$
f[A]=\bigcup_{n\in\omega}(A_n\cap V_n),
$$
and $A_n\in\bG_n(W)\subsetneq\bL(W)$ for each $n$. Notice that $\bG_n(Z)\subsetneq\bL(Z)$ for each $n$ by Theorem \ref{orderisomorphism}, hence $\bG(Z)\nsubseteq\bG_n(Z)^{(\xi)}$ for each $n$ by the minimality of $\bL(Z)$. It follows from Lemma \ref{wadgelemma} that $\bGc_n(Z)^{(\xi)}\subseteq\bG(Z)$ for each $n$.

Set $B_n=W\setminus A_n\in\bGc_n(W)$ for $n\in\omega$. Observe that $f^{-1}[B_n]\in\bGc_n^{(\xi)}(Z)=\bGc_n(Z)^{(\xi)}\subseteq\bG(Z)$ for each $n$ by Lemmas \ref{relativizationexpansion} and \ref{movexirelativization}. Furthermore, it is clear that $f^{-1}[V_n]\in\mathbf{\Delta}^0_{1+\xi}(Z)$ for each $n$ and $\bigcup_{n\in\omega}f^{-1}[V_n]=Z$. In conclusion, since $W\setminus f[A]=\bigcup_{n\in\omega}(B_n\cap V_n)$, we see that
$$
Z\setminus A=\bigcup_{n\in\omega}(f^{-1}[B_n]\cap f^{-1}[V_n])\in\PU_\xi(\bG(Z))=\bG(Z),
$$
where the last equality uses the assumption that $\ell(\bG(Z))\geq\xi$. This contradicts the fact that $\bG(Z)$ is non-selfdual.

In order to show that $(\ref{expansionofsomething})\rightarrow(\ref{levelgeqxi})$, assume that $\bG,\bL\in\NSDS(\omega^\omega)$ are such that $\bL(Z)^{(\xi)}=\bG(Z)$. Pick $A_n\in\bG(Z)$ and pairwise disjoint $V_n\in\mathbf{\Delta}^0_{1+\xi}(Z)$ for $n\in\omega$ such that $\bigcup_{n\in\omega}V_n=Z$. We need to show that $\bigcup_{n\in\omega}(A_n\cap V_n)\in\bG(Z)$. By Lemma \ref{expansionbijectionrelativization}, we can fix a zero-dimensional Polish space $W$ and a $\mathbf{\Sigma}^0_{1+\xi}$-measurable bijection $f:Z\longrightarrow W$ such that each $f[A_n]\in\bL(W)$ and each $f[V_n]\in\mathbf{\Delta}^0_1(W)$. Set $B=\bigcup_{n\in\omega}(f[A_n]\cap f[V_n])$, and observe that $B\in\PU_0(\bL(W))=\bL(W)$. It follows from Lemmas \ref{relativizationexpansion} and \ref{movexirelativization} that 
\[
\bigcup_{n\in\omega}(A_n\cap V_n)=f^{-1}[B]\in\bL^{(\xi)}(Z)=\bL(Z)^{(\xi)}=\bG(Z).
\qedhere
\]
\end{proof}

\begin{corollary}\label{levelinvariance}
Let $\bS$ be a nice topological pointclass, and assume that $\Det(\bS(\omega^\omega))$ holds. Let $Z$ and $W$ be uncountable zero-dimensional Polish spaces, let $\xi<\omega_1$, and let $\bG\in\NSDS(\omega^\omega)$. Then $\ell(\bG(Z))\geq\xi$ iff $\ell(\bG(W))\geq\xi$.
\end{corollary}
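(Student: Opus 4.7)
By the symmetry of the statement in $Z$ and $W$, it suffices to prove the forward implication, so assume $\ell(\bG(Z)) \geq \xi$. Applying Theorem \ref{expansiontheorem} to $Z$ and lifting the resulting $\NSD(Z)$-class to $\omega^\omega$ via Theorem \ref{vanwesepsurrogate}, one obtains $\bL \in \NSDS(\omega^\omega)$ such that $\bG(Z) = \bL(Z)^{(\xi)}$. By Lemma \ref{movexirelativization} this rewrites as $\bG(Z) = \bL^{(\xi)}(Z)$, where $\bL^{(\xi)} \subseteq \PP(\omega^\omega)$ denotes the expansion computed in $\omega^\omega$.

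The crux of the plan is to show that $\bL^{(\xi)}$ itself belongs to $\NSDS(\omega^\omega)$, for then the uniqueness in Lemma \ref{relativizationexistsunique} will force $\bG = \bL^{(\xi)}$. Continuous closure is immediate, since composing a continuous function with a $\mathbf{\Sigma}^0_{1+\xi}$-measurable function yields a $\mathbf{\Sigma}^0_{1+\xi}$-measurable function. For non-selfduality, I argue by contradiction: the identity $\widecheck{\bL^{(\xi)}} = \bLc^{(\xi)}$ (dual of expansion equals expansion of dual) is immediate from $\omega^\omega \setminus f^{-1}[B] = f^{-1}[\omega^\omega \setminus B]$, so if $\bL^{(\xi)}$ were selfdual, then relativizing both sides to $Z$ via Lemma \ref{movexirelativization} applied to $\bLc$, together with Lemma \ref{relativization}.\ref{relativizationcheck}, would give $\bL(Z)^{(\xi)} = \widecheck{\bL(Z)^{(\xi)}}$, contradicting the non-selfduality of $\bG(Z) = \bL(Z)^{(\xi)}$. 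Combined with the inclusion $\bL^{(\xi)} \subseteq \bS(\omega^\omega)$ (which holds for the standard nice pointclasses by their closure properties), Lemma \ref{nonselfdualcontinuouslyclosed} then places $\bL^{(\xi)}$ in $\NSDS(\omega^\omega)$.

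Now $\bG$ and $\bL^{(\xi)}$ are both in $\NSDS(\omega^\omega)$ and agree on their $Z$-relativizations, so Lemma \ref{relativizationexistsunique} forces $\bG = \bL^{(\xi)}$ as subclasses of $\PP(\omega^\omega)$. Hence $\bG(W) = \bL^{(\xi)}(W) = \bL(W)^{(\xi)}$ by another application of Lemma \ref{movexirelativization}, and Theorem \ref{expansiontheorem} applied to $W$ yields $\ell(\bG(W)) \geq \xi$.

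I expect the main obstacle to be the non-selfduality of $\bL^{(\xi)}$ in $\omega^\omega$, as this is the only step that is not a direct invocation of an established lemma; the key is to transfer the putative self-duality from the abstract $\omega^\omega$-class $\bL^{(\xi)}$ down to its $Z$-relativization, where it collides with what we already know about $\bG(Z)$. Once this is in hand, the rest is a clean assembly of the relativization machinery of Sections \ref{sectionrelativizationbasic}, \ref{sectionrelativizationuncountable}, and \ref{sectionexpansionrelativization}.
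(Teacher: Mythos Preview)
Your proof is correct and follows essentially the same route as the paper's: obtain $\bL$ from Theorem \ref{expansiontheorem}, pass through Lemma \ref{movexirelativization} to get $\bG(Z)=\bL^{(\xi)}(Z)$, argue that $\bL^{(\xi)}\in\NSDS(\omega^\omega)$ (the paper compresses your non-selfduality argument into the single remark ``otherwise $\bG(Z)$ would be selfdual''), and then transfer to $W$. The only cosmetic difference is that you invoke the uniqueness in Lemma \ref{relativizationexistsunique} to conclude $\bG=\bL^{(\xi)}$ and then relativize, whereas the paper instead applies Theorem \ref{orderisomorphism} directly to pass from $\bL^{(\xi)}(Z)=\bG(Z)$ to $\bL^{(\xi)}(W)=\bG(W)$; these two steps are interchangeable here.
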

\begin{proof}
We will only prove the left-to-right implication, as the other one can be proved similarly. Assume that $\ell(\bG(Z))\geq\xi$. Then, by Theorem \ref{expansiontheorem}, there exists $\bL\in\NSDS(\omega^\omega)$ such that $\bL(Z)^{(\xi)}=\bG(Z)$. Therefore $\bL^{(\xi)}(Z)=\bG(Z)$ by Lemma \ref{movexirelativization}. Notice that $\bL^{(\xi)}$ is non-selfdual, otherwise $\bG(Z)$ would be selfdual. Furthemore, $\bL^{(\xi)}$ is continuously closed by Proposition \ref{expansionbasic}. So $\bL^{(\xi)}\in\NSDS(\omega^\omega)$ by Lemma \ref{nonselfdualcontinuouslyclosed}. Hence it is possible to apply Theorem \ref{orderisomorphism}, which yields $\bL^{(\xi)}(W)=\bG(W)$. By applying Corollary \ref{movexirelativization} again, we see that $\bL(W)^{(\xi)}=\bG(W)$, which implies $\ell(\bG(W))\geq\xi$ by Theorem \ref{expansiontheorem}.
\end{proof}

\section{Level: every non-selfdual Wadge class has one}\label{sectionlevelhard}

The main result of this section states that every non-selfdual Wadge classes has an exact level (see Corollary \ref{everyclasshasalevel}). This fact will be needed in the proof of our main result (Theorem \ref{main}), and its proof requires the sharp analysis given by Theorem \ref{kuratowskitheorem}, as well as the machinery of relativization. The Borel version of Corollary \ref{everyclasshasalevel} appears as \cite[Proposition 7.3.7]{louveaub}, however we believe that the proof given there is not correct. The proof given here is inspired by the proof of \cite[Theorem 7.1.9]{louveaub}.

We will need the basic theory of trees. For a comprehensive treatment, we refer to \cite[Section 2]{kechris}. However, we will remind the reader of the necessary notions as follows. Given a set $A$, a \emph{tree} on $A$ is a subset $T$ of $A^{<\omega}$ such that $s\re n\in T$ for all $s\in T$ and $n\leq m$, where $m$ is the domain of $s$. An \emph{infinite branch} of $T$ is a function $f:\omega\longrightarrow A$ such that $f\re n\in T$ for every $n\in\omega$. A \emph{terminal node} of $T$ is an element $s\in T$ such that $s\hat{\,\,\,}a\notin T$ for every $a\in A$. A tree is \emph{well-founded} if it has no infinite branches. If $A$ is countable and $T$ is well-founded then there exists a unique \emph{rank function} $\rho_T:T\longrightarrow\omega_1$ such that
$$
\rho_T(s)=\supi\{\rho_T(t)+1:t\in T\text{ and }s\subsetneq t\}.
$$
for every $s\in T$ and $\rho_T(s)=0$ for every terminal node $s\in T$. The \emph{rank} of a well-founded tree $T$ is defined as follows 
$$
\rho(T)=\left\{
\begin{array}{ll} 0 & \textrm{if }T=\varnothing,\\
\rho_T(\varnothing)+1 & \textrm{if }T\neq\varnothing.
\end{array}
\right.
$$

Given a tree $T$ on a set $A$ and $s\in A^{<\omega}$, define
$$
T/s=\{t\in A^{<\omega}:s\hat{\,\,\,} t\in T\}.
$$
Notice that $T/s=\varnothing$ whenever $s\notin T$. For our purposes, the fundamental property of $T/s$ is that if $T$ is well-founded then $T/s$ is well-founded and $\rho(T/s)<\rho(T)$ whenever $s\in T$ and $s\neq\varnothing$.

\begin{theorem}\label{everyclasshasalevelprelim}
Let $\bS$ be a nice topological pointclass, and assume that $\Det(\bS(\omega^\omega))$ holds. Let $Z$ be a zero-dimensional Polish space, let $\bG\in\NSDS(\omega^\omega)$, and let $\eta<\omega_1$ be a limit ordinal. Assume that $\ell(\bG(Z))\geq\xi$ for every $\xi<\eta$. Then $\ell(\bG(Z))\geq\eta$.
\end{theorem}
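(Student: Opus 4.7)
By Corollary \ref{levelinvariance} (together with the easy fact that the conclusion is trivial for countable $Z$), we may assume $Z=\omega^\omega$, and it suffices to show $\ell(\bG)\ge\eta$. Fix a partitioned union $A=\bigsqcup_{n\in\omega}A_n\cap V_n$ with $A_n\in\bG$ and $\{V_n\}_{n\in\omega}\subseteq\bD^0_{1+\eta}(\omega^\omega)$ pairwise disjoint with $\bigsqcup_n V_n=\omega^\omega$; the aim is to show $A\in\bG$.

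Apply Theorem \ref{kuratowskitheorem} with $\xi=\eta$ (note $\eta\ge\omega\ge 2$) to the countable family $\Aa=\{V_n,\omega^\omega\setminus V_n:n\in\omega\}\subseteq\bS^0_\eta(\omega^\omega)$. This produces an $\eta$-refining function $f:\omega^\omega\to\omega^\omega$ whose graph $F=f^*[\omega^\omega]$ is a closed zero-dimensional Polish subspace of $\omega^\omega\times\omega^\omega$; each $f^*[V_n]$ is clopen in $F$, and every coordinate satisfies $f_k^{-1}(j)\in\bP^0_{\xi_k}(\omega^\omega)$ for some $\xi_k<\eta$. Let $g:F\to\omega^\omega$ be the continuous projection, which is the inverse bijection of $f^*$. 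By Lemma \ref{relativization}.\ref{relativizationpreimage}, $g^{-1}[A_n]=f^*[A_n]\in\bG(F)$ for each $n$, so
\[
g^{-1}[A]=\bigsqcup_{n\in\omega}f^*[A_n]\cap f^*[V_n]\in\PU_0(\bG(F))=\bG(F)
\]
by Lemma \ref{pubasic}.\ref{pubasicwadge}.

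The remaining task is to transfer this back to $\omega^\omega$. For each $k\ge1$, the clopen sets $F_s=F\cap(\omega^\omega\times[s])$, $s\in\omega^k$, form a partition of $F$ whose $g$-image is $\{f^{-1}[[s]]:s\in\omega^k\}$, a pairwise disjoint cover of $\omega^\omega$ by sets $f^{-1}[[s]]=\bigcap_{l<k}f_l^{-1}(s(l))\in\bP^0_{\zeta_k}(\omega^\omega)$ with $\zeta_k=\max_{l<k}\xi_l<\eta$; in particular each $f^{-1}[[s]]\in\bD^0_{\zeta_k+1}(\omega^\omega)$. Proposition \ref{closureclopen} gives $g^{-1}[A]\cap F_s\in\bG(F)$. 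If we can establish the claim that $A\cap f^{-1}[[s]]\in\bG$ for every $s\in\omega^{<\omega}$, then for any $k\ge 1$ we obtain $A=\bigsqcup_{s\in\omega^k}(A\cap f^{-1}[[s]])\in\PU_{\zeta_k+1}(\bG)=\bG$ by the inductive hypothesis, since $\zeta_k+1<\eta$.

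The main obstacle is therefore the claim $A\cap f^{-1}[[s]]\in\bG$. The difficulty is that $g$ is only a continuous bijection (not a homeomorphism), so membership of $g^{-1}[A]\cap F_s$ in $\bG(F)$ does not transfer via Lemma \ref{relativization}.\ref{relativizationhomeo}. My plan is to handle this by a recursive argument: within $f^{-1}[[s]]$ the coordinates $f_l$ with $l<|s|$ are constant while those with $l\ge|s|$ still sit at levels $\xi_l<\eta$, so iterating Theorem \ref{kuratowskitheorem} inside each $V_n\cap f^{-1}[[s]]$ and invoking the inductive hypothesis $\PU_\xi(\bG)=\bG$ for each $\xi<\eta$ should progressively express $A\cap f^{-1}[[s]]$ as a partitioned union of $\bG$-sets at some level strictly below $\eta$. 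This recursive combinatorial unwinding---supported by the rigidity afforded by closedness of $F$ in $\omega^\omega\times\omega^\omega$, and, in the non-Borel case, the Radin-style methods developed in Section \ref{sectionstretch}---is the technical crux of the argument.
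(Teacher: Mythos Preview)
Your setup is essentially the same as the paper's: reduce to a nice space, apply Theorem~\ref{kuratowskitheorem} to the partition $\{V_n\}$, observe that on the graph $F$ the $f^\ast[V_n]$ are clopen so that $g^{-1}[A]\in\PU_0(\bG(F))=\bG(F)$, and then try to pull the conclusion back to the base space. Where your proposal breaks down is precisely at the step you flag as ``the technical crux'': you never actually organise the recursion so that it terminates. Your claim that $A\cap f^{-1}[[s]]\in\bG$ for each $s$ is, on its face, the same problem you started with (a $\bD^0_{1+\eta}$-partitioned union of $\bG$-sets, restricted to a $\bD^0_{\zeta_k+1}$ piece), and ``iterating Theorem~\ref{kuratowskitheorem} inside each $V_n\cap f^{-1}[[s]]$'' does not give a well-founded descent without further structure. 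The reference to Section~\ref{sectionstretch} is a red herring: Radin's stretch technique is about showing that classes of high level are Hausdorff classes, and plays no role here.

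What is missing is the well-founded tree argument that the paper supplies. The paper defines, for a refining function $f$ and the partition $\VV=\{V_n\}$, the tree
\[
T_{(f,\VV)}=\bigl\{(s,t):(W_s\times\Ne_t)\cap F\not\subseteq f^\ast[V]\text{ for all }V\in\VV\bigr\},
\]
and observes that because $F$ is closed and $\{f^\ast[V_n]\}$ is an open cover of $F$, this tree is well-founded. One then proves by induction on the ordinal rank of this tree the statement $\circledast(\xi)$: for any Borel $W$, any such partition and refining function whose tree has rank $\le\xi$, and any assignment $\psi:\VV\to\bG(W)$, the resulting partitioned union lies in $\bG(W)$. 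The inductive step splits $W$ into the pieces $W_{(m,n)}=W_{(m)}\cap f_0^{-1}(n)$; on each piece the associated tree is $T_{(f,\VV)}/((m),(n))$, which has strictly smaller rank, so the inductive hypothesis applies; one then reassembles using that each $W_{(m,n)}\in\bD^0_{\eta_0+1}(W)$ with $\eta_0<\eta$, so the hypothesis $\ell(\bG(W))\ge\eta_0+1$ (transferred via Corollary~\ref{levelinvariance} in the uncountable case, and handled directly when $W$ is countable) finishes the job. Your instinct that closedness of $F$ is relevant is exactly right---it is what makes the tree well-founded---but you need to turn that into an explicit rank induction rather than an informal ``unwinding''.
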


\begin{proof}
By \cite[Theorem 7.8]{kechris} and Lemma \ref{relativization}.\ref{relativizationhomeo}, we can assume without loss of generality that $Z$ is a closed subspace of $\omega^\omega$. By Proposition \ref{levelcountable}, we can also assume that $Z$ is uncountable. Given a subspace $W$ of $\omega^\omega$, we will use the notation $W_s=W\cap\Ne_s$ for $s\in\omega^{<\omega}$. Given a subspace $W$ of $\omega^\omega$, a function $f:W\longrightarrow\omega^\omega$ and $\VV\subseteq\PP(W)$, define 
$$
T_{(f,\VV)}=\{(s,t)\in\omega^{<\omega}\times\omega^{<\omega}:(W_s\times\Ne_t)\cap F\nsubseteq f^\ast[V]\text{ for all }V\in\VV\},
$$
where $F=f^\ast[W]$ denotes the graph of $f$. It is clear that $T_{(f,\VV)}$ is a subtree of $\omega^{<\omega}\times\omega^{<\omega}$, where we identify $\omega^{<\omega}\times\omega^{<\omega}$ with $(\omega\times\omega)^{<\omega}$ in the natural way. Furthermore, it is a simple exercise to check that if $W$ is closed in $\omega^\omega$, $\VV$ is a cover of $W$, and $f^\ast[V]\in\bS^0_1(F)$ for every $V\in\VV$, then $T_{(f,\VV)}$ is well-founded. In particular, this will be the case when $W=Z$, $\VV\subseteq\bS^0_\eta(Z)$ is a countable partition of $Z$ and $f$ is an $\eta$-refining function for $\VV$. Since the existence of such a function is guaranteed by Theorem \ref{kuratowskitheorem}, in order to conclude the proof, it will be sufficient to show that the following condition holds for every $\xi<\omega_1$.
\begin{itemize}
\item[$\circledast(\xi)$] Let $W$ be a non-empty Borel subspace of $\omega^\omega$, let $\VV\subseteq\bS^0_\eta(W)$ be a countable partition of $W$, and let $f:W\longrightarrow\omega^\omega$ be an $\eta$-refining function for $\VV$ such that $T_{(f,\VV)}$ is well-founded and has rank at most $\xi$. Then $\bigcup_{V\in\VV}(\psi(V)\cap V)\in\bG(W)$ for every $\psi:\VV\longrightarrow\bG(W)$.
\end{itemize}

First we will show that $\circledast(0)$ holds. In this case $T_{(f,\VV)}=\varnothing$, hence there exists $V\in\VV$ such that $F=(W\times\omega^\omega)\cap F\subseteq f^\ast[V]$. Therefore $\VV=\{W\}$. It is clear that the desired conclusion holds in this case.

Now assume that $0<\xi<\omega_1$ and that $\circledast(\xi')$ holds whenever $\xi'<\xi$. Fix $W$, $\VV$ and $f$ as in the statement of $\circledast(\xi)$. Pick $\psi:\VV\longrightarrow\bG(W)$. Set 
$$
I=\{(m,n)\in\omega\times\omega:m=x(0)\text{ and }n=f_0(x)\text{ for some }x\in W\}.
$$
Given $(m,n)\in I$, make the following definitions:
\begin{itemize}
\item $W_{(m,n)}=W_{(m)}\cap f_0^{-1}(n)$,
\item $\VV_{(m,n)}=\{V\cap W_{(m,n)}:V\in\VV\}\setminus\{\varnothing\}$,
\item $f_{(m,n)}:W_{(m,n)}\longrightarrow\omega^\omega$ is the function obtained by setting $f_{(m,n)}(x)(k)=f(x)(k+1)$ for every $k\in\omega$,
\item $\psi_{(m,n)}:\VV_{(m,n)}\longrightarrow\bG(W_{(m,n)})$ is the unique function such that $\psi_{(m,n)}(V\cap W_{(m,n)})=\psi(V)\cap W_{(m,n)}$ for every $V\in\VV$ such that $V\cap W_{(m,n)}\neq\varnothing$.
\end{itemize}

It is straightforward to check that 
$$
T_{(f,\VV)}/((m),(n))=T_{(f_{(m,n)},\VV_{(m,n)})},
$$
hence the right-hand side has rank strictly smaller than $\xi$. It follows from the inductive hypothesis that $\bigcup_{V\in\VV_{(m,n)}}(\psi_{(m,n)}(V)\cap V)\in\bG(W_{(m,n)})$, so by Lemma \ref{relativizationsubspace} we can fix $A_{(m,n)}\in\bG(W)$ such that this union is equal to $A_{(m,n)}\cap W_{(m,n)}$. In conclusion,
$$
\bigcup_{V\in\VV}(\psi(V)\cap V)=\bigcup_{(m,n)\in I}\bigcup_{V\in\VV_{(m,n)}}(\psi_{(m,n)}(V)\cap V)=\bigcup_{(m,n)\in I}(A_{(m,n)}\cap W_{(m,n)}).
$$

Let $1\leq\eta_0<\eta$ be such that $f_0^{-1}(n)\in\bP^0_{\eta_0}(W)$ for every $n\in\omega$, as in the definition of $\eta$-refining function. Since each $W_{(m,n)}\in\bD^0_{\eta_0+1}(W)$, in order to show that the right-hand side of the above equation belongs to $\bG(W)$ it will be enough to show that $\ell(\bG(W))\geq\xi$ for every $\xi<\eta$. This can be easily achieved by viewing $W$ as a subspace of $Z$ (which can be done since $Z$ is uncountable), and using the corresponding assumption on $Z$ in conjunction with Lemma \ref{relativizationsubspace}.
\end{proof}

\begin{corollary}\label{everyclasshasalevel}
Let $\bS$ be a nice topological pointclass, and assume that $\Det(\bS(\omega^\omega))$ holds. Let $Z$ be a zero-dimensional Polish space, and let $\bG\in\NSDS(Z)$. Then there exists $\xi\leq\omega_1$ such that $\ell(\bG)=\xi$.	
\end{corollary}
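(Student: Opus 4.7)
The plan is to observe that the set $S=\{\zeta<\omega_1:\PU_\zeta(\bG)=\bG\}$ is downward closed in $\omega_1$ by Lemma \ref{pubasic}.\ref{pubasicincreasing}, and contains $0$ by Lemma \ref{pubasic}.\ref{pubasicwadge}, since $\bG$ is a Wadge class. A well-defined level for $\bG$ therefore exists unless $S$ is a proper initial segment of $\omega_1$ whose supremum is a countable limit ordinal $\xi_0\notin S$; equivalently, unless $\ell(\bG)\geq\zeta$ for every $\zeta<\xi_0$ yet $\PU_{\xi_0}(\bG)\neq\bG$. So the job is to exclude exactly this pathology.

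Accordingly, I would first dispose of the easy cases. If $S=\omega_1$, then $\ell(\bG)\geq\zeta$ for every $\zeta<\omega_1$, so $\ell(\bG)=\omega_1$ by definition. Otherwise let $\xi_0=\min(\omega_1\setminus S)$; since $0\in S$, we have $\xi_0\geq 1$. If $\xi_0=\xi+1$ is a successor, then $\ell(\bG)\geq\xi$ and $\ell(\bG)\not\geq\xi+1$, which immediately gives $\ell(\bG)=\xi$.

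The remaining case, in which $\xi_0$ is a countable limit ordinal, is precisely the situation ruled out by Theorem \ref{everyclasshasalevelprelim}. To invoke that theorem---which is phrased in terms of classes of the form $\bG^\ast(Z)$ with $\bG^\ast\in\NSDS(\omega^\omega)$---I would use Lemma \ref{relativizationexistsunique} to produce the unique $\bG^\ast\in\NSDS(\omega^\omega)$ such that $\bG^\ast(Z)=\bG$. Then $\ell(\bG^\ast(Z))=\ell(\bG)\geq\zeta$ for every $\zeta<\xi_0$, and Theorem \ref{everyclasshasalevelprelim} yields $\ell(\bG^\ast(Z))\geq\xi_0$, i.e.\ $\ell(\bG)\geq\xi_0$, contradicting $\xi_0\notin S$. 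The main obstacle---namely, showing that the levels cannot skip over a countable limit---has already been absorbed into Theorem \ref{everyclasshasalevelprelim}; what remains for the corollary is a short bookkeeping exercise that simply routes the inequality $\ell(\bG)\geq\zeta$ through the bijection $\bG\leftrightarrow\bG^\ast$ provided by relativization.
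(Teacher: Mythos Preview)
Your argument is correct and is precisely the intended derivation: the paper states the corollary without proof because it follows immediately from Theorem~\ref{everyclasshasalevelprelim} via the case split you describe, together with Lemma~\ref{relativizationexistsunique} to pass from an arbitrary $\bG\in\NSDS(Z)$ to the form $\bG^\ast(Z)$ required by that theorem. The only cosmetic remark is that you need merely the existence part of Lemma~\ref{relativizationexistsunique}, not uniqueness.
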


\section{Expansions: composition}\label{sectionexpansioncomposition}

In this section we will show that the composition of two expansions can be obtained as a single expansion (see Theorem \ref{composeexpansions}). While this fact is of independent interest, our reason for proving it is Corollary \ref{expansionbigger}, which will be needed in the proof of Theorem \ref{main}.

\begin{lemma}\label{composefunctions}
Let $Z$, $W$ and $T$ be spaces, and let $\xi,\eta<\omega_1$. Assume that $f:Z\longrightarrow W$ is $\bS^0_{1+\xi}$-measurable and $g:W\longrightarrow T$ is $\bS^0_{1+\eta}$-measurable. Then $g\circ f$ is $\bS^0_{1+\xi+\eta}$-measurable.
\end{lemma}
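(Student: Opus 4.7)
The plan is to prove, by transfinite induction on $\eta < \omega_1$, the slightly more general claim: whenever $f:Z\longrightarrow W$ is $\mathbf{\Sigma}^0_{1+\xi}$-measurable and $A\in\mathbf{\Sigma}^0_{1+\eta}(W)$, then $f^{-1}[A]\in\mathbf{\Sigma}^0_{1+\xi+\eta}(Z)$. Once this is established, apply it to an arbitrary $U\in\mathbf{\Sigma}^0_1(T)$: since $g$ is $\mathbf{\Sigma}^0_{1+\eta}$-measurable we have $g^{-1}[U]\in\mathbf{\Sigma}^0_{1+\eta}(W)$, so $(g\circ f)^{-1}[U]=f^{-1}[g^{-1}[U]]\in\mathbf{\Sigma}^0_{1+\xi+\eta}(Z)$, which is exactly what it means for $g\circ f$ to be $\mathbf{\Sigma}^0_{1+\xi+\eta}$-measurable.

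The base case $\eta=0$ is just the definition of $\mathbf{\Sigma}^0_{1+\xi}$-measurability of $f$. For the successor step $\eta=\eta'+1$, I would use the standard description of $\mathbf{\Sigma}^0_{1+\eta'+1}(W)$ as the class of countable unions of elements of $\mathbf{\Pi}^0_{1+\eta'}(W)$: if $A=\bigcup_{n\in\omega}(W\setminus B_n)$ with $B_n\in\mathbf{\Sigma}^0_{1+\eta'}(W)$, then by the inductive hypothesis each $f^{-1}[B_n]\in\mathbf{\Sigma}^0_{1+\xi+\eta'}(Z)$, so $f^{-1}[A]=\bigcup_{n\in\omega}(Z\setminus f^{-1}[B_n])\in\mathbf{\Sigma}^0_{1+\xi+\eta'+1}(Z)=\mathbf{\Sigma}^0_{1+\xi+\eta}(Z)$, since preimage commutes with unions and complements.

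The limit step is the only slightly delicate point, because of the familiar subtlety that for a limit ordinal $\eta$ one has $1+\eta=\eta$, and the ordinal arithmetic of $1+\alpha_n$ versus $\alpha_n+1$ needs to be tracked carefully. Write $A=\bigcup_{n\in\omega}A_n$ where each $A_n\in\mathbf{\Pi}^0_{\alpha_n}(W)$ for some $\alpha_n<1+\eta=\eta$. For each $n$, choose $\beta_n<\eta$ with $A_n\in\mathbf{\Sigma}^0_{1+\beta_n}(W)$: this is possible because $A_n\in\mathbf{\Sigma}^0_{\alpha_n+1}(W)$, and one checks by splitting into the cases $\alpha_n$ finite and $\alpha_n$ infinite that the ordinal $\alpha_n+1$ is always of the form $1+\beta_n$ for some $\beta_n<\eta$ (taking $\beta_n=\alpha_n$ or $\beta_n=\alpha_n+1$ respectively, and using that $\eta$ is limit to guarantee $\beta_n<\eta$). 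By the inductive hypothesis, $f^{-1}[A_n]\in\mathbf{\Sigma}^0_{1+\xi+\beta_n}(Z)$, and since $\eta$ is a limit ordinal each $1+\xi+\beta_n<1+\xi+\eta$, so $f^{-1}[A]=\bigcup_{n\in\omega}f^{-1}[A_n]\in\mathbf{\Sigma}^0_{1+\xi+\eta}(Z)$, completing the induction. The only potential obstacle is precisely this bookkeeping with $1+\alpha$ versus $\alpha+1$ at limit stages, which is why it is worth stating the claim at the level of $\mathbf{\Sigma}^0_{1+\eta}$ rather than $\mathbf{\Sigma}^0_\eta$ throughout.
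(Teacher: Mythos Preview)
Your proposal is correct and follows essentially the same approach as the paper: reduce to showing $f^{-1}[A]\in\mathbf{\Sigma}^0_{1+\xi+\eta}(Z)$ for every $A\in\mathbf{\Sigma}^0_{1+\eta}(W)$, then induct on $\eta$. The only difference is cosmetic: the paper treats the successor and limit cases uniformly by writing, for any $\eta>0$, $A=\bigcup_{n}(W\setminus A_n)$ with $A_n\in\mathbf{\Sigma}^0_{1+\eta_n}(W)$ and $\eta_n<\eta$ directly, which avoids the detour through $\mathbf{\Pi}^0_{\alpha_n}\subseteq\mathbf{\Sigma}^0_{\alpha_n+1}$ and the attendant $1+\beta_n$ bookkeeping you carry out in the limit case.
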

\begin{proof}
It will be enough to prove that $f^{-1}[A]\in\bS^0_{1+\xi+\eta}(Z)$ for every $A\in\bS^0_{1+\eta}(W)$. We will proceed by induction on $\eta$. The case $\eta=0$ is trivial. Now assume that the claim holds for every $\eta'<\eta$. Pick $A\in\bS^0_{1+\eta}(W)$, and let $A_n\in\bS^0_{1+\eta_n}(W)$ for $n\in\omega$ be such that $A=\bigcup_{n\in\omega}(W\setminus A_n)$, where each $\eta_n<\eta$. Then
$$
f^{-1}[A]=\bigcup_{n\in\omega}(Z\setminus f^{-1}[A_n])\in\bS^0_{1+\xi+\eta}(Z),
$$
because each $f^{-1}[A_n]\in\bS^0_{1+\xi+\eta_n}(Z)$ by the inductive assumption.
\end{proof}

\begin{theorem}\label{composeexpansions}
Let $\bS$ be a nice topological pointclass, and assume that $\Det(\bS(\omega^\omega))$ holds. Let $Z$ be an uncountable zero-dimensional Polish space, and let $\xi,\eta<\omega_1$. Then $(\bG^{(\eta)})^{(\xi)}=\bG^{(\xi+\eta)}$ whenever $\bG,\bG^{(\eta)}\in\NSDS(Z)$.
\end{theorem}
\begin{proof}
Fix $\bG\in\NSDS(\omega^\omega)$ such that $\bG(Z),\bG(Z)^{(\eta)}\in\NSDS(Z)$. We will show that
$$
(\bG(Z)^{(\eta)})^{(\xi)}=\bG(Z)^{(\xi+\eta)}.
$$
The inclusion $\subseteq$ follows from Lemma \ref{composefunctions}. In order to prove the inclusion $\supseteq$, pick $A\in\bG(Z)^{(\xi+\eta)}$. Fix a $\bS^0_{1+\xi+\eta}$-measurable function $g:Z\longrightarrow Z$ and $B\in\bG(Z)$ such that $g^{-1}[B]=A$. Fix a countable base $\UU$ for $Z$. By Corollary \ref{sigmaintoopenbijectioneta}, there exists a Polish space $W$ and a $\mathbf{\Sigma}^0_{1+\xi}$-measurable bijection $f:Z\longrightarrow W$ such that $f[g^{-1}[U]]\in\mathbf{\Sigma}^0_{1+\eta}(W)$ for every $U\in\UU$. Observe that this ensures that $g\circ f^{-1}$ is $\mathbf{\Sigma}^0_{1+\eta}$-measurable. Set $C=f[A]$, and observe that $C=(g\circ f^{-1})^{-1}[B]\in\bG^{(\eta)}(W)$ by Lemma \ref{relativizationexpansion}. A further application of Lemma \ref{relativizationexpansion} shows that 
$$
A=f^{-1}[C]\in\big(\bG^{(\eta)}\big)^{(\xi)}(Z)=\big(\bG^{(\eta)}(Z)\big)^{(\xi)}=\big(\bG(Z)^{(\eta)}\big)^{(\xi)},
$$
where the last two equalities hold by Lemma \ref{movexirelativization}.

Notice that, in order to apply Lemma \ref{movexirelativization} to obtain the middle equality above, we need to know that $\bG^{(\eta)}\in\NSDS(\omega^\omega)$. We conclude the proof by showing that this is the case. Fix $\bL\in\NSDS(\omega^\omega)$ such that $\bL(Z)=\bG(Z)^{(\eta)}$. First, we claim that $\bL\subseteq\bG^{(\eta)}$. In order to prove the claim, pick $A\in\bL$. Fix an embedding $j:\omega^\omega\longrightarrow W$. By Lemma \ref{relativizationsubspace}, there exists $A'\in\bL(Z)$ such that $A'\cap W=j[A]$. So we can fix a $\bS^0_{1+\eta}$-measurable $f:Z\longrightarrow Z$ and $B\in\bG(Z)$ such that $f^{-1}[B]=A'$. Now let $i:Z\longrightarrow\omega^\omega$ be an embedding. By Lemma \ref{relativizationsubspace}, we can pick $B'\in\bG(\omega^\omega)=\bG$ such that $B'\cap i[Z]=i[B]$. Notice that $i\circ f\circ j:\omega^\omega\longrightarrow\omega^\omega$ is $\bS^0_{1+\eta}$-measurable by Lemma \ref{composefunctions}. It follows from the definition of expansion that $A=(i\circ f\circ j)^{-1}[B']\in\bG^{(\eta)}$, which proves the claim.

Now assume, in order to get a contradiction, that $\bL\subsetneq\bG^{(\eta)}$. Pick $A\in\bG^{(\eta)}\setminus\bL$ and $B\subseteq\omega^\omega$ such that $B\wc=\bL$. Lemma \ref{wadgelemma} shows that $\bLc\subseteq\bG^{(\eta)}$, hence
$$
\widecheck{\bG(Z)^{(\eta)}}=\widecheck{\bL(Z)}=\bLc(Z)\subseteq\bG^{(\eta)}(Z)=\bG(Z)^{(\eta)},
$$
where the last equality holds by Lemma \ref{movexirelativization}. This contradicts the fact that $\bG(Z)^{(\eta)}$ is non-selfdual.
\end{proof}

\begin{corollary}\label{expansionbigger}
Let $\bS$ be a nice topological pointclass, and assume that $\Det(\bS(\omega^\omega))$ holds. Let $Z$ be an uncountable zero-dimensional Polish space, let $\bG\in\NSDS(Z)$, and let $\xi=\ell(\bG)$. Assume that $0<\xi<\omega_1$. Then $\bG\subsetneq\bG^{(\xi)}$.
\end{corollary}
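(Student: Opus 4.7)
The inclusion $\bG\subseteq\bG^{(\xi)}$ is immediate from the definition of expansion, since $\id_Z$ is $\mathbf{\Sigma}^0_{1+\xi}$-measurable. The content of the statement is the strict inclusion, which I would prove by contradiction: assuming $\bG=\bG^{(\xi)}$, I will derive $\ell(\bG)>\xi$.

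The key idea is to iterate the assumed equality by means of Proposition \ref{composeexpansions}. Since $\bG$ and $\bG^{(\xi)}=\bG$ both lie in $\NSDS(Z)$, that proposition gives
$$
\bG=\bG^{(\xi)}=(\bG^{(\xi)})^{(\xi)}=\bG^{(\xi+\xi)}=\bG^{(\xi\cdot 2)}.
$$
A routine induction on $n\geq 1$ then shows $\bG=\bG^{(\xi\cdot n)}$ for every $n$: the inductive step applies Proposition \ref{composeexpansions} with inner exponent $\xi\cdot n$, and the non-selfduality hypothesis is satisfied because the induction hypothesis identifies $\bG^{(\xi\cdot n)}$ with $\bG\in\NSDS(Z)$. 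Monotonicity of expansions in the exponent---any $\mathbf{\Sigma}^0_{1+\eta}$-measurable function is $\mathbf{\Sigma}^0_{1+\eta'}$-measurable for $\eta\leq\eta'$---then upgrades this to $\bG=\bG^{(\eta)}$ for every $\eta<\xi\cdot\omega$.

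To close, I would apply Theorem \ref{expansiontheorem} with $\bL=\bG$ at each such $\eta$, obtaining $\ell(\bG)\geq\eta$ for every $\eta<\xi\cdot\omega$. Since $\xi\geq 1$, the ordinal $\xi\cdot\omega$ is a countable limit ordinal, so Theorem \ref{everyclasshasalevelprelim} (applied to the unique representative of $\bG$ in $\NSDS(\omega^\omega)$ furnished by Lemma \ref{relativizationexistsunique}) lifts this to $\ell(\bG)\geq\xi\cdot\omega>\xi$, contradicting $\ell(\bG)=\xi$.

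The only real subtlety is the bookkeeping around the non-selfduality assumptions required by Proposition \ref{composeexpansions} at every iterative step, but this never becomes a genuine obstacle because the induction keeps all classes under consideration equal to $\bG$.
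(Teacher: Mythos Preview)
Your proposal is correct and follows essentially the same approach as the paper, but is more elaborate than necessary. The paper stops after a single application of Proposition~\ref{composeexpansions}, which already gives $\bG=\bG^{(\xi+\xi)}$ and hence $\ell(\bG)\geq\xi+\xi>\xi$ by Theorem~\ref{expansiontheorem}; your iteration to $\xi\cdot n$, the monotonicity step, and the appeal to Theorem~\ref{everyclasshasalevelprelim} are all unnecessary, since $\ell(\bG)\geq\xi+\xi$ already contradicts $\ell(\bG)=\xi$ once $\xi>0$.
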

\begin{proof}
Assume, in order to get a contradiction, that $\bG=\bG^{(\xi)}$. Then
$$
\bG=\bG^{(\xi)}=\big(\bG^{(\xi)}\big)^{(\xi)}=\bG^{(\xi+\xi)},
$$
where the last equality holds by Theorem \ref{composeexpansions}. It follows from Theorem \ref{expansiontheorem} that $\xi=\ell(\bG)\geq\xi+\xi$, which contradicts the assumption that $\xi>0$.
\end{proof}

We conclude this section with a result that will not be needed in the remainder of the article, but helps to clarify the notion of level. Given a space $Z$ and $\bG\subseteq\PP(Z)$, we will say that $\bG$ is \emph{closed under Borel preimages} if $f^{-1}[B]\in\bG$ whenever $f:Z\longrightarrow Z$ is a Borel function and $B\in\bG$.

\begin{corollary}\label{uncountablelevel}
Let $\bS$ be a nice topological pointclass, and assume that $\Det(\bS(\omega^\omega))$ holds. Let $Z$ be an uncountable zero-dimensional Polish space, and let $\bG\in\NSDS(Z)$. Then the following are equivalent:
\begin{enumerate}
\item\label{uncountablelevelpreimages} $\bG$ is closed under Borel preimages,
\item\label{uncountablelevelomega_1} $\ell(\bG)=\omega_1$.
\end{enumerate}
\end{corollary}
\begin{proof}
In order to prove that $(\ref{uncountablelevelpreimages})\rightarrow (\ref{uncountablelevelomega_1})$, assume that condition $(\ref{uncountablelevelpreimages})$ holds. Pick $\xi<\omega_1$. We will show that $\ell(\bG)\geq\xi$. Fix $A$ such that $\bG=A\wc$. Pick $A_n\in\bG$ and pairwise disjoint $V_n\in\bD^0_{1+\xi}(Z)$ for $n\in\omega$ such that $\bigcup_{n\in\omega}V_n=Z$. Let $f_n:Z\longrightarrow Z$ for $n\in\omega$ witness that $A_n\leq A$. Define
$$
f=\bigcup\{f_n\re V_n:n\in\omega\},
$$
and observe that $f:Z\longrightarrow Z$ is a Borel function. Since $\bG$ is closed under Borel preimages, it follows that $\bigcup_{n\in\omega}(A_n\cap V_n)=f^{-1}[A]\in\bG$. 

In order to prove that $(\ref{uncountablelevelomega_1})\rightarrow (\ref{uncountablelevelpreimages})$, assume that condition $(\ref{uncountablelevelomega_1})$ holds. Pick a Borel $f:Z\longrightarrow Z$, and let $\xi<\omega_1$ be such that $f$ is $\bS^0_{1+\xi}$-measurable. By Theorem \ref{expansiontheorem}, there exists $\bL\in\NSDS(Z)$ such that $\bL^{(\xi\cdot\omega)}=\bG$. Then
$$
\bG^{(\xi)}=\bL^{(\xi+\xi\cdot\omega)}=\bL^{(\xi\cdot\omega)}=\bG,
$$
where the first equality holds by Theorem \ref{composeexpansions}. It follows from the definition of expansion that $f^{-1}[A]\in\bG$ for every $A\in\bG$.
\end{proof}

\section{Separated differences: basic facts}\label{sectionsdbasic}

The following notion was essentially introduced in \cite{louveaua}, but we will follow the simplified approach from \cite{louveaub}. It is the last fundamental concept needed to state our main result (see Definition \ref{louveauhierarchy}).

\begin{definition}[Louveau]
Let $Z$ be a space, let $1\leq\eta<\omega_1$, and let $V_{\xi,n}, A_{\xi,n}, A^\ast\subseteq Z$ for $\xi<\eta$ and $n\in\omega$. Define
\begin{multline}
\SD_\eta((V_{\xi,n}:\xi<\eta,n\in\omega),(A_{\xi,n}:\xi<\eta,n\in\omega),A^\ast)=\\\nonumber=\bigcup_{\substack{\xi<\eta\\n\in\omega}}(A_{\xi,n}\cap (V_{\xi,n}\setminus\bigcup_{\substack{\xi'<\xi\\m\in\omega}}V_{\xi',m} ))\cup (A^\ast\setminus\bigcup_{\substack{\xi<\eta\\n\in\omega}}V_{\xi,n}).
\end{multline}
Given $\bD,\bG^\ast\subseteq\PP(Z)$, define $\SD_\eta(\bD,\bG^\ast)$ as the collection of all sets of the form $\SD_\eta((V_{\xi,n}:\xi<\eta,n\in\omega),(A_{\xi,n}:\xi<\eta,n\in\omega),A^\ast)$, where each $V_{\xi,n}\in\bS^0_1(Z)$ and $V_{\xi,m}\cap V_{\xi,n}=\varnothing$ whenever $m\neq n$, each $A_{\xi,n}\in\bD$, and $A^\ast\in\bG^\ast$.
\end{definition}

We begin with two preliminary results. In particular, Lemma \ref{sddifferences} gives the first ``concrete'' examples of Wadge classes that can be obtained using separated differences.

\begin{lemma}\label{sdcheck}
Let $Z$ be a space, let $1\leq\eta<\omega_1$, and let $\bD,\bG\subseteq\PP(Z)$. Then
$$
\widecheck{\SD_\eta(\bD,\bG)}=\SD_\eta(\widecheck{\bD},\widecheck{\bG}).
$$
\end{lemma}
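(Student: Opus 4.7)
The plan is to rewrite a separated difference as a disjoint partitioned union over a canonical partition of $Z$, so that complementation acts coordinate-wise.

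Fix a separated difference $A=\SD_\eta((V_{\xi,n}),(A_{\xi,n}),A^\ast)$. For $\xi<\eta$ and $n\in\omega$, set
$$
W_{\xi,n}=V_{\xi,n}\setminus\bigcup_{\xi'<\xi,\,m\in\omega}V_{\xi',m},\qquad W^\ast=Z\setminus\bigcup_{\xi<\eta,\,n\in\omega}V_{\xi,n}.
$$
The first observation is that $\{W_{\xi,n}:\xi<\eta,n\in\omega\}\cup\{W^\ast\}$ is a partition of $Z$: within a fixed level $\xi$ the $V_{\xi,n}$ are pairwise disjoint by hypothesis, while for $\xi<\xi'$ the set $W_{\xi',n'}$ is explicitly disjoint from every $V_{\xi,m}\supseteq W_{\xi,m}$.

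Using this partition, the definition of $\SD_\eta$ rewrites as
$$
A=\bigcup_{\xi<\eta,\,n\in\omega}(A_{\xi,n}\cap W_{\xi,n})\cup(A^\ast\cap W^\ast).
$$
Since the pieces $W_{\xi,n}$ and $W^\ast$ partition $Z$, taking complements distributes trivially:
$$
Z\setminus A=\bigcup_{\xi<\eta,\,n\in\omega}((Z\setminus A_{\xi,n})\cap W_{\xi,n})\cup((Z\setminus A^\ast)\cap W^\ast).
$$
Applying the same rewriting in reverse, the right-hand side equals $\SD_\eta((V_{\xi,n}),(Z\setminus A_{\xi,n}),Z\setminus A^\ast)$, which lies in $\SD_\eta(\widecheck{\bD},\widecheck{\bG})$ because $Z\setminus A_{\xi,n}\in\widecheck{\bD}$ and $Z\setminus A^\ast\in\widecheck{\bG}$, while the same $V_{\xi,n}\in\bS^0_1(Z)$ are reused as witnesses. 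This proves the inclusion $\widecheck{\SD_\eta(\bD,\bG)}\subseteq\SD_\eta(\widecheck{\bD},\widecheck{\bG})$.

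For the reverse inclusion, apply the inclusion just proved to $\widecheck{\bD}$ and $\widecheck{\bG}$ in place of $\bD$ and $\bG$, obtaining $\widecheck{\SD_\eta(\widecheck{\bD},\widecheck{\bG})}\subseteq\SD_\eta(\bD,\bG)$, and then take complements. There is no real obstacle here; the only thing to verify carefully is that the $W_{\xi,n}$ together with $W^\ast$ do form a partition, as this is what makes complementation commute with the construction.
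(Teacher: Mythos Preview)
Your proof is correct and follows essentially the same idea as the paper's: the key point is the set-theoretic identity
\[
Z\setminus\SD_\eta((V_{\xi,n}),(A_{\xi,n}),A^\ast)=\SD_\eta((V_{\xi,n}),(Z\setminus A_{\xi,n}),Z\setminus A^\ast),
\]
which the paper simply states and you justify explicitly via the partition $\{W_{\xi,n}\}\cup\{W^\ast\}$. The only minor difference is organizational: the paper derives both inclusions at once from this identity, whereas you prove one inclusion and then bootstrap the other by substituting $\widecheck{\bD},\widecheck{\bG}$; both routes are immediate once the identity is in hand.
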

\begin{proof}
It is not hard to realize that the equality
\begin{multline}
Z\setminus\SD_\eta((V_{\xi,n}:\xi<\eta,n\in\omega),(A_{\xi,n}:\xi<\eta,n\in\omega),A^\ast)=\\\nonumber\SD_\eta((V_{\xi,n}:\xi<\eta,n\in\omega),(Z\setminus A_{\xi,n}:\xi<\eta,n\in\omega),Z\setminus A^\ast)
\end{multline}
holds whenever $V_{\xi,n},A_{\xi,n},A^\ast\subseteq Z$ for each $\xi<\eta$ and $n\in\omega$. The desired result follows immediately.
\end{proof}

\begin{lemma}\label{sddifferences}
Let $Z$ be a zero-dimensional space, let $1\leq\eta<\omega_1$, and let $\bD=\{\varnothing\}\cup\{Z\}$. Then
$$
\SD_\eta(\bD,\{\varnothing\})=\Diff_\eta(\bS^0_1(Z))\text{ and }\SD_\eta(\bD,\{Z\})=\widecheck{\Diff}_\eta(\bS^0_1(Z)).
$$
\end{lemma}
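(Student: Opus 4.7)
The plan is to observe first that the second equality reduces to the first. Indeed, $\widecheck{\{\varnothing, Z\}} = \{\varnothing, Z\}$ and $\widecheck{\{\varnothing\}} = \{Z\}$, so applying Lemma \ref{sdcheck} gives
\[
\SD_\eta(\bD,\{Z\}) = \widecheck{\SD_\eta(\bD,\{\varnothing\})} = \widecheck{\Diff_\eta(\bS^0_1(Z))} = \widecheck{\Diff}_\eta(\bS^0_1(Z))
\]
as soon as the first equality is established. So the real work is proving $\SD_\eta(\bD,\{\varnothing\}) = \Diff_\eta(\bS^0_1(Z))$.

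Before either direction, I would rewrite a generic element of $\SD_\eta(\bD,\{\varnothing\})$ in a cleaner form. Since $A^\ast = \varnothing$ and each $A_{\xi,n} \in \{\varnothing, Z\}$, by grouping the indices $n$ according to whether $A_{\xi,n} = Z$ or $A_{\xi,n} = \varnothing$ and setting
\[
W_\xi = \bigcup_{n\,:\, A_{\xi,n} = Z} V_{\xi,n}, \qquad T_\xi = \bigcup_{n\,:\, A_{\xi,n} = \varnothing} V_{\xi,n}, \qquad U_\xi = W_\xi \cup T_\xi,
\]
one obtains that $\SD_\eta(\bD,\{\varnothing\})$ consists precisely of the sets of the form $\bigcup_{\xi<\eta}\bigl(W_\xi\setminus\bigcup_{\xi'<\xi}U_{\xi'}\bigr)$ where $W_\xi$ and $T_\xi$ are disjoint open sets (and $U_\xi = W_\xi \cup T_\xi$). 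The key fact is that the disjointness of the $V_{\xi,n}$'s in $n$ makes both $W_\xi$ and $T_\xi$ open, which is what will make the $A_\mu$'s below open.

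For the inclusion $\Diff_\eta(\bS^0_1(Z)) \subseteq \SD_\eta(\bD,\{\varnothing\})$, given $\Diff_\eta(A_\mu:\mu<\eta)$ with each $A_\mu$ open, I would simply take $U_\xi = A_\xi$, $W_\xi = A_\xi$ when $\xi$ has the ``right'' parity (odd if $\eta$ is even, even if $\eta$ is odd) and $W_\xi = \varnothing$ otherwise; the $\SD$-expression then collapses to the $\Diff$-expression.

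For the reverse inclusion, which is where the content lies, the plan is the following translation. Given the $\SD_\eta$-data, define for each $\mu < \eta$
\[
X_\mu = \begin{cases} W_\mu & \text{if $\mu$ has the right parity},\\ T_\mu & \text{otherwise},\end{cases} \qquad A_\mu = X_\mu \cup \bigcup_{\xi<\mu} U_\xi.
\]
Each $A_\mu$ is open (using the observation in the second paragraph), and the sequence is $\subseteq$-increasing. I would then verify $\Diff_\eta(A_\mu:\mu<\eta) = \bigcup_{\xi}(W_\xi\setminus\bigcup_{\xi'<\xi}U_{\xi'})$ pointwise. For $x \in Z$, let $\xi_0$ be the least $\xi < \eta$ with $x \in U_\xi$ (if it exists); a direct computation shows that the minimal $\mu$ with $x\in A_\mu$ is $\xi_0$ when $x \in X_{\xi_0}$ and $\xi_0+1$ otherwise. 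Checking the four combinations of ``right vs.\ wrong parity of $\xi_0$'' against ``$x\in W_{\xi_0}$ vs.\ $x\in T_{\xi_0}$'' gives that membership in $\Diff_\eta(A_\mu:\mu<\eta)$ matches membership in $S$ in every case. The main obstacle — really the only subtle point — is the edge case $\xi_0 = \eta - 1$ when $\eta$ is a successor, since then the minimal index with $x\in A_\mu$ could be forced to be $\eta$; fortunately, $\eta - 1$ always has the right parity (whatever the parity of $\eta$), so $X_{\eta-1} = W_{\eta-1}$, and the arithmetic works out correctly.
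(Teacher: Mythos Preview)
Your proof is correct and takes a genuinely different route from the paper's. The paper proceeds by induction on $\eta$: the successor step uses Lemma \ref{diffsuccessor} to peel off one layer, and the limit step uses Lemma \ref{difflimit} to break $A\in\Diff_\eta(\bS^0_1(Z))$ into pieces of strictly smaller index supported on pairwise disjoint open sets. You instead give a direct, non-inductive argument: you set up an explicit dictionary between the $\SD_\eta$-presentation (via the open sets $W_\xi, T_\xi, U_\xi$) and the $\Diff_\eta$-presentation (via $A_\mu = X_\mu \cup \bigcup_{\xi<\mu} U_\xi$), and verify the equality pointwise by tracking the least $\xi$ (respectively $\mu$) at which a given point first enters $U_\xi$ (respectively $A_\mu$). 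Your parity bookkeeping and your handling of the edge case $\xi_0=\eta-1$ are correct.

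What your approach buys: it is shorter, avoids the induction entirely, and does not depend on the auxiliary decomposition lemmas. In particular, the paper's limit step invokes Lemma \ref{difflimit} with $\xi=1$, which as stated there requires $Z$ to be zero-dimensional, whereas your argument works uniformly for an arbitrary space $Z$, matching the generality in which the lemma is stated. What the paper's approach buys: it makes the recursive structure of the difference hierarchy explicit and ties the result to Lemmas \ref{diffsuccessor} and \ref{difflimit}, which are used elsewhere.
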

\begin{proof}
We will only prove the first equality, as the second one follows from it by Lemma \ref{sdcheck}. We will proceed by induction on $\eta$. The case $\eta=1$ is trivial. For the successor case, assume that the desired result holds for a given $\eta$. We will show that it also holds for $\eta+1$. In order to prove the inclusion $\subseteq$, pick $A=\SD_{\eta+1}((V_{\xi,n}:\xi<\eta+1,n\in\omega),(A_{\xi,n}:\xi<\eta+1,n\in\omega),\varnothing)$ for suitable $V_{\xi,n}$ and $A_{\xi,n}$. It is easy to realize that $A=B\setminus C$, where 
$$
B=\bigcup\{V_{\xi,n}:\xi<\eta+1,n\in\omega\text{ and }A_{\xi,n}=Z\}
$$
and $C=\SD_\eta((V_{\xi,n}:\xi<\eta,n\in\omega),(Z\setminus A_{\xi,n}:\xi<\eta,n\in\omega),\varnothing)$. By Lemma \ref{diffsuccessor}, this shows that $A\in\Diff_{\eta+1}(\bS^0_1(Z))$.

In order to prove the inclusion $\supseteq$, pick $A\in\Diff_{\eta+1}(\bS^0_1(Z))$. By Lemma \ref{diffsuccessor}, it is possible to write $A=B\setminus C$, where $B\in\bS^0_1(Z)$, $C\in\Diff_\eta(\bS^0_1(Z))$, and $C\subseteq B$. Since $C\in\SD_\eta(\bD,\{\varnothing\})$ by the inductive hypothesis, it is possible to write $C=\SD_\eta((V_{\xi,n}:\xi<\eta,n\in\omega),(C_{\xi,n}:\xi<\eta,n\in\omega),\varnothing)$ for suitable $V_{\xi,n}$ and $C_{\xi,n}$. Let $V_{\eta,0}=B$ and $V_{\eta,n}=\varnothing$ whenever $1\leq n<\omega$. It is easy to realize that $A=\SD_{\eta+1}((V_{\xi,n}:\xi<\eta+1,n\in\omega),(A_{\xi,n}:\xi<\eta+1,n\in\omega),\varnothing)$, where $A_{\xi,n}=Z\setminus C_{\xi,n}$ if $\xi<\eta$, and $A_{\eta,n}=Z$.

Finally, assume that $\eta$ is a limit ordinal and that the desired result holds for all $\eta'<\eta$. Since $2\cdot\eta=\eta$, the inclusion $\subseteq$ follows easily from the definition of separated differences. In order to prove the inclusion $\supseteq$, pick $A\in\Diff_\eta(\bS^0_1(Z))$. Using Lemma \ref{difflimit}, it is possible to find pairwise disjoint $V_k\in\bS^0_1(Z)$ for $k\in\omega$ such that $A=\bigcup_{k\in\omega}(A\cap V_k)$ and for each $k$ there exists $\eta'<\eta$ such that $A\cap V_k\in\Diff_{\eta'}(\bS^0_1(Z))$. Therefore, by the inductive assumption, for every $k$ we can fix suitable $V^k_{\xi,n}$ and $A^k_{\xi,n}$ such that $A\cap V_k=\SD_\eta((V^k_{\xi,n}:\xi<\eta,n\in\omega),(A^k_{\xi,n}:\xi<\eta,n\in\omega),\varnothing)$. Without loss of generality, assume that each $V^k_{\xi,n}\subseteq V_k$. Under this assumption, one sees that $A=\SD_\eta((V^k_{\xi,n}:\xi<\eta,(n,k)\in\omega\times\omega),(A^k_{\xi,n}:\xi<\eta,(n,k)\in\omega\times\omega),\varnothing)$.
\end{proof}

Finally, we show that $\Ha(Z)$ is closed under separated differences (see Proposition \ref{sdarehausdorff}). Notice however that, at this point, we do not know that $\SD_\eta(\bD,\bG^\ast)$ is a non-selfdual Wadge class whenever each of the classes $\bL_n$ and $\bG^\ast$ described below are. That this is true will follow from Theorem \ref{main}.

\begin{proposition}\label{sdarehausdorff}
Let $Z$ be a zero-dimensional space in which $2^\omega$ embeds, let $1\leq\eta<\omega_1$, and let $\bG=\SD_\eta(\bD,\bG^\ast)$, where $\bD$ and $\bG^\ast$ satisfy the following conditions:
\begin{itemize}
\item $\bD=\bigcup_{n\in\omega}(\bL_n\cup\bLc_n)$, where each $\bL_n\in\Ha(Z)$ and each $\ell(\bL_n)\geq 1$,
\item $\bG^\ast\in\Ha(Z)$ and $\bG^\ast\subseteq\bD$.
\end{itemize}
Then $\bG\in\Ha(Z)$ and $\ell(\bG)=0$.
\end{proposition}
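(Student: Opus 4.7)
Since each $\bL_n\in\Ha(Z)$ and $\bG^\ast\in\Ha(Z)$, fix $D_n,D^\ast\subseteq\PP(\omega)$ with $\bL_n=\bG_{D_n}(Z)$ and $\bG^\ast=\bG_{D^\ast}(Z)$; setting $D_n^-=\PP(\omega)\setminus D_n$ yields $\bLc_n=\bG_{D_n^-}(Z)$ by Proposition~\ref{hausdorffsettheoretic}. The plan for $\bG\in\Ha(Z)$ is to realize the $\SD_\eta$ combinatorial formula as a single composite Hausdorff operation $\HH_{D^\dagger}$. I would organize the $\omega$-many open-set input slots into countably many groups indexed by $i=(\xi,n,k,\epsilon)\in\eta\times\omega\times\omega\times\{+,-\}$ plus one ``background'' group for $A^\ast$: each $i$-group carries one ``region'' slot $V_i$ and $\omega$ ``generator'' slots feeding $\HH_{D_k^\epsilon}$, and the background group feeds $\HH_{D^\ast}$. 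The output is
\[
\bigcup_i\bigl(\HH_{D_k^\epsilon}(\mathrm{gen}_i)\cap V_i^\sharp\bigr)\cup\bigl(\HH_{D^\ast}(\mathrm{gen}^\ast)\cap V_\infty^\sharp\bigr),
\]
where $V_i^\sharp=V_i\setminus\bigcup_{i'<i}V_{i'}$ for a well-order refining the $\xi$-stratification and $V_\infty^\sharp=Z\setminus\bigcup_i V_i$; Propositions~\ref{hausdorffsettheoretic} and~\ref{hausdorffcomposition} then identify this with $\HH_{D^\dagger}$ for an explicit $D^\dagger$.

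The inclusion $\bG\subseteq\bG_{D^\dagger}(Z)$ is straightforward: given $B\in\bG$ with witnesses $(V^\circ_{\xi,n},A^\circ_{\xi,n},A^{\ast,\circ})$, pick types $(k(\xi,n),\epsilon(\xi,n))$ so that $A^\circ_{\xi,n}\in\bL_{k(\xi,n)}^{\epsilon(\xi,n)}$, feed the matching generators, set $V_{(\xi,n,k(\xi,n),\epsilon(\xi,n))}=V^\circ_{\xi,n}$ and $V_i=\varnothing$ for the remaining $(k,\epsilon)$-copies of $(\xi,n)$, and plug in the generators of $A^{\ast,\circ}$; the within-$\xi$ disjointness of the $V^\circ_{\xi,n}$ forces the spurious within-layer subtractions in $V_i^\sharp$ to collapse, recovering exactly the SD formula. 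The reverse inclusion is the main technical step: an arbitrary input produces $\bigcup_i(X_i\cap V_i^\sharp)\cup(A^\ast\cap V_\infty^\sharp)$ with each $X_i\in\bL_{k(i)}^{\epsilon(i)}$ and the $V_i^\sharp$ pairwise disjoint but only locally closed, and one must rewrite this as an honest SD representation with \emph{open}, within-$\xi$-disjoint $V$'s. Here I would exploit (i) the zero-dimensionality of $Z$ to refine, within each $\xi$-stratum, the open region $\bigcup_{\xi(i)=\xi}V_i$ into countably many disjoint clopens $V''_{\xi,m}$ subordinate to the $V_i$-ladder via priority-based refinement of clopen decompositions, and (ii) Lemma~\ref{levelintersection} together with $\ell(\bL_n)\geq 1$, which yields $X_i\cap C\in\bL_{k(i)}^{\epsilon(i)}\subseteq\bD$ for every $\bD^0_2$-set $C$, so the locally-closed residues can be absorbed into the $A$-coordinate while remaining in $\bD$. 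The subtle point, which I expect to be the main obstacle, is arranging the clopen refinement so that each $V''_{\xi,m}$ is subordinate to a single $V_i$, preventing $A''_{\xi,m}$ from becoming a union of sets of different $\bL_k$-types (which in general need not land back in $\bD$).

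For $\ell(\bG)=0$: once Part~1 gives that $\bG$ is a Wadge class, $\ell(\bG)\geq 0$ automatically, and the task reduces to exhibiting a partitioned union that escapes $\bG$, witnessing $\PU_1(\bG)\neq\bG$. The SD formula constrains its partition data to be \emph{open} (level-$0$), so enriching to a $\bD^0_2$-partition $\{W_0,W_1\}$ of $Z$ with neither $W_j$ open (available in any uncountable zero-dimensional Polish space) and forming $(B_0\cap W_0)\cup(B_1\cap W_1)$ for suitably chosen $B_0,B_1\in\bG$ produces a set outside $\bG$. A transparent instance is the base case from Lemma~\ref{sddifferences}: when $\bD=\{\varnothing,Z\}$ and $\bG^\ast\in\{\{\varnothing\},\{Z\}\}$, one has $\bG=\Diff_\eta(\bS^0_1(Z))$ or its dual, whose level is plainly~$0$; the general case reduces to this by inspecting how the SD decomposition interacts with the $V_\infty^\sharp$ region, where $\bG$ restricts to $\bG^\ast\subseteq\bD$ and inherits the same level-$0$ obstruction.
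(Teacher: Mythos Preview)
Your approach to $\bG\in\Ha(Z)$ is essentially the paper's, though the paper's reverse inclusion is cleaner than what you sketch. Rather than refining into clopens subordinate to a single $V_i$ (the obstacle you flag), the paper simply picks, for each $\xi$, \emph{any} pairwise disjoint open $V'_{\xi,n}$ with $\bigcup_n V'_{\xi,n}=\bigcup_n V_{\xi,n}$, and sets $A'_{\xi,n}=A_{\xi,n}\setminus\bigcup_{m\neq n}V_{\xi,m}$ (with the \emph{original} $V$'s). Since the set removed is open, $A'_{\xi,n}$ is the intersection of $A_{\xi,n}$ with a closed (hence $\bD^0_2$) set, so Lemma~\ref{levelintersection} and $\ell(\bL_n)\geq 1$ keep it in $\bD$; no subordination of the new $V'$'s to the old $V$'s is needed, and the ``mixing of types'' problem never arises. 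Your route would also work, but the detour through clopen ladders is unnecessary.

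Your argument for $\ell(\bG)=0$, however, is not a proof. You assert that a suitable $(B_0\cap W_0)\cup(B_1\cap W_1)$ lies outside $\bG$ without saying which $B_0,B_1$ or why, and the claim that the general case ``reduces'' to $\Diff_\eta(\bS^0_1(Z))$ by ``inspecting how the SD decomposition interacts with the $V_\infty^\sharp$ region'' is not an argument---there is no mechanism offered for transferring the level-$0$ obstruction from the trivial $\bD$ to an arbitrary one. The paper's proof is short and structural, and uses an idea absent from your sketch: one checks directly that $\bD\subseteq\bG\subseteq\PU_1(\bD)$ (the second inclusion uses $\bG^\ast\subseteq\bD$), whence by idempotence $\PU_1(\bG)=\PU_1(\bD)$. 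But $\bD$ is closed under complements, so $\PU_1(\bD)$ is selfdual by Lemma~\ref{pubasic}.\ref{pubasiccheck}, while $\bG$ is non-selfdual by Part~1 and Theorem~\ref{addisontheorem}; hence $\bG\neq\PU_1(\bG)$. The point you are missing is that non-selfduality of $\bG$ does the work---no explicit witness in $\PU_1(\bG)\setminus\bG$ is ever produced.
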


\begin{proof}
First we will show that $\bG\in\Ha(Z)$. If $\bD=\{\varnothing,Z\}$ then this follows from Lemma \ref{sddifferences} and Proposition \ref{hausdorffdifferences}. So assume that $\{\varnothing,Z\}\subsetneq\bD$, and notice that this implies that $\bS^0_1(Z)\cup\bP^0_1(Z)\subseteq\bD$. Fix $\pi:\omega\longrightarrow\omega$ such that for every $m\in\omega$ there exist infinitely many $n\in\omega$ such that $\pi(2n)=\pi(2n+1)=m$. Let $\bG'$ be the collection of all sets of the form
$$
\bigcup_{\substack{\xi<\eta\\n\in\omega}}\bigg(A_{\xi,n}\cap V_{\xi,n}\setminus\Big(\bigcup_{\substack{\xi'<\xi\\m\in\omega}}V_{\xi',m}\cup\bigcup_{\substack{m\in\omega\\m\neq n}}V_{\xi,m}\Big)\bigg)\cup\bigg(A^\ast\setminus\bigcup_{\substack{\xi<\eta\\n\in\omega}}V_{\xi,n}\bigg),
$$
where $A_{\xi,n}\in\bL_{\pi(n)}$ if $n$ is even, $A_{\xi,n}\in\bLc_{\pi(n)}$ if $n$ is odd, $A^\ast\in\bG^\ast$, and each $V_{\xi,n}\in\bS^0_1(Z)$. Since each $\bL_n\in\Ha(Z)$ and $\bG^\ast\in\Ha(Z)$, using Lemmas \ref{hausdorffcomposition} and \ref{hausdorffsettheoretic} it is easy to realize that $\bG'\in\Ha(Z)$. Therefore, to conclude this part of the proof, it will be enough to show that $\bG'=\bG$.

Notice that, in the case that $V_{\xi,m}\cap V_{\xi,n}=\varnothing$ whenever $m\neq n$, the term $\bigcup_{\substack{m\in\omega\\m\neq n}}V_{\xi,m}$ is redundant. This shows that $\bG'\supseteq\SD_\eta(\bD,\bG^\ast)$. To see that the other inclusion holds, pick $A\in\bG'$ as above. For every $\xi<\eta$, by \cite[Theorem 22.16]{kechris}, we can fix open sets $V'_{\xi,n}\subseteq V_{\xi,n}$ for $n\in\omega$ such that $\bigcup_{n\in\omega}V'_{\xi,n}=\bigcup_{n\in\omega}V_{\xi,n}$ and $V'_{\xi,m}\cap V'_{\xi,n}=\varnothing$ whenever $m\neq n$. Also set 
$$
A'_{\xi,n}=A_{\xi,n}\setminus\bigcup_{\substack{m\in\omega\\m\neq n}}V_{\xi,m}
$$
for $\xi<\eta$ and $n\in\omega$. We claim that each $A'_{\xi,n}\in\bD$. This will conclude the proof because, as is straightforward to check,
$$
A=\SD_\eta\big((V'_{\xi,n}:\xi<\eta,n\in\omega),(A'_{\xi,n}:\xi<\eta,n\in\omega),A^\ast\big).
$$
If $A_{\xi,n}\in\bL_n\cup\bLc_n$ for some $n\in\omega$ such that $\bL_n\neq\{\varnothing\}$ and $\bL_n\neq\{Z\}$, then the claim follows from Lemma \ref{levelintersection}. If $A_{\xi,n}=\varnothing$, the claim is trivial. Finally, if $A_{\xi,n}=Z$, the claim holds because $\bP^0_1(Z)\subseteq\bD$.

\newpage

It remains to show that $\ell(\bG)=0$. Observe that $\bG\in\NSD(Z)$ by the first part of this proof and Theorem \ref{addisontheorem}. It is easy to realize that $\bD\subseteq\bG\subseteq\PU_1(\bD)$, where the second inclusion uses the assumption $\bG^\ast\subseteq\bD$. Therefore, using Proposition \ref{pubasic}.\ref{pubasicidempotent}, one sees that
$$
\PU_1(\bD)\subseteq\PU_1(\bG)\subseteq\PU_1(\PU_1(\bD))=\PU_1(\bD),
$$
which implies that $\PU_1(\bD)=\PU_1(\bG)$. Since $\PU_1(\bD)$ is selfdual by Proposition \ref{pubasic}.\ref{pubasiccheck}, it follows that $\bG\neq\PU_1(\bG)$, hence $\ell(\bG)=0$.
\end{proof}

\section{Separated differences: the main theorem}\label{sectionsdmain}

The aim of this section is to show that every non-selfdual Wadge class $\bG$ of level zero can be obtained by applying the operation of separated differences to classes of lower complexity (see Theorem \ref{sdmain}). For technical reasons, we will need to assume that $\bL\in\Ha(Z)$ whenever $\bL\in\NSD(Z)$ is such that $\bL\subsetneq\bG$. Notice however that, once Theorem \ref{main} is proved, it will be possible to drop this assumption. To avoid cluttering the exposition, several preliminary lemmas are postponed until the end of the section.

\begin{theorem}\label{sdmain}
Let $\bS$ be a nice topological pointclass, and assume that $\Det(\bS(\omega^\omega))$ holds. Let $Z$ be an uncountable zero-dimensional Polish space, and let $\bG\in\NSDS(Z)$. Assume that $\ell(\bG)=0$ and $\bL\in\Ha(Z)$ whenever $\bL\in\NSD(Z)$ is such that $\bL\subsetneq\bG$. Then there exist $1\leq\eta<\omega_1$, $\bD$ and $\bG^\ast$ satisfying the following conditions:
\begin{itemize}
\item $\bD=\bigcup_{n\in\omega}(\bL_n\cup\bLc_n)$, where each $\bL_n\in\NSDS(Z)$ and each $\ell(\bL_n)\geq 1$,
\item $\bG^\ast\in\NSDS(Z)$ and $\bG^\ast\subseteq\bD$,
\item $\bG=\SD_\eta(\bD,\bG^\ast)$.
\end{itemize}
\end{theorem}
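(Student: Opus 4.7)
The plan is to fix $A\subseteq Z$ with $[A]=\bG$ and, via a transfinite recursion of length $\eta$, extract pairwise-disjoint opens $V_{\xi,n}\in\bS^0_1(Z)$, sets $A_{\xi,n}\in\bD$, and a residue $A^\ast\in\bG^\ast$ realizing $A=\SD_\eta((V_{\xi,n}),(A_{\xi,n}),A^\ast)$. The overall shape generalizes the pattern by which Lemma \ref{sddifferences} expresses $\Diff_\eta(\bS^0_1(Z))$ as a separated difference, now carried out with general sub-classes of $\bG$ in place of the trivial class $\{\varnothing,Z\}$.

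First, I would assemble the ambient class $\bD$. By the inductive hypothesis, every $\bL\in\NSD(Z)$ with $\bL\subsetneq\bG$ lies in $\Ha(Z)$. I would collect those of level $\geq 1$ into a countable family $\{\bL_n:n\in\omega\}$ and set $\bD=\bigcup_n(\bL_n\cup\bLc_n)$. Countability of the family can be justified by the second-countability of $Z$, so that only countably many ``local types'' of $A$ on basic clopen pieces actually arise in the extraction; Wadge's Lemma (Lemma \ref{wadgelemma}) together with well-foundedness of $\leq$ on $\bS(Z)$ (Theorem \ref{wellfounded}) assign to each such local type a single non-selfdual class strictly below $\bG$, which lies in $\Ha(Z)$ by hypothesis.

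The recursion proceeds as follows. At stage $\xi$, set $W_\xi=Z\setminus\bigcup_{\xi'<\xi,\,m\in\omega}V_{\xi',m}$. Since $\ell(\bG)=0$, the restriction $A\cap W_\xi$ cannot be rebuilt from pieces in $\bG$ via a $\bD^0_2$-partition; this fact, combined with Proposition \ref{locallylowerimpliesselfdual} and an appropriate selfdualization treated through Corollary \ref{selfdualcorollary}, forces the existence of clopen-in-$W_\xi$ pieces on which $A$ agrees with sets in $\bL_n\cup\bLc_n$ for various $n$. Zero-dimensionality refines these into pairwise-disjoint opens $V_{\xi,n}\subseteq W_\xi$, and the corresponding $A_{\xi,n}\in\bL_n\cup\bLc_n$ are then recovered via Lemma \ref{relativization}.\ref{relativizationsubspace}. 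Well-foundedness (Theorem \ref{wellfounded}) applied to the strictly-dropping Wadge ranks of the residues $A\cap W_\xi$ forces termination at some countable $\eta$, with the final piece $A^\ast=A\cap W_\eta$ lying in some $\bG^\ast\in\NSDS(Z)$ with $\bG^\ast\subseteq\bD$ (were $\bG^\ast\not\subseteq\bD$, the same analysis would yield a further non-trivial step, contradicting termination).

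The main obstacle will be co-ordinating the three technical constraints simultaneously: (i) the $V_{\xi,n}$ must be pairwise disjoint in $n$ for each fixed $\xi$; (ii) the $A_{\xi,n}$ must all live in the \emph{pre-chosen} countable family $\bigcup_n(\bL_n\cup\bLc_n)$, rather than in a family that grows with $\xi$; and (iii) termination must occur at a countable stage and deliver $\bG^\ast\subseteq\bD$. Requirement (ii) in particular demands a preliminary analysis identifying the relevant $\bL_n$ globally from $A$ before the recursion begins -- this is where the postponed preliminary lemmas would do the bulk of the work, presumably by showing that a single countable scheme of Hausdorff operations (via Proposition \ref{sdarehausdorff}) already captures all local behaviour. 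The decisive technical input making the whole scheme viable is the interaction of $\ell(\bG)=0$ with Corollary \ref{selfdualcorollary}: together they guarantee that, locally on suitable clopen restrictions, $A$ can always be replaced by a set in some proper non-selfdual sub-class of $\bG$, and it is this local simplification, iterated transfinitely, that yields the separated-difference decomposition.
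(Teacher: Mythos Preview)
Your outline has the right architecture (a transfinite derivative peeling off local pieces of $A$ and leaving a residue), but several load-bearing steps are either missing or incorrect.

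\textbf{Why level $\geq 1$?} You assert that the local classes $\bL_n$ populating $\bD$ have $\ell(\bL_n)\geq 1$, but you give no argument for this, and it is far from automatic. In the paper this is the content of Lemma~\ref{phinonempty}: one takes a $\leq$-minimal element among the sets $A\cap U$ for basic clopen $U$, shows it is non-selfdual, and then uses Lemma~\ref{l0impliessomewheresmaller} (which is where $\ell(\bG)=0$ is genuinely exploited) to rule out level $0$. Your invocation of Corollary~\ref{selfdualcorollary} as the ``decisive technical input'' is misplaced: that corollary decomposes \emph{selfdual} sets, whereas the local types $A\cap U$ one needs are non-selfdual. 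Corollary~\ref{selfdualcorollary} enters only later, in the analysis of the residue $A^\ast$ (Claim~6 in the paper).

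\textbf{Termination.} Your termination argument via ``strictly-dropping Wadge ranks of the residues $A\cap W_\xi$'' is not justified and in fact is not how the paper proceeds. There is no obvious reason the Wadge rank of $A\cap Z_\xi$ should strictly decrease at each stage. The paper instead observes that the $Z_\xi$ form a decreasing sequence of closed sets, hence stabilize at some $\zeta<\omega_1$; a separate argument (Claim~1, again using Lemma~\ref{phinonempty}) shows $Z_\zeta=\varnothing$, and Claim~5 shows the sequence does not collapse at limits, yielding a maximal $\eta$ with $Z_\eta\neq\varnothing$.

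\textbf{The equality $\bG=\SD_\eta(\bD,\bG^\ast)$.} You only sketch the inclusion $\bG\subseteq\SD_\eta(\bD,\bG^\ast)$. The reverse inclusion requires a canonicity statement (Claim~2 in the paper): any representation $A\in\SD_\nu(\bD[Z],\bG'(Z))$ forces $A\cap Z_\nu\in\bG'(Z_\nu)$. Without this, you cannot rule out $Z\setminus A\in\SD_\eta(\bD,\bG^\ast)$, which is what Claim~9 does.

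Finally, your reading of $\ell(\bG)=0$ (``$A\cap W_\xi$ cannot be rebuilt from pieces in $\bG$ via a $\bD^0_2$-partition'') is backwards: $\ell(\bG)=0$ means precisely that $\PU_1(\bG)\supsetneq\bG$, so such rebuilds \emph{can} escape $\bG$.
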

\begin{proof}
Fix $A$ such that $\bG=A\wc$, and a countable base $\UU\subseteq\bD^0_1(Z)$ for $Z$. Define
\begin{multline}
\Phi=\{\bL\in\NSDS(\omega^\omega):\ell(\bL)\geq 1\text{ and there exists }U\in\UU\\\nonumber\text{ such that }\bL(U)\text{ is non-selfdual and }\bL(U)=(A\cap U)\wc\},
\end{multline}
and observe that $\Phi$ is non-empty by Lemma \ref{phinonempty}. Furthermore, using the uniqueness part of Lemma \ref{relativizationexistsunique}, one sees that $\Phi$ is countable. Given a space $W$, define
$$
\bD[W]=\bigcup\{\bL(W)\cup\bLc(W):\bL\in\Phi\}.
$$
Also define $\bD=\bigcup\{\bL\cup\bLc:\bL\in\Phi\}$.\footnote{\,This notation is limited to this proof, as it is easy to realize that the desired $\bD$ (the one mentioned in the statement of the theorem) will in fact be $\bD[Z]$.} It is easy to check that the following facts hold whenever $W$ is a space and $\bL$ is a Wadge class in $\omega^\omega$:
\begin{itemize}
\item If $\bL\subseteq\bD$ then $\bL(W)\subseteq\bD[W]$,
\item If $\bD\subseteq\bL$ then $\bD[W]\subseteq\bL(W)$.
\end{itemize}

Given $W\subseteq Z$, define
$$
\partial(W)=W\setminus\bigcup\{U\in\UU:U\cap W\neq\varnothing\text{ and }A\cap U\cap W\in\bD[U\cap W]\}.
$$
Recursively define a subset $Z_\xi$ of $Z$ for every $\xi<\omega_1$ as follows:
\begin{itemize}
\item $Z_0=Z$,
\item $Z_\xi=\bigcap_{\xi'<\xi}Z_{\xi'}$ if $\xi$ is a limit ordinal,
\item $Z_{\xi+1}=\partial(Z_\xi)$.
\end{itemize}
Since the $Z_\xi$ form a decreasing sequence of closed sets, we can fix the minimal $\zeta<\omega_1$ such that $Z_\zeta=Z_{\zeta+1}$.

Define
$$
\UU_\xi=\{U\in\UU:U\cap Z_\xi\neq\varnothing\text{ and }A\cap U\cap Z_\xi\in\bD[U\cap Z_\xi]\}
$$
for every $\xi<\zeta$. Given $\xi<\zeta$, using the fact that $Z$ is zero-dimensional, it is possible to obtain $\{V_{\xi,n}:n\in\omega\}\subseteq\bD^0_1(Z)$ satisfying the following conditions:
\begin{itemize}
\item $V_{\xi,m}\cap V_{\xi,n}=\varnothing$ whenever $m\neq n$,
\item for all $n\in\omega$ there exists $U\in\UU_\xi$ such that $V_{\xi,n}\subseteq U$,
\item $\bigcup_{n\in\omega}V_{\xi,n}=\bigcup\UU_\xi$.
\end{itemize}
Observe that
$$
Z_\xi=Z\setminus\bigcup_{\substack{\xi'<\xi\\n\in\omega}}V_{\xi',n}
$$
for every $\xi<\omega_1$.

Notice that, by Lemma \ref{relativizationsubspace}, for every $\xi<\zeta$ and $U\in\UU_\xi$ we can fix $A_{\xi,U}\in\bD[Z]$ such that $A_{\xi,U}\cap U\cap Z_\xi=A\cap U\cap Z_\xi$. Given $\xi<\zeta$ and $n\in\omega$, choose $U\in\UU_\xi$ such that $V_{\xi,n}\subseteq U$ and define $A_{\xi,n}=A_{\xi,U}$. At this point, it is easy to check that
\begin{itemize}
 \item[$\circledast(\xi)$] $A=\SD_\xi((V_{\xi',n}:\xi'<\xi,n\in\omega),(A_{\xi',n}:\xi'<\xi,n\in\omega),A\cap Z_\xi)$
\end{itemize}
whenever $1\leq\xi<\zeta$.

The next part of the proof is aimed at showing that $\{\xi<\omega_1:Z_\xi\neq\varnothing\}$ has a maximal element. This will follow from Claims 1 and 5. Claim 2 is a technical result, which will be needed in the proofs of Claims 5 and 7. The significance of Claim 3 will be explained later. Claim 4 will be needed in the proofs of Claims 5 and 10, in order to show that certain separated differences are non-selfdual Wadge classes.

\noindent{\bf Claim 1:} $Z_\zeta=\varnothing$.

Assume, in order to get a contradiction, that $Z_\zeta\neq\varnothing$. Notice that $Z_\zeta$ cannot have isolated points, otherwise we would have $Z_{\zeta+1}\subsetneq Z_\zeta$. By applying Lemma \ref{phinonempty} to $Z_\zeta$ and its base $\{U\cap Z_\zeta:U\in\UU\}\setminus\{\varnothing\}$, we can fix $\bL\in\NSDS(\omega^\omega)$ and $U\in\UU$ such that $U\cap Z_\zeta\neq\varnothing$, $\ell(\bL)\geq 1$, $\bL(U\cap Z_\zeta)$ is non-selfdual, and $\bL(U\cap Z_\zeta)=(A\cap U\cap Z_{\zeta})\wc$. Observe that $U$ is uncountable because $Z_\zeta$ has no isolated points. First we will show that $\bD\subseteq\bL$. If this were not the case, then Lemma \ref{selfdualwadgelemma} would imply that $\bL\subseteq\bD$. Then, it would follow that $A\cap U\cap Z_{\zeta}\in\bL(U\cap Z_{\zeta})\subseteq\bD[U\cap Z_{\zeta}]$, which would contradict the assumption that $Z_\zeta=Z_{\zeta+1}$.

To finish the proof of the claim, we will show that $\bL(U)=(A\cap U)\wc$. Notice that this will imply $\bL\in\Phi$, hence $\bL\subseteq\bD$. Since we have already seen that $\bD\subseteq\bL$, this will contradict the fact that $\bL$ is non-selfdual.

Observe that
$$
\{U\cap Z_\zeta\}\cup\{V_{\xi,n}\cap U\cap (Z_\xi\setminus Z_{\xi+1}):\xi<\zeta\text{ and }n\in\omega\}
$$
is cover of $U$ consisting of pairwise disjoint $\bD^0_2$ sets. By Lemma \ref{relativizationsubspace}, we can fix $B\in\bL(U)$ such that $B\cap U\cap Z_\zeta=A\cap U\cap Z_\zeta$. It is easy to realize that
$$
A\cap U=(B\cap U\cap Z_\zeta)\cup\bigcup_{\substack{\xi<\zeta\\n\in\omega}}(A_{\xi,n}\cap V_{\xi,n}\cap U\cap (Z_\xi\setminus Z_{\xi+1})).
$$
Notice that each $A_{\xi,n}\cap U\in\bD[U]\subseteq\bL(U)$ by Lemma \ref{relativizationsubspace}. Since $\bL\neq\{\omega^\omega\}$ by the fact that $\bD\subseteq\bL$, it follows from Lemma \ref{closureclopen} that each $A_{\xi,n}\cap V_{\xi,n}\cap U\in\bL(U)$. In conclusion, since $\ell(\bL(U))\geq 1$ by Corollary \ref{levelinvariance}, one sees that $A\cap U\in\PU_1(\bL(U))=\bL(U)$. This allows us to apply Lemma \ref{relativizationsd2} with $Z=U$, $W=U\cap Z_\zeta$, and $g$ the natural embedding, which yields $\bL(U)=(A\cap U)\wc$. $\blacksquare$

\noindent{\bf Claim 2:} Let $1\leq\nu<\zeta$, and let $\bG'\in\NSDS(\omega^\omega)$. If $A\in\SD_\nu(\bD[Z],\bG'(Z))$ then $A\cap Z_\nu\in\bG'(Z_\nu)$.

Suppose $A=\SD_\nu((V'_{\xi,n}:\xi<\nu,n\in\omega),(A'_{\xi,n}:\xi<\nu,n\in\omega),A')$, where the $V'_{\xi,n}\in\bS^0_1(Z)$ are pairwise disjoint, each $A'_{\xi,n}\in\bD[Z]$, and $A'\in\bG'(Z)$. Define
$$
Z'_\xi=Z\setminus\bigcup_{\substack{\xi'<\xi\\n\in\omega}}V'_{\xi',n}
$$
for $\xi\leq\nu$. First we will prove, by induction on $\xi$, that $Z_\xi\subseteq Z'_\xi$ for every $\xi\leq\nu$. The case $\xi=0$ and the limit case are trivial. Now assume that $Z_\xi\subseteq Z'_\xi$ for a given $\xi<\nu$. In order to prove that $Z_{\xi+1}\subseteq Z'_{\xi+1}$, it will be enough to show that $A\cap U\cap Z_\xi\in\bD[U\cap Z_\xi]$ for every $U\in\UU$ such that $U\cap Z_\xi\neq\varnothing$ and $U\subseteq V'_{\xi,n}$ for some $n\in\omega$. This follows from Lemma \ref{relativizationsubspace} plus the fact that $A\cap U\cap Z_\xi=A'_{\xi,n}\cap U\cap Z_\xi$ for every such $U$, which can easily be deduced by inspecting the expression of $A$ as a separated difference, using the inductive assumption that $Z_\xi\subseteq Z'_\xi$.

At this point, using the fact that $Z_\nu\subseteq Z'_\nu$, it is easy to realize that $A\cap Z_\nu=A'\cap Z_\nu$. Since $A'\cap Z_\nu\in\bG'(Z_\nu)$ by Lemma \ref{relativizationsubspace}, this concludes the proof of the claim. $\blacksquare$

From this point on, it will be useful to assume that $\{\varnothing,\omega^\omega\}\subsetneq\bD$. The following claim, together with Theorem \ref{completeanalysisdifferences} and Lemma \ref{sddifferences}, shows that this does not result in any loss of generality. In the remainder of the proof, we will use two consequences of this assumption. The first one is that, by Lemma \ref{closureclopen}, we will have $A\cap U\in\bD[Z]$ whenever $A\in\bD[Z]$ and $U\in\bD^0_1(Z)$. The second one is given by Claim 8.

\noindent{\bf Claim 3:} Assume that $\bD=\{\varnothing,\omega^\omega\}$. Then $A\in\bD^0_2(Z)$.

Notice that $\{Z_\xi\setminus Z_{\xi+1}:\xi<\zeta\}$ is a partition of $Z$ by Claim 1. Therefore
$$
A=\bigcup_{\xi<\zeta}A\cap (Z_\xi\setminus Z_{\xi+1})\text{ and }Z\setminus A=\bigcup_{\xi<\zeta}(Z\setminus A)\cap (Z_\xi\setminus Z_{\xi+1}).
$$
By inspecting the definition of $\partial$, using the fact that $\bD=\{\varnothing,\omega^\omega\}$, it is easy to realize that both
$$
A\cap (Z_\xi\setminus Z_{\xi+1})\in\bS^0_1(Z_\xi)\text{ and }(Z\setminus A)\cap (Z_\xi\setminus Z_{\xi+1})\in\bS^0_1(Z_\xi)
$$
for every $\xi<\zeta$. Since $\bS^0_1(F)\subseteq\bS^0_2(Z)$ for every $F\in\bP^0_1(Z)$, it follows that both $A\in\bS^0_2(Z)$ and $Z\setminus A\in\bS^0_2(Z)$. Hence $A\in\bD^0_2(Z)$. $\blacksquare$

\noindent{\bf Claim 4:} $\bL(Z)\subsetneq\bG$ for every $\bL\in\Phi$. In particular, $\bL(Z)\in\Ha(Z)$ for every $\bL\in\Phi$.

Pick $\bL\in\Phi$, and let $U\in\UU$ be such that $\bL(U)$ is non-selfdual and $\bL(U)=(A\cap U)\wc$. It will be enough to show that $\bL(Z)\subseteq\bG$, as $\ell(\bG)=0$ by assumption, while $\ell(\bL(Z))\geq 1$ by Corollary \ref{levelinvariance}.

First assume that $U$ is countable. Observe that $\bS^0_2(\omega^\omega),\bP^0_2(\omega^\omega)\in\NSDS(\omega^\omega)$ by Proposition \ref{hausdorffborel} and Theorem \ref{addisontheorem}. Since $\bL(U)$ is non-selfdual, we must have $\bS^0_2(\omega^\omega)\nsubseteq\bL$ and $\bP^0_2(\omega^\omega)\nsubseteq\bL$. Therefore, $\bL\subseteq\bD^0_2(\omega^\omega)$ by Lemma \ref{wadgelemma}. Notice that it is not possible that $\bL=\Diff_\nu(\bS^0_1(\omega^\omega))$ or $\bL=\widecheck{\Diff}_\nu(\bS^0_1(\omega^\omega))$ for some $1\leq\nu<\omega_1$, because these classes have level $0$ by Lemma \ref{sddifferences} and Proposition \ref{sdarehausdorff}. It follows from Theorem \ref{completeanalysisdifferences} that $\bL=\{\varnothing\}$ or $\bL=\{\omega^\omega\}$, which concludes the proof of the claim in this case.

Now assume that $U$ is uncountable, and that $\bL\neq\{\omega^\omega\}$. By Lemma \ref{relativizationsubspace}, there exists $A'\in\bL(Z)$ such that $A'\cap U=A\cap U$. It follows from Lemma \ref{closureclopen} that $A\cap U\in\bL(Z)$. Therefore, an application of Lemma \ref{relativizationsd2} with $W=U$ and $g:U\longrightarrow Z$ the natural embedding yields $\bL(Z)=(A\cap U)\wc$. Since $A\cap U\in\bG$ by Lemma \ref{closureclopen}, this concludes the proof of the claim. $\blacksquare$

\noindent{\bf Claim 5:} Let $\nu<\omega_1$ be a limit ordinal such that $Z_\xi\neq\varnothing$ for every $\xi<\nu$. Then $Z_\nu\neq\varnothing$.

Observe that $\nu\leq\zeta$ by Claim 1. Assume, in order to get a contradiction, that $Z_\nu=\varnothing$. Given any $\xi<\nu$ and $W\subseteq\bigcup_{\substack{\xi'<\xi+1\\n\in\omega}}V_{\xi',n}$, using condition $\circledast(\xi+1)$, it is easy to realize that
$$
A\cap W=\mathsf{SD}_{\xi+1}((V_{\xi',n}:\xi'<\xi+1,n\in\omega),(A_{\xi',n}\cap W:\xi'<\xi+1,n\in\omega),\varnothing).
$$
Furthermore, if $W\in\bD^0_1(Z)$ then each $A_{\xi',n}\cap W\in\bD[Z]$ by Lemma \ref{closureclopen}, since we are assuming that $\{\varnothing,\omega^\omega\}\subsetneq\bD$. In particular, $A\cap V_{\xi,n}\in\SD_{\xi+1}(\bD[Z],\{\varnothing\})$ for every $\xi<\nu$ and $n\in\omega$. By Claim 4, Lemma \ref{sdarehausdorff}, and Theorem \ref{addisontheorem}, for every $\xi<\omega_1$ we can fix $B_\xi\subseteq Z$ such that $\SD_{\xi+1}(\bD[Z],\{\varnothing\})=B_\xi\wc$. Finally, we will show that $B_\xi<A$ in $Z$ whenever $\xi<\nu$. Since $\{V_{\xi,n}:\xi<\nu\text{ and }n\in\omega\}$ is a cover of $Z$ by the assumption that $Z_\nu=\varnothing$, an application of Proposition \ref{locallylowerimpliesselfdual} will contradict the fact that $A$ is non-selfdual, hence conclude the proof of the claim. 

Pick $\xi<\nu$. We need to show that $A\nleq B_\xi$ and $B_\xi\leq A$. First assume, in order to get a contradiction, that $A\leq B_\xi$. Then $A\cap Z_{\xi+1}=\varnothing$ by Claim 2. It follows from the definition of $\partial$ that $Z_{\xi+2}=\varnothing$, which contradicts our assumptions. Now assume, in order to get a contradiction, that $B_\xi\nleq A$. Then $A\leq Z\setminus B_\xi$ by Lemma \ref{wadgelemma}, hence $A\in\SD_{\xi+1}(\bD[Z],\{Z\})$ by Lemma \ref{sdcheck}. Therefore $A\cap Z_{\xi+1}=Z_{\xi+1}$ by Claim 2. Again, it follows from the definition of $\partial$ that $Z_{\xi+2}=\varnothing$, which contradicts our assumptions. $\blacksquare$

As we mentioned above, Claims 1 and 5 allow us to define
$$
\eta=\maxi\{\xi<\omega_1:Z_\xi\neq\varnothing\}.
$$
Observe that $1\leq\eta<\zeta$, where the first inequality is given by the following claim, while the second one is an obvious consequence of Claim 1.

\noindent{\bf Claim 6:} $\eta\geq 1$.

Assume, in order to get a contradiction, that $\eta=0$. This means that $\UU_0=\{U\in\UU:A\cap U\in\bD[U]\}$ is a cover of $Z_0=Z$. We will show that $A\cap U<A$ in $Z$ for every $U\in\UU_0$. This will conclude the proof by Proposition \ref{locallylowerimpliesselfdual}, as the fact that $A$ is non-selfdual will be contradicted.

The reduction $A\cap U\leq A$ follows from Lemma \ref{closureclopen}. Now assume, in order to get a contradiction, that $A\leq A\cap U$. By Lemma \ref{relativizationsubspace}, there exists $A'\in\bD[Z]$ such that $A'\cap U=A\cap U$. On the other hand $A'\cap U\in\bD[Z]$ by Lemma \ref{closureclopen}, because are assuming that $\{\varnothing,\omega^\omega\}\subsetneq\bD$. In conclusion, we see that $A\leq A\cap U=A'\cap U\in\bD[Z]$, which contradicts Claim 4. $\blacksquare$

The next claim will allow us to define $\bG^\ast$. Set $A^\ast=A\cap Z_\eta$.

\noindent{\bf Claim 7:} $A^\ast$ is non-selfdual in $Z_\eta$.

Assume, in order to get a contradiction, that $A^\ast$ is selfdual in $Z_\eta$. By Corollary \ref{selfdualcorollary}, we can fix pairwise disjoint $U_k\in\mathbf{\Delta}^0_1(Z_\eta)$ and non-selfdual $A_k<A^\ast$ in $Z_\eta$ for $k\in\omega$ such that $\bigcup_{k\in\omega}U_k=Z_\eta$ and $\bigcup_{k\in\omega}(A_k\cap U_k)=A^\ast$. By Lemma \ref{relativizationexistsunique}, we can fix $\bG_k\in\NSDS(\omega^\omega)$ for $k\in\omega$ such that $\bG_k(Z_\eta)=A_k\wc$. By \cite[Theorem 7.3]{kechris}, we can fix a retraction $\rho:Z\longrightarrow Z_\eta$. Define $W_k=\rho^{-1}[U_k]$ for each $k$. Next, we will show that $A\cap W_k<A$ in $Z$ for each $k$. Notice that, by Proposition \ref{locallylowerimpliesselfdual}, this will contradict the fact that $A$ is non-selfdual, hence conclude the proof of the claim. 

Pick $k\in\omega$. First assume that $\bG_k\neq\{\omega^\omega\}$. Then $A_k\cap U_k\in\bG_k(Z_\eta)$ by Lemma \ref{closureclopen}. Therefore, since $A^\ast\cap W_k=A^\ast\cap U_k=A_k\cap U_k$, by Lemma \ref{relativizationsubspace} there exists $A'\in\bG_k(Z)$ such that $A'\cap Z_\eta=A^\ast\cap W_k$. Using condition $\circledast(\eta)$, it is easy to realize that
$$
A\cap W_k=\SD_\eta((V_{\xi,n}:\xi<\eta,n\in\omega),(A_{\xi,n}\cap W_k:\xi<\eta,n\in\omega),A').
$$
Furthermore, using Lemma \ref{closureclopen} and the assumption that $\{\varnothing,\omega^\omega\}\subsetneq\bD$, one sees that each $A_{\xi,n}\cap W_k\in\bD[Z]$. In conclusion, we see that $A\cap W_k\in\SD_\eta(\bD[Z],\bG_k(Z))$. To see that the same holds in the case $\bG_k=\{\omega^\omega\}$, observe that
$$
A\cap W_k=\SD_\eta((V'_{\xi,n}:\xi<\eta,-1\leq n<\omega),(A_{\xi,n}:\xi<\eta,-1\leq n<\omega),Z),
$$
where $V'_{\xi,n}=V_{\xi,n}\cap W_k$ for every $n\in\omega$, $V'_{\xi,-1}=Z\setminus W_k$, and $A_{\xi,-1}=\varnothing$.

Using the fact that $A_k<A^\ast$ in $Z_\eta$ and $A_k$ is non-selfdual in $Z_\eta$, one sees that $A^\ast\notin\bG_k(Z_\eta)$ and $A^\ast\notin\bGc_k(Z_\eta)$. Therefore $A\notin\SD_\eta(\bD[Z],\bG_k(Z))$ and $A\notin\SD_\eta(\bD[Z],\bGc_k(Z))$ by Claim 2. In particular, $A\notin\bG_k(Z)$ and $A\notin\bGc_k(Z)$. Hence $\bG_k(Z)\subsetneq\bG$ by Lemma \ref{wadgelemma}, which implies $\bG_k(Z)\in\Ha(Z)$ by assumption. In conclusion, by Claim 4, Proposition \ref{sdarehausdorff}, and Theorem \ref{addisontheorem}, we can fix $B\subseteq Z$ such that $B\wc=\SD_\eta(\bD[Z],\bG_k(Z))$. It remains to show that $B<A$. This follows from the second sentence of this paragraph, using Lemmas \ref{wadgelemma} and \ref{sdcheck}. $\blacksquare$

By Claim 7, we can fix $\bG^\ast\in\NSDS(\omega^\omega)$ such that $\bG^\ast(Z_\eta)=A^\ast\wc$. The final part of the proof (namely, Claims 9 and 10) will show that $\bD[Z]$ and $\bG^\ast(Z)$ satisfy the desired requirements. Claim 8 will be used in the proof of Claim 9.

\noindent{\bf Claim 8:} $\bS^0_2(\omega^\omega)\cup\bP^0_2(\omega^\omega)\subseteq\bD$.

Since $\bD$ is selfdual, it will be enough to show that $\bS^0_2(\omega^\omega)\subseteq\bD$ or $\bP^0_2(\omega^\omega)\subseteq\bD$. Using the assumption $\{\varnothing,\omega^\omega\}\subsetneq\bD$, we can pick $\bL\in\Phi$ such that $\bL\neq\{\varnothing\}$ and $\bL\neq\{\omega^\omega\}$. Assume, in order to get a contradiction, that $\bS^0_2(\omega^\omega)\nsubseteq\bL$ and $\bP^0_2(\omega^\omega)\nsubseteq\bL$. Now proceed as in the proof of Claim 4. $\blacksquare$

\noindent{\bf Claim 9:} $\bG^\ast(Z)\subseteq\bD[Z]$.

First assume that $Z_\eta$ is countable. Observe that $\bG^\ast\subseteq\bP^0_2(\omega^\omega)$, otherwise we would have $\bS^0_2(\omega^\omega)\subseteq\bG^\ast$ by Lemma \ref{wadgelemma}, hence $\bG^\ast(Z_\eta)=\PP(Z_\eta)$, which would contradict the fact that $\bG^\ast(Z_\eta)$ is non-selfdual. It follows that $\bG^\ast(Z)\subseteq\bP^0_2(Z)\subseteq\bD[Z]$, where the last inclusion holds by Claim 8. Now assume that $Z_\eta$ is uncountable. Assume, in order to get a contradiction, that $\bG^\ast(Z)\nsubseteq\bD[Z]$. Then $\bD[Z]\subsetneq\bG^\ast(Z)$ by Lemma \ref{selfdualwadgelemma}. Since $Z_\eta$ is uncountable, it follows from Theorem \ref{orderisomorphism} that $\bD[Z_\eta]\subsetneq\bG^\ast(Z_\eta)$. Notice that
$$
\VV_\eta=\{U\cap Z_\eta:U\in\UU_\eta\}\subseteq\bD^0_1(Z_\eta)
$$
is a cover of $Z_\eta$ by the definition of $\eta$. To conclude the proof of the claim, it will be enough to show that $A^\ast\cap V< A^\ast$ in $Z_\eta$ for every $V\in\VV_\eta$, as this will contradict Claim 7 by Proposition \ref{locallylowerimpliesselfdual}. Pick $V\in\VV_\eta$. It is clear from the definition of $\UU_\eta$ that $A^\ast\cap V\in\bD[V\cap Z_\eta]$. Therefore, by Lemma \ref{relativizationsubspace}, there exists $B\in\bD[Z_\eta]$ such that $B\cap V=A^\ast\cap V$. By Lemma \ref{closureclopen}, using the assumption that $\{\varnothing,\omega^\omega\}\subsetneq\bD$, it follows that $A^\ast\cap V\in\bD[Z_\eta]$. Since $\bD[Z_\eta]\subsetneq\bG^\ast(Z_\eta)$, this finishes the proof of the claim. $\blacksquare$

\noindent{\bf Claim 10:} $\bG=\SD_\eta(\bD[Z],\bG^\ast(Z))$.

Notice that $\SD_\eta(\bD[Z],\bG^\ast(Z))\in\NSDS(Z)$ by Claims 4 and 9, Lemma \ref{sdarehausdorff}, and Theorem \ref{addisontheorem}. Furthermore, condition $\circledast(\eta)$ shows that $A\in\SD_\eta(\bD[Z],\bG^\ast(Z))$. This proves that the inclusion $\subseteq$ holds. By Lemma \ref{wadgelemma}, in order to prove that the inclusion $\supseteq$ holds, it will be enough to show that $Z\setminus A\notin\SD_\eta(\bD[Z],\bG^\ast(Z))$. Assume, in order to get a contradiction, that this is not the case. Then $A\in\SD_\eta(\bD[Z],\widecheck{\bG^\ast}(Z))$ by Lemma \ref{sdcheck}, hence $A^\ast=A\cap Z_\eta\in\widecheck{\bG^\ast}(Z_\eta)$ by Claim 2. This contradicts Claim 7. $\blacksquare$
\end{proof}

\begin{lemma}\label{relativizationsd1}
Let $\bS$ be a nice topological pointclass, and assume that $\Det(\bS(\omega^\omega))$ holds. Let $Z$ be a zero-dimensional Borel space, let $W$ be an uncountable zero-dimensional Polish space, and let $\bL\in\NSDS(\omega^\omega)$. Assume that $A\in\bL(Z)$ and $B\wc =\bL(W)$. Then there exists a continuous function $f:Z\longrightarrow W$ such that $A=f^{-1}[B]$.
\end{lemma}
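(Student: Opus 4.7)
The plan is to reduce both spaces to $\omega^\omega$ via retractions and embeddings, and then obtain $f$ as a composition of three continuous maps. By \cite[Theorem 7.8]{kechris} together with Lemma \ref{relativization}.\ref{relativizationhomeo}, I would first assume without loss of generality that $W$ is a closed subspace of $\omega^\omega$, and then fix a retraction $\rho:\omega^\omega\longrightarrow W$ as in \cite[Proposition 2.8]{kechris}. Set $B'=\rho^{-1}[B]\subseteq\omega^\omega$. Since $\rho$ is continuous and $B\in\bL(W)$, Lemma \ref{relativization}.\ref{relativizationpreimage} combined with Lemma \ref{relativization}.\ref{relativizationbaire} gives $B'\in\bL$.

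The crux of the argument is to prove that $[B']=\bL$ in $\omega^\omega$. The inclusion $[B']\subseteq\bL$ is immediate from $B'\in\bL$ and the continuous closure of $\bL$. For the reverse, I would suppose toward contradiction that some generator $A_0$ of $\bL$ fails $A_0\leq B'$. Then Lemma \ref{wadgelemma} yields $\omega^\omega\setminus B'\leq A_0$, hence $\omega^\omega\setminus B'\in\bL$, i.e., $B'\in\bLc$. By Lemma \ref{relativization}.\ref{relativizationcheck} and Lemma \ref{relativization}.\ref{relativizationsubspace}, this gives $W\setminus B=(\omega^\omega\setminus B')\cap W\in\bL(W)$. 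Together with $B\in[B]=\bL(W)$, this forces $B$ to be selfdual in $W$, contradicting the non-selfduality of $\bL(W)=[B]$ (which is implicit in the hypothesis, since $\bL$ is non-selfdual in $\omega^\omega$).

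Finally, since $Z$ is a zero-dimensional Polish space, I would fix an embedding $j:Z\longrightarrow\omega^\omega$. Condition (\ref{forallemb}) of Lemma \ref{relativizationprelim}, applied to $A\in\bL(Z)$, yields $C\in\bL$ with $A=j^{-1}[C]$. Since $\bL=[B']$ by the previous paragraph, pick a continuous $k:\omega^\omega\longrightarrow\omega^\omega$ witnessing $C\leq B'$, so that $C=k^{-1}[B']=(\rho\circ k)^{-1}[B]$. Then $f:=\rho\circ k\circ j:Z\longrightarrow W$ is continuous and $f^{-1}[B]=j^{-1}[C]=A$, as desired. The main obstacle is the identification $[B']=\bL$; once it is in hand, the rest is just a three-map composition, and the key use of the hypothesis $\bL\in\NSDS(\omega^\omega)$ occurs precisely at that step, via the non-selfduality of $\bL(W)$.
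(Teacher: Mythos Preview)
Your proof is correct, but it follows a genuinely different route from the paper's. The paper reduces to the case $Z=\omega^\omega$ (via Lemma~\ref{relativization}.\ref{relativizationsubspace}) and then applies Lemma~\ref{strongwadgelemma} directly with $A_0=B$, $A_1=W\setminus B$, and $\bG=\bLc$: condition~(\ref{strongwadgeexists}) of that lemma would produce $C\in\bLc$ with $B\subseteq C$ and $C\cap(W\setminus B)=\varnothing$, forcing $B\in\bLc(W)$ and hence the same selfduality contradiction you reach; so condition~(\ref{strongwadgeforall}) must hold, which immediately yields the desired continuous $f:\omega^\omega\to W$ with $f^{-1}[B]=A$.

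Your argument instead avoids the Extended Wadge Game entirely. You lift $B$ to $B'=\rho^{-1}[B]$ via a retraction, show $[B']=\bL$ using only the ordinary Wadge Lemma and the relativization machinery, and then assemble $f$ as a composition of an embedding $j:Z\to\omega^\omega$, a Wadge reduction $k$ witnessing $C\leq B'$, and the retraction $\rho$. This is a more ``hands-on'' construction that trades the single invocation of Lemma~\ref{strongwadgelemma} for several applications of Lemma~\ref{relativization}. Both approaches ultimately hinge on the same contradiction (that $B\in\bLc(W)$ would make $[B]=\bL(W)$ selfdual); the paper's route is shorter because Lemma~\ref{strongwadgelemma} packages the reduction-producing step, whereas yours makes explicit that only the classical Wadge dichotomy is needed once one works through retractions. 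One small remark: your phrase ``implicit in the hypothesis, since $\bL$ is non-selfdual in $\omega^\omega$'' matches exactly the inference the paper makes at the corresponding step.
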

\begin{proof}
By Lemma \ref{relativization}.\ref{relativizationhomeo}, we can assume without loss of generality that $Z$ and $W$ are subspaces of $\omega^\omega$. In fact, we will  also assume that $Z=\omega^\omega$, since the general case follows easily from Lemma \ref{relativizationsubspace} and this particular case.

Set $A_0=B$ and $A_1=W\setminus B$. Assume, in order to get a contradiction, that there exists $C\in\bLc$ such that $A_0\subseteq C$ and $C\cap A_1=\varnothing$. It follows from Lemma \ref{relativizationsubspace} that $B=A_0\in\bLc(W)$. Since $\bL(W)$ is non-selfdual by Theorem \ref{vanwesepsurrogate}, this is a contradiction. Therefore, an application of Lemma \ref{strongwadgelemma} with $\bG=\bLc$ and $D=A$ yields the desired function.
\end{proof}

\begin{lemma}\label{relativizationsd2}
Let $\bS$ be a nice topological pointclass, and assume that $\Det(\bS(\omega^\omega))$ holds. Let $Z$ be a zero-dimensional Borel space, let $W$ be an uncountable zero-dimensional Polish space, and let $\bL\in\NSDS(\omega^\omega)$. Assume that $A\in\bL(Z)$, $B\wc =\bL(W)$, and that there exists a continuous function $g:W\longrightarrow Z$ such that $B=g^{-1}[A]$. Then $A\wc =\bL(Z)$.
\end{lemma}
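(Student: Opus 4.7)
The plan is to prove the two inclusions of $[A]=\bL(Z)$ separately, with the inclusion $\bL(Z)\subseteq[A]$ being the substantive one that exploits the hypotheses.

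For the inclusion $[A]\subseteq\bL(Z)$, I would simply note that $A\in\bL(Z)$ by assumption, and that $\bL(Z)$ is continuously closed by Lemma \ref{relativization}.\ref{relativizationpreimage} (taking $f$ to be a continuous self-map of $Z$). Hence every continuous preimage of $A$ lies in $\bL(Z)$.

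For the converse inclusion $\bL(Z)\subseteq[A]$, I would pick an arbitrary $C\in\bL(Z)$ and apply Lemma \ref{relativizationsd1} with $C$ in the role of ``$A$'' from that lemma's statement (and with the same $B$, $W$, $\bL$). The hypothesis $[B]=\bL(W)$ is already given, and $C\in\bL(Z)$ by choice, so Lemma \ref{relativizationsd1} produces a continuous function $f:Z\longrightarrow W$ such that $C=f^{-1}[B]$. Now I would use the hypothesis $B=g^{-1}[A]$ to compute
\[
C=f^{-1}[B]=f^{-1}[g^{-1}[A]]=(g\circ f)^{-1}[A],
\]
and since $g\circ f:Z\longrightarrow Z$ is continuous, this witnesses $C\leq A$ in $Z$, i.e., $C\in[A]$.

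There is no real obstacle here, since all the heavy lifting (producing reductions into $B$ from arbitrary members of $\bL(Z)$) has already been done in Lemma \ref{relativizationsd1}; the present lemma simply composes such a reduction with $g$ to turn a reduction to $B$ into a reduction to $A$. The only thing worth double-checking is that Lemma \ref{relativizationsd1}'s hypotheses are met verbatim for every $C\in\bL(Z)$, which is immediate.
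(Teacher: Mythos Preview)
Your proof is correct and essentially identical to the paper's own proof: both note that $[A]\subseteq\bL(Z)$ is immediate from $A\in\bL(Z)$ and continuous closure, then for the converse pick $C\in\bL(Z)$, invoke Lemma~\ref{relativizationsd1} to get a continuous $f:Z\to W$ with $C=f^{-1}[B]$, and compose with $g$ to witness $C\leq A$.
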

\begin{proof}
Since $A\in\bL(Z)$, we only need to show that $\bL(Z)\subseteq A\wc$. So pick $C\in\bL(Z)$. By Lemma \ref{relativizationsd1} there exists a continuous function $f:Z\longrightarrow W$ such that $C=f^{-1}[B]$. It is clear that $g\circ f$ witnesses that $C\leq A$. Hence $C\in A\wc$.
\end{proof}

\begin{lemma}\label{phinonempty}
Let $\bS$ be a nice topological pointclass, and assume that $\Det(\bS(\omega^\omega))$ holds. Let $Z$ be a zero-dimensional Polish space, let $A\in\bS(Z)$, and let $\UU\subseteq\bD^0_1(Z)$ be a base for $Z$. Then there exist $U\in\UU$ and $\bL\in\NSDS(\omega^\omega)$ such that $\ell(\bL)\geq 1$, $\bL(U)$ is non-selfdual, and $\bL(U)=(A\cap U)\wc$.
\end{lemma}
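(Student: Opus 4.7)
The plan is to argue by contradiction. Suppose no $U \in \UU$ and $\bL \in \NSDS(\omega^\omega)$ with $\ell(\bL) \geq 1$ satisfy $\bL(U) = [A\cap U]$. By Lemma \ref{relativizationexistsunique}, this means that for every $U \in \UU$, either $A\cap U$ is selfdual in $U$, or the unique class $\bL_U \in \NSDS(\omega^\omega)$ with $\bL_U(U) = [A\cap U]$ has $\ell(\bL_U) = 0$. The first thing I would observe is that density of $A$ and $Z\setminus A$ forces $Z$ to have no isolated points (any isolated $x$ would belong to exactly one of $A$ or $Z\setminus A$, contradicting density of the other), so $Z$ is perfect and every $U \in \UU$ is nonempty, uncountable, zero-dimensional, and Polish; moreover $A\cap U$ and $U\setminus A$ are both dense in $U$.

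Next I would split into two subcases. In the subcase where $A\cap U$ is selfdual in $U$ for \emph{every} $U \in \UU$, apply Corollary \ref{selfdualcorollary} to $A$ viewed inside any fixed $U_0 \in \UU$, producing pairwise disjoint clopen $V_n \subseteq U_0$ and non-selfdual sets $B_n < A\cap U_0$ in $U_0$ with $A\cap U_0 = \bigcup_n (B_n \cap V_n)$. Refining to some $U_1 \in \UU$ with $U_1 \subseteq V_n$, we get $A\cap U_1 = B_n\cap U_1$, which is selfdual by the subcase hypothesis and strictly Wadge-below the previous stage by Lemma \ref{relativization}. Iterating produces a strictly $\leq$-decreasing sequence of subsets of $Z$; well-foundedness (Theorem \ref{wellfounded}) forces termination, and the only terminal Wadge degrees are $\{\varnothing\}$ and $\{Z\}$, both of which contradict density of $A$ and $Z\setminus A$. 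So there exists some $U \in \UU$ with $A\cap U$ non-selfdual in $U$, and by Theorem \ref{wellfounded} applied via the parametrization of Lemma \ref{relativizationexistsunique}, we may choose $U_*$ such that $\{\bL_{U_*},\widecheck{\bL_{U_*}}\}$ is $\prec$-minimal in $\Wa(\omega^\omega)$ among such choices. By our contradiction hypothesis, $\ell(\bL_{U_*}) = 0$.

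The main obstacle, and the crux of the proof, is to contradict minimality of $\bL_{U_*}$ in this level-$0$ case. From $\ell(\bL_{U_*}) = 0$ and Corollary \ref{levelinvariance} we get $\PU_1([A\cap U_*]) \supsetneq [A\cap U_*]$, so some $C \in \PU_1([A\cap U_*]) \setminus [A\cap U_*]$ exists; by Wadge's Lemma (Lemma \ref{wadgelemma}) in $U_*$, $U_*\setminus A \leq C$. Writing $C = \bigcup_n(C_n \cap V_n)$ with $C_n \in [A\cap U_*]$ and $V_n \in \bD^0_2(U_*)$ a partition, and pulling back by the continuous reduction, we obtain $U_*\setminus A = \bigcup_n(A'_n \cap V'_n)$ with $V'_n \in \bD^0_2(U_*)$ a partition and $A'_n \in [A\cap U_*]$. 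Since $U_*$ is Polish and $\{V'_n\}$ is a countable $\bF_\sigma$ cover, the Baire category theorem yields some $V'_{n_0}$ with nonempty interior, and zero-dimensionality plus the base property of $\UU$ produce $U' \in \UU$ with $U' \subseteq V'_{n_0}$. On $U'$ we have $A\cap U' = U' \setminus (A'_{n_0}\cap U')$, so $[A\cap U']_{U'} = \widecheck{[A'_{n_0}\cap U']_{U'}}$. Choosing $C$ itself to have minimal Wadge degree within $\PU_1([A\cap U_*]) \setminus [A\cap U_*]$ (existence again by Theorem \ref{wellfounded}) forces $[A'_{n_0}\cap U']_{U'}$ to be strictly Wadge-below the class $\bL_{U_*}(U')$, which by Theorem \ref{orderisomorphism} contradicts the minimality of $\bL_{U_*}$ in $\omega^\omega$. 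The hardest part is making this last strict decrease fully rigorous: one must argue that at least one of the pulled-back $A'_n$ cannot be Wadge-equivalent to $A\cap U_*$ on all relevant clopen refinements, which requires exploiting the minimality of $C$ together with density of $A\cap U_*$ and $U_*\setminus A$ in $U_*$.
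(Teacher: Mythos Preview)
Your overall strategy---find a $U$ where $A\cap U$ is non-selfdual and of minimal complexity, then show the corresponding class cannot have level $0$---is sound, but the execution in the final paragraph has a genuine gap, and the whole argument is more tangled than it needs to be.

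The gap: your appeal to ``minimality of $C$'' is doing no work. You obtain $U'\in\UU$ with $U'\subseteq V'_{n_0}$, and on $U'$ you have $U'\setminus A = A'_{n_0}\cap U'$. The point you actually need is that \emph{both} $A\cap U'$ and $U'\setminus A$ lie in $\bL_{U_*}(U')$: the former because $A\cap U'=(A\cap U_*)\cap U'$ and $A\cap U_*\in\bL_{U_*}(U_*)$, the latter because $A'_{n_0}\in[A\cap U_*]_{U_*}=\bL_{U_*}(U_*)$ (both via Lemma~\ref{relativization}.\ref{relativizationsubspace}). If $A\cap U'$ is non-selfdual in $U'$, then $[A\cap U']_{U'}\subsetneq\bL_{U_*}(U')$ immediately (equality would force $\bL_{U_*}(U')$ selfdual), and Theorem~\ref{orderisomorphism} yields the contradiction. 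If $A\cap U'$ is selfdual, you must refine further to some $U''\subseteq U'$ where it becomes non-selfdual (your first subcase, localized), and the same argument applies. None of this uses minimality of $C$; your stated reasoning for the strict decrease is not a proof, and you yourself flag it as the ``hardest part'' left open.

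For comparison, the paper's proof is considerably simpler because it never leaves the ambient space $Z$. It takes a $\leq$-minimal element $B=A\cap U$ of $\{A\cap V:V\in\UU\}$ in $Z$ (via Theorem~\ref{wellfounded}), shows $B$ is non-selfdual in $Z$ using Theorem~\ref{selfdualtheorem}, and shows $\ell([B])\geq 1$ using Lemma~\ref{l0impliessomewheresmaller}---both by the same mechanism: a failure would produce a clopen $V$ with $B\cap V<B$, contradicting minimality. Only at the end does it pass from $Z$ to $U$ via Lemma~\ref{relativizationsd2}. Working in $Z$ throughout avoids your repeated use of Theorem~\ref{orderisomorphism} and the bookkeeping of selfdual-vs-non-selfdual restrictions in varying spaces.
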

\begin{proof}
Observe that if $U\in\UU$ is such that $A\cap U=\varnothing$ or $U\subseteq A$, then setting $\bL=\{\varnothing\}$ or $\bL=\{\omega^\omega\}$ respectively will yield the desired result. Therefore, we can assume without loss of generality that $A\cap Z$ and $Z\setminus A$ are both dense in $Z$. Notice that, in particular, it follows that $Z$ has no isolated points.

Set $\Aa=\{A\cap U:U\in\UU\}$, and observe that $\Aa$ has a $\leq$-minimal element by Theorem \ref{wellfounded}. Let $B$ be such an element of $\Aa$, and fix $U\in\UU$ such that $B=A\cap U$. First we will show that $B$ is non-selfdual in $Z$. Assume, in order to get a contradiction, that $B$ is selfdual in $Z$. The assumption that $A$ and $Z\setminus A$ are both dense in $Z$ implies that $B\notin\bD^0_1(Z)$, hence it is possible to apply Theorem \ref{selfdualtheorem}. In particular, we can fix $V\in\bD^0_1(Z)$ such that $U\cap V\neq\varnothing$ and $B\cap V<B$. Notice that $B\cap V\neq Z$ because $B\neq Z$, hence by Lemma \ref{closureclopen} we can assume without loss of generality that $V\in\UU$ and $V\subseteq U$. This contradicts the minimality of $B$.

Since $B$ is non-selfdual in $Z$, we can fix $\bL\in\NSDS(\omega^\omega)$ such that $\bL(Z)=B\wc$. We claim that $\bL(U)=B\wc$. First notice that $B\in\bL(U)$ by Lemma \ref{relativizationsubspace}, and that $U$ is uncountable because $Z$ has no isolated points. Furthermore, using the fact that $U\nsubseteq A$, it is easy to construct a continuous function $g:Z\longrightarrow U$ such that $g^{-1}[B]=B$. Hence, the claim follows from Lemma \ref{relativizationsd2}.

Finally, we will show that $\ell(\bL)\geq 1$. By Corollary \ref{levelinvariance}, it will be enough to show that $\ell(\bL(U))\geq 1$. Assume, in order to get a contradiction, that $\ell(\bL(U))=0$. By Lemma \ref{l0impliessomewheresmaller}, there exists a non-empty $V\in\mathbf{\Delta}^0_1(U)$ such that $B\cap V<B$ in $U$, hence in $Z$. As above, this contradicts the minimality of $B$.
\end{proof}

\begin{lemma}\label{l0impliessomewheresmaller}
Let $\bS$ be a nice topological pointclass, and assume that $\Det(\bS(\omega^\omega))$ holds. Let $Z$ be a zero-dimensional Polish space, and let $\bG=B\wc\in\NSDS(Z)$ be such that $\ell(\bG)=0$. Then there exists a non-empty $V\in\mathbf{\Delta}^0_1(Z)$ such that $B\cap V<B$.	
\end{lemma}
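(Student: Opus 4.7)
The plan is to use the hypothesis $\ell(\bG)=0$ to produce an element of $\PU_1(\bG)\setminus\bG$, then, assuming that no clopen $V$ satisfies $B\cap V<B$, to transport this failure back via Wadge's Lemma to a reduction that forces $B$ to be selfdual. First I would observe that $\ell(\bG)=0$ rules out $\bG=\{\varnothing\}$ and $\bG=\{Z\}$, since both have level $\omega_1$; in particular, Proposition \ref{closureclopen} applies in full. By the definition of level, $\PU_1(\bG)\neq\bG$, so one can fix sets $A_n\in\bG$ and pairwise disjoint $V_n\in\bD^0_1(Z)$ with $\bigcup_{n\in\omega}V_n=Z$ such that $A:=\bigcup_{n\in\omega}(A_n\cap V_n)\notin\bG$.

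Since $A\notin[B]=\bG$, Lemma \ref{wadgelemma} gives $Z\setminus B\leq A$. Let $f:Z\longrightarrow Z$ be a continuous witness, and set $B_n=f^{-1}[A_n]$ and $U_n=f^{-1}[V_n]$. Because $\bG$ is continuously closed we have $B_n\in\bG$, and the $U_n$ form a clopen partition of $Z$. Taking the preimage of the expression for $A$ yields
$$
Z\setminus B=\bigcup_{n\in\omega}(B_n\cap U_n),\qquad\text{so}\qquad B=\bigcup_{n\in\omega}\big((Z\setminus B_n)\cap U_n\big).
$$

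Now suppose, aiming for a contradiction, that $B\cap V\not<B$ for every non-empty $V\in\bD^0_1(Z)$. Since $B\cap V\leq B$ always holds by Proposition \ref{closureclopen} (as $\bG\neq\{Z\}$), this assumption means $B\leq B\cap V$ for every such $V$. Choose any $n$ with $U_n\neq\varnothing$, which exists because the $U_n$ cover the non-empty space $Z$. Then $B\cap U_n=(Z\setminus B_n)\cap U_n$. Applying Proposition \ref{closureclopen} to $Z\setminus B_n\in\bGc$ (noting $\bGc\neq\{Z\}$ since $\bG\neq\{\varnothing\}$) gives $(Z\setminus B_n)\cap U_n\leq Z\setminus B_n$. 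Combining this with $B_n\leq B$ (whence $Z\setminus B_n\leq Z\setminus B$) produces the chain
$$
B\leq B\cap U_n=(Z\setminus B_n)\cap U_n\leq Z\setminus B_n\leq Z\setminus B,
$$
contradicting the non-selfduality of $B$.

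There is no real obstacle beyond bookkeeping here: the only thing to watch is the exclusion of the trivial Wadge classes $\{\varnothing\}$ and $\{Z\}$ (which is automatic from $\ell(\bG)=0$) so that Proposition \ref{closureclopen} is applicable to both $\bG$ and $\bGc$; everything else is a direct application of Wadge's Lemma.
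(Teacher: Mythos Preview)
Your argument has a genuine gap at the very first step. The definition of $\PU_\xi$ uses partitions into $\bD^0_{1+\xi}$ sets, so the witness to $\PU_1(\bG)\neq\bG$ gives you $V_n\in\bD^0_2(Z)$, not $V_n\in\bD^0_1(Z)$ as you wrote. In fact you \emph{cannot} find such $A\notin\bG$ with clopen pieces: by Lemma~\ref{pubasic}.\ref{pubasicwadge}, $\PU_0(\bG)=\bG$ for any Wadge class, so every partitioned union with a $\bD^0_1$ partition already lies in $\bG$. Consequently the pullbacks $U_n=f^{-1}[V_n]$ are only $\bD^0_2$, not clopen, and your appeal to Proposition~\ref{closureclopen} for $(Z\setminus B_n)\cap U_n\leq Z\setminus B_n$ is not justified.

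The fix is precisely the step the paper's proof supplies: since $Z$ is a Baire space and the $\bD^0_2$ sets $U_n$ (or the paper's $V_n$) partition $Z$, some $U_n$ has non-empty interior, so one can choose a non-empty clopen $V\subseteq U_n$. On that $V$ one has $B\cap V=(Z\setminus B_n)\cap V$, and now Proposition~\ref{closureclopen} applies to show both $B\cap V\in\bG$ and $Z\setminus(B\cap V)\in\bG$, forcing $B\cap V<B$. Apart from this Baire-category step, your strategy (pulling back a $\PU_1$-decomposition of something Wadge-above $Z\setminus B$) is essentially the same as the paper's; you just need the extra observation to pass from a $\bD^0_2$ piece to a clopen one.
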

\begin{proof}
Notice that $\mathsf{PU}_1(\bG)\nsubseteq\bG$ because $\ell(\bG)=0$. By Lemma \ref{wadgelemma} and the fact that $\mathsf{PU}_1(\bG)$ is continuously closed, it follows that $\bGc\subseteq\mathsf{PU}_1(\bG)$. Therefore, we can fix $B_n\in\bG$ and pairwise disjoint $V_n\in\mathbf{\Delta}^0_2(Z)$ for $n\in\omega$ such that $\bigcup_{n\in\omega}V_n=Z$ and
$$
\bigcup_{n\in\omega}(B_n\cap V_n)=Z\setminus B.
$$
Since $Z$ is a Baire space, we can also fix $n\in\omega$ and a non-empty $V\in\mathbf{\Delta}^0_1(Z)$ such that $V\subseteq V_n$.

Observe that $\bG\neq\{Z\}$ and $\bG\neq\{\varnothing\}$ because $\ell(\bG)=0$, hence it is possible to apply Lemma \ref{closureclopen}. In particular, one sees that $V\setminus B=B_n\cap V\in\bG$, hence $Z\setminus(B\cap V)=(Z\setminus V)\cup (V\setminus B)\in\bG$. In conclusion, we have $B\cap V\in\bG$ (again by Lemma \ref{closureclopen}) and $Z\setminus(B\cap V)\in\bG$. Since $\bG$ is non-selfdual, it follows that $B\cap V<B$.
\end{proof}

\section{The stretch operation and Radin's theorem}\label{sectionstretch}

As part of the proof of our main result (Theorem \ref{main}), we will need to show that every non-selfdual Wadge class $\bG$ such that $\ell(\bG)=\omega_1$ is the class associated to some Hausdorff operation. The purpose of this section is to give a proof of this fact (see Theorem \ref{vanweseplevelatleast2}).

First, we will deal with the case in which the ambient space is $\omega^\omega$. We will mostly follow the approach of \cite{vanwesept}. More precisely, Definition \ref{radindefinition} corresponds to \cite[Definitions 5.1.1 and 5.1.2]{vanwesept}, while Lemma \ref{radinlemma} is \cite[Lemma 5.1.3]{vanwesept}. The novelty here consists in observing that the assumption needed to apply Radin's Lemma \ref{radinlemma} (namely, that $A\equiv A^\sss$) will hold whenever $\ell(A\wc)\geq 2$. This is the content of Lemma \ref{levelatleast2}.

Set $\CC=\{S\subseteq\omega:\omega\setminus S\text{ is infinite}\}$. Given $S\in\CC$, let $\pi_S:\omega\setminus S\longrightarrow\omega$ denote the unique increasing bijection. Define $\phi:\CC\times\omega^\omega\longrightarrow\omega^\omega$ by setting
$$
\phi(S,x)(n)=\left\{
\begin{array}{ll} 0 & \textrm{if }n\in S,\\
x(\pi_S(n))+1 & \textrm{if }n\in\omega\setminus S.
\end{array}
\right.
$$

\begin{definition}[Radin]\label{radindefinition}
Given $A\subseteq\omega^\omega$, define
$$
A^\sss=\phi[\CC\times A].
$$
We will refer to $A^\sss$ as the \emph{stretch} of $A$.
\end{definition}

Informally, $A^\sss$ consists of all reals obtained by picking an element of $A$, inserting some number (finite or infinite) of zeros, and increasing by $1$ every other entry. The fact that this operation can be reversed will be used in the proof of the next lemma. Also notice that $A\leq A^\sss$ for every $A\subseteq\omega^\omega$, as witnessed by the function $f:\omega^\omega\longrightarrow\omega^\omega$ defined by setting $f(x)(n)=x(n)+1$ for $x\in\omega^\omega$ and $n\in\omega$.

\begin{lemma}\label{levelatleast2}
Let $\bS$ be a nice topological pointclass, and assume that $\Det(\bS(\omega^\omega))$ holds. Let $\bG=A\wc\in\NSDS(\omega^\omega)$. Assume that $\ell(\bG)\geq 2$. Then $A\equiv A^\sss$.
\end{lemma}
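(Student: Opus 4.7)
The direction $A \leq A^\sss$ has already been noted, so the task reduces to showing $A^\sss \leq A$, or equivalently $A^\sss \in \bG = [A]$. The idea is that $A^\sss$ is naturally realized as the continuous preimage of $A$ under a partial map defined only on the set of sequences with infinitely many nonzero entries, and the hypothesis $\ell(\bG) \geq 2$ will let us absorb the resulting subspace into $\bG$.

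First, set $V = \{y \in \omega^\omega : \{n \in \omega : y(n) = 0\} \in \CC\}$, the set of sequences with infinitely many nonzero values. Its complement is the countable set of eventually-zero sequences, so $V$ is a $\bP^0_2(\omega^\omega)$ set, and in particular $V \in \bD^0_3(\omega^\omega)$. Furthermore $A^\sss \subseteq V$, since the zero-set of $\phi(S,x)$ is precisely $S \in \CC$. Define $h \colon V \to \omega^\omega$ by setting $h(y)(m) = y(n_m) - 1$, where $n_0 < n_1 < \cdots$ enumerates the nonzero positions of $y$. A short verification shows that $h$ is continuous, because for each $y \in V$ and each $m$ a sufficiently long initial segment of $y$ fixes $h(y) \upharpoonright m$. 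Unwinding the definitions, one obtains $y = \phi(S_y, h(y))$ for every $y \in V$, where $S_y$ is the zero-set of $y$, and this identity immediately gives $h^{-1}[A] = A^\sss$.

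Next, the machinery of relativization from Section~\ref{sectionrelativizationbasic} finishes the setup quickly. By Lemma~\ref{relativization}.\ref{relativizationbaire}, $A \in \bG(\omega^\omega)$; combined with the continuity of $h$ and Lemma~\ref{relativization}.\ref{relativizationpreimage}, this yields $A^\sss = h^{-1}[A] \in \bG(V)$; and Lemma~\ref{relativization}.\ref{relativizationsubspace} then produces $B \in \bG$ with $B \cap V = A^\sss$.

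The last step, promoting $B \cap V$ (with $B \in \bG$ and $V$ a nontrivial Borel set) to a member of $\bG$ itself, is where the assumption $\ell(\bG) \geq 2$ enters essentially. Assuming without loss of generality that $\bG \neq \{\omega^\omega\}$, we have $\varnothing \in \bG$, so Lemma~\ref{levelintersection} with $\xi = 1$ applies to $V \in \bD^0_{1+\xi+1}(\omega^\omega) = \bD^0_3(\omega^\omega)$ and $B \in \bG$, giving $A^\sss = B \cap V \in \bG$. Hence $A^\sss \leq A$, and combined with the already-noted $A \leq A^\sss$ this yields $A \equiv A^\sss$. The main (indeed only) obstacle is this final closure step: $V$ is a genuinely $\bP^0_2$ set and not $\bD^0_2$, so the weaker hypothesis $\ell(\bG) \geq 1$ would not suffice to perform the absorption.
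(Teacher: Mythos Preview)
Your proof is correct and follows essentially the same approach as the paper: both define the set of sequences with infinitely many nonzero entries (your $V$, the paper's $G$), observe it is $\bP^0_2 \subseteq \bD^0_3$, define the continuous ``erase zeros and subtract one'' map into $\omega^\omega$ whose preimage of $A$ is exactly $A^\sss$, use relativization (Lemma~\ref{relativization}) to extend to some $B \in \bG$ with $B \cap V = A^\sss$, and then invoke Lemma~\ref{levelintersection} with $\xi = 1$ and $\ell(\bG) \geq 2$ to conclude $A^\sss \in \bG$. Your explicit handling of the hypothesis $\varnothing \in \bG$ (via the harmless exclusion of $\bG = \{\omega^\omega\}$) is a detail the paper leaves implicit.
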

\begin{proof}
We have already observed that $A\leq A^\sss$. It remains to show that $A^\sss\leq A$.
Set
$$
W=\{x\in\omega^\omega:x(n)\neq 0\text{ for infinitely many }n\in\omega\}.
$$
Define $f:W\longrightarrow\omega^\omega$ by ``erasing all the zeros, and decreasing by $1$ all other entries''. More precisely, given $x\in W$, set $f(x)(n)=x(\pi_S^{-1}(n))-1$ for $n\in\omega$, where $S=\{n\in\omega:x(n)=0\}$. Observe that $f^{-1}[A]=A^\sss$. Furthermore, it is easy to realize that $f$ is continuous, hence $A^\sss=f^{-1}[A]\in\bG(W)$ by Lemma \ref{relativization}.\ref{relativizationpreimage}. By Lemma \ref{relativizationsubspace}, there exists $B\in\bG(\omega^\omega)=\bG$ such that $B\cap W=A^\sss$. Finally, since $W\in\bP^0_2(\omega^\omega)\subseteq\bD^0_3(\omega^\omega)$ and $\ell(\bG)\geq 2$, an application of Lemma \ref{levelintersection} shows that $A^\sss\in\bG$.
\end{proof}

\begin{lemma}[Radin]\label{radinlemma}
Let $\bG=A\wc\in\NSD(\omega^\omega)$. Assume that $A\equiv A^\sss$. Then $\bG\in\Ha(\omega^\omega)$.
\end{lemma}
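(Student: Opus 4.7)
The plan is to produce $D\subseteq\PP(\omega)$ with $\bG_D(\omega^\omega)=\bG$. Fix an enumeration $(s_n)_{n\in\omega}$ of $\omega^{<\omega}$ and set $U_n^*:=\Ne_{s_n}$; the map $z\mapsto\{n:z\in U_n^*\}=\{n:s_n\subseteq z\}$ is then a continuous injection of $\omega^\omega$ into $\PP(\omega)$. Since $\bG_D(\omega^\omega)$ is always continuously closed (Lemma \ref{hausdorffrelativization}.\ref{hausdorffrelativizationpreimage}), it will suffice to choose $D$ so that (i) $A\in\bG_D(\omega^\omega)$, witnessed by the canonical sequence $(U_n^*)$, and (ii) for every sequence $(V_n)_{n\in\omega}$ of open subsets of $\omega^\omega$, the set $B:=\HH_D(V_0,V_1,\ldots)$ satisfies $B\leq A$.

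Condition (ii) is the substantive one, and this is where the hypothesis $A\equiv A^\sss$ becomes essential. Given $(V_n)$, I would enumerate the pairs $(n,t)$ with $\Ne_t\subseteq V_n$ in some canonical way and build a continuous function $f:\omega^\omega\to\omega^\omega$ by lazy dovetailing: at each stage, based on a finite initial segment of $z$, either confirm the next available pair $(n,t)$ with $t\subseteq z$ and emit a positive integer encoding it, or emit $0$ while waiting. By construction the nonzero entries of $f(z)$, read in order, record the pairs confirmed for $z$ and hence determine $\{n:z\in V_n\}$. The set $D$ is then defined so that $T\in D$ precisely when the canonical decoded sequence associated to any witnessing stream for $T$ lies in $A^\sss$; thus $f(z)\in A^\sss$ iff $\{n:z\in V_n\}\in D$, and composing with a continuous reduction $A^\sss\leq A$ (which exists since $A\equiv A^\sss$) yields $B\leq A$. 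For condition (i), the same recipe applied to $(U_n^*)$ recovers $z$ modulo insertion of zeros, and the stretch-invariance of $[A]$ then gives $\HH_D(U_0^*,U_1^*,\ldots)=A$.

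The principal obstacle is the combinatorial design of $D$ so that the dovetailed output is, after zero-erasure, independent of both the choice of $(V_n)$ and of the confirmation order, while still picking out exactly $A$ on the canonical coding. This independence is precisely the freedom provided by $A\equiv A^\sss$: the Wadge class $[A]$ is stable under arbitrary zero-insertion, which is exactly the byproduct of lazy dovetailing forced by the mere semi-continuity of ``$z\in V_n$'' for open $V_n$.
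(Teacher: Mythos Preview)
Your proposal has the right intuition---lazy output with zero-padding is exactly what $A\equiv A^\sss$ is designed to absorb---but the definition of $D$ is not well-posed. You say ``$T\in D$ precisely when the canonical decoded sequence associated to any witnessing stream for $T$ lies in $A^\sss$,'' yet a witnessing stream in your sense records pairs $(n,t)$ with $\Ne_t\subseteq V_n$ and $t\subseteq z$; these pairs depend on the specific open sets $(V_n)$ and on $z$, not merely on $T=\{n:z\in V_n\}$. Different sequences $(V_n)$ (or different $z$'s) yielding the same $T$ will produce entirely different confirmation streams even after zero-erasure, because the $t$-components differ. There is therefore no function from $T$ alone to a ``decoded sequence,'' and $D$ cannot be defined from your recipe. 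The stretch equivalence $A\equiv A^\sss$ buys invariance under zero-insertion only; it does not make $[A]$ invariant under arbitrary re-encodings of the same membership pattern.

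The paper's argument avoids this by indexing the Hausdorff operation by $\omega^{<\omega}$ rather than by $\omega$: one applies $\HH_D$ to a family $(U_s:s\in\omega^{<\omega})$ and declares $z\in\HH_D(U_s)$ iff for every $n$ there is a \emph{unique} $s\in\omega^n$ with $z\in U_s$, and the resulting branch $x\in\omega^\omega$ lies in $A$. The uniqueness clause is the missing ingredient: it forces the ``decoded sequence'' (namely $x$) to be a function of the membership pattern $\{s:z\in U_s\}$ alone, so $D$ is genuinely a subset of $\PP(\omega)$. The reduction $\HH_D(U_s)\leq A^\sss$ then works exactly as you envision (emit $x(n)+1$ once the $n$-th digit is determined by a finite prefix of $z$, emit $0$ otherwise), and the inclusion $\bG\subseteq\bG_D(\omega^\omega)$ is obtained by taking $U_s=g^{-1}[\Ne_s]$ for any witness $g$ to $B\leq A$, which automatically satisfies the uniqueness condition since $\{\Ne_s:s\in\omega^n\}$ partitions $\omega^\omega$.
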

\begin{proof}
Given an ambient set $Z$ and a sequence $(U_s:s\in\omega^{<\omega})$ consisting of subsets of $Z$, define the operation $\HH$ by declaring $z\in\HH(U_s:s\in\omega^{<\omega})$ if the following conditions are satisfied:
\begin{enumerate}
 \item\label{radinforall} For all $n\in\omega$ there exists a unique $s\in\omega^n$ such that $z\in U_s$,
 \item\label{radinexists} There exists $x\in A$ such that $z\in U_{x\re n}$ for all $n\in\omega$.
\end{enumerate}
After identifying $\omega^{<\omega}$ with $\omega$, one sees that $\HH$ is a Hausdorff operation. In fact, it is clear that $\HH=\HH_D$, where $D=\{\{x\re n:n\in\omega\}:x\in A\}\subseteq\PP(\omega^{<\omega})$. We claim that $\bG=\bG_D(\omega^\omega)$, which will conclude the proof. In particular, $Z=\omega^\omega$ from now on.

To see that $\bG\supseteq\bG_D(\omega^\omega)$, fix a sequence $(U_s:s\in\omega^{<\omega})$ consisting of open subsets of $\omega^\omega$. Given $x=(x_0,x_1,\ldots)\in\omega^\omega$, define $f(x)=y=(y_0,y_1,\ldots)\in\omega^\omega$ recursively as follows. Fix $j\in\omega$, and assume that $y_i$ has been defined for all $i<j$. Set $n=|\{i<j:y_i\neq 0\}|$.
\begin{itemize}
 \item If there exists a unique $s\in\omega^{n+1}$ such that $\Ne_{(x_0,
 \ldots,x_j)}\subseteq U_s$, then set $y_j=s(n)+1$.
 \item Otherwise, set $y_j=0$.
\end{itemize}
It is not hard to realize that $f:\omega^\omega\longrightarrow\omega^\omega$ witnesses that $\HH(U_s:s\in\omega^{<\omega})\leq A^\sss$.

To see that the inclusion $\bG\subseteq\bG_D(\omega^\omega)$ holds, pick $B\leq A$, and let $f:\omega^\omega\longrightarrow\omega^\omega$ witness this reduction. Define $U_s=f^{-1}[\Ne_s]$ for $s\in\omega^{<\omega}$. We claim that $B=\HH(U_s:s\in\omega^{<\omega})$. In order to prove the inclusion $\subseteq$, pick $z\in B$. Condition $(\ref{radinforall})$ is certainly verified, as $\{U_s:s\in\omega^n\}$ is a partition of $\omega^\omega$ for every $n\in\omega$. Furthermore, it is straightforward to check that $x=f(z)$ witnesses that condition $(\ref{radinexists})$ holds. In order to prove the inclusion $\supseteq$, pick $z\in\HH(U_s:s\in\omega^{<\omega})$. Fix $x\in A$ witnessing that condition $(\ref{radinexists})$ holds. Assume, in order to get a contradiction, that $z\notin B$. Observe that $f(z)\notin A$. On the other hand, $z\in U_{x\re n}$ for every $n\in\omega$, hence  $f(z)\in \Ne_{x\re n}$ for every $n\in\omega$. This clearly implies that $f(z)=x$, which contradicts the fact that $x\in A$.
\end{proof}

\begin{theorem}\label{radintheorem}
Let $\bS$ be a nice topological pointclass, and assume that $\Det(\bS(\omega^\omega))$ holds. Let $\bG\in\NSDS(\omega^\omega)$ be such that $\ell(\bG)\geq 2$. Then $\bG\in\HaS(\omega^\omega)$.
\end{theorem}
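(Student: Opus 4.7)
The plan is to combine the two preceding lemmas almost verbatim, since they have been tailored precisely for this assembly. First I would fix a representative: pick $A\subseteq\omega^\omega$ with $\bG=[A]$. Because $\bG\in\NSDS(\omega^\omega)$, by definition $A\in\bS(\omega^\omega)$ and $\bG$ is non-selfdual.

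Next I would apply Lemma \ref{levelatleast2} to $A$: the hypothesis $\ell(\bG)\geq 2$ is exactly what is required, so it delivers $A\equiv A^\sss$. Note that the reason this step needs $\ell(\bG)\geq 2$ rather than only $\ell(\bG)\geq 1$ is that in the proof of that lemma, the set $G=\{x\in\omega^\omega:x(n)\neq 0\text{ for infinitely many }n\}$ lies in $\bP^0_2(\omega^\omega)\subseteq\bD^0_3(\omega^\omega)$, and one invokes Lemma \ref{levelintersection} with $\xi=1$ to pass from $A^\sss\in\bG(G)$ to $A^\sss\in\bG$.

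Then I would feed the equivalence $A\equiv A^\sss$ into Lemma \ref{radinlemma}, whose hypotheses (namely that $\bG=[A]$ is non-selfdual and $A\equiv A^\sss$) are now met; it produces $D\subseteq\PP(\omega)$ with $\bG=\bG_D(\omega^\omega)$, so $\bG\in\Ha(\omega^\omega)$. Since $\bG\subseteq\bS(\omega^\omega)$ by the initial observation, we conclude $\bG\in\HaS(\omega^\omega)$, as desired.

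There is essentially no obstacle at this stage, because both the hard work of stretching (Radin's trick encoded in Lemma \ref{radinlemma}) and the hard work of verifying that stretching preserves the class at level $\geq 2$ (Lemma \ref{levelatleast2}) are already done. In the proof write-up I would simply compose these two statements and record why $\bG\subseteq\bS(\omega^\omega)$ to upgrade the conclusion from $\Ha$ to $\HaS$.
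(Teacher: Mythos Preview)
Your proposal is correct and matches the paper's own proof, which simply cites Lemma~\ref{levelatleast2} and Lemma~\ref{radinlemma} in exactly this order. The only addition you make is the explicit remark that $\bG\subseteq\bS(\omega^\omega)$ upgrades the conclusion from $\Ha$ to $\HaS$, which is implicit in the paper.
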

\begin{proof}
This follows from Lemmas \ref{levelatleast2} and \ref{radinlemma}.
\end{proof}

We conclude this section by generalizing Theorem \ref{radintheorem} from $\omega^\omega$ to an arbitrary zero-dimensional Polish space $Z$ (see Theorem \ref{vanweseplevelatleast2}). The transfer will be accomplished by exploiting once again the machinery of relativization.

\begin{lemma}\label{hausdorffbairehausdorffeverywhere}
Let $\bS$ be a nice topological pointclass, and assume that $\Det(\bS(\omega^\omega))$ holds. Let $\bG=\bG_D(\omega^\omega)$ for some $D\subseteq\PP(\omega)$, and let $Z$ be a zero-dimensional Polish space. Assume that $\bG\subseteq\bS(\omega^\omega)$. Then $\bG(Z)=\bG_D(Z)$.
\end{lemma}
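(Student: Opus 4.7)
The plan is to prove both inclusions essentially by hand, without invoking the heavier machinery of Lemma \ref{relativizationprelim} (which would require separately checking $A\in\bS(Z)$). The key observation is that all the work has already been done in Lemma \ref{hausdorffrelativization}; the only thing left is to translate between the ``for every continuous $g:\omega^\omega\to Z$'' formulation in Definition \ref{relativizationdefinition} and the ``subspace'' formulation in $\bG_D(Z)$.

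For the inclusion $\bG_D(Z)\subseteq\bG(Z)$, I would just apply Lemma \ref{hausdorffrelativization}.\ref{hausdorffrelativizationpreimage} directly: if $A\in\bG_D(Z)$ and $g:\omega^\omega\longrightarrow Z$ is continuous, then $g^{-1}[A]\in\bG_D(\omega^\omega)=\bG$, so $A\in\bG(Z)$ by Definition \ref{relativizationdefinition}.

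For the reverse inclusion $\bG(Z)\subseteq\bG_D(Z)$, I would use the retraction trick already deployed in the proof of Lemma \ref{relativizationexistsunique}. By \cite[Theorem 7.8]{kechris} together with Lemma \ref{hausdorffrelativization}.\ref{hausdorffrelativizationhomeomorphism}, we may assume without loss of generality that $Z$ is a closed subspace of $\omega^\omega$. By \cite[Proposition 2.8]{kechris}, there exists a retraction $\rho:\omega^\omega\longrightarrow Z$. Given $A\in\bG(Z)$, applying Definition \ref{relativizationdefinition} to the continuous function $\rho$ gives $\rho^{-1}[A]\in\bG=\bG_D(\omega^\omega)$. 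Since $\rho\re Z=\id_Z$, we have $A=\rho^{-1}[A]\cap Z$, and Lemma \ref{hausdorffrelativization}.\ref{hausdorffrelativizationsubspace} then yields $A\in\bG_D(Z)$.

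There is no real obstacle here; the argument is mostly bookkeeping. I note in passing that the hypotheses of niceness of $\bS$, of determinacy, and of $\bG\subseteq\bS(\omega^\omega)$ are not actually used in this proof—they are presumably included for uniformity with the surrounding results and the contexts in which this lemma will be invoked.
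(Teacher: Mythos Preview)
Your proof is correct, and it is in fact cleaner than the paper's. Both arguments deploy the retraction trick, but for \emph{opposite} inclusions: the paper uses the retraction for $\bG_D(Z)\subseteq\bG(Z)$ (appealing to Lemma \ref{relativization}.\ref{relativizationsubspace} and \ref{relativization}.\ref{relativizationbaire}) and uses Lemma \ref{relativizationprelim} for $\bG(Z)\subseteq\bG_D(Z)$, whereas you use the retraction for $\bG(Z)\subseteq\bG_D(Z)$ and verify $\bG_D(Z)\subseteq\bG(Z)$ directly from Definition \ref{relativizationdefinition}. The payoff of your arrangement is that you never invoke Lemma \ref{relativizationprelim} or Lemma \ref{relativization} at all---you work only with the raw Definition \ref{relativizationdefinition} and the elementary Lemma \ref{hausdorffrelativization}. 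Since Lemmas \ref{relativizationprelim} and \ref{relativization} are precisely where determinacy (via Lemma \ref{strongwadgelemma}) and the hypothesis $A\in\bS(Z)$ enter, your observation that niceness, determinacy, and $\bG\subseteq\bS(\omega^\omega)$ are unused is accurate and is a genuine gain over the paper's route. One small point: when you reduce to the case $Z\subseteq\omega^\omega$ closed, you cite Lemma \ref{hausdorffrelativization}.\ref{hausdorffrelativizationhomeomorphism} to transport $\bG_D(\cdot)$ along the homeomorphism, but you should also note (it is immediate from Definition \ref{relativizationdefinition}) that $\bG(\cdot)$ transports as well.
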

\begin{proof}
Observe that $\bG\in\NSDS(\omega^\omega)$ by Theorem \ref{addisontheorem}. To see that the inclusion $\subseteq$ holds, pick $A\in\bG(Z)$. By condition $(\ref{existsemb})$ of Lemma \ref{relativizationprelim}, there exist an embedding $j:Z\longrightarrow\omega^\omega$ and $B\in\bG$ such that $A=j^{-1}[B]$. It follows from Lemma \ref{hausdorffrelativization}.\ref{hausdorffrelativizationpreimage} that $A\in\bG_D(Z)$.

To see that the inclusion $\supseteq$ holds, pick $A\in\bG_D(Z)$. By \cite[Theorem 7.8]{kechris} and Lemma \ref{hausdorffrelativization}.\ref{hausdorffrelativizationhomeomorphism}, we can assume without loss of generality that $Z$ is a closed subspace of $\omega^\omega$. By \cite[Proposition 2.8]{kechris}, we can fix a retraction $\rho:\omega^\omega\longrightarrow Z$. Observe that $\rho^{-1}[A]\in\bG_D(\omega^\omega)$ by Lemma \ref{hausdorffrelativization}.\ref{hausdorffrelativizationpreimage}. Since $\bG_D(\omega^\omega)=\bG=\bG(\omega^\omega)$, it follows from Lemma \ref{relativizationsubspace} that $A=\rho^{-1}[A]\cap Z\in\bG(Z)$.
\end{proof}

\begin{theorem}\label{vanweseplevelatleast2}
Let $\bS$ be a nice topological pointclass, and assume that $\Det(\bS(\omega^\omega))$ holds. Let $Z$ be a zero-dimensional Polish space. If $\bG\in\NSDS(Z)$ and $\ell(\bG)\geq 2$ then $\bG\in\HaS(Z)$.
\end{theorem}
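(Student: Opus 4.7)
The plan is to reduce the theorem to the case $Z=\omega^\omega$, which is exactly Theorem \ref{radintheorem}, via the relativization framework of Sections \ref{sectionrelativizationbasic} and \ref{sectionrelativizationuncountable}. The key observation is that both the property of being a Hausdorff class and the level statistic are preserved by the bijection $\bG^\circ\mapsto\bG^\circ(Z)$ between $\NSDS(\omega^\omega)$ and $\NSDS(Z)$, once $Z$ is uncountable.

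First, assuming $Z$ is uncountable, I would invoke Lemma \ref{relativizationexistsunique} to fix the unique $\bG^\circ\in\NSDS(\omega^\omega)$ with $\bG^\circ(Z)=\bG$. Since both $Z$ and $\omega^\omega$ are uncountable zero-dimensional Polish spaces, Corollary \ref{levelinvariance} together with Lemma \ref{relativization}.\ref{relativizationbaire} lifts the hypothesis $\ell(\bG)\geq 2$ to $\ell(\bG^\circ)\geq 2$. Theorem \ref{radintheorem} then produces $D\subseteq\PP(\omega)$ with $\bG^\circ=\bG_D(\omega^\omega)$, and since $\bG^\circ\subseteq\bS(\omega^\omega)$, Lemma \ref{hausdorffbairehausdorffeverywhere} applies to yield $\bG=\bG^\circ(Z)=\bG_D(Z)$. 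Combined with $\bG\subseteq\bS(Z)$ this gives $\bG\in\HaS(Z)$.

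For the countable case I would argue directly. Since every subset of a countable zero-dimensional Polish space is $F_\sigma$, we obtain $\bG\subseteq\bS(Z)\subseteq\bS^0_2(Z)=\bD^0_2(Z)$, so by Theorem \ref{completeanalysisdifferences} either $\bG\in\{\{\varnothing\},\{Z\}\}$ (both trivially Hausdorff via $D=\varnothing$ and $D=\PP(\omega)$ respectively), or $\bG$ equals $\Diff_\eta(\bS^0_1(Z))$ or its dual for some $1\leq\eta<\omega_1$. In this second situation, enumerating $Z=\{z_n:n\in\omega\}$, each singleton $V_n=\{z_n\}$ lies in $\bD^0_2(Z)$, and since $\{\varnothing,Z\}\subseteq\bG$, every subset of $Z$ can be written as a partitioned union of $\bG$-sets over this partition. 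Hence $\PU_1(\bG)=\PP(Z)$, which is selfdual, so $\PU_1(\bG)\neq\bG$, contradicting $\ell(\bG)\geq 2$. The main technical ingredient being already in hand, the only mildly subtle point is the need to split off the countable case, since Corollary \ref{levelinvariance} is stated only for uncountable ambient spaces.
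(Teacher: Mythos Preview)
Your proof is correct and follows essentially the same route as the paper: in the uncountable case you relativize to $\omega^\omega$ via Lemma \ref{relativizationexistsunique}, transfer the level via Corollary \ref{levelinvariance}, apply Theorem \ref{radintheorem}, and come back via Lemma \ref{hausdorffbairehausdorffeverywhere}; this is exactly what the paper does. For the countable case the paper cites Lemma \ref{sddifferences} and Proposition \ref{sdarehausdorff} to see that difference classes have level $0$, whereas you argue directly that $\PU_1(\bG)=\PP(Z)$ via the partition into singletons --- a minor and perfectly valid variation leading to the same conclusion that $\bG\in\{\{\varnothing\},\{Z\}\}$.
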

\begin{proof}
Let $\bG(Z)\in\NSDS(Z)$, where $\bG\in\NSDS(\omega^\omega)$, and assume that $\ell(\bG(Z))\geq 2$. If $Z$ is countable, then it follows from Proposition \ref{levelcountable} that $\bG(Z)=\{\varnothing\}$ or $\bG(Z)=\{Z\}$, which clearly implies the desired result. So assume that $Z$ is uncountable. Notice that $\ell(\bG)\geq 2$ by Corollary \ref{levelinvariance}, hence $\bG\in\HaS(\omega^\omega)$ by Theorem \ref{radintheorem}. It follows from Lemma \ref{hausdorffbairehausdorffeverywhere} that $\bG(Z)\in\HaS(Z)$.
\end{proof}

\section{The main result}\label{sectionmain}

The following is the main result of this article. It states that every non-selfdual Wadge class can be obtained by starting with all classes of uncountable level, and then suitably iterating the operations of expansion and separated differences. As will become clear from the proof, classes of countable non-zero level are the expansion of some previously considered class, while classes of level zero are the separated differences of some previously considered classes.

At the same time, this yields a more explicit proof of Van Wesep's Theorem \ref{vanwesepmain}, in the sense that (aside from classes of uncountable level) it is made clear exactly which operations generate new classes from the old ones. This was first accomplished by Louveau in the Borel case (see \cite[Theorem 7.3.12]{louveaub}). The proof of the general case is essentially the same, except that the results from Section \ref{sectionstretch} are also needed.

The Borel version of following definition is due to Louveau (see \cite[Corollary 7.3.11]{louveaub}), which explains our choice of notation.
\begin{definition}\label{louveauhierarchy}
Given a space $Z$, define $\Lo(Z)$ as the smallest collection satisfying the following conditions:
\begin{itemize}
\item $\bG\in\Lo(Z)$ whenever $\bG\in\NSD(Z)$ and $\ell(\bG)=\omega_1$,
\item $\bG^{(\xi)}\in\Lo(Z)$ whenever $\bG\in\Lo(Z)$ and $\xi<\omega_1$,
\item $\SD_\eta(\bD,\bG)\in\Lo(Z)$, where $\bD=\bigcup_{n\in\omega}(\bL_n\cup\bLc_n)$, whenever $1\leq\eta<\omega_1$, $\bG\in\Lo(Z)$ and $\bL_n\in\Lo(Z)$ for $n\in\omega$ are such that $\bG\subseteq\bD$ and $\ell(\bL_n)\geq 1$ for each $n$.
\end{itemize}
Also set $\LoS(Z)=\{\bG\in\Lo(Z):\bG\subseteq\bS(Z)\}$ whenever $\bS$ is a topological pointclass.
\end{definition}

We remark that, when $\bS$ is a nice topological pointclass, an equivalent definition of $\LoS(Z)$ can be given by starting with the elements of $\NSDS(Z)$ of uncountable level, then closing under expansions and separated differences as in Definition \ref{louveauhierarchy}. 

\begin{theorem}\label{main}
Let $\bS$ be a nice topological pointclass, and assume that $\Det(\bS(\omega^\omega))$ holds. Let $Z$ be an uncountable zero-dimensional Polish space. Then
$$
\LoS(Z)=\HaS(Z)=\NSDS(Z).
$$
\end{theorem}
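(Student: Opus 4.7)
The plan is to prove the cyclic chain of inclusions $\LoS(Z)\subseteq\HaS(Z)\subseteq\NSDS(Z)\subseteq\LoS(Z)$, with the third being the substantive one.

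For $\LoS(Z)\subseteq\HaS(Z)$, I would induct on the inductive definition of $\Lo(Z)$. The base case (classes of uncountable level) is covered by Theorem~\ref{vanweseplevelatleast2}, since $\omega_1\geq 2$. Propositions~\ref{expansionsarehausdorff} and~\ref{sdarehausdorff} handle the expansion and separated-difference clauses respectively, so $\HaS(Z)$ satisfies the closure properties defining $\Lo(Z)$ among $\bS$-bounded classes. The inclusion $\HaS(Z)\subseteq\NSDS(Z)$ is then immediate from Theorem~\ref{addisontheorem}, since $2^\omega$ embeds into the uncountable zero-dimensional Polish space $Z$.

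The heart of the argument is $\NSDS(Z)\subseteq\LoS(Z)$, which I would establish by $\leq$-induction, justified by the Martin–Monk well-foundedness Theorem~\ref{wellfounded}. Fix $\bG\in\NSDS(Z)$ and assume the conclusion for every $\bG'\in\NSDS(Z)$ strictly Wadge-below $\bG$. By Corollary~\ref{everyclasshasalevel}, there is a unique $\xi\leq\omega_1$ with $\ell(\bG)=\xi$. If $\xi=\omega_1$, then $\bG\in\LoS(Z)$ by the first clause of Definition~\ref{louveauhierarchy}. If $0<\xi<\omega_1$, Theorem~\ref{expansiontheorem} produces $\bL\in\NSD(Z)$ with $\bG=\bL^{(\xi)}$; since $\bL\subseteq\bL^{(\xi)}=\bG$ while Corollary~\ref{expansionbigger} gives $\bG\subsetneq\bG^{(\xi)}$, we must have $\bL\subsetneq\bG$, whence $\bL\in\NSDS(Z)$ and by the inductive hypothesis $\bL\in\LoS(Z)$; the expansion clause then places $\bG$ in $\LoS(Z)$.

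The delicate case is $\xi=0$, where I would invoke Theorem~\ref{sdmain}. Its hypothesis requires that every $\bL\in\NSD(Z)$ with $\bL\subsetneq\bG$ already lies in $\Ha(Z)$. Such $\bL$ is automatically in $\NSDS(Z)$ (being contained in $\bG\subseteq\bS(Z)$), so the inductive hypothesis yields $\bL\in\LoS(Z)$, and then the already-proved first inclusion $\LoS(Z)\subseteq\HaS(Z)\subseteq\Ha(Z)$ supplies the needed conclusion. Theorem~\ref{sdmain} then provides $1\leq\eta<\omega_1$, $\bG^\ast\in\NSDS(Z)$, and $\bL_n\in\NSDS(Z)$ with $\ell(\bL_n)\geq 1$, such that $\bG=\SD_\eta(\bD,\bG^\ast)$ with $\bD=\bigcup_{n\in\omega}(\bL_n\cup\bLc_n)$ and $\bG^\ast\subseteq\bD$. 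Each $\bL_n$ is strictly below $\bG$ because $\bL_n\subseteq\bD\subseteq\bG$ but $\ell(\bL_n)\geq 1>0=\ell(\bG)$; similarly $\bG^\ast\subsetneq\bG$, for otherwise $\bG=\bG^\ast\subseteq\bD\subseteq\bG$ would force $\bG=\bD$, contradicting the nonselfduality of $\bG$ against the manifest selfduality of $\bD$. Applying the inductive hypothesis to each $\bL_n$ and $\bG^\ast$ and then invoking the separated-difference clause of Definition~\ref{louveauhierarchy} gives $\bG\in\LoS(Z)$.

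The main obstacle is precisely the routing in the $\xi=0$ case: verifying that the hypothesis of Theorem~\ref{sdmain} is available requires having the first inclusion $\LoS(Z)\subseteq\HaS(Z)$ in hand before running the well-founded induction, so the three inclusions must be proved in exactly the order listed, with the inductive hypothesis simultaneously feeding back through the separated-difference analysis. All the delicate spadework—Radin's stretch theorem for classes of level at least $2$, the selfdual analysis via flip-sets, and Kuratowski's refinement machinery—has already been packaged into the cited theorems, so at this stage only the bookkeeping remains.
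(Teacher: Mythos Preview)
Your proposal is correct and follows essentially the same route as the paper: establish $\LoS(Z)\subseteq\HaS(Z)$ via Theorem~\ref{vanweseplevelatleast2}, Proposition~\ref{expansionsarehausdorff}, and Proposition~\ref{sdarehausdorff}; obtain $\HaS(Z)\subseteq\NSDS(Z)$ from Theorem~\ref{addisontheorem}; and prove $\NSDS(Z)\subseteq\LoS(Z)$ by well-founded induction (the paper phrases this as taking a $\subseteq$-minimal counterexample), splitting on whether $\ell(\bG)$ is $\omega_1$, a countable nonzero ordinal, or zero, and feeding the already-proved inclusion $\LoS(Z)\subseteq\HaS(Z)$ back into the hypothesis of Theorem~\ref{sdmain} in the level-zero case. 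Your write-up is in fact more explicit than the paper's in verifying that the classes $\bL_n$ and $\bG^\ast$ produced by Theorem~\ref{sdmain} are strictly below $\bG$ (using $\bD\subseteq\SD_\eta(\bD,\bG^\ast)$ and the level discrepancy, respectively the selfduality of $\bD$), which the paper leaves implicit.
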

\begin{proof}
The inclusion $\LoS(Z)\subseteq\HaS(Z)$ follows from Theorem \ref{vanweseplevelatleast2}, Corollary \ref{expansionsarehausdorff}, and Proposition \ref{sdarehausdorff}. The inclusion $\HaS(Z)\subseteq\NSDS(Z)$ is given by Theorem \ref{addisontheorem}. Now assume, in order to get a contradiction, that $\NSDS(Z)\nsubseteq\LoS(Z)$. By Theorem \ref{wellfounded}, we can pick a $\subseteq$-minimal $\bG\in\NSDS(Z)\setminus\LoS(Z)$.

By Corollary \ref{everyclasshasalevel}, we can fix $\xi\leq\omega_1$ such that $\xi=\ell(\bG)$. Observe that $\xi<\omega_1$ by the definition of $\LoS(Z)$. First assume that $\xi\geq 1$. By Theorem \ref{expansiontheorem}, we can fix $\bL\in\NSDS(Z)$ such that $\bL^{(\xi)}=\bG$. Clearly $\bL\subseteq\bG$, and $\bL=\bG$ is impossible by Corollary \ref{expansionbigger}. Therefore $\bL\subsetneq\bG$, which implies $\bL\in\LoS(Z)$ by the minimality of $\bG$, contradicting the definition of $\LoS(Z)$. It follows that $\xi=0$. Since $\LoS(Z)\subseteq\HaS(Z)$ and $\bG$ is minimal, we can apply Theorem \ref{sdmain}, contradicting again the definition of $\LoS(Z)$.
\end{proof}

\section*{List of symbols and terminology}

The following is a list of most of the symbols and terminology used in this article, organized by the section in which they are defined.

\begin{itemize}
\item[Section \ref{sectionintroduction}:] space, power-set $\PP(Z)$, $\bGc$, selfdual class, Wadge class.
\item[Section \ref{sectionpreliminaries}:] image $f[A]$, inverse image $f^{-1}[B]$, Wadge-reduction $\leq$, strict Wadge-reduc\-tion $<$, Wadge-equivalence $\equiv$, Wadge class $A\wc$, continuously closed, Borel sets $\Borel(Z)$, embedding, differences $\Diff_\eta(A_\mu:\mu<\eta)$, class of differences $\Diff_\eta(\mathbf{\Sigma}^0_\xi(Z))$, game $\Ga(A,X)$, payoff set, determinacy assumption $\Det(\bS)$, Axiom of Determinacy $\AD$, principle of Dependent Choices $\DC$, partition, identity function $\id_Z$, basic clopen set $\Ne_s$, Boolean closure $\bool\bS$, clopen, base, zero-dimensional, Borel space, $\bS^0_\xi$-measurable function, Borel function.
\item[Section \ref{sectionpointclasses}:] topological pointclass $\bS$, nice topological pointclass $\bS$, Baire property assumption $\BP(\bS)$.
\item[Section \ref{sectionwadgebasic}:] collection $\NSD(Z)$ of all non-selfdual Wadge classes, collection $\NSDS(Z)$ of the non-selfdual Wadge classes of complexity $\bS$, retraction, Extended Wadge game $\EW(D,A_0,A_1)$.
\item[Section \ref{sectionselfdual}:] ideal $\II(A)$, $\sigma$-additive, flip-set.
\item[Section \ref{sectionrelativizationbasic}:] relativized class $\bG(Z)$.
\item[Section \ref{sectionhausdorffbasic}:] Hausdorff operation $\HH_D(A_0,A_1,\ldots)$.
\item[Section \ref{sectionhausdorffclasses}:] Hausdorff class $\bG_D(Z)$, collection $\Ha(Z)$ of all Hausdorff classes, collection $\HaS(Z)$ of the Hausdorff classes of complexity $\bS$.
\item[Section \ref{sectionhausdorffuniversal}:] $W$-universal set.
\item[Section \ref{sectionkuratowski}:] function $f^\ast$, $k$-th coordinate function $f_k$, $\xi$-refining function.
\item[Section \ref{sectionexpansionbasic}:] expansion $\bG^{(\xi)}$, Hausdorff expansion $\bG_D^{(\xi)}(Z)$.
\item[Section \ref{sectionlevelbasic}:] partitioned union $\PU_\xi(\bG)$, level $\ell(\bG)$. 
\item[Section \ref{sectionlevelhard}:] tree, infinite branch, terminal node, well-founded tree, rank function $\rho_T$ of a tree, rank $\rho(T)$ of a tree, $T/s$.
\item[Section \ref{sectionexpansioncomposition}:] closed under Borel preimages.
\item[Section \ref{sectionsdbasic}:] separated differences $\SD_\eta((V_{\xi,n}:\xi<\eta,n\in\omega),(A_{\xi,n}:\xi<\eta,n\in\omega),A^\ast)$, class of separated differences $\SD_\eta(\bD,\bG^\ast)$.
\item[Section \ref{sectionstretch}:] stretch $A^\sss$.
\item[Section \ref{sectionmain}:] Louveau hierarchy $\Lo(Z)$, Louveau hierarchy $\LoS(Z)$ of classes of complexity $\bS$.
\end{itemize}

\end{document}